\renewcommand\ge\geqslant
\renewcommand\geq\geqslant
\renewcommand\le\leqslant
\renewcommand\leq\leqslant
\renewcommand\propto\varpropto
\newcommand{\sn}{\mathop{{}\rm sn}}
\newcommand{\cn}{\mathop{{}\rm cn}}
\newcommand{\slemn}{\mathop{{}\rm sl}}
\newcommand{\clemn}{\mathop{{}\rm cl}}
\newcommand{\sm}{\mathop{{}\rm sm}}
\newcommand{\cm}{\mathop{{}\rm cm}}
\newcommand{\defeq}{\mathrel{:=}}
\newcommand{\eqdef}{\mathrel{=:}}
\newcommand{\notdivides}{\nmid}
\theoremstyle{plain}
\newtheorem{theorem}{Theorem}[section]
\newtheorem{proposition}[theorem]{Proposition}
\newtheorem{lemma}[theorem]{Lemma}
\theoremstyle{definition}
\newtheorem{definition}[theorem]{Definition}
\newtheorem{example}[theorem]{Example}
\newtheorem*{example*}{Example}
\theoremstyle{remark}
\newtheorem*{remark*}{Remark}
\numberwithin{equation}{section}
\renewcommand{\LT@makecaption}[3]{%
  \LT@mcol\LT@cols c{\hbox to\z@{\hss\parbox[t]\LTcapwidth{%
        \sbox\@tempboxa{%
          {\@captionheadfont#1{#2}}.%
          \@captionfont\upshape\enspace#3}%
        \ifdim\wd\@tempboxa>\hsize
            {\@captionheadfont#1{#2}}.%
            \@captionfont\upshape\enspace#3%
            \else
            \hbox to\hsize{\hfil\box\@tempboxa\hfil}%
            \fi
            \endgraf\vskip\baselineskip}%
      \hss}}}
\newenvironment{sizeequation}[1]{%
  \skip@=\baselineskip
  #1%
  \baselineskip=\skip@
  \equation
}{\endequation \ignorespacesafterend}
\newenvironment{sizealign}[1]{%
  \skip@=\baselineskip
  #1%
  \baselineskip=\skip@
  \align
}{\endalign \ignorespacesafterend}  
\newenvironment{sizemultline}[1]{%
  \skip@=\baselineskip
  #1%
  \baselineskip=\skip@
  \multline
}{\endmultline \ignorespacesafterend}  
\begin{document}

\title[Quadratic differential systems and Chazy equations]{Quadratic differential systems\\ and Chazy equations,~I}
\author{Robert S. Maier}
\address{Depts.\ of Mathematics and Physics, University of Arizona, Tucson,
AZ 85721, USA}
\email{rsm@math.arizona.edu}
\date{}

\begin{abstract}
Generalized Darboux--Halphen (gDH) systems, which form a versatile class of
three-dimensional homogeneous quadratic differential systems (HQDS's), are
introduced.  They generalize the Darboux--Halphen (DH) systems considered
by other authors, in~that any non\nobreakdash-DH gDH system is affinely but
not projectively covariant.  It is shown that the gDH class supports a rich
collection of rational solution-preserving maps: morphisms that transform
one gDH system to another.  The proof relies on a bijection between (i)~the
solutions with noncoincident components of any `proper' gDH system, and
(ii)~the solutions of a generalized Schwarzian equation (gSE) associated
to~it, which generalizes the Schwarzian equation (SE) familiar from the
conformal mapping of hyperbolic triangles.  The gSE can be integrated
parametrically in~terms of the solutions of a Papperitz equation, which is
a generalized Gauss hypergeometric equation.  Ultimately, the rational gDH
morphisms come from hypergeometric transformations.  A~complete
classification of proper non\nobreakdash-DH gDH systems with the Painlev\'e
property (PP) is also carried~out, showing how some are related by rational
morphisms.  The classification follows from that of non\nobreakdash-SE
gSE's with the~PP, due to Garnier and Carton-LeBrun.  As examples, several
non\nobreakdash-DH gDH systems with the~PP are integrated explicitly
in~terms of elementary and elliptic functions.  
%Part~II of this paper will
%explain how the solutions of several Chazy equations, which are nonlinear
%third-order ODE's with the~PP, can be represented in~terms of the solutions
%of gDH systems.
\end{abstract}

%\subjclass[2010]{Primary 34M55; 33C05, 34M15, 37J35.}

\maketitle

\section{Introduction: (g)DH Systems, Chazy Equations}
\label{sec:intro}

Autonomous differential systems $\dot x=Q(x)$,
$x=(x_1,\dots,x_d)\in{\mathbb{K}}^d$, where the base field $\mathbb{K}$ is
$\mathbb{R}$ or~$\mathbb{C}$ and each component~$Q_i$ of
$Q\colon{\mathbb{K}}^d\to{\mathbb{K}}^d$ is a homogeneous quadratic form in
$x_1,\dots,x_d$, are interesting continuous-time dynamical systems that
have found application in the physical and biological sciences.  The
independent variable will be denoted by~$\tau$.  If an initial condition
$x(\tau_0)=x^0\in\mathbb{K}^d$ is imposed at any point
$\tau=\tau_0\in\mathbb{K}$, an analytic solution $x=x(\tau)$ will exist
locally, as a power series in~$\tau-\tau_0$.  But, global behavior may be
difficult to determine.

An algebraic approach to such homogeneous quadratic differential systems
(called HQDS's here) is often useful, e.g., when studying the stability of
the point $0\in{\mathbb{K}}^d$, the possible phase portraits, and system
equivalences \cite{Kaplan79,Kinyon95,Markus60,Walcher91}.  To any HQDS
$\bigl({\mathbb{K}}^{d}\!,\ \dot{x}= Q( x)\bigr)$ there is naturally
associated an algebra $\mathfrak{A}=(\mathbb{K}^d,*)$ over~$\mathbb{K}$.
Any $x\in\mathfrak{A}$ can be written as $\sum_{i=1}^dx_ie_i$, where
$e_1,\dots,e_d$ are basis vectors for~$\mathbb{K}^d$, and the product~$*$
on~$\mathbb{K}^d$ is defined by $x*x\defeq Q(x)$ and
  \begin{equation}
    \label{eq:polarization}
    x * y \defeq  \left[ (x+y)*(x+y) - x*x - y*y    \right] / 2,
  \end{equation}
which is the usual polarization identity.  The HQDS then becomes an
evolution equation $\dot x=x*x$ in~$\mathfrak{A}$.  As defined, the
product~$*$ is commutative but non-associative, meaning not necessarily
associative; and there may be no corresponding multiplicative identity
element.  Any idempotent $p\in\mathfrak{A}$, satisfying $p*p=p$, yields a
1\nobreakdash-parameter family of ray solutions of the HQDS, namely
$x(\tau)=\allowbreak -(\tau-\tau_*)^{-1}p$, and any nilpotent
$n\in\mathfrak{A}$, satisfying $n*n=0$, yields a 1\nobreakdash-parameter
family (i.e., ray) of constant solutions, namely $x(\tau)\equiv K\,n$,
$K\in\mathbb{K}$.

An analytic~map $\Phi\colon{\mathbb{K}^d}\to{\mathbb{K}^d}$ with
$\Phi(0)=0$ (or~more generally, a germ of an analytic function) is called
\emph{solution-preserving} from an HQDS
$\bigl({\mathbb{K}}^{d}\!,\ \dot{\tilde x}=\tilde Q(\tilde x)\bigr)$ to an
HQDS $\bigl({\mathbb{K}}^{d}\!,\ \dot x=Q(x)\bigr)$, if $x(\tau)\defeq
\Phi\left(\tilde x(\tau)\right)$ is a local solution of $\dot x=Q(x)$ for
each local solution $\tilde x=\tilde x(\tau)$ of $\dot {\tilde x}=\tilde
Q(\tilde x)$ with initial condition $\tilde x(\tau_0)$ sufficiently near
the origin~\cite{Kinyon95,Walcher91}.  One writes $Q=\Phi_*\tilde Q$.  A
straightforward case is when $\Phi$~is linear and invertible, and hence
everywhere defined.  If $\Phi$~equals some $T\in{\it GL}(d,\mathbb{K})$
then $Q=T \circ \tilde Q \circ T^{-1}$, and the corresponding algebras
${\mathfrak{A}},\tilde{\mathfrak{A}}$ are isomorphic.

The classification of the qualitative behaviors of
$d$\nobreakdash-dimensional HQDS's thus reduces to placing commutative,
non-associative algebras~$\mathfrak{A}=(\mathbb{K}^d,*)$ into equivalence
classes of various sorts, often with the aid of algebraic invariants.  The
space of commutative products~$*$ on~${\mathbb{K}}^d$ has $d^2(d+1)/2$
parameters, of which such invariants are functions.  A full classification
when $d=2$ has been carried out \cite{Date79,Markus60,Sibirsky88}; and has
revealed, \emph{inter alia}, which real planar HQDS's have unbounded
solutions~\cite{Dickson70}.  But the structure of the
18\nobreakdash-parameter space of non-associative algebras with $d=3$,
i.e., of quadratic vector fields on ${\mathbb{R}}^3$ or~${\mathbb{C}}^3$,
is not fully understood.

Any three-dimensional HQDS $\dot x=Q(x)\eqdef x*x$, with 18~parameters, can
be written component-wise as
\begin{equation}
  \label{eq:HQDS0}
  \left\{
  \begin{aligned}
    \dot{  x}_1 &= a_{11}  x_1^2 + a_{12}  x_2^2 + a_{13}  x_3^2  + b_{11}  x_2  x_3 + b_{12}  x_3  x_1 + b_{13}  x_1  x_2,\\
    \dot{  x}_2 &= a_{21}  x_1^2 + a_{22}  x_2^2 + a_{23}  x_3^2  + b_{21}  x_2  x_3 + b_{22}  x_3  x_1 + b_{23}  x_1  x_2,\\
    \dot{  x}_3 &= a_{31}  x_1^2 + a_{32}  x_2^2 + a_{33}  x_3^2  + b_{31}  x_2  x_3 + b_{32}  x_3  x_1 + b_{33}  x_1  x_2.
  \end{aligned}
  \right.
\end{equation}
It is not known exactly which such systems are integrable, in any sense;
though Kovalevskaya exponents~\cite{Goriely2001} are a useful tool.
Conditions in~terms of them for the existence of polynomial first
integrals~\cite{Tsygvintsev2000} and for algebraic
integrability~\cite{Yoshida87a} are known.  But only HQDS's of certain
types, such as Lotka--Volterra systems, have been subjected to detailed
Kovalevskaya--Painlev\'e analyses
\cite{Bountis84,Constandinides2011,Goriely92,Grammaticos89,Hoyer1879}, or
other integrability
analyses~\cite{GonzalezGascon2000,Maciejewski2001,Moulin99}.  Nor have the
cases when the individual components $x_1,x_2,x_3$ of a solution
$x=x(\tau)$ satisfy `nice' nonlinear scalar ODE's of low degree been fully
characterized.  But the piecemeal integration of HQDS's of the
form~(\ref{eq:HQDS0}), with the aid of elliptic, hypergeometric, or other
special functions, has been carried~on since the 19th-century work of
Darboux, Halphen~\cite{Halphen1881c}, and Brioschi~\cite{Brioschi1881}; and
Hoyer~\cite{Hoyer1879}.

A theme of the present paper, and especially of Part~II, is the connection
between HQDS's on~${\mathbb{C}}^3$ and nonlinear scalar ODE's in the
\emph{Chazy class}, which are sometimes satisfied by components
$x_i=x_i(\tau)$, or their linear combinations.  The Chazy class includes
all third-order autonomous scalar ODE's of the form
\begin{equation}
\label{eq:chazyclass}
\dddot u =  A\,u\ddot u + B\,\dot u^2 + C\,u^2\dot u + D\,u^4.
\end{equation}
(Besides the invariance under translation of~$\tau$, note the invariance
under $\tau\mapsto\tau/\lambda$ when accompanied by $u\mapsto \lambda u$ or
by $(A,B,C,D)\mapsto(\lambda A,\lambda B,\lambda^2 C,\lambda^3 D)$.)  Such
ODE's have analytic local solutions $u=u(\tau)$, which may not extend
analytically or meromorphically to $\mathbb{C}\ni\tau$.  ODE's in the Chazy
class appear in symmetry reductions of the self-dual Yang--Mills equations
for the gauge group $\textrm{Diff}(S^3)$ \cite{Ablowitz2003}, and of the
Prandtl boundary layer equations~\cite{Ludlow2000,Naz2008}; and also in the
construction of ${\it SU}(2)$-invariant hypercomplex
manifolds~\cite{Hitchin98}.  In many applications what appears is actually
a HQDS, from which a Chazy-class ODE can optionally be derived.

The Chazy class was introduced in an early (1911) extension of Painlev\'e's
classification of nonlinear scalar ODE's from the second to the third
order.  Chazy (\cite{Chazy11};~see also~\cite{Bureau64b}) imposed the
Painlev\'e property (PP), according to which no~local solution of the ODE
may have a branch point; at~least, not one that is movable, with a
location depending on the choice of initial conditions.  Nearly all
nonlinear third-order scalar ODE's that have the~PP, and lie in the
`polynomial class' (i.e., express $\dddot u$ as a polynomial in $u,\dot u,
\ddot u$), turn~out to reduce to the form~(\ref{eq:chazyclass}) when
recessive, sub-dominant terms are omitted.  Up~to normalization there are
12~possible nonzero choices for the coefficient vector $(A,B,C,D)$;
equivalently, 12~choices for the element $[A:B:C:D]$ of the weighted
projective space $\mathbb{P}^3(1,1,2,3)$.  The corresponding ODE's are now
labeled Chazy-I through Chazy-XII~\cite{Cosgrove2000,Sasano2010}, with
Chazy-XI and~XII being infinite families, parametrized by a positive
integer~$N$.

The best known Chazy equations are Chazy-III and~XII.  In those two
equations, $[A:B:C:D]$ is respectively
\begin{displaymath}
[2:-3:0:0],\qquad [2(N^2-36):-3(N^2+12):48(N^2-36):-4(N^2-36)^2].
\end{displaymath}
The former, often called the classical Chazy equation or simply \emph{the}
Chazy equation, is the $N\to\infty$ limit of the latter (in which
$N\neq1,6$ for the PP to obtain).  Generically, any solution of
Chazy\nobreakdash-III or of Chazy\nobreakdash-XII with $N\neq\allowbreak
2,3,4,5$, determined by initial data $(u,\dot u,\ddot u)$ at some point
$\tau=\tau_0$, is only locally defined and has a \emph{natural boundary} in
the complex $\tau$-plane, beyond which it cannot be analytically or
meromorphically continued.  The maximal domain of definition of the
solution, extending from $\tau_0$ to this movable boundary, is a disk or a
half-plane~\cite{Joshi93}.

Chazy himself showed that Chazy-III and~XII can be integrated
\emph{parametrically} with the aid of a particular second-order
\emph{linear} ODE: the Gauss hypergeometric equation (GHE), which is
satisfied by the hypergeometric function ${}_2F_1(t)$.  The independent
variable~$\tau$ of the Chazy equation is represented as the ratio of a pair
of solutions of a certain GHE, and the dependent variable~$u$ as the
logarithmic derivative of one of the pair.  There is an alternative
interpretation of solutions of Chazy-III and~XII, involving a HQDS\null.
For both ODE's, any local solution $u=u(\tau)$ can be obtained from a local
solution $x=x(\tau)$ of a corresponding 3\nobreakdash-dimensional HQDS
$\dot x=Q(x)$ (a~permutation-invariant Darboux--Halphen system) as an
average of components: $u=(x_1+x_2+x_3)/3$.  Recently, Chakravarty and
Ablowitz~\cite{Chakravarty2010} produced several additional representations
of the solutions of Chazy-III, as combinations of components of the
solutions of other such systems.

The focus of Parts I and~II of this paper is therefore on a versatile and
rather general class of 3\nobreakdash-dimensional HQDS's of the form
$\left(\mathbb{C}^3,\ \dot x=Q(x)\eqdef x*x\right)$, which will be called
\emph{generalized Darboux--Halphen} (gDH) \emph{systems}.  By definition,
any gDH system can be written component-wise as
\begin{equation}
\label{eq:gDH}
\left\{
\begin{aligned}
  \dot x_1 = -a_1(x_1-x_2)(x_3-x_1) + (b_1\,x_2x_3+b_2\,x_3x_1+b_3\,x_1x_2) - c\,x_2x_3, \\
  \dot x_2 = -a_2(x_2-x_3)(x_1-x_2) + (b_1\,x_2x_3+b_2\,x_3x_1+b_3\,x_1x_2) - c\,x_3x_1, \\
  \dot x_3 = -a_3(x_3-x_1)(x_2-x_3) + (b_1\,x_2x_3+b_2\,x_3x_1+b_3\,x_1x_2) - c\,x_1x_2 ,
\end{aligned}
\right.
\end{equation}
with $(a_1,a_2,a_3;b_1,b_2,b_3;c)\in
{\mathbb{C}}^3\times{\mathbb{C}}^3\times{\mathbb{C}}$.  This
is the specialization
\begin{equation}
a_{ij}=a_i\delta_{ij},\qquad b_{ij}=(2a_i-c)\delta_{ij} - a_i + b_j  
\end{equation}
of the general HQDS~(\ref{eq:HQDS0}).  The system~(\ref{eq:gDH}) will be
denoted by ${\rm gDH}(a_1,\nobreak a_2,\nobreak a_3;\allowbreak
b_1,\nobreak b_2,\nobreak b_3;c)$.  Such systems make~up a
dimension\nobreakdash-7 subspace of the linear space of
3\nobreakdash-dimensional HQDS's.  Topics to be treated include the
integration of gDH systems, by an implicitly algebro-geometric procedure
that generalizes the integration of Chazy-III and~XII; rational but
nonlinear morphisms between gDH systems; the Painlev\'e properties of gDH
systems; the nonlinear ODE's satisfied by (linear combinations~of) gDH
components~$x_i$; and the extent to which solutions of the Chazy equations
can be represented in this way.

The general linear group ${\it GL}(3,\mathbb{C})$ acting on
$x\in{\mathbb{C}}^3$, resp.\ the special linear group ${\it
  SL}(3,\mathbb{C})$, has~9, resp.~8 parameters.  Hence up~to linear
equivalence, the space of 3\nobreakdash-dimensional HQDS's has only
$18-\nobreak9=9$ parameters; or $18-\nobreak8=10$ if equivalence of HQDS's
under $x\mapsto\lambda x$, i.e.\ under $\tau\mapsto\lambda\tau$, is not
considered.  Also, no~nontrivial 1\nobreakdash-parameter group of
transformations $T\in{\it SL}(3,\mathbb{C})$ stabilizes the gDH subspace.
It follows that up~to linear equivalence, the gDH subspace has a small
codimension in the full HQDS parameter space, i.e., $10-\nobreak7=3$.
That~is, it is fairly generic.  However, gDH systems are significantly
different from the parametrized 3\nobreakdash-dimensional HQDS's
(of~Lotka--Volterra or Hoyer form) that have been integrated or tested for
integrability by other authors, as mentioned above.

If a condition $x=x^0=(x_1^0,x_2^0,x_3^0)\in\mathbb{C}^3$ is imposed at any
$\tau_0\in\mathbb{C}$, an analytic solution $x=x(\tau)$ of any gDH system
will exist for $\tau$~in a neighborhood of~$\tau_0$.  It is easy to see
that if two components coincide at~$\tau_0$, they will do so at all~$\tau$.
(If $x_j=x_k$ then $\dot x_j=\dot x_k$.)  In fact, any gDH system with
$c\neq b_1+b_2+b_3$ has a 1\nobreakdash-parameter family of meromorphic ray
solutions, proportional to~$(\tau-\tau_*)^{-1}$, i.e.
\begin{equation}
\label{eq:raysoln}
  x(\tau)=(c-b_1-b_2-b_3)^{-1}(\tau-\tau_*)^{-1}(1,1,1),
\end{equation}
in which all components coincide.  This family of `scale-invariant'
solutions comes from an idempotent of the algebra~$\mathfrak{A}$ associated
to~(\ref{eq:gDH}): an element $p_0\propto\allowbreak
e_0\defeq\allowbreak e_1+\nobreak e_2+\nobreak e_3$, satisfying
$p_0*p_0=p_0$.  If $c=\allowbreak b_1+\nobreak b_2+\nobreak b_3$ then $e_0$~is
nilpotent (i.e.\ $e_0*e_0=0$), and any constant function $x(\tau)\equiv
x_1^0(1,1,1)$ will be a solution.

The $b_1=b_2=b_3\eqdef b$ case of the gDH system~(\ref{eq:gDH}) has been
considered by Bureau \cite{Bureau72,Bureau87}, but its subcase $b=c/2$ is
much better known.  Any gDH system satisfying $b_1=b_2=b_3=c/2$ will be
called a \emph{Darboux--Halphen} (DH) \emph{system} and denoted by ${\rm
  DH}(a_1,a_2,a_3;c)$.  Most attention below will be directed to
\emph{proper} DH systems, which also satisfy (i)~$c\neq0$ and (ii)
$c-a_1-a_2-a_3\neq0$.  By examination, any proper DH system can
alternatively be written as
\begin{gather}
\label{eq:DH}
\left\{
\begin{aligned}
  \dot x_1 &= (c/2)\left[x_1^2 + (1+\rho\alpha_1)(x_1-x_2)(x_3-x_1)\right], \\
  \dot x_2 &= (c/2)\left[x_2^2 + (1+\rho\alpha_2)(x_2-x_3)(x_1-x_2)\right], \\
  \dot x_3 &= (c/2)\left[x_3^2 + (1+\rho\alpha_3)(x_3-x_1)(x_2-x_3)\right],
\end{aligned}
\right.
\\[\jot]
\begin{aligned}
\rho\defeq 2/(1-\alpha_1-\alpha_2-\alpha_3),
\end{aligned}\nonumber
\end{gather}
where 
\begin{equation}
\label{eq:computealphas}
\alpha_i=-a_i/(c-\nobreak a_1-\nobreak a_2-\nobreak a_3)
\end{equation}
defines `angular' parameters $\alpha_1,\alpha_2,\alpha_3\in\mathbb{C}$,
satisfying $\alpha_1+\alpha_2+\alpha_3\neq1$.

The system~(\ref{eq:DH}) will be denoted by ${\rm
  DH}(\alpha_1,\alpha_2,\alpha_3\,|\, c)$.  It is a standardized form of
the Darboux--Halphen HQDS, which was introduced in 1881 by
Halphen~\cite{Halphen1881c} and has been studied more recently
\cite{Ablowitz2003,Harnad2000,Maier15,Ohyama96}.  For instance, it arises
in the conformal mapping of hyperbolic triangles, where the angular
parameters have units of $\pi$~radians, and
$\alpha_1+\alpha_2+\alpha_3\neq1$ is a non-Euclidean condition.  The
permutation-invariant DH system used in representing solutions of Chazy-III
and~XII, mentioned above, turns~out to be ${\rm DH}(0,0,0\,|\,c)$,
resp.\ ${\rm DH}(\frac2N,\frac2N,\frac2N\,|\,c)$.  The less symmetric
system ${\rm DH}(0,\frac13,\frac12\,|\,c)$, resp.\ ${\rm
  DH}(\frac1N,\frac13,\frac12\,|\,c)$, can also be used.

A nice feature of any proper DH system ${\rm
  DH}(\alpha_1,\alpha_2,\alpha_3\,|\,c)$ is that the corresponding
non-associative algebra $\mathfrak{A}=(\mathbb{C}^3,*)$ is \emph{unital}:
the above idempotent~$p_0$, which for a proper DH system equals
$(2/c)e_0=\allowbreak (2/c)(e_1+\nobreak e_2+\nobreak e_3)$, is a
multiplicative identity element of~$\mathfrak{A}$.  In any proper DH
system, $c\neq0$ can optionally be `scaled~out' and omitted from the
parameter vector $(\alpha_1,\alpha_2,\alpha_3\,|\,c)$.  In many contexts
its precise value is unimportant.  If unspecified, by~default its value
will be~2.

It is known that no proper DH system ${\rm
  DH}(\alpha_1,\alpha_2,\alpha_3\,|\,c)$ is algebraically
integrable~\cite{Maciejewski95,Valls2006b}.  But each DH system ${\rm
  DH}(a_1,a_2,a_3;c)$, proper or improper, has a Lie point symmetry that
the general gDH system does not.  It is stable under
$\tau\mapsto(A\tau+B)/(C\tau+D)$ for any $\pm\left(\begin{smallmatrix} A &
  B \\ C & D
\end{smallmatrix}\right)\in{\it PSL}(2,\mathbb{C})$, 
provided that each component~$x_i$ is simultaneously transformed in an
affine-linear way~\cite{Harnad2000}.  This facilitates
integration~\cite{Ablowitz2003,Chakravarty2010,Harnad2000}.  A related fact
is that if
$(\alpha_1,\alpha_2,\alpha_3)=\allowbreak(\frac1{N_1},\frac1{N_2},\frac1{N_3})$
where each $N_i$ is a positive integer or~$\infty$, with
$\alpha_1+\nobreak\alpha_2+\allowbreak\alpha_3<\nobreak1$, the solutions of
${\rm DH}(\alpha_1,\alpha_2,\alpha_3\,|\,c)$ will have a modular
interpretation~\cite{Harnad2000,Maier15}.  If an appropriate initial
condition $x(\tau_0)=x^0$ is imposed at any point~$\tau=\tau_0$ in the
upper half-plane~$\mathfrak{H}$, the components $x_i=x_i(\tau)$ will become
{\em quasi-modular forms\/}: single-valued functions on~$\mathfrak{H}$ that
are logarithmic derivatives of conventional modular forms, and transform
specially under a triangle subgroup $\Delta(N_1,N_2,N_3)$ of~${\it
  PSL}(2,{\mathbb{R}})$.  A~fundamental domain for this subgroup, acting
on~$\mathfrak{H}$, can be chosen to be a hyperbolic triangle with angles
$\pi(\frac1{N_1},\frac1{N_2},\frac1{N_3})$; and the group elements will
reflect this domain and its images through their sides, thereby
tessellating~$\mathfrak{H}$.  The real axis will be a natural boundary
through which $x=x(\tau)$ cannot be continued.

The simplest example is $(N_1,N_2,N_3)=(\infty,3,2)$, when the DH system
becomes a famous differential system of Ramanujan.  The group
$\Delta(\infty,3,2)$ is the classical modular group ${\it
  PSL}(2,{\mathbb{Z}})$.  For an appropriate initial condition
$x(\tau_0)=x^0$, the components $x_2(\tau),x_3(\tau)$ will be proportional
to logarithmic derivatives of the Eisenstein series $E_4,E_6$.  These are
well-known modular forms for ${\it PSL}(2,\mathbb{Z})$, defined
on~$\mathfrak{H}$ as $q$\nobreakdash-series: power series in the half-plane
variable $q=\exp(2\pi{\rm i}\tau)$.  For other (generic) choices of initial
condition, the solution $x=x(\tau)$ will come from the preceding by
applying an element of~${\it PSL}(2,{\mathbb{C}})$ to the independent
variable~$\tau$; so it will be defined not on~$\mathfrak{H}$, but on some
other half-plane or disk.

In general, the 3\nobreakdash-dimensionality of the group manifold of ${\it
  PSL}(2,\mathbb{C})$ corresponds to that of the space of disks (including
infinite disks, i.e.\ half-planes) in the complex $\tau$\nobreakdash-plane,
to that of the solution space of any system ${\rm
  DH}(\frac1{N_1},\frac1{N_2},\frac1{N_3})$, and to that of its space of
initial conditions $x(\tau_0)$.  The phenomenon of a natural boundary
(a~`wall of poles') may have been first discovered for Chazy-III and~XII,
which are associated to ${\rm DH}(0,\frac13,\frac12)$ and ${\rm
  DH}(\frac1N,\frac13,\frac12)$, but in~fact it is displayed by any generic
solution of ${\rm DH}(\alpha_1,\alpha_2,\alpha_3)$ with
$(\alpha_1,\alpha_2,\alpha_3) =\allowbreak
(\frac1{N_1},\frac1{N_2},\frac1{N_3})$, provided that
$\alpha_1+\nobreak\alpha_2+\allowbreak\alpha_3<\nobreak1$ is satisfied.  As
will be seen, though, non\nobreakdash-DH gDH systems differ significantly
from DH systems: their solutions have no natural boundaries.

The chief results of Part~I of this paper are the following.

\smallskip
(1)\quad{}Any ${\rm gDH}(a_1,a_2,a_3;b_1,b_2,b_3;c)$ that is `proper' (see
Definition~\ref{def:proper}) can be integrated parametrically with the aid
of the Gauss hypergeometric function ${}_2F_1(t)$.  The integration
procedure, based on Theorem~\ref{thm:integration}, extends the known
procedure for closed-form integration of proper DH systems, and hence the
integration of the Chazy-III and~XII equations in~terms of~${}_2F_1(t)$.
The accompanying Theorem~\ref{thm:integration0}, which is a special case,
applies the procedure to the integration of any ${\rm
  DH}(\alpha_1,\alpha_2,\alpha_3\,|\,c)$.  Both theorems employ not the
Gauss hypergeometric equation (GHE), but rather its generalization, the
Papperitz equation~(PE).

Remarkably, the PE\nobreakdash-based integration of proper gDH systems does
not require the presence of a 3\nobreakdash-parameter group of Lie
symmetries.  Non-DH gDH systems are covariant under affine transformations
$\tau\mapsto\allowbreak A\tau+\nobreak B$, but not under projective
transformations $\tau\mapsto\allowbreak (A\tau+\nobreak B)/\allowbreak
(C\tau+\nobreak D)$.

\smallskip
(2)\quad{}For any choice of distinct singular points
$t_1,t_2,t_3\in\mathbb{P}^1_t$, there is a correspondence between the
solutions $x=x(\tau)$ of any proper gDH system and the solutions
$t=t(\tau)$ of an associated nonlinear third-order ODE: a generalized
Schwarzian equation ${\rm gS}_{t_1,t_2,t_3}(\nu_1,\nobreak \nu_1';
\allowbreak \nu_2,\nobreak \nu_2'; \allowbreak \nu_3,\nobreak \nu_3';\bar
n)$.  (See Theorem~\ref{thm:gSE}; the parameters of the gSE are
birationally related to $(a_1,\nobreak a_2,\nobreak a_3;\allowbreak
b_1,\nobreak b_2,\nobreak b_3;c)$ and constitute an alternative parameter
vector.)  The correspondence is not quite a bijection: it excludes
solutions $x=x(\tau)$ with coincident components, such as the ray
solution~(\ref{eq:raysoln}).  If $(t_1,\nobreak t_2,\nobreak
t_3)=\allowbreak (0,\nobreak 1,\nobreak \infty)$, the correspondence is
$t=-(x_2-\nobreak x_3)/\allowbreak(x_1-\nobreak x_2)$.

The $b_1=b_2=b_3$ case of this proper
$\textrm{gDH}\leftrightarrow\textrm{gSE}$ correspondence was previously
found by Bureau~\cite{Bureau72}.  In the DH case the gSE specializes to
Eq.~(\ref{eq:SE}), a Schwarzian ODE (based on a Schwarzian derivative)
which is familiar from conformal mapping~\cite{Nehari52}.  That proper DH
systems can be integrated with the aid of Schwarzian equations is well
known, but the extension to gDH systems and gSE's, the latter having no
direct conformal mapping interpretation, is new.

\smallskip
(3)\quad{}There is a large collection of nonlinear but \emph{rational}
solution-preserving maps $x=\Phi(\tilde x)$ between gDH systems.  This is
perhaps the most important result of Part~I of this paper.  Each of these
can be viewed as a map of projective planes, i.e., as a map
$\Phi\colon\mathbb{P}^2_{\tilde x}\to\mathbb{P}^2_x$.  If any such map is
applied to a HQDS $\dot {\tilde x}={\tilde Q}(\tilde x)\eqdef\allowbreak
\tilde x*\tilde x$ of the gDH form that satisfies certain conditions on its
parameters, another HQDS $\dot{x}=Q(x)\eqdef x*x$ of the gDH form will
result.  That~is, $Q=\Phi_*\tilde Q$.  This is the content of
Theorem~\ref{thm:gDHsystems}, accompanied by Tables \ref{tab:sigmas}
and~\ref{tab:restrictions}, and Figure~\ref{fig:only}.

The existence of these rational morphisms follows from the
PE\nobreakdash-based integration scheme for proper gDH systems.  At~base,
each comes from a lifting of a PE (on a Riemann sphere $\mathbb{P}^1_t$) to
another (on a Riemann sphere $\mathbb{P}^1_{\tilde t}$).  The lifting is
along a rational map $t=R(\tilde t)$.  Equivalently, each comes from the
lifting of a GHE to a GHE, or a gSE to another gSE\null.  In~fact, each is
associated to a \emph{hypergeometric transformation} (a~transformation of
a~${}_2F_1(t)$ to a~${}_2F_1(\tilde t)$), since such transformations come
from such liftings of GHE's.  The morphisms are denoted by ${\bf2},{\bf3}$,
etc., in reference to the associated ${}_2F_1$ transformations (quadratic,
cubic, etc.), most of which are classical, having been worked~out by
Goursat~\cite{Goursat1881}.

Several of the rational morphisms were found by Harnad and
McKay~\cite{Harnad2000}, who applied them to certain DH systems with no
free parameters, the solutions $x_i=x_i(\tau)$ of which are quasi-modular
forms for subgroups of ${\it PSL}(2,{\mathbb{Z}})$.  It is now clear that
they extend to more general DH systems, and from DH to gDH systems.  They
can even be applied to some non-gDH HQDS's.  For example, the complex
morphism denoted by~${\bf 3}_{\bf c}$ yields a solution-preserving map from
any 3\nobreakdash-dimensional HQDS with the symmetry ${\tilde x}_1
\to\nobreak {\tilde x}_2 \to\allowbreak {\tilde x}_3 \to\nobreak {\tilde
  x}_1$, such as the Leonard--May model of cyclic competition among three
species, to another 3\nobreakdash-dimensional HQDS\null.

\smallskip
(4)\quad{}There is a complete classification of proper non\nobreakdash-DH gDH systems
with the Painlev\'e property~(PP), given in
Theorem~\ref{thm:PPclassification} and
Table~\ref{tab:PPclassification}\null.  It is a corollary of the
classification of non-Schwarzian gSE's with the~PP, begun by
Garnier~\cite{Garnier12} and completed by
Carton-LeBrun~\cite{CartonLeBrun69b}.  The new classification supplements
the known result that ${\rm DH}(\alpha_1,\alpha_2,\alpha_3)$ has the PP if
and only if $(\alpha_1,\nobreak\alpha_2,\nobreak\alpha_3)= \allowbreak
(\frac1{N_1},\nobreak\frac1{N_2},\nobreak\frac1{N_3})$, where each~$N_i$ is
a nonzero integer or~$\infty$.  Interestingly, many of the proper non\nobreakdash-DH
gDH systems with the PP are related by the rational solution-preserving
maps $x=\Phi(\tilde x)$ of~\S\,\ref{sec:transformations}.  The maps are
indicated in the final column of Table~\ref{tab:PPclassification}.

For most proper non\nobreakdash-DH gDH systems with the~PP, generic
solutions $x=x(\tau)$ are doubly periodic, i.e.\ elliptic.  (See
Theorem~\ref{thm:PPintegration} and Table~\ref{tab:PPintegration}.)  Simple
periodicity also occurs, and there are proper gDH systems with the~PP for
which the poles of $x=x(\tau)$ form a polynomially or exponentially
stretched lattice, rather than a regular one.  All this is of~interest
because an irregular pattern of singularities in the complex
$\tau$\nobreakdash-plane would tend to indicate
non-integrability~\cite{Bessis86}.  But no proper non\nobreakdash-DH gDH
system with the~PP has any solution with a natural boundary, unlike the
proper DH systems with the~PP.

As examples, several proper non\nobreakdash-DH gDH systems with the PP,
which are `basic' in the sense that they are not images of other such
systems under rational morphisms, are integrated in~terms of elementary and
elliptic functions.  Papperitz-based integration suffices, and is
facilitated in most cases by the PE having trivial monodromy.  (See
Examples \ref{ex:1}--\ref{ex:4}; Example~\ref{ex:2} includes a complete
integration of the $N=4$ case of Chazy\nobreakdash-XI [without recessive
  terms].)  For each of these gDH systems a pair $I_1,I_2$ of first
integrals is derived, which are typically rational in~$x_1,x_2,x_3$.  Some
final remarks are made on gDH systems that may lack the~PP but are
nonetheless integrable.  This includes certain improper gDH systems.

\smallskip
Part~II of this paper will explore the scalar ODE's satisfied by (linear
combinations~of) components $x_i=x_i(\tau)$ of the solutions of gDH
systems.  It will be shown that the abovementioned DH representations of
the solutions of the Chazy\nobreakdash-III equation~\cite{Chakravarty2010}
are related by the rational morphisms of~\S\,\ref{sec:transformations}, and
that they can be generalized to DH representations of the solutions of
Chazy-XII, and to gDH representations of the solutions of
Chazy\nobreakdash-I, II, VII, and~XI\null.  Solutions of DH systems computed
with the aid of the theory of modular forms will also be discussed.

\section{Hypergeometric Integration of gDH Systems}
\label{sec:integration}

In this section we examine how the generalized Darboux--Halphen system
of~(\ref{eq:gDH}), ${\rm gDH}(a_1,a_2,a_3;b_1,b_2,b_3;c)$ can be
parametrically integrated with the aid of the Gauss hypergeometric
function~${}_2F_1(t)$.  We do this `backwards': we start with the Papperitz
equation, the most general second-order linear Fuchsian ODE with three
singular points on the Riemann sphere
$\mathbb{P}^1=\mathbb{C}\cup\{\infty\}$.  It generalizes the Gauss
hypergeometric equation~(GHE)\null.  In Theorem~\ref{thm:integration} we
derive from any nonzero solution of the Papperitz equation a HQDS satisfied
by a triple $(x_1,x_2,x_3)\eqdef x$ of logarithmic derivatives, viewed as a
function of~$\tau$; both $x$ and~$\tau$ are constructed as local
functions of $t\in\mathbb{P}^1$.  The HQDS turns~out to be a (proper) gDH
system.  This approach is essentially that of Ohyama~\cite{Ohyama96}, but
basing our treatment on the Papperitz equation rather than on the GHE
allows us to treat non\nobreakdash-DH gDH systems, quite elegantly.  We show in
Theorem~\ref{thm:integration0} how, \emph{a~fortiori}, the Papperitz
equation can be used to integrate any proper DH system, and also explain
why any DH system has an extra Lie point symmetry: it is covariant under
maps $\tau\mapsto(A\tau+\nobreak B)/\allowbreak(C\tau+\nobreak D)$ that are
not affine.

Theorem~\ref{thm:gDHbasecoin} states that generic (`noncoincident')
solutions $x=x(\tau)$ of any proper gDH system are bijective with the
solutions $t=t(\tau)$ of an associated nonlinear third-order differential
equation, the so-called gSE\null.  (If the singular values
on~$\mathbb{P}^1_t$ are chosen to be $0,1,\infty$, the bijection will be
$t=-(x_2-\nobreak x_3)/\allowbreak(x_1-\nobreak x_2)$.)  For any proper DH
system the gSE reduces to the Schwarzian equation~(\ref{eq:SE}), which is
well known in the theory of conformal mapping~\cite{Nehari52}.  But the
extension to non\nobreakdash-DH gDH systems, and in~fact the appearance of
the gSE in this context, are new.

\subsection{Papperitz equations}
\label{subsec:21}

A Papperitz equation (PE) on~$\mathbb{P}^1_t$ is of the form
$\mathcal{L}f=0$ with $\mathcal{L}\defeq D_t^2 + P_1(t)D_t + P_2(t)$, and
is determined by its Riemann P\nobreakdash-symbol
\begin{subequations}
\begin{gather}
\label{eq:PPsymbol}
\left\{  
\begin{array}{ccc}
  t_1 & t_2 & t_3 \\
  \hline
  \mu_1 & \mu_2 & \mu_3 \\
  \mu'_1 & \mu'_2 & \mu'_3
\end{array}
\right\}, \\ \intertext{which tabulates the unordered pair of
  characteristic exponents at each singular point.  Here, the singular
  points are the distinct points $t_1,t_2,t_3\in{\mathbb{P}}^1_t$.  The
  respective exponents $\mu_i,\mu_i'\in\mathbb{C}$ must satisfy Fuchs's
  relation $\sum_{i=1}^3(\mu_i+\nobreak\mu_i')=1$ for the equation to be
  Fuchsian, i.e.\ for each singular point to be regular, but otherwise they
  are unconstrained.  If no~$t_i$ is~$\infty$, the PE is} \left\{ D_t^2 +
\left( \sum_{i=1}^3 \frac{1-\mu_i-\mu_i'}{t-t_i} \right)D_t +\left[
  \sum_{i=1}^3\frac{\mu_i\mu_i'\,(t_i-t_j)(t_i-t_k)}{(t-t_i)^2(t-t_j)(t-t_k)}
  \right] \right\}f=0,
\label{eq:PE}
\end{gather}
\end{subequations}
the subscripts $j,k$ in the second summand being the two elements of
$\{1,2,3\}$ other than~$i$, in either order.  If one of $t_1,t_2,t_3$
is~$\infty$ then an obvious limit of~(\ref{eq:PE}) is taken.  The exponent
\emph{differences}
$(\alpha_1,\alpha_2,\alpha_3)=\allowbreak(\mu'_1-\nobreak\mu_1,\nobreak\mu'_2-\nobreak\mu_2,\nobreak\mu'_3-\nobreak\mu_3)$,
defined only up~to sign, will play an important role below.

Three special cases (normal forms) of the PE are important in applications
and calculations.  In each, $(t_1,t_2,t_3)=(0,1,\infty)$.  In the first,
with $\mu_1=\mu_2=0$, the P\nobreakdash-symbol~(\ref{eq:PPsymbol})
reduces to
\begin{subequations}
\begin{gather}
\label{eq:GHPsymbol}
\left\{  
\begin{array}{ccc}
  0 & 1 & \infty \\
  \hline
  0 & 0 & (1-\alpha_1-\alpha_2-\alpha_3)/2 \\
  \alpha_1 & \alpha_2 & (1-\alpha_1-\alpha_2+\alpha_3)/2
\end{array}
\right\}, \\
\intertext{and the equation~(\ref{eq:PE}) to the GHE}
\left[
D_t^2 + \left(\frac{1-\alpha_1}{t}+\frac{1-\alpha_2}{t-1}\right)D_t +\frac{(1-\alpha_1-\alpha_2)^2-\alpha_3^2}{4\,t(t-1)}
\right]f=0.
\label{eq:GHE}
\end{gather}
\end{subequations}
A local solution~$f$ of the GHE at $t=0$ associated to the exponent~$0$
is the Gauss hypergeometric function
\begin{equation}
\label{eq:hypseries}
  {}_2F_1(a,b;\,c;\,t) = \sum_{n=0}^\infty \frac{(a)_n(b)_n}{(c)_n}\,\frac{t^n}{n!},
\end{equation}
in which $(a)_n\defeq (a)(a+1)\cdots(a+n-1)$ and the hypergeometric
parameters $a,b;c$ come from the exponent differences
$(\alpha_1,\alpha_2,\alpha_3)$ by
\begin{equation}
\label{eq:abc}
(a,b;\,c)=\left(\tfrac12(1-\alpha_1-\alpha_2-\alpha_3),\tfrac12(1-\alpha_1-\alpha_2+\alpha_3);\,1-\alpha_1\right).
\end{equation}
Generically ${}_2F_1(a,b;c;t)$ is well-defined: it is defined if $c$~is not
a nonpositive integer, i.e., if the exponent $\alpha_1$ of the GHE at~$t=0$
is not a positive integer.  Also generically, a second solution~$f$ of the
GHE~(\ref{eq:GHE}) at~$t=0$ is given by
\begin{equation}
t^{1-c}\, {}_2F_1(a-c+1;\,b-c+1;\,2-c;\,t).
\end{equation}
It comes from the exponent $\alpha_1=1-c$ at~$t=0$.  The local function
${}_2F_1$ can also be used to construct a pair of local solutions
to~(\ref{eq:GHE}) at $t=1$ and at~$t=\infty$.

In the second special case of the PE, with $\mu_1+\mu_1'=\mu_2+\mu_2'=1$,
the P\nobreakdash-symbol~(\ref{eq:PPsymbol}) reduces to
\begin{subequations}
\begin{gather}
\label{eq:SAHPsymbol}
\left\{  
\begin{array}{ccc}
  0 & 1 & \infty \\
  \hline
  (1-\alpha_1)/2 & (1-\alpha_2)/2 & (-1-\alpha_3)/2 \\
  (1+\alpha_1)/2 & (1+\alpha_2)/2 & (-1+\alpha_3)/2
\end{array}
\right\}, \\
\intertext{and the equation~(\ref{eq:PE}) to}
\left[
D_t^2 + \left(
\frac{1-\alpha_1^2}{4\,t^2} + \frac{1-\alpha_2^2}{4\,(t-1)^2}
-\frac{1-\alpha_1^2-\alpha_2^2+\alpha_3^2}{4\,t(t-1)}
\right)
\right]f=0.
\label{eq:SAHE}
\end{gather}
\end{subequations}
This alternative normal form, self-adjoint in~that it has a null
coefficient function $P_1(t)$, is used, e.g., in conformal mapping.  In the
third special case of the PE, with $\mu_1\mu_1'=\mu_2\mu_2'$, the
P\nobreakdash-symbol~(\ref{eq:PPsymbol}) reduces to
\begin{subequations}
\begin{gather}
\label{eq:thirdHPsymbol}
\left\{  
\begin{array}{ccc}
  0 & 1 & \infty \\
  \hline
  \left[(1-\alpha_1-\alpha_3)^2-\alpha_2^2\right]/4(1-\alpha_3) & \left[(1-\alpha_2-\alpha_3)^2-\alpha_1^2\right]/4(1-\alpha_3) & 0 \\
  \left[(1+\alpha_1-\alpha_3)^2-\alpha_2^2\right]/4(1-\alpha_3) & \left[(1+\alpha_2-\alpha_3)^2-\alpha_1^2\right]/4(1-\alpha_3)  & \alpha_3
\end{array}
\right\}, \\
\intertext{and the equation~(\ref{eq:PE}) to}
\left[
D_t^2 + 
\left(
\frac{1-\alpha_1^2+\alpha_2^2 - \alpha_3^2}{2(1-\alpha_3)\,t}
+\frac{1+\alpha_1^2-\alpha_2^2 - \alpha_3^2}{2(1-\alpha_3)\,(t-1)}
\right)D_t
+\frac{\lambda}{16(1-\alpha_3)^2\,t^2(1-t)^2}
\right]f=0,
\label{eq:thirdHE}
\end{gather}
\end{subequations}
with $\lambda=\left[(1-\alpha_3)^2-(\alpha_1+\alpha_2)^2\right]
\left[(1-\alpha_3)^2-(\alpha_1-\alpha_2)^2\right]$.  This third normal form
is distinguished by its coefficient function~$P_2(t)$ having only double
poles.

By manipulation of P\nobreakdash-symbols, the solutions of the general
PE~(\ref{eq:PE}) can be expressed in~terms of those of the
GHE~(\ref{eq:GHE}), such as~${}_2F_1$.  Let $f$~be a solution
of~(\ref{eq:PE}) and let $f_i=\Delta_i f$, $i=1,2,3$, where 
\begin{align}
\label{eq:fidef}
\Delta_i(t) & \defeq  
\left[-\,\frac{(t_i-t)(t_j-t_k)}{(t-t_j)(t_k-t_i)}\right]^{\mu_j}
\left[-\,\frac{(t-t_i)(t_j-t_k)}{(t_k-t)(t_i-t_j)}\right]^{\mu_k} \\
&\hphantom{:}\propto (t-t_i)^{\mu_j+\mu_k}(t-t_j)^{-\mu_j}(t-t_k)^{-\mu_k},\nonumber
\end{align}
in which $(i,j,k)$ is the cyclic permutation of $(1,2,3)$ determined
by~$i$.  (Since the right side of~(\ref{eq:fidef}) is expressed in~terms of
cross-ratios, $f_i$~will be well-defined even if one of $t_1,t_2,t_3$ is
taken to~$\infty$.)  For each~$i$, $f_i$~will be a solution of the PE that
has P\nobreakdash-symbol
\begin{equation}
\label{eq:enroute}
\left\{  
\begin{array}{ccc}
  t_j & t_k & t_i \\
  \hline
  0 & 0 & \mu_i+\mu_j+\mu_k \\
  \mu'_j-\mu_j & \mu'_k-\mu_k & \mu'_i+\mu_j+\mu_k
\end{array}
\right\}.
\end{equation}
To obtain a solution of a GHE from~$f_i$, one must apply a M\"obius
transformation: $t\mapsto S_{ijk}(t)\defeq
(t-t_j)(t_k-t_i)/(t-t_i)(t_k-t_j)$, which takes $t_j,t_k,t_i$
to~$0,1,\infty$.  If one defines~$F_{ijk}$ by
$F_{ijk}(t)=f_i(S_{ijk}^{-1}(t))$, then $F_{ijk}(t)$ will be the solution
of the PE with P\nobreakdash-symbol
\begin{equation}
\left\{  
\begin{array}{ccc}
  0 & 1 & \infty \\
  \hline
  0 & 0 & \mu_i+\mu_j+\mu_k\\
  \mu'_j-\mu_j & \mu'_k-\mu_k & \mu'_i+\mu_j+\mu_k
\end{array}
\right\},
\end{equation}
i.e., of a certain GHE (cf.~(\ref{eq:GHPsymbol})).  Reversing this, one
sees that if $F_{ijk}$~is any solution of this GHE, such as the
appropriate~${}_2F_1$, the function $f_i(t) = F_{ijk}(S_{ijk}(t))$ will be
a solution of the PE with P\nobreakdash-symbol~(\ref{eq:enroute}), and
furthermore that
\begin{equation}
  f(t) = \Delta_i^{-1}(t)
\,F_{ijk}(S_{ijk}(t)) 
\end{equation}
will be a solution of the original PE~(\ref{eq:PE}).  By permuting $i,j,k$,
one can generate $3!=6$ local solutions of~(\ref{eq:PE}) that are expressed
in~terms of~${}_2F_1$.

In a similar way, one can express solutions of any PE in~terms of solutions
of either of the remaining two normal forms of the PE,
Eqs.\ (\ref{eq:SAHE}) and~(\ref{eq:thirdHE}).  Each normal form is
associated to a distinct triple $\Delta_i$, $i=1,2,3$, analogous
to~(\ref{eq:fidef}).

\subsection{From PE to gDH}
\label{subsec:fromPEtogDH}

The following theorem facilitates the hypergeometric integration of gDH and
DH systems.  It also introduces an alternative parametrization of gDH
systems, based not on $(a_1,\nobreak a_2,\nobreak a_3;\allowbreak
b_1,\nobreak b_2,\nobreak b_3;c)$ but on a birationally equivalent
parameter vector $(\nu_1,\nobreak \nu_1'; \allowbreak \nu_2,\nobreak \nu_2'; \allowbreak \nu_3,\nobreak \nu_3';\allowbreak \bar n; c )$.

\begin{theorem}
  Let $f=f(t)$ be a nonzero analytic local solution of a Papperitz equation
  of the form\/ {\rm(\ref{eq:PE})} with exponents\/ $(\mu_1,\nobreak
  \mu_1'; \allowbreak \mu_2,\nobreak \mu_2'; \allowbreak \mu_3,\nobreak
  \mu_3')$ satisfying Fuchs's relation\/
  $\sum_{i=1}^3(\mu_i+\nobreak\mu_i')=1$, and with\/ {\rm(}distinct\/{\rm)}
  singular points\/ $t_1,t_2,t_3\in\mathbb{P}^1$.  Define functions\/
  $\tau$ and\/ $x_1,x_2,x_3$ of the local parameter\/~$t$ as follows.
  \begin{enumerate}
  \item For some\/ $\bar n\in\mathbb{C}\setminus\{0\}$ and some
    `offset vector'\/
    $\kappa=(\kappa_1,\kappa_2,\kappa_3)\in{\mathbb{C}}^3$ satisfying\/
    $\kappa_1+\kappa_2+\kappa_3=1$, choose\/ $\tau=\tau(t)$ to satisfy
    \begin{displaymath}
      \frac{{\rm d}\tau}{{\rm d}t} =  K^{-2}(t) f^{-1/\bar n}(t), 
    \end{displaymath}
    where
    \begin{sizeequation}{\small}
      \label{eq:Kdef}
      K^2(t)\defeq 
      \left[ \frac{(t-t_1)^2(t_2-t_3)}{(t_1-t_2)(t_3-t_1)} \right]^{\kappa_1}
      \left[ \frac{(t-t_2)^2(t_3-t_1)}{(t_2-t_3)(t_1-t_2)} \right]^{\kappa_2}
      \left[ \frac{(t-t_3)^2(t_1-t_2)}{(t_3-t_1)(t_2-t_3)} \right]^{\kappa_3}.
    \end{sizeequation}
  \item For some\/ $c\in\mathbb{C}\setminus\{0\}$, define each\/ $x_i$ as a
    logarithmic derivative:
    \begin{align*}
      x_i(t) &= c^{-1} {\bar{n}^{-1}}\,\frac{\rm d}{{\rm d}\tau}\log f_i \\
      &= c^{-1} \,\frac{\rm d}{{\rm d}\tau}\log\left[
        K^{-2}\Delta_i^{1/\bar n} ({\rm d}\tau/{\rm d}t)^{-1}
        \right],
    \end{align*}
    where\/ $f_i=\Delta_if$ and
    \begin{equation}
      \label{eq:Deltaidef}
      \Delta_i(t) \defeq   
      \left[-\,\frac{(t_i-t)(t_j-t_k)}{(t-t_j)(t_k-t_i)}\right]^{\mu_j}
      \left[-\,\frac{(t-t_i)(t_j-t_k)}{(t_k-t)(t_i-t_j)}\right]^{\mu_k},
    \end{equation}
  \end{enumerate}
  in which\/ $(i,j,k)$ is the cyclic permutation of\/ $(1,2,3)$ determined
  by\/ $i$.  Then\/ $x=(x_1,x_2,x_3)$, viewed as a function of\/~$\tau$,
  will satisfy the generalized Darboux--Halphen system\/
  {\rm(\ref{eq:gDH})}, ${\rm gDH}(a_1,\nobreak a_2,\nobreak a_3;\allowbreak
  b_1,\nobreak b_2,\nobreak b_3;c)$, with parameters\/ $(a_1,\nobreak
  a_2,\nobreak a_3;\allowbreak b_1,\nobreak b_2,\nobreak b_3)$ computed
  from\/ $(\mu_1,\nobreak \mu_1'; \allowbreak \mu_2,\nobreak \mu_2';
  \allowbreak \mu_3,\nobreak \mu_3';\bar n)$ and\/~$c$, i.e., from\/
  $(\nu_1,\nobreak \nu_1'; \allowbreak \nu_2,\nobreak \nu_2'; \allowbreak
  \nu_3,\nobreak \nu_3';\bar n)$ and\/~$c$, by
  \begin{equation}
    \label{eq:zetasntoab}
    \left\{
  \begin{aligned}
    a_i &=
    \frac{\nu_i}{\nu_1+\nu_2+\nu_3+{\bar{n}}}\,
    c,\\
    b_i &=
    \frac{-{\bar{n}}\nu'_i - (\bar n-1)(\nu_j+\nu_k)  - {\bar{n}}(\bar n - 1)}
    {\nu_1+\nu_2+\nu_3+{\bar{n}}}\,c,
  \end{aligned}
  \right.
  \end{equation}
  where
  \begin{equation}
    \nu_i\defeq \mu_i+(2\kappa_i-1){\bar{n}}, \qquad
    \nu_i'\defeq \mu_i'+2(\kappa_i-1){\bar{n}}
    \label{eq:zetas}
  \end{equation}
  are \emph{offset} exponents, satisfying\/ $\sum_{i=1}^3 (\nu_i+\nobreak
  \nu_i')=\allowbreak 1-\nobreak 2\bar n$.  It is assumed in this that the
  denominator\/ $\rho^{-1}\defeq \nu_1+\nu_2+\nu_3+\bar n$
  in\/~{\rm(\ref{eq:zetasntoab}\rm)} is nonzero.  Equivalently,
  $(a_1,\nobreak a_2,\nobreak a_3;\allowbreak b_1,\nobreak b_2,\nobreak
  b_3)$ are determined implicitly by the inverse formulas
  \begin{equation}
    \left\{
    \begin{aligned}
      \bar n &= \frac{2c-b_1-b_2-b_3}c,\\
      \nu_i &= \bar n\left(\frac{a_i}{c-a_1-a_2-a_3}\right),\\
      \alpha_i &\defeq\nu_i'-\nu_i = \frac{-c-a_i+b_j+b_k}{c-a_1-a_2-a_3},
    \end{aligned}
    \right.
    \label{eq:abtozetasn}
  \end{equation}
  which express\/ $(\nu_1,\nobreak \nu_1'; \allowbreak \nu_2,\nobreak \nu_2'; \allowbreak \nu_3,\nobreak \nu_3';\bar n)$ in terms of\/
  $(a_1,\nobreak a_2,\nobreak a_3;\allowbreak b_1,\nobreak b_2,\nobreak
  b_3)$ and\/~$c$.
  \label{thm:integration}
\end{theorem}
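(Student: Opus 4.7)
The strategy is a direct, if lengthy, calculation. The crucial preliminary observation is that each $f_i \defeq \Delta_i f$ is itself a solution of a Papperitz equation, namely the one with rotated P\nobreakdash-symbol~(\ref{eq:enroute}), as is already established in~\S\ref{subsec:21}. Writing its ODE as $f_i'' + P_1^{(i)} f_i' + P_2^{(i)} f_i = 0$ (primes denoting $d/dt$) and setting $u_i \defeq f_i^{1/\bar n}$, this linear equation for~$f_i$ translates into a Riccati equation $v_i' + \bar n\, v_i^2 + P_1^{(i)} v_i + \bar n^{-1} P_2^{(i)} = 0$ for $v_i \defeq u_i'/u_i$; and from the second formula of part~(2) of the theorem one has $c\,x_i = \dot u_i / u_i = t'\, v_i$, where $t' \defeq dt/d\tau = K^2 f^{1/\bar n}$.

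The plan is first to express $\dot x_i$ as a quadratic form in $(x_1,x_2,x_3)$ with $t$\nobreakdash-dependent coefficients, and then to eliminate that $t$\nobreakdash-dependence, leaving an autonomous quadratic system of gDH form. For the first task, I would differentiate $c\,x_i = t'\,v_i$ with respect to~$\tau$, using $d/d\tau = t'\, d/dt$ and the Riccati relation to eliminate $v_i'$. This produces a relation of the form
\[
\dot x_i = -c\bar n\, x_i^2 + \bigl(\text{terms linear in $x_i$ with $t$-dependent coefficients}\bigr) + \bigl(\text{$t$-dependent inhomogeneous term}\bigr),
\]
whose rational-in-$t$ pieces originate from $P_1^{(i)}$, $P_2^{(i)}$, and the logarithmic derivative $(K^2)'/K^2 = \sum_{j=1}^{3} 2\kappa_j/(t-t_j)$. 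For the second task, I would exploit the identities
\[
c\bar n\,(x_i - x_j) = t'\,(\Lambda_i - \Lambda_j),
\qquad
\Lambda_i - \Lambda_j = (\mu_1 + \mu_2 + \mu_3)\,\frac{t_i - t_j}{(t-t_i)(t-t_j)},
\]
where $\Lambda_i \defeq \Delta_i'/\Delta_i$. These three linear relations express each of $t'/(t-t_\ell)$ as an explicit linear combination of the differences $x_i - x_j$; substituting those expressions converts every rational function of~$t$ appearing on the right-hand side into a polynomial in $x_1, x_2, x_3$.

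Third, I would compare the resulting quadratic form with the gDH template~(\ref{eq:gDH}) term by term. The offset exponents $\nu_i, \nu_i'$ of~(\ref{eq:zetas}) arise naturally at this step: the $\mu_i, \mu_i'$ enter through the $\Lambda_i$ and the $P_j^{(i)}$, while the offsets $\kappa_i$ (together with~$\bar n$) enter through $K^2$ and its logarithmic derivative, and the two contributions combine precisely into the linear combinations~(\ref{eq:zetas}). Reading off coefficients yields~(\ref{eq:zetasntoab}); the inverse formulas~(\ref{eq:abtozetasn}) then follow by solving the linear system, with $\bar n$ recoverable from $\sum_i b_i$ and each $a_i/c$ from the coefficient of $x_i^2$ in the $i$\nobreakdash-th equation. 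The principal obstacle is the cancellation of the explicit $t$-dependence in the second step above; this is the consistency check built into the construction, and Fuchs's relation $\sum_i (\mu_i + \mu_i') = 1$ plays a role in ensuring that the cancellation is complete.
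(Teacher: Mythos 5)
Your plan is, at bottom, the same direct verification that the paper performs: reduce the claim to identities among rational functions of~$t$, using the fact that each $f_i=\Delta_if$ solves the rotated Papperitz equation with P\nobreakdash-symbol~(\ref{eq:enroute}); the Riccati/logarithmic-derivative bookkeeping is a legitimate reorganization of that computation. But the pivotal step --- the elimination of the residual $t$\nobreakdash-dependence --- is mis-stated in a way that would halt a literal implementation. The three relations $c\bar n\,(x_i-x_j)=t'(\Lambda_i-\Lambda_j)$ are cyclically dependent (they sum to zero), so they furnish only two independent linear conditions on the three quantities $w_\ell\defeq t'/(t-t_\ell)$: they determine the differences $w_i-w_j$, never the individual~$w_\ell$. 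Hence it is not true that ``every rational function of~$t$'' on the right-hand side converts to a polynomial in $x_1,x_2,x_3$; the single term $t'/(t-t_1)$, for instance, does not.

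What rescues the strategy, and what your write-up must make explicit, is that the combinations which actually occur all lie in the span of the differences. Concretely: (i)~in the coefficient of~$x_i$, the simple-pole sums $(K^2)_t/K^2=\sum_\ell 2\kappa_\ell/(t-t_\ell)$ and $P_1^{(i)}$ each have residue sum equal to~$2$ (the first because $\sum_\ell\kappa_\ell=1$, the second because Fuchs's relation gives $3-1=2$ for the rotated equation), so their difference has residue sum zero and is a combination of the $w_i-w_j$ (the extra piece $\bar n^{-1}f_t/f=v_i-\bar n^{-1}\Lambda_i$ hidden inside $(t')_t$ contributes an $x_i^2$ term plus another difference combination); and (ii)~the inhomogeneous term $-c^{-1}\bar n^{-1}(t')^2P_2^{(i)}$ is a constant multiple of $(w_i-w_j)(w_i-w_k)$, because the P\nobreakdash-symbol~(\ref{eq:enroute}) has a zero exponent at each of $t_j,t_k$, so that $P_2^{(i)}$ has a double pole only at~$t_i$ and equals $(\mu_i+\mu_j+\mu_k)(\mu_i'+\mu_j+\mu_k)(t_i-t_j)(t_i-t_k)\big/\bigl[(t-t_i)^2(t-t_j)(t-t_k)\bigr]$. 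These two observations are where Fuchs's relation and $\kappa_1+\kappa_2+\kappa_3=1$ genuinely enter, and they are the real content of the proof. Once they are supplied, your computation and the paper's --- which sidesteps the inversion issue entirely by expanding $\dot x_i$ and all the quadratic monomials in the common $\mathbb{C}(t)$\nobreakdash-basis $(K^2f^{1/\bar n})^{2-m}(f_\tau/f)^m$ and checking linear dependence there --- reduce to the same finite verification of the coefficient formulas~(\ref{eq:zetasntoab}).
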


\begin{remark*}
  The formulas (\ref{eq:Kdef}),(\ref{eq:Deltaidef}) for $K^2$
  and~$\Delta_i$ (the latter based on cross-ratios) are perhaps overly
  elaborate: they were crafted to cover the case when one of $t_1,t_2,t_3$
  is taken to~$\infty$.  If each~$t_i$ is finite then
  \begin{equation}
    \label{eq:2ndremark}
    \begin{aligned}
      K(t) &\propto (t-t_1)^{\kappa_1}(t-t_2)^{\kappa_2} (t-t_3)^{\kappa_3}, \\
      \Delta_i(t) &\propto (t-t_i)^{\mu_j+\mu_k}(t-t_j)^{-\mu_j}(t-t_k)^{-\mu_k},
    \end{aligned}
  \end{equation}
  the validity of the theorem being unaffected by the choice of
  proportionality constants.  For instance, if $(t_1,t_2,t_3)=(0,1,\infty)$
  then
  \begin{equation}
    \label{eq:Deltas}
    (\Delta_1,\Delta_2,\Delta_3)=\left(
    (-t)^{\mu_2+\mu_3}(t-1)^{-\mu_2},\,
    (-t)^{-\mu_1}(t-1)^{\mu_1+\mu_3},\,
    (-t)^{-\mu_1}(t-1)^{-\mu_2}
    \right),
  \end{equation}
  so that in this case, if $\mu_1+\mu_2+\mu_3=\nu_1+\nu_2+\nu_3+\bar
  n\eqdef \rho^{-1}$ is nonzero, one can simply write
  \begin{align}
    \left({\Delta_1}^\rho,{\Delta_2}^\rho,{\Delta_3}^\rho\right)&={\Delta_1}^\rho\times
    \bigl(1,\,(1-t)/t,\,-1/t\bigr), \\
    \left({f_1}^\rho,{f_2}^\rho,{f_3}^\rho\right)&={f_1}^\rho\times
    \bigl(1,\,(1-t)/t,\,-1/t\bigr).
  \end{align}
  Also in this case, if
  $(\kappa_1,\kappa_2,\kappa_3)=(0,0,1)$ then $K^2$~will simply equal
  unity.
\end{remark*}
\begin{remark*}
  The expressions for $\Delta_i$, $i=1,2,3$, originate in the first normal
  form of the PE (the GHE); see~(\ref{eq:fidef}).  As $f_i=\Delta_i f$,
  each of $f_1,f_2,f_3$ satisfies a GHE\null.  Thus the gDH
  system~(\ref{eq:gDH}) for the vector of logarithmic derivatives
  $(x_1,x_2,x_3)\eqdef x$ is associated specifically to the GHE\null.
  Alternative gDH systems (i.e., non-gDH HQDS's) associated to the other
  two normal forms of the PE can readily be derived; but up~to linear
  equivalence, i.e.\ up~to the action of ${\it GL}(3,\mathbb{C})$ on
  $x\in{\mathbb{C}}^3$, they will be no~different.
\end{remark*}

\begin{proof}
  For convenience, define the logarithmic derivatives
  \begin{equation}
    \begin{aligned}
      \bar\kappa(t)&= K_t/K = \kappa_1(t-t_1)^{-1} + \kappa_2(t-t_2)^{-1} + \kappa_3(t-t_3)^{-1},\\
      \delta_i(t) &= (\Delta_i)_t/\Delta_i=(\mu_j+\mu_k)(t-t_i)^{-1}- \mu_j(t-t_j)^{-1}- \mu_k(t-t_k)^{-1}.
    \end{aligned}
  \end{equation}
Also introduce an \emph{ad hoc} notation for certain homogeneous quadratic
polynomials, 
\begin{equation}
  \mathopen{\pmb{\lbrack}}A_2,A_1,A_0\mathclose{\pmb{\rbrack}} \defeq  \sum_{m=0}^2 A_m 
\left(K^2f^{1/\bar n}\right)^{2-m}\left(\frac{f_\tau}{f}\right)^m.
\end{equation}
By definition,
\begin{equation}
  x_i=c^{-1}{\bar{n}^{-1}}\left(\frac{f_\tau}f + \delta_i\,K^2f^{1/\bar n}\right),
\end{equation}
so that
\begin{subequations}
\label{eq:adhoc2}
  \begin{align}
    (x_i-x_j)(x_k-x_i) &=c^{-2}{\bar{n}^{-2}}\,\mathopen{\pmb{\lbrack}}0,\,0,\,(\delta_i-\delta_j)(\delta_k-\delta_i)\mathclose{\pmb{\rbrack}},\\
    x_px_q &= c^{-2}{\bar{n}^{-2}}\,\mathopen{\pmb{\lbrack}}1,\,\delta_p+\delta_q,\,\delta_p\delta_q\mathclose{\pmb{\rbrack}}.
  \end{align}
\end{subequations}
The PE~(\ref{eq:PE}) is of the form $\left[D_t^2+P_1(t)D_t+P_2(t)\right]f=0$,
and changing the variable of differentiation from $t$ to~$\tau$ converts it
to 
\begin{equation}
\label{eq:adhoc1}
  \frac{f_{\tau\tau}}f +\, \mathopen{\pmb{\lbrack}}-\bar n^{-1}\!,\,P_1-2\,\bar\kappa,\,P_2\mathclose{\pmb{\rbrack}}=0.
\end{equation}
A similar manipulation of the formula for $\dot x_i = {\rm d}x_i/{\rm
  d}\tau$ yields
\begin{equation}
  \dot x_i = c^{-1}{\bar{n}^{-1}}\left\{
\frac{f_{\tau\tau}}f + \mathopen{\pmb{\big\lbrack}}-1,\,{\bar{n}}^{-1}\delta_i,\,(\delta_i)_t + 2\,\bar\kappa\,\delta_i\mathclose{\pmb{\big\rbrack}}
\right\},
\end{equation}
which combined with~(\ref{eq:adhoc1}) gives
\begin{equation}
\label{eq:adhoc3}
  \dot x_i = c^{-1}{\bar{n}^{-1}}\,
\mathopen{\pmb{\big\lbrack}}-1+\bar n^{-1}\!,\,- P_1 + \bar n^{-1}\delta_i + 2\,\bar\kappa,\,-P_2 + (\delta_i)_t + 2\,\bar\kappa\,\delta_i\mathclose{\pmb{\big\rbrack}}.
\end{equation}
The representations (\ref{eq:adhoc2}) and~(\ref{eq:adhoc3}) reduce the
question of the linear dependence of the local functions
$(x_i-x_j)(x_k-x_i)$, $\{x_px_q\}_{p,q=1}^3$, and~$\dot x_i$ to linear
algebra over~$\mathbb{C}(t)$, as each function is effectively a
3\nobreakdash-vector
$\mathopen{\pmb{\lbrack}}A_2(t),A_1(t),A_0(t)\mathclose{\pmb{\rbrack}}$ of
rational functions of~$t$.  Each of the three equations in the gDH
system~(\ref{eq:gDH}) can thus be viewed as a relation of dependence among
3\nobreakdash-vectors of rational functions of~$t$, with $a_1,\nobreak
a_2,\nobreak a_3;\allowbreak b_1,\nobreak b_2,\nobreak b_3;\nobreak c$
appearing as coefficients.  If $a_1,\nobreak a_2,\nobreak a_3;\allowbreak
b_1,\nobreak b_2,\nobreak b_3$ are computed from the formulas in the
theorem, the validity of each relation can be verified by hand, though the
use of a computer algebra system is recommended.
\end{proof}

Theorem~\ref{thm:integration} reveals that any `proper' gDH system (to be
defined shortly) can be integrated parametrically in~terms of solutions of
the GHE; and generically, in~terms of the canonical hypergeometric
function~${}_2F_1$.  The integration proceeds as follows.  Given
$(a_1,\nobreak a_2,\nobreak a_3;\allowbreak b_1,\nobreak b_2,\nobreak
b_3;\nobreak c)$, one first uses the formulas~(\ref{eq:zetasntoab}) to
compute $(\nu_1,\nobreak \nu_1'; \allowbreak \nu_2,\nobreak \nu_2';
\allowbreak \nu_3,\nobreak \nu_3';\bar n)$, and thus $(\mu_1,\nobreak
\mu_1'; \allowbreak \mu_2,\nobreak \mu_2'; \allowbreak \mu_3,\nobreak
\mu_3';\bar n)$, the exponent offset vector~$\kappa$ (satisfying
$\kappa_1+\nobreak\kappa_2+\allowbreak\kappa_3=\nobreak1$) being chosen
arbitrarily.  Let $f$ be a nonzero local solution of a PE~(\ref{eq:PE})
with exponents $(\mu_1,\nobreak \mu_1'; \allowbreak \mu_2,\nobreak \mu_2';
\allowbreak \mu_3,\nobreak \mu_3')$, the choice of singular points
$(t_1,t_2,t_3)$ of the PE also being arbitrary.  The PE having a
2\nobreakdash-dimensional space of solutions, there are two free parameters
in the choice of~$f$, as one can write $f=K_1f^{(1)}+\nobreak K_2f^{(2)}$;
and $f$~can generically be expressed in~terms of~${}_2F_1$.  The gDH
variables~$\tau$ (independent) and $x_1,x_2,x_3$ (dependent) will be
parametrized by~$t$ as
\begin{equation}
\label{eq:integration}
\left\{
\begin{aligned}
\tau(t) 
&= \tau_0 + \int^t \left(\frac{{\rm d}\tau}{{\rm d}t}\right)\,dt\\
&=\tau_0 + C \int^t
      (t-t_1)^{-2\kappa_1}(t-t_2)^{-2\kappa_2}(t-t_3)^{-2\kappa_3}\,
      f^{-1/\bar n}(t)\,dt,   \\
x_i(t)&= c^{-1}{\bar{n}}^{-1}
    \left(\frac{{\rm d}\tau}{{\rm d}t}
    \right)^{-1}
    \left(\frac{(f_i)_t}{f_i}
    \right),
\end{aligned}
\right.
\end{equation}
where $f_i\defeq \Delta_if$, $i=1,2,3$, are as in the theorem (they depend
on $\mu_1,\mu_2,\mu_3$), and $C$~depends on~$t_1,t_2,t_3$.  By transposing
the ordered pairs $(\nu_i,\nu_i')$ and hence the ordered pairs
$(\mu_i,\mu_i')$, one can produce $2^3=8$ formally distinct integrations.

In~(\ref{eq:integration}) the formula for $\tau=\tau(t)$ contains an
additional parameter, $\tau_0\in\mathbb{C}$.  Thus there are three free
parameters in the local solution $t\mapsto(\tau;x_1,x_2,x_3)$.  Hence any
integration of this type will generate a 3\nobreakdash-parameter family of
solutions of the gDH system.  PE\nobreakdash-based integration is therefore
\emph{complete}: it constructs the general solution.  (Special solutions
with less than the full complement of three free parameters will be
discussed below.)  The freedom in the choice of~$\tau_0$ and the choice of
scale of the PE solution~$f$ can be viewed as the cause of the covariance
of the gDH system under affine transformations, i.e., under $\tau\mapsto
A\tau+\nobreak B$.

There is an obvious caveat.  Theorem~\ref{thm:integration} assumes that
$\bar n\neq\infty$ and $\bar n\neq0$, i.e., that $c\neq0$ and
$2c-b_1-b_2-b_3\neq0$; also, that the calculation of the exponent
parameters $\nu_i,\nu_i'$ does not involve a division by zero, i.e.,
$c-\nobreak a_1-\nobreak a_2-\nobreak a_3\neq0$.

\begin{definition}
\label{def:proper}
  A gDH system (\ref{eq:gDH}) is \emph{proper} if its parameter vector
  $(a_1,\nobreak a_2,\nobreak a_3; \allowbreak b_1,\nobreak b_2,\nobreak
  b_3;c)\in{\mathbb{C}}^3\times{\mathbb{C}}^3\times{\mathbb{C}}$ satisfies
  (i)~$c\neq 0$, (ii) $c-\nobreak a_1-\nobreak a_2-\nobreak a_3\neq0$, and
  (iii)~$2c-\nobreak b_1-\nobreak b_2-\nobreak b_3\neq 0$.  Thus a proper
  DH system (see the Introduction) is a proper gDH system that satisfies
  $b_1=b_2=b_3=c/2$.

  Equivalently, a gDH system is proper if its alternative parameter vector
  $(\nu_1,\nobreak \nu_1'; \allowbreak \nu_2,\nobreak \nu_2'; \allowbreak
  \nu_3,\nobreak \nu_3';\allowbreak \bar n; c )\in \allowbreak
     {\mathbb{C}}^3\times{\mathbb{C}}^3\times{\mathbb{C}\times{\mathbb{C}}}$,
     constrained to satisfy $\sum_{i=1}^3(\nu_1+\nu_i')=\allowbreak
     1-\nobreak 2\bar n$ and birationally related to $(a_1,\nobreak
     a_2,\nobreak a_3; \allowbreak b_1,\nobreak b_2,\nobreak b_3;c)$ by
     (\ref{eq:zetasntoab}),(\ref{eq:abtozetasn}), satisfies (i)~$c\neq 0$,
     (ii)~the condition that
  \begin{displaymath}
    \rho^{-1}\defeq \nu_1+\nu_2+\nu_3+\bar n = (1-\alpha_1-\alpha_2-\alpha_3)/2
  \end{displaymath}
  be nonzero, where $\alpha_i\defeq\nu_i'-\nu_i$, $i=1,2,3$, are called the
  angular parameters of the gDH system, and (iii)~$\bar n\neq 0$.
\end{definition}

The just-described integration procedure can be applied to any
\emph{proper} gDH system.  The system parameter $\rho^{-1}$ (or~its
reciprocal~$\rho$) will appear again, and it should be noted that
\begin{equation}
\label{eq:rhodef0}
\rho=\frac{c-a_1-a_2-a_3}{2\,c-b_1-b_2-b_3}
\end{equation}
when expressed in~terms of $(a_1,a_2,a_3;b_1,b_2,b_3;c)$.

There is another caveat: it will shortly become clear that although
PE\nobreakdash-based integration is complete, it is not adapted to
constructing special solutions in which two or more of $x_1,x_2,x_3$
coincide (necessarily, at all~$\tau$).  But such solutions can often be
found by examination.  The solution~(\ref{eq:raysoln}), with $x_1=x_2=x_3$,
has been noted.  It is also easy to check that any gDH
system~(\ref{eq:gDH}) with $a_1=0$ and $2b_1= b_2+\nobreak b_3=c\neq0$,
such as a proper DH system ${\rm DH}(0,\alpha_2,\alpha_3\,|\,c)$, will have
solution
\begin{equation}
\label{eq:raysoln2}
  x(\tau)=(c-b_1-b_2-b_3)^{-1}(\tau-\tau_*)^{-1}(1,1,1) + A(\tau-\tau_*)^{-2}(1,0,0)
\end{equation}
for any $A\in\mathbb{C}$.  This is a 2\nobreakdash-parameter family of
solutions with $x_2=x_3$, the $A=0$ case of which
is~(\ref{eq:raysoln}).\footnote{It generalizes a well-known
  2\nobreakdash-parameter family of solutions of Chazy-III~\cite{Joshi93}.
  There are only two free parameters because for the ray
  solution~(\ref{eq:raysoln}) of the gDH system (parameters constrained as
  stated), the set~$\mathcal{R}$ of Kovalevskaya
  exponents~\cite{Goriely2001,Tsygvintsev2000} is degenerate:
  $\mathcal{R}=\{-1,\nobreak-1,\nobreak-1\}$.}
\begin{proposition}
  In any gDH system\/ {\rm(\ref{eq:gDH})}, the elements\/
  $e=x_1e_1+x_2e_2+x_3e_3$ of the corresponding non-associative algebra\/
  $\mathfrak{A}$ that satisfy\/ $e*e\propto e$ include the following seven:
  $e_0;e_1,\nobreak e_2,\nobreak e_3;\allowbreak e_1',\nobreak
  e_2',\nobreak e_3'$, in which
  \begin{align*}
    e_0 & \defeq e_1+e_2+e_3,\\
    e_i' & \defeq (c-a_i-b_i)e_i + (-c-a_i+b_j+b_k)(e_j+e_k),\qquad i=1,2,3,
  \end{align*}
  where\/ $j,k$ are the elements of\/ $\{1,2,3\}$ other than\/ $i$.  If any
  such\/ $e$ is not nilpotent {\rm(}i.e., $e*e\neq0${\rm)}, then it yields
  an idempotent\/ $p\propto e$ of\/ $\mathfrak{A}$, and a\/
  {\rm1}\nobreakdash-parameter family of ray solutions of\/
  {\rm(\ref{eq:gDH})} along\/~$p$, i.e., $x(\tau)=\allowbreak
  -(\tau-\nobreak\tau_*)^{-1}p$.  If\/ $e$ is nilpotent there is a ray of
  constant solutions instead, i.e., $x(\tau)\equiv K\,e$, $K\in\mathbb{C}$.
\label{prop:defineps}
\end{proposition}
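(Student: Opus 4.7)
The statement divides into two parts: the algebraic claim $e*e\propto e$ for each of the seven listed elements, and the dynamical claim that such an element yields either a ray solution or a constant solution. The dynamical half is entirely standard: if $e*e=\lambda e$ with $\lambda\neq0$, then $p:=e/\lambda$ is idempotent, and one checks directly that $x(\tau)=-(\tau-\tau_*)^{-1}p$ satisfies $\dot x=(\tau-\tau_*)^{-2}p=x*x$; if $\lambda=0$, the constant function $x\equiv Ke$ trivially solves $\dot x=x*x$. This was already noted for $p_0$ in the Introduction.

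For the algebraic claim, the cases $e=e_0$ and $e=e_i$ are immediate from substitution into~(\ref{eq:gDH}). For $e_0=(1,1,1)$, all three ``angular'' factors $(x_j-x_k)(x_k-x_i)$ vanish, leaving $e_0*e_0=(b_1+b_2+b_3-c)\,e_0$. For $e_i$, which has only one nonzero component, almost every quadratic term is killed, leaving $e_i*e_i=a_i\,e_i$.

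The three $e_i'$ are the genuinely nontrivial cases, and my plan is to handle them uniformly via the subspace $V_i:=\{x:x_j=x_k\}$, which (as already noted in the Introduction) is invariant under the gDH flow because $\dot x_j-\dot x_k$ vanishes whenever $x_j=x_k$. This makes $*$ close on the 2-dimensional $V_i$ and reduces the search for ray directions in $V_i$ to the zeros of a single homogeneous cubic in the two coordinates parametrising $V_i$. Taking $i=1$ and writing $(\alpha,\beta,\beta)$ for a point of $V_1$, direct substitution yields
\begin{align*}
Q_1 &= a_1(\alpha-\beta)^2+(b_1-c)\beta^2+(b_2+b_3)\alpha\beta,\\
Q_2 &= b_1\beta^2+(b_2+b_3-c)\alpha\beta,
\end{align*}
and the ray condition $\alpha Q_2-\beta Q_1=0$ factors as
\[
\beta\,(\alpha-\beta)\,\bigl[(b_2+b_3-c-a_1)\alpha-(c-a_1-b_1)\beta\bigr]=0.
\]
The first factor corresponds to $e_1\in V_1$, the second to $e_0\in V_1$, and the third gives exactly the direction $(c-a_1-b_1,\,-c-a_1+b_2+b_3)$, i.e., $e_1'$. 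The cases $e_2',e_3'$ follow by cyclic permutation. The main (modest) obstacle is the cubic factorisation above---a polynomial identity linear in the parameters $a_1,b_1,b_2,b_3,c$ and verifiable by hand or by any computer algebra system.
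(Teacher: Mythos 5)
Your proof is correct, and it takes a mildly but genuinely different route from the paper's. The paper's own proof is a one-line direct verification: substitute $e=x_1e_1+x_2e_2+x_3e_3$ into $e*e=\lambda e$ and check, by hand or by computer algebra, that each of the seven listed elements satisfies it. You instead \emph{derive} the three nontrivial directions $e_i'$ by restricting the quadratic vector field to the invariant plane $V_i=\{x_j=x_k\}$ (whose invariance the paper had already observed) and factoring the resulting homogeneous cubic $\alpha Q_2-\beta Q_1$; I have checked that your factorisation
$\beta(\alpha-\beta)\bigl[(b_2+b_3-c-a_1)\alpha-(c-a_1-b_1)\beta\bigr]$
is the correct expansion, and its third root is exactly the direction $e_1'$ of the proposition. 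What your approach buys is a little more than the statement asks for: it shows that $e_0$, $e_i$, $e_i'$ are the \emph{only} elements of $V_i$ (up to scale) satisfying $e*e\propto e$, which explains where the otherwise unmotivated formula for $e_i'$ comes from and is consistent with the paper's remark that a generic gDH algebra has exactly seven idempotents. What it costs is that the three directions transverse to the coincidence planes are not discussed, but none are claimed in the statement, so nothing is missing. The dynamical half of your argument (idempotents give $-(\tau-\tau_*)^{-1}p$ ray solutions, nilpotents give rays of constant solutions) is the standard observation the paper also relies on.
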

\begin{proof}
  Substitute $e=x_1e_1+x_2e_2+x_3e_3$ into $e*e=\lambda e$, the product~$*$
  coming from (\ref{eq:polarization}) and~(\ref{eq:gDH}).  If $\lambda\neq0$
  then $e$~is idempotent; otherwise nilpotent.
\end{proof}
\begin{remark*}
  The preceding illustrates the fact that any non-associative algebra
  over~$\mathbb{R}$ must have at~least one idempotent or
  nilpotent~\cite{Kaplan79}.  Additionally, as any gDH
  algebra~$\mathfrak{A}$ has dimension $d=3$ over~$\mathbb{C}$, the number
  of its pairwise linearly independent idempotents and nilpotents, if
  finite, is $\le 2^d\nobreak-1=\allowbreak 2^3-\nobreak1=7$.
  (See~\cite[Thm.~3.3]{Walcher91}.)  For generic $(a_1,\nobreak
  a_2,\nobreak a_3;\allowbreak b_1,\nobreak b_2,\nobreak b_3;\nobreak c)$,
  by examination there will be no nilpotents and exactly seven idempotents,
  namely $p_0;p_1,\nobreak p_2,\nobreak p_3;\allowbreak p_1',\nobreak
  p_2',\nobreak p_3'$, defined as above as scaled versions of
  $e_0;e_1,\nobreak e_2,\nobreak e_3;\allowbreak e_1',\nobreak
  e_2',\nobreak e_3'$.  In nongeneric cases the latter may degenerate or
  become nilpotent.
\end{remark*}

Just as $e=e_0\defeq e_1+e_2+e_3$ yields the ray
solution~(\ref{eq:raysoln}), for which $x_1=x_2=x_3$ at all~$\tau$, so do
$e=e_i,e_i'$ yield respectively the ray solutions
\begin{subequations}
\label{eq:newscale}
\begin{sizealign}{\small}
x(\tau)&= -(\tau-\tau_*)^{-1}p_i = -a_i^{-1}(\tau-\tau_*)^{-1}e_i,\\
x(\tau)&=-(\tau-\tau_*)^{-1}p_i' = \left[c(c-a_i-b_j-b_k)+a_i(b_i+b_j+b_k)\right]^{-1}(\tau-\tau_*)^{-1}e_i',
\end{sizealign}
\end{subequations}
for which $x_j=x_k$ at all~$\tau$.  (If the factor multiplying $e=e_i,e_i'$
diverges then $e$~is nilpotent, and $x\equiv Ke$, $K\in\mathbb{C}$, is a
ray of nilpotents.)  The solutions~(\ref{eq:newscale}) can be embedded in
families like~(\ref{eq:raysoln2}), which may or may not have three free
parameters.

In the integration of differential equations, solutions without a full
complement of free parameters are traditionally called `singular'
solutions~\cite{Davis62}.  They lie off the general-solution manifold, and
may not be generated by a generic integration procedure.  The following
more restrictive definition will be useful here.

\begin{definition}
\label{def:singular}
Any meromorphic local solution $x=x(\tau)$ of the gDH system (\ref{eq:gDH})
is \emph{coincident} if two or more components are coincident (necessarily,
at all~$\tau$).
\end{definition}

The following two lemmas will be used below.  The first
(Lemma~\ref{lem:usedlater}) is phrased so as to refer to
PE\nobreakdash-based integration, but its formulas will find broader
application.  The most important are (\ref{eq:zerothclaim})
and~(\ref{eq:secondclaim}), which construct $x=x(\tau)$ from $t=t(\tau)$,
and reconstruct $t=t(\tau)$ from any noncoincident $x=x(\tau)$.  Outside
PE\nobreakdash-based integration, the latter will be used as the
\emph{definition} of $t=t(\tau)$; and the two will be called the
$t(\cdot)\mapsto\allowbreak x(\cdot)$ map and the
$x(\cdot)\mapsto\allowbreak t(\cdot)$ map.

\begin{lemma}
As in Theorem\/~{\rm\ref{thm:integration}}, let\/ $\tau=\tau(t)$ and\/
$x=x(t)$ be computed from a nonzero local Papperitz solution\/ $f=f(t)$,
the latter via\/ $x_i = c^{-1}\bar n^{-1} \dot f_i/f_i$, where the dot
indicates\/ ${\rm d}/{\rm d}\tau$.  Then for\/ $i=1,2,3$, with\/ $j,k$ the
elements other than\/~$i$,
\begin{sizeequation}{\small}
\label{eq:zerothclaim}
\begin{split}
x_i &= c^{-1}\frac{{\rm d}}{{\rm d}\tau}\log
\left[
\frac{\dot t}{(t-t_i)^{-(\nu_j+\nu_k)/\bar n} \, (t-t_j)^{(\nu_j+\bar n)/\bar n} \, (t-t_k)^{(\nu_k+\bar n)/\bar n}}
\right]
\\
  &= c^{-1}\left[
\frac{\ddot t}{\dot t} - \bar n^{-1}\sum_{l=1}^3(\nu_l+\bar n)\frac{\dot t}{t-t_l}
\right]
+c^{-1}\bar n^{-1}(\nu_1+\nu_2+\nu_3+\bar n)\,\frac{\dot t}{t-t_i},
\end{split}
\end{sizeequation}
where\/ $\nu_l\defeq \mu_l + (2\kappa_l-1)\bar n$ as usual, so\/
$\nu_1+\nu_2+\nu_3+\bar n=\mu_1+\mu_2+\mu_3$.  Also,
\begin{equation}
x_i-x_j = c^{-1}{\bar{n}^{-1}}\, (\nu_1+\nu_2+\nu_3+\bar
n)\left[\frac{\dot t}{t-t_i} - \frac{\dot t}{t-t_j}\right]
\label{eq:firstclaim}
\end{equation}
for any\/ $i,j\in\{1,2,3\}$.  Furthermore, if\/ $x_1,x_2,x_3$ do not
coincide, then
\begin{equation}
\label{eq:secondclaim}
  t= -\left[
    \frac
        {t_1t_2(x_1-x_2) + t_2t_3(x_2-x_3) + t_3t_1(x_3-x_1)}
        {t_1(x_2-x_3) + t_2(x_3-x_1) + t_3(x_1-x_2)}
    \right].
\end{equation}
If\/ $\rho^{-1}\defeq \nu_1+\nu_2+\nu_3+\bar n$ is nonzero then\/
${f_1}^\rho+{f_2}^\rho+{f_3}^\rho=0$, and if\/ $x_1,x_2,x_3$ do not
coincide then
\begin{equation}
  \label{eq:fourthclaim}
[x_2-x_3:x_3-x_1:x_1-x_2] = [{f_1}^\rho:{f_2}^{\rho}:{f_3}^\rho],  
\end{equation}
interpreted as an equality between\/ $\mathbb{P}^2$-valued functions.
\label{lem:usedlater}
\end{lemma}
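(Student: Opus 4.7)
The plan is to reduce every claim to direct computation starting from the two defining relations supplied by Theorem~\ref{thm:integration}: namely $x_i=c^{-1}\bar n^{-1}\dot f_i/f_i$ with $f_i=\Delta_i f$, and $\dot t={\rm d}t/{\rm d}\tau = K^2 f^{1/\bar n}$ (the inverse of the ${\rm d}\tau/{\rm d}t$ formula). Taking the logarithmic $\tau$-derivative of $\dot t = K^2 f^{1/\bar n}$ yields $\ddot t/\dot t = 2\bar\kappa\dot t + \bar n^{-1}\dot f/f$, so $\dot f/f = \bar n(\ddot t/\dot t) - 2\bar n\bar\kappa\dot t$. Since $\dot\Delta_i/\Delta_i = \delta_i\dot t$, this gives the intermediate formula $x_i = c^{-1}[\ddot t/\dot t + \bar n^{-1}(\delta_i - 2\bar n\bar\kappa)\dot t]$.

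To finish (\ref{eq:zerothclaim}) one computes the partial fractions of $\delta_i - 2\bar n\bar\kappa$. Using $\mu_l = \nu_l + \bar n - 2\kappa_l\bar n$ and $\kappa_1+\kappa_2+\kappa_3=1$, the coefficient of $1/(t-t_l)$ for $l\neq i$ works out to $-(\nu_l+\bar n)$, and the coefficient of $1/(t-t_i)$ to $\nu_j+\nu_k = -(\nu_i+\bar n) + (\nu_1+\nu_2+\nu_3+\bar n)$; this produces the stated sum-plus-correction-at-$i$ form. The logarithmic-derivative form in the first line of (\ref{eq:zerothclaim}) is then a straightforward re-assembly of the same terms. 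Subtracting the formulas for $x_i$ and $x_j$ causes the $\ddot t/\dot t$ term and the full sum over $l$ to cancel identically, which is exactly (\ref{eq:firstclaim}).

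For (\ref{eq:secondclaim}) I would substitute $x_i-x_j = c^{-1}\bar n^{-1}\rho^{-1}\dot t\,(t_i-t_j)/[(t-t_i)(t-t_j)]$ into numerator and denominator of the right-hand side. After clearing the common factor $c^{-1}\bar n^{-1}\rho^{-1}\dot t/[(t-t_1)(t-t_2)(t-t_3)]$, both reduce to polynomials in $t$. The denominator polynomial $\sum_i t_i(t_j-t_k)(t-t_i)$ has vanishing coefficient of $t$ (a Vandermonde telescoping) and constant term $(t_1-t_2)(t_2-t_3)(t_3-t_1)$. The numerator polynomial $\sum t_jt_k(t_j-t_k)(t-t_i)$ has vanishing constant term and coefficient of $t$ equal to $-(t_1-t_2)(t_2-t_3)(t_3-t_1)$. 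Their ratio is therefore $-t$, proving the claim.

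The remaining two statements are pure identities in~$t$, so I would work with $\Delta_i^\rho$ directly. Writing $A=t_2-t_3$, $B=t_3-t_1$, $C=t_1-t_2$ and $(\alpha,\beta,\gamma) = (\rho\mu_1,\rho\mu_2,\rho\mu_3)$ with $\alpha+\beta+\gamma=1$, the cross-ratio formula (\ref{eq:Deltaidef}) raised to the $\rho$-th power gives
\begin{equation*}
\Delta_i^\rho \;=\; C_i\,(t-t_i)\,(t-t_1)^{-\alpha}(t-t_2)^{-\beta}(t-t_3)^{-\gamma},\qquad C_i = (t_j-t_k)\bigl/(A^\alpha B^\beta C^\gamma),
\end{equation*}
after using $\rho(\mu_j+\mu_k) = 1-\rho\mu_i$. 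Summing, the common prefactor comes out and leaves $\sum_i(t_j-t_k)(t-t_i)$, which vanishes because its $t$-coefficient is $A+B+C=0$ and its constant term is $\sum t_i(t_j-t_k)=0$; multiplying by $f^\rho$ yields $\sum_if_i^\rho=0$. For (\ref{eq:fourthclaim}) one reads off $[\Delta_1^\rho:\Delta_2^\rho:\Delta_3^\rho] = [A(t-t_1):B(t-t_2):C(t-t_3)]$, while (\ref{eq:firstclaim}) gives the same projective triple for $[x_2-x_3:x_3-x_1:x_1-x_2]$. The only genuine obstacle is the careful sign/constant bookkeeping in the passage from the cross-ratio form of $\Delta_i$ to the clean prefactors $C_i$; everything else is symbolic algebra that can be checked by hand or by computer algebra, exactly as in the proof of Theorem~\ref{thm:integration}.
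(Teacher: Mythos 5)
Your proposal is correct and follows essentially the same route as the paper: both start from $x_i=c^{-1}\bar n^{-1}(\dot f/f+\dot\Delta_i/\Delta_i)$, use $\dot t=K^2f^{1/\bar n}$ to trade $\dot f/f$ for $\bar n\,\ddot t/\dot t-2\bar n\bar\kappa\,\dot t$, and reduce the rest to partial-fraction and polynomial identities in~$t$ (your coefficient computations $-(\nu_l+\bar n)$ and $\nu_j+\nu_k$ check out). The only cosmetic differences are that you verify~(\ref{eq:secondclaim}) by substituting~(\ref{eq:firstclaim}) into the right-hand side and matching polynomial coefficients, where the paper solves the two-term ratio~(\ref{eq:solute}) for~$t$, and that you make explicit the constants in $\Delta_i^\rho\propto(t-t_i)\prod_l(t-t_l)^{-\rho\mu_l}$ that the paper leaves as a ``check by hand.''
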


\begin{proof}
  These four formulas are all consequences of the definitions of $K^2$ and
  $\Delta_i$ given in Theorem~\ref{thm:integration}.  By rewriting
  logarithmic derivatives,
  \begin{align}
    x_i&=c^{-1}\bar n^{-1}(\dot f/f + \dot\Delta_i/\Delta_i)\\
    &= c^{-1}\bar n^{-1}(\bar n\,\ddot t/\dot t - \bar n{(K^2)}^{\textbf{.}}/K^2 + \dot\Delta_i/\Delta_i)\nonumber\\
    &= c^{-1}\ddot t/\dot t + c^{-1}\left[-(K^2)_t/K^2 + {\bar
        n}^{-1}(\Delta_i)_t/\Delta_i\right]\,\dot t,\nonumber
  \end{align}
  which leads to~(\ref{eq:zerothclaim}).  Equation~(\ref{eq:firstclaim}) is
  a corollary.  It follows from~(\ref{eq:firstclaim}) that
  \begin{equation}
    \label{eq:solute}
    \frac{x_i-x_j}{x_j-x_k} = -\,\frac{(t_k-t)(t_i-t_j)}{(t-t_i)(t_j-t_k)},
  \end{equation}
  and the formula~(\ref{eq:secondclaim}) is obtained by
  solving~(\ref{eq:solute}) for~$t$.  One can check by hand that
  ${\Delta_1}^\rho + {\Delta_2}^\rho + {\Delta_3}^\rho=0$, which implies
  ${f_1}^\rho+{f_2}^\rho+{f_3}^\rho=0$;
  and that
  \begin{multline}
    \label{eq:assist}
    [{f_1}^\rho:{f_2}^\rho:{f_3}^\rho] = 
    [{\Delta_1}^\rho:{\Delta_2}^\rho:{\Delta_3}^\rho]\\ = [(t_2-t_3)(t-t_1):(t_3-t_1)(t-t_2):(t_1-t_2)(t-t_3)].
  \end{multline}
  Combining (\ref{eq:firstclaim}) with~(\ref{eq:assist})
  yields~(\ref{eq:fourthclaim}).  
\end{proof}

\begin{lemma}
For any noncoincident solution\/ $x=x(\tau)$ of a proper gDH system of the
form\/ {\rm(\ref{eq:gDH})}, if\/ $t=t(\tau)$ is \emph{defined} by the\/
$x(\cdot)\mapsto t(\cdot)$ map\/ {\rm(\ref{eq:secondclaim})}, then
\begin{align}
\label{eq:fifthclaim}
  \dot t &=  c\,{\bar{n}}\,\rho
  \left\{
  \frac
  {(t_1-t_2)(t_2-t_3)(t_3-t_1) \cdot (x_1-x_2)(x_2-x_3)(x_3-x_1)   }
  {\left[t_1(x_2-x_3) + t_2(x_3-x_1) + t_3(x_1-x_2)\right]^2}
  \right\},\\
  \frac{\ddot t}{\dot t} &= c\rho\, \frac{\sum t_i(x_j-x_k)\left[
(\nu_i-\bar n)x_i + (\nu_j+\bar n)x_j + (\nu_k+\bar n)x_k\right]}
{t_1(x_2-x_3) + t_2(x_3-x_1) + t_3(x_1-x_2)},
\label{eq:sixthclaim}
\end{align}
the summation being over cyclic permutations\/ $i,j,k$ of\/ $1,2,3$, 
and\/ $\rho$ being defined as usual by\/ $\rho^{-1}\defeq \nu_1+\nu_2+\nu_3+\bar
n$, or equivalently by\/~{\rm(\ref{eq:rhodef0})}.  Additionally, the
reconstruction formulas\/ {\rm(\ref{eq:zerothclaim})} and\/
{\rm(\ref{eq:firstclaim})} for\/ $x_i$ and\/ $x_i-\nobreak x_j$ hold.
\label{lem:usedlater2}
\end{lemma}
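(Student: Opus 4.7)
The plan is to reduce this lemma to Lemma~\ref{lem:usedlater} using the completeness of PE-based integration. The gDH system, being first-order autonomous on $\mathbb{C}^3$, has a three-dimensional local solution space at each initial condition. Theorem~\ref{thm:integration} together with the three-parameter freedom in the PE construction (two for the PE solution $f$ up to overall scale, one for $\tau_0$) produces a three-parameter family of gDH solutions; since the noncoincident condition is open in the initial data, every noncoincident local analytic solution $x=x(\tau)$ arises from the construction for some $f$, $\kappa$, and $\tau_0$. For any such realization, Lemma~\ref{lem:usedlater}'s equation (\ref{eq:secondclaim}) expresses the PE parameter $t$ in terms of $x$, so it must agree with the $t(\tau)$ defined here by (\ref{eq:secondclaim}). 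The reconstruction formulas (\ref{eq:zerothclaim}) and (\ref{eq:firstclaim}) then transfer immediately from Lemma~\ref{lem:usedlater}.

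To obtain (\ref{eq:fifthclaim}), I would first derive the explicit factorization $t-t_i = -(t_i-t_j)(t_i-t_k)(x_j-x_k)/D$, where $D$ is the denominator in (\ref{eq:secondclaim}), by straightforward algebra exploiting the identity $(x_1-x_2)+(x_2-x_3)+(x_3-x_1)=0$. Substituting this into (\ref{eq:firstclaim}) for any single pair $(i,j)$ and solving for $\dot t$ produces the symmetric expression in (\ref{eq:fifthclaim}), with coefficient $c\,\bar n\,\rho$; the totally antisymmetric numerator $(t_1-t_2)(t_2-t_3)(t_3-t_1)(x_1-x_2)(x_2-x_3)(x_3-x_1)$ emerges naturally from the product of the two $(t-t_\bullet)$ factors. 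For (\ref{eq:sixthclaim}), I would differentiate (\ref{eq:firstclaim}) once more with respect to $\tau$, substitute $\dot x_i$ from the gDH system~(\ref{eq:gDH}), and re-express the resulting $(a_i,b_i,c)$ coefficients via (\ref{eq:abtozetasn}); the offset-exponent combinations $(\nu_i-\bar n)x_i + (\nu_j+\bar n)x_j + (\nu_k+\bar n)x_k$ in the numerator should then appear as the structural residue of this substitution.

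The main obstacle is not conceptual but algebraic bookkeeping: the specific offset-exponent combinations in (\ref{eq:sixthclaim}) must drop out of a fairly involved substitution of the birational relations (\ref{eq:zetasntoab})--(\ref{eq:abtozetasn}) into the once-differentiated form of (\ref{eq:firstclaim}). As in the proof of Theorem~\ref{thm:integration}, verifying that everything collapses into the stated symmetric expression is best handled by a computer algebra system. A~minor secondary point is justifying the dimension-count argument in the reduction step: one needs to know that the PE construction is genuinely three-parameter (i.e., that the map from $(f,\tau_0)$ to local solutions is not degenerate), which is transparent from (\ref{eq:integration}) since the $\tau_0$-translation and the two PE-scale parameters act nontrivially and independently on $(\tau; x_1,x_2,x_3)$.
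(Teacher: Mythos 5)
Your computational core is sound, and in fact coincides with the route the paper itself sketches only as an aside at the end of its proof: the factorization $t-t_i=-(t_i-t_j)(t_i-t_k)(x_j-x_k)/D$ is correct, eliminating $t$ from~(\ref{eq:firstclaim}) does yield~(\ref{eq:fifthclaim}), and (\ref{eq:sixthclaim}) does emerge from one further differentiation together with the substitution~(\ref{eq:abtozetasn}). But the paper's actual proof runs in the opposite direction, and for a reason. It takes (\ref{eq:secondclaim}) as the \emph{definition} of~$t$ --- an explicit rational function of $x_1,x_2,x_3$ --- differentiates it along the flow of~(\ref{eq:gDH}), and verifies (\ref{eq:fifthclaim}) and~(\ref{eq:sixthclaim}) directly; the reconstruction formulas (\ref{eq:zerothclaim}),(\ref{eq:firstclaim}) are then recovered from these by elementary manipulation. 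No appeal to PE-based integration is made, and the PE route is mentioned only in the subjunctive (``if $x=x(\tau)$ \emph{were} constructed by PE-based integration\dots'').

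The genuine gap in your version is the reduction step. You infer that every noncoincident local solution arises from the PE construction because that construction yields a three-parameter family and noncoincidence is an open condition on initial data. That inference is not valid as stated: a three-parameter family of solutions of a three-dimensional system covers only the image of the parameter-to-initial-condition map, which need not exhaust a prescribed open set (there can be critical values, or points simply outside the image). The paper itself cautions that singular solutions ``may not be generated by a generic integration procedure,'' and the special two-parameter noncoincident solutions in \S\,\ref{sec:painleve} are reached only by limiting procedures. To close the gap you would need to exhibit, for each noncoincident $x^0$, Frobenius data $\bigl(f(t^*),f_t(t^*)\bigr)$ at the point $t^*$ determined by~(\ref{eq:secondclaim}) that realizes~$x^0$: one must solve $x_i^0=c^{-1}\bar n^{-1}u\left(v+\delta_i(t^*)\right)$ for the two unknowns $(u,v)$, the compatibility condition among the three equations being precisely the relation that defined~$t^*$. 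That inversion is doable but has to be written out; alternatively, drop the reduction altogether and differentiate (\ref{eq:secondclaim}) directly, which is both shorter and unconditional. There is also a mild circularity worry in your arrangement: the completeness of PE-based integration for noncoincident solutions is most cleanly justified via the ${\rm gDH}\leftrightarrow{\rm gSE}$ bijection of Theorem~\ref{thm:gDHbasecoin}, which itself rests on the present lemma.
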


\begin{remark*}
In the special case of a DH system with $(t_1,t_2,t_3)=(0,1,\infty)$, the
formulas (\ref{eq:fifthclaim}),\allowbreak(\ref{eq:sixthclaim}), and the
consequent $t(\cdot)\mapsto\allowbreak x(\cdot)$
map~(\ref{eq:zerothclaim}), were previously obtained as Eqs.\ (17)--(20) of
Ref.~\cite{Ablowitz99}.
\end{remark*}

\begin{proof}
Equations (\ref{eq:fifthclaim}),\allowbreak(\ref{eq:sixthclaim}) can be
verified by successively differentiating the gDH system~(\ref{eq:gDH}).
The formulas (\ref{eq:zerothclaim}),(\ref{eq:firstclaim}) then follow from
(\ref{eq:secondclaim}),\allowbreak(\ref{eq:fifthclaim}),\allowbreak(\ref{eq:sixthclaim})
by elementary manipulations.

It is worth noting that if $x=x(\tau)$ were constructed by
PE\nobreakdash-based integration, Eq.~(\ref{eq:fifthclaim}) would come by
eliminating~$t$ from the expressions for $x_1-\nobreak x_2$, $x_2-\nobreak
x_3$, $x_3-\nobreak x_1$ provided by~(\ref{eq:firstclaim}), and
Eq.~(\ref{eq:sixthclaim}) by substituting (\ref{eq:fifthclaim})
into~(\ref{eq:zerothclaim}), and solving for~$\ddot t/\dot t$.
\end{proof}

A `generalized Schwarzian' equation satisfied by any function $t=t(\tau)$
derived from a noncoincident gDH solution $x=x(\tau)$ will now be
introduced.  This ODE is invariant under any affine transformation
$\tau\mapsto \tau'\defeq\allowbreak A\tau+\nobreak B$.  It will play a role
in the Painlev\'e analysis of~\S\,\ref{sec:painleve}.

\begin{definition}
The (triangular) generalized Schwarzian equation, called the gSE and
denoted by ${\rm gS}_{t_1,t_2,t_3}(\nu_1,\nobreak \nu_1'; \allowbreak
\nu_2,\nobreak \nu_2'; \allowbreak \nu_3,\nobreak \nu_3';\bar n)$, is the
following nonlinear third-order equation for $t=t(\tau)$:
\begin{sizemultline}{\small}
\label{eq:gSE}
\frac{\dddot t}{\dot t^3} + (\bar n-2)
\left(\frac{\ddot t}{\dot t^2}\right)^{2}
+\left[\sum_{i=1}^3 \frac{1-(\nu_i+\bar n)-(\nu_i'+\bar n)}{t-t_i}\right]
\frac{\ddot t}{\dot t^2}\\
{}+
\frac1{\bar n}
\sum_{i=1}^3
\left[
\frac{(\nu_i+\bar n)(\nu_i'+\bar n)}{(t-t_i)^2}
+\frac{(\nu_i-\bar n)(\nu'_i-\bar n)-(\nu_j+\bar n)(\nu_j'+\bar
  n)-(\nu_k+\bar n)(\nu_k'+\bar n)}{(t-t_j)(t-t_k)}
\right]=0,
\end{sizemultline}
the subscripts $j,k$ in the second summand being the elements of
$\{1,2,3\}$ other than\/~$i$.  Here the vector $(\nu_1,\nobreak \nu_1';
\allowbreak \nu_2,\nobreak \nu_2'; \allowbreak \nu_3,\nobreak \nu_3';\bar
n)\in\allowbreak \bigl(\mathbb{C}^2\bigr)^3 \times
(\mathbb{C}\setminus\{0\})$ of parameters is required to satisfy the
Fuchsian condition $\sum_{i=1}^3 (\nu_i+\nobreak\nu_i') =\allowbreak
1-\nobreak2\bar n$.  Also, $t_1,t_2,t_3\in\mathbb{P}^1_t$ are distinct
singular values, so that if $t_3=\infty$ the gSE reduces to
\begin{sizemultline}{\small}
\label{eq:gSE2}
\frac{\dddot t}{\dot t^3} + (\bar n-2)
\left(\frac{\ddot t}{\dot t^2}\right)^{2}
+\left[\sum_{i=1}^2 \frac{1-(\nu_i+\bar n)-(\nu_i'+\bar n)}{t-t_i}\right]
\frac{\ddot t}{\dot t^2}\\
{}+
\frac1{\bar n}
\left\{\sum_{i=1}^2
\left[
\frac{(\nu_i+\bar n)(\nu_i'+\bar n)}{(t-t_i)^2}\right]
+\frac{(\nu_3-\bar n)(\nu'_3-\bar n)-(\nu_1+\bar n)(\nu_1'+\bar
  n)-(\nu_2+\bar n)(\nu_2'+\bar n)}{(t-t_1)(t-t_2)}\right\}
=0,
\end{sizemultline}
The gSE is invariant under each
transposition $\nu_i\leftrightarrow\nu_i'$.
\end{definition}

\begin{theorem}
Let\/ $x=x(\tau)$ be a noncoincident local meromorphic solution of a proper
gDH system\/ ${\rm gDH}(a_1,\nobreak a_2,\nobreak a_3;\allowbreak
b_1,\nobreak b_2,\nobreak b_3;c)$, and let\/ $t=t(\tau)$ be computed from\/
$x=x(\tau)$ by Eq.~{\rm(\ref{eq:secondclaim})}, for some choice of
distinct\/ $t_1,t_2,t_3\in\mathbb{P}^1_t$.  {\rm(}E.g., if\/
$(t_1,t_2,t_3)$ is\/ $(0,1,\infty)$ then\/ $t=-(x_2-\nobreak
x_3)/\allowbreak (x_1-\nobreak x_2)$.{\rm)} Then\/ $t$~will satisfy a gSE\/
${\rm gS}_{t_1,t_2,t_3}(\nu_1,\nobreak \nu_1'; \allowbreak \nu_2,\nobreak
\nu_2'; \allowbreak \nu_3,\nobreak \nu_3';\bar n)$, where the parameters,
satisfying\/ $\sum_{i=1}^3 (\nu_i+\nobreak\nu_i') =\allowbreak
1-\nobreak2\bar n$, are computed from\/ $(a_1,\nobreak a_2,\nobreak
a_3;\allowbreak b_1,\nobreak b_2,\nobreak b_3;c)$ by the formulas\/
{\rm(\ref{eq:abtozetasn})}.
\label{thm:gSE}
\end{theorem}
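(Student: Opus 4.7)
My plan is to reduce the claim to the Papperitz setting of Theorem~\ref{thm:integration} and then verify the gSE by a direct calculation in~$t$. Because PE-based integration produces the three-parameter general solution of a proper gDH system, whereas the coincident solutions (such as those in~(\ref{eq:raysoln}) and~(\ref{eq:newscale})) form a proper subvariety of the solution manifold, every noncoincident local meromorphic solution $x=x(\tau)$ must arise from some nonzero local Papperitz solution $f=f(t)$ with the P-symbol and singular points dictated by the inverse formulas~(\ref{eq:abtozetasn}). Moreover, by the intermediate identity~(\ref{eq:solute}) in the proof of Lemma~\ref{lem:usedlater}, the $t$ reconstructed from $x(\tau)$ via~(\ref{eq:secondclaim}) is the very same parameter used in this PE integration, provided the singular values $t_1,t_2,t_3$ are chosen consistently. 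The claim thus reduces to showing that $t=t(\tau)$, defined implicitly by $d\tau/dt = K^{-2}(t)f^{-1/\bar n}(t)$, satisfies the gSE associated to the Papperitz data.

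To verify this, I would compute the derivatives $\dot t$, $\ddot t/\dot t^2$, and $\dddot t/\dot t^3$ in terms of $t$, $K$, and $f$. From $\dot t = K^2 f^{1/\bar n}$ one reads off $\ddot t/\dot t^2 = d(\log\dot t)/dt = 2\bar\kappa + \bar n^{-1}u$, with $\bar\kappa = K_t/K$ and $u := f_t/f$, and then $\dddot t/\dot t^3 = 2(\ddot t/\dot t^2)^2 + d(\ddot t/\dot t^2)/dt$. The last derivative introduces $f_{tt}/f$, which the Papperitz equation~(\ref{eq:PE}) converts into $-P_1(t)\,u - P_2(t)$. Consequently, every derivative of $t$ that enters~(\ref{eq:gSE}) becomes a polynomial of degree at most two in the Riccati variable $u$, with coefficients that are explicit rational functions of~$t$ built from $\kappa_i$, $\mu_i$, $\mu_i'$, $\bar n$, and $t_1,t_2,t_3$.

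Substituting these expressions into~(\ref{eq:gSE}) and collecting by powers of $u$ yields three separate identities in~$t$, namely the vanishing of the coefficients of $u^0$, $u^1$, and $u^2$. Each must hold independently of the choice of $f$, since the PE admits a two-parameter family of solutions and the assignment $f\mapsto u(t_0)$ is surjective for generic $t_0$. Using the explicit forms of $P_1,P_2$ from~(\ref{eq:PE}), the expansion $\bar\kappa = \sum_i \kappa_i/(t-t_i)$, the substitutions $\nu_i = \mu_i+(2\kappa_i-1)\bar n$ and $\nu_i' = \mu_i'+2(\kappa_i-1)\bar n$ from~(\ref{eq:zetas}), Fuchs's relation $\sum_i(\mu_i+\mu_i') = 1$, and the normalization $\sum_i\kappa_i = 1$, I would verify each identity by partial-fraction matching at $t=t_i$ and at $t=\infty$.

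The main obstacle is the algebraic bulk of the $u^0$ identity, which combines $\bar\kappa_t$, $\bar\kappa^2$, $P_1\bar\kappa$, and $P_2$ into an expression with double poles $1/(t-t_i)^2$ and cross poles $1/[(t-t_i)(t-t_j)]$ that must collapse exactly onto the intricate $(1/\bar n)[\,\cdots]$ coefficient in~(\ref{eq:gSE}); a symbolic-algebra check is advisable. The independence of the final identities from the free parameters in~$f$ (equivalently, from $u$) is what confirms that the gSE really is an ODE for $t(\tau)$ alone, with parameters pinned down by the gDH data through~(\ref{eq:abtozetasn}). As a sanity check, one can verify the $\kappa_i$-dependence cancels throughout, reflecting the fact that the gSE depends only on $(\nu_i,\nu_i';\bar n)$ and not on the offset vector $\kappa$ chosen in the PE integration.
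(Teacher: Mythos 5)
Your computational core is sound, but you have taken the route of the paper's post-proof Remark rather than of its actual proof. The paper verifies the theorem directly from the gDH system: $t$ is \emph{defined} by (\ref{eq:secondclaim}), the expressions for $\dot t$ and $\ddot t/\dot t$ in terms of $x_1,x_2,x_3$ are obtained by differentiating (\ref{eq:gDH}) (Lemma~\ref{lem:usedlater2}), and substitution into (\ref{eq:gSE}) gives a polynomial identity in $x\in\mathbb{C}^3$ that is checked once and for all, covering every noncoincident solution with no detour through the Papperitz equation. What you prove instead is the ``closely related claim'' that the inverse of $\tau=\tau(t)$ built from a PE solution $f$ satisfies the gSE; your Riccati reduction ($u=f_t/f$, the $u^2$ coefficient cancelling identically, the $u^1$ and $u^0$ coefficients vanishing as rational identities in $t$ by partial-fraction matching) is exactly the calculation the paper leaves to the reader there, and it is correct, including the observation that the offset vector $\kappa$ drops out.

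The genuine gap is your opening reduction. From ``PE-based integration produces the three-parameter general solution'' and ``coincident solutions form a proper subvariety'' it does not follow that every noncoincident solution arises from a Papperitz solution: the paper's own examples exhibit noncoincident \emph{special} solutions (e.g.\ the two-parameter families $x^{[i']}$ of Examples \ref{ex:1}--\ref{ex:2}) that lie off the three-parameter general-solution manifold. They happen to come from Frobenius solutions of the PE, but your dimension count does not establish this, so as written your argument only covers a dense subset of the noncoincident solutions. The clean repair is a density argument: once the gSE is known for solutions with initial data in a Zariski-dense subset of $\mathbb{C}^3$, observe that (\ref{eq:gSE}), after clearing denominators and eliminating $t,\dot t,\ddot t,\dddot t$ via (\ref{eq:secondclaim}) and repeated use of (\ref{eq:gDH}), becomes a polynomial identity in $(x_1,x_2,x_3)$, hence holds identically, hence for every noncoincident solution. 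Either add this step explicitly, or bypass the reduction altogether and verify the identity directly from the gDH system as the paper does.
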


\begin{proof}
  When $(t_1,t_2,t_3)=(0,1,\infty)$, the gSE~(\ref{eq:gSE}) can readily be
  derived by manipulating the gDH HQDS~(\ref{eq:gDH}).  That
  Eq.~(\ref{eq:gSE}) in the reduced form~(\ref{eq:gSE2}) is satisfied by
  the Brioschi variable $t=-(x_2-\nobreak x_3)/\allowbreak (x_1-\nobreak
  x_2)$ in any proper gDH system with
  $(t_1,t_2,t_3)=\allowbreak(0,1,\infty)$, satisfying $b_1=b_2=b_3$, was
  proved by Bureau~\cite{Bureau72}; and the restriction $b_1=\nobreak
  b_2=\nobreak b_3$ can be relaxed.

  The case when $t_1,t_2,t_3$ are in general position, and $t$~is obtained
  by the more general $x(\cdot)\mapsto\allowbreak t(\cdot)$ map,
  Eq.~(\ref{eq:secondclaim}), also follows from~(\ref{eq:gDH}).  (Using a
  computer algebra system is recommended.)  Verification is facilitated by
  the formulas for $\dot t,\ddot t/\dot t$ given in
  Lemma~\ref{lem:usedlater2}.  Alternatively, one can simply show that
  Eq.~(\ref{eq:gSE}) is reduced to~(\ref{eq:gSE2}) by the unique M\"obius
  transformation of~$\mathbb{P}_t^1$ that maps $(t_1,t_2,t_3)$ to
  $(t_1,t_2,\infty)$, and in~particular to~$(0,1,\infty)$.
\end{proof}

\begin{remark*}
  A closely related claim dealing with PE-based integration, namely that if
  $\tau=\tau(t)$ is computed from a nonzero local PE solution $f=f(t)$ by
  the procedure of Theorem~\ref{thm:integration}, then the locally defined
  inverse function $t=t(\tau)$ must satisfy the gSE~(\ref{eq:gSE}), can
  also be proved.  This is an exercise in calculus; cf.\ the derivation of
  Eq.~(\ref{eq:zerothclaim}).  Starting with the PE~(\ref{eq:PE}) satisfied
  by~$f$, one uses $\dot t= K^{2}(t) f^{1/\bar n}(t)$ to convert each ${\rm
    d}/{\rm d}\tau$ to a ${\rm d}/{\rm d}t$, and substitutes the
  definition~(\ref{eq:Kdef}) for $K^2=K^2(t)$.  After much manipulation one
  ends~up with a gSE satisfied by $t=t(\tau)$, including the Fuchsian
  condition, which in this derivation comes from Fuchs's relation
  $\sum_{i=1}^3 (\mu_i+\nobreak\mu_i')=1$ on the exponents of the~PE\null.
  Details are left to the reader.  A~significant feature of this
  alternative derivation is that unlike the initial (linear) PE for
  $f=f(t)$, the (nonlinear) gSE~(\ref{eq:gSE}) involves only the
  \emph{offset} exponents $\nu_1,\nobreak\nu_2,\nobreak\nu_3;\allowbreak
  \nu_1',\nobreak\nu_2',\nobreak\nu_3'$.  The offset vector~$\kappa$ does
  not appear explicitly.  Of~course if one integrates the gSE by
  substituting $\dot t=K^2(t)f^{1/\bar n}(t)$, with $K^2(t)$ defined as in
  Theorem~\ref{thm:integration}, then $\kappa_1,\kappa_2,\kappa_3$ will
  reappear at~least briefly.
\end{remark*}

Theorem~\ref{thm:gSE} provides the forward half of a (slightly restricted)
$\textrm{gDH}\leftrightarrow\textrm{gSE}$ correspondence, since it computes
a local gSE solution $t=t(\tau)$ from any local \emph{noncoincident}
solution of any \emph{proper} gDH system.  The reverse direction
$t(\cdot)\mapsto x(\cdot)$ is provided by Eq.~(\ref{eq:zerothclaim}), on
account of the final sentence of Lemma~\ref{lem:usedlater2}.  The proper
$\textrm{gDH}\leftrightarrow\textrm{gSE}$ correspondence is summarized as
follows.

\begin{theorem}
  For any proper gDH system\/ ${\rm gDH}(a_1,a_2,a_3;b_1,b_2,b_3;c)$, at
  any point in the\/ $\tau$\nobreakdash-plane there is a bijection\/
  $x\leftrightarrow t$ between\/ {\rm(i)} its meromorphic local solutions
  $x=x(\tau)$ that are noncoincident and\/ {\rm(ii)} the meromorphic local
  solutions\/ $t=t(\tau)$ of an associated gSE of the form\/
  {\rm(\ref{eq:gSE})}.  In the latter, the parameters\/ $(\nu_1,\nobreak
  \nu_1'; \allowbreak \nu_2,\nobreak \nu_2'; \allowbreak \nu_3,\nobreak
  \nu_3';\bar n)$ are computed from\/ $(a_1,\nobreak a_2,\nobreak
  a_3;\allowbreak b_1,\nobreak b_2,\nobreak b_3;c )$ by the formulas\/
  {\rm(\ref{eq:abtozetasn})}.  The maps\/ $t(\cdot)\mapsto\allowbreak
  x(\cdot)$, $x(\cdot)\mapsto\allowbreak t(\cdot)$ of the bijection are
  given by Eqs.\ {\rm(\ref{eq:zerothclaim})}, {\rm(\ref{eq:secondclaim})}.
  In all of this, the choice of {\rm(}distinct\/{\rm)}
  $t_1,t_2,t_3\in\mathbb{P}^1$ is arbitrary.  {\rm(}E.g., if\/
  $(t_1,t_2,t_3)$ is\/ $(0,1,\infty)$ then\/ $t=-(x_2-\nobreak
  x_3)/\allowbreak (x_1-\nobreak x_2)$.{\rm)}
\label{thm:gDHbasecoin}
\end{theorem}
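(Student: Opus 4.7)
The plan is to exploit what has already been proved: Theorem~\ref{thm:gSE} establishes that the map $x(\cdot)\mapsto t(\cdot)$ defined by (\ref{eq:secondclaim}) sends a noncoincident local solution of a proper gDH system to a local solution of the associated gSE (\ref{eq:gSE}). Only the reverse direction and the check that the two maps are mutually inverse remain. For the reverse direction, I would define $x_1,x_2,x_3$ from a given local meromorphic solution $t=t(\tau)$ of the gSE by the formula (\ref{eq:zerothclaim}), and verify that the resulting $x$ satisfies the gDH system (\ref{eq:gDH}) with parameters determined by (\ref{eq:abtozetasn}). This can be carried out either directly, by substituting (\ref{eq:zerothclaim}) into (\ref{eq:gDH}) and using the gSE to eliminate $\dddot t$, or more elegantly by Papperitz lifting: integrate $\dot t = K^{2}(t)\,f^{-1/\bar n}(t)$ to recover a candidate local solution $f=f(t)$ of a PE with the exponents $(\mu_i,\mu_i')$ of (\ref{eq:zetas}) for some choice of $\kappa$, then show that $f$ solves the PE (\ref{eq:PE}) as a consequence of the gSE satisfied by $t$, and apply Theorem~\ref{thm:integration}.

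Next, I would verify that the $x$ so constructed is noncoincident. Since $t$ is a nonconstant meromorphic function, the formula (\ref{eq:firstclaim}) from Lemma~\ref{lem:usedlater} (which only used the definitions of $K^{2}$ and $\Delta_i$, so applies in the present setting too) shows that $x_i-x_j$ can vanish identically only if $t_i=t_j$, which is excluded.

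Finally, bijectivity amounts to checking that the two compositions are the identity. The composition $x\mapsto t\mapsto x$ is handled by the last sentence of Lemma~\ref{lem:usedlater2}: whenever $t$ is defined from a noncoincident $x$ by (\ref{eq:secondclaim}), the reconstruction formulas (\ref{eq:zerothclaim}) and (\ref{eq:firstclaim}) hold, so (\ref{eq:zerothclaim}) returns the original $x$. For the composition $t\mapsto x\mapsto t$, substituting the expression for $x_i-x_j$ given by (\ref{eq:firstclaim}) into the right-hand side of (\ref{eq:secondclaim}) leads to cancellation of the common factor $c^{-1}\bar n^{-1}\rho^{-1}\dot t$ in both numerator and denominator, and the remaining rational expression in $t, t_1, t_2, t_3$ simplifies algebraically to~$t$.

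The main obstacle is the explicit symbolic verification that (\ref{eq:zerothclaim}) produces a gDH solution from a gSE solution; this is a delicate computation for which a computer algebra system is all but indispensable, as Theorem~\ref{thm:gSE} already acknowledges in the opposite direction. The case of arbitrary distinct $(t_1,t_2,t_3)\in\mathbb{P}^1$ reduces to the normalized case $(0,1,\infty)$ by a M\"obius transformation on $\mathbb{P}^1_t$, which preserves both the structural form of the gSE and the projective expression (\ref{eq:secondclaim}), so no genuinely new computation is needed beyond the normalized case already covered in the proof of Theorem~\ref{thm:gSE}.
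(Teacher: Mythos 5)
Your proposal is correct and follows essentially the same route as the paper, which justifies the theorem by combining Theorem~\ref{thm:gSE} for the direction $x(\cdot)\mapsto t(\cdot)$ with the final sentence of Lemma~\ref{lem:usedlater2} (the reconstruction formula (\ref{eq:zerothclaim})) for the reverse direction. Your treatment is somewhat more careful than the paper's terse summary in that you explicitly address surjectivity --- i.e., that (\ref{eq:zerothclaim}) applied to an arbitrary gSE solution yields a noncoincident gDH solution and that both compositions are the identity --- points the paper leaves implicit in the PE-based integration scheme and the remark following Theorem~\ref{thm:gSE}.
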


If in disagreement with the hypotheses of the theorem, the solution
$x=x(\tau)$ is coincident (i.e., has coincident components), then the
$\mathbb{P}^1$\nobreakdash-valued function $t=t(\tau)$ computed
from~(\ref{eq:secondclaim}) will either be undefined (if $x_1=x_2=x_3$), or
be a constant: $t\equiv t_1$, $t_2$, or~$t_3$.  (The values $t=t_1,t_2,t_3$
correspond to the invariant planes $x_2-\nobreak x_3=0$, $x_3-\nobreak
x_1=0$, $x_1-\nobreak x_2=0$.)  But by convention, a function
$t\equiv\text{const}$ cannot be a solution of a Schwarzian equation such as
the gSE~(\ref{eq:gSE}), on account of divisions by zero.

Theorem~\ref{thm:gDHbasecoin} focuses on meromorphic local solutions of the
gSE, but of~course for a generic choice of $(\nu_1,\nobreak \nu_1';
\allowbreak \nu_2,\nobreak \nu_2'; \allowbreak \nu_3,\nobreak \nu_3';\bar
n)$, generic gSE solutions $t=t(\tau)$ will have movable branch points.  In
the Painlev\'e analysis of~$\S\,\ref{sec:painleve}$, such points will be
ruled~out by restricting the parameter vector.  By substituting the formal
statement $t=\allowbreak t(\tau)\sim\allowbreak t_* +\nobreak
C(\tau-\nobreak\tau_*)^p$ into the gSE, it is easy to verify the following.
If $t_*\neq t_1,t_2,t_3$ then $p\in\{ \pm1, \pm(n+\nobreak 1) \}$, with the
statement holding as~$\tau\to\tau_*$ if $\mathop{\text{Re}}p>0$ and
as~$\tau\to\infty$ if $\mathop{\text{Re}}p<0$.  In this, $n\defeq 1/(\bar n
-\nobreak 1)$ and $\bar n = (n+1)/n$.  For the gSE to have the Painlev\'e
property (PP), one expects the restriction that $n$~be an integer
or~$\infty$.  Also, if $t_*=t_i$ for one of $i=1,2,3$, one finds that
$p\in\{r_i,r_i'\}$, where $(r_i,r_i')\defeq -\bar n(1/\nu_i, 1/\nu_i')$.
As will be seen, for the gSE to have the~PP, the exponents $r_i,r_i'$ (if
defined and finite) will also need to be integers.

\subsection{Integrating DH systems}

Any proper DH system ${\rm DH}(\alpha_1,\alpha_2,\alpha_3\,|\,c)$ of the
form~(\ref{eq:DH}) is by definition a proper gDH system, with parameters
\begin{equation}
  a_i = -\alpha_ic/(1-\alpha_1-\alpha_2-\alpha_3),\qquad i=1,2,3,
\end{equation}
and $b_1=b_2=b_2=c/2$.  Moreover, it follows from the birational
correspondence (\ref{eq:zetasntoab}),(\ref{eq:abtozetasn}) that the
alternative gDH parameter vector $(\nu_1,\nobreak \nu_1'; \allowbreak
\nu_2,\nobreak \nu_2'; \allowbreak \nu_3,\nobreak \nu_3';\allowbreak \bar
n; c )$ equals $(-\alpha_1/2,\nobreak \alpha_1/2 ;\allowbreak
-\alpha_2/2,\nobreak \alpha_2/2 ;\allowbreak
-\alpha_3/2,\nobreak \alpha_3/2 ;\allowbreak 1/2;c )$.

Thus an integration scheme for proper DH systems follows from the
Papperitz-based scheme of~\S\,\ref{subsec:fromPEtogDH} for proper gDH
systems, by setting $\bar n=1/2$.  This DH-specific value for~$\bar n$ is
quite special: it permits the use of a \emph{ratio} of a pair of PE
solutions, and in~fact of a pair of Gauss hypergeometric functions.

\begin{lemma}
\label{lem:suppl}
  Let\/ $\bar n=1/2$ and let the offset vector\/ $\kappa$ be
  $\kappa_i=\allowbreak(1-\nobreak\mu_i-\nobreak\mu_i')/2$, $i=1,2,3$.
  Then the definition of\/ $\tau=\tau(t)$ given in
  Theorem\/~{\rm\ref{thm:integration}} is equivalent to
  \begin{displaymath}
    \tau(t) = f^{(1)}(t) / f^{(2)}(t),
  \end{displaymath}
  where\/ $f^{(1)},f^{(2)}$ are any independent analytic local solutions of
  the Papperitz equation.  Also, the proper gDH system produced by the
  theorem has\/ $b_1=b_2=b_3=c/2$, and thus reduces to a proper DH
  system\/ ${\rm DH}(\alpha_1,\alpha_2,\alpha_3\,|\,c)$.
\end{lemma}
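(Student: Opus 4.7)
The plan is to verify both claims directly from the definitions in Theorem~\ref{thm:integration}, using Abel's theorem for the Wronskian of the PE.

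For the first claim that $\tau(t) = f^{(1)}(t)/f^{(2)}(t)$, let $\sigma(t) \defeq f^{(1)}(t)/f^{(2)}(t)$; then $\sigma'(t) = W(t)/f^{(2)}(t)^2$, where $W(t)$ is the Wronskian of the independent PE solutions. Abel's theorem gives $W(t) \propto \exp\bigl(-\!\int P_1(t)\,dt\bigr)$, with $P_1(t) = \sum_i (1-\mu_i-\mu_i')/(t-t_i)$ the first-order coefficient in the PE~(\ref{eq:PE}). With the prescribed offset $\kappa_i = (1-\mu_i-\mu_i')/2$, the integrand is precisely $\sum_i 2\kappa_i/(t-t_i)$, so $W(t) \propto \prod_i (t-t_i)^{-2\kappa_i} \propto K^{-2}(t)$ by the simplified form~(\ref{eq:2ndremark}). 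Setting $f \defeq f^{(2)}$ and using $\bar n = 1/2$, one gets $\sigma'(t) \propto K^{-2}(t)f(t)^{-2} = K^{-2}(t)f(t)^{-1/\bar n}$, matching the prescription for ${\rm d}\tau/{\rm d}t$ in the theorem. The remaining constants of proportionality and integration amount to an irrelevant affine reparametrization of $\tau$. The freedom to choose any independent pair $(f^{(1)}, f^{(2)})$ corresponds to the Möbius freedom specific to DH systems: changing the basis in the solution space implements $\tau \mapsto (A\tau+B)/(C\tau+D)$, which is precisely the extra covariance that distinguishes DH from general gDH systems.

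For the second claim that $b_1 = b_2 = b_3 = c/2$, the plan is a direct computation via~(\ref{eq:zetasntoab}). With $\bar n = 1/2$ and $2\kappa_i - 1 = -\mu_i-\mu_i'$, the offset exponents from~(\ref{eq:zetas}) simplify so that $\nu_i + \nu_i' = 0$ for each $i$ (the constraint $\sum(\nu_l + \nu_l') = 1-2\bar n = 0$ is then automatic). Substituting into the formula
\[
b_i = \frac{-\bar n\,\nu_i' - (\bar n - 1)(\nu_j+\nu_k) - \bar n(\bar n - 1)}{\nu_1+\nu_2+\nu_3+\bar n}\,c
\]
with $\bar n = 1/2$ and using $\nu_i' = -\nu_i$ together with $\sum_l \nu_l = \rho^{-1} - \bar n$, the numerator collapses symmetrically to $\rho^{-1}/2$, yielding $b_i = c/2$ independently of~$i$.

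The main obstacle is only bookkeeping. The single nontrivial identification is $\exp\bigl(-\!\int P_1\,dt\bigr) \propto K^{-2}(t)$, which works \emph{exactly} because of the prescribed choice $\kappa_i = (1-\mu_i-\mu_i')/2$; and one should note in passing that this choice satisfies $\sum \kappa_i = 1$ precisely by Fuchs's relation $\sum_i(\mu_i+\mu_i') = 1$, so the offset vector is admissible in the sense of Theorem~\ref{thm:integration}. Everything else is substitution, and the second claim can be verified independently of the first.
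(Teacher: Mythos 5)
Your proposal is correct and follows essentially the same route as the paper's own proof: the ratio $f^{(1)}/f^{(2)}$ is differentiated, Abel's theorem gives the Wronskian as $\prod_i(t-t_i)^{-(1-\mu_i-\mu_i')}=K^{-2}$ under the prescribed choice $\kappa_i=(1-\mu_i-\mu_i')/2$, and the identity $b_1=b_2=b_3=c/2$ is obtained by substituting $\nu_i'=-\nu_i$ and $\bar n=1/2$ into~(\ref{eq:zetasntoab}). Your sign bookkeeping on the Wronskian exponents and your remark that $\sum_i\kappa_i=1$ is guaranteed by Fuchs's relation are both correct and slightly more careful than the paper's displayed computation.
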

\begin{proof}
The derivative of any ratio of solutions $f^{(1)}(t) / f^{(2)}(t)$ with
respect to~$t$ equals $W(t)/\!\left[f^{(2)}(t)\right]^2$, where $W(t)$ is
the Wronskian $f^{(1)}_tf^{(2)} - f^{(2)}_tf^{(1)}$.  The PE~(\ref{eq:PE})
is of the form $\left[D_t^2+Q(t)D_t+Q(t)\right]f=0$, so that
\begin{equation}
  W(t)=\exp\, -\!\int^t Q(t)\,dt \propto 
(t-t_1)^{1-\mu_1-\mu_1'}(t-t_2)^{1-\mu_2-\mu_2'} (t-t_3)^{1-\mu_3-\mu_3'},
\end{equation}
where the proportionality constant depends on the choice
of~$f^{(1)},f^{(2)}$.  If $\bar n=1/2$, the condition in
Theorem~\ref{thm:integration} that $\tau$ is defined to satisfy is that
${{\rm d}\tau}/{{\rm d}t} = K^{-2}/f^2$, where $f$~is a nonzero solution of
the PE and
\begin{equation}
K(t)\propto(t-t_1)^{\kappa_1}(t-t_2)^{\kappa_2}(t-t_3)^{\kappa_3},
\end{equation}
the proportionality constant being a function of $t_1,t_2,t_3$.  By the
preceding, when $\kappa_i=(1-\mu_i-\mu_i')/2$ this condition will be
satisfied by the choice $\tau=f^{(1)} / f^{(2)}$, if $f$~is a suitably
scaled version of~$f^{(2)}$.  Also, 
\begin{equation}
\label{eq:pair}
\begin{aligned}
\nu_i &= \mu_i + (2\kappa_i-1){\bar{n}} = \mu_i + \kappa_i - 1/2 = (\mu_i-\mu_i')/2=-\alpha_i/2,\\
\nu_i' &= \mu_i' + (2\kappa_i-1){\bar{n}} = \mu_i' + \kappa_i -1/2 = (\mu'_i-\mu_i)/2=+\alpha_i/2.  
\end{aligned}
\end{equation}
That $b_1=b_2=b_3=c/2$ follows by substituting these expressions (and $\bar
n=1/2$) into the formula~(\ref{eq:zetasntoab}) for~$b_i$.
\end{proof}

In the $\bar n=1/2$ case the integration procedure for proper gDH systems
based on Theorem~\ref{thm:integration} specializes to the following
procedure for proper DH systems, when enhanced by Lemma~\ref{lem:suppl}.
(The formulas for $\mu_i,\mu'_i$ are as in~(\ref{eq:pair}).)

\begin{theorem}
  Any proper Darboux--Halphen system\/ ${\rm
    DH}(\alpha_1,\alpha_2,\alpha_3\,|\,c)$ of the form\/ {\rm(\ref{eq:DH})} can
  be integrated parametrically as follows.  For any exponent offset
  vector\/ $\kappa\in\mathbb{C}^3$ with\/ $\kappa_1+\kappa_2+\kappa_3=1$,
  let
  \begin{displaymath}
    \mu_i = (1-\alpha_i)/2 - \kappa_i,\qquad
    \mu'_i = (1+\alpha_i)/2 - \kappa_i,
  \end{displaymath}
  and let\/ $f^{(1)},f^{(2)}$ be any pair of independent analytic local
  solutions of a Papperitz equation\/~{\rm(\ref{eq:PE}\rm)} with
  exponents\/ $(\mu_1,\nobreak \mu_1'; \allowbreak \mu_2,\nobreak \mu_2';
  \allowbreak \mu_3,\nobreak \mu_3')$, and {\rm(}distinct\/{\rm)} singular
  points\/ $t_1,t_2,t_3\in\mathbb{P}^1$.  The independent and dependent
  variables, $\tau$ and\/~$x_i$, $i=1,2,3$, are then defined as functions
  of the local parameter\/~$t$ by
  \begin{align*}
    \tau(t) &= f^{(1)}(t) / f^{(2)}(t),\\
    x_i(t)&=(2/c)\,\frac{{\rm d}}{{\rm d}\tau}\log f_i,
  \end{align*}
  where\/ $f_i=\Delta_if^{(2)}$ and
    \begin{displaymath}
      \Delta_i(t) \defeq  
\left[-\,\frac{(t_i-t)(t_j-t_k)}{(t-t_j)(t_k-t_i)}\right]^{\mu_j}
\left[-\,\frac{(t-t_i)(t_j-t_k)}{(t_k-t)(t_i-t_j)}\right]^{\mu_k}.
    \end{displaymath}
  In this formula, $i,j,k$ is any cyclic permutation of\/ $1,2,3$.
\label{thm:integration0}
\end{theorem}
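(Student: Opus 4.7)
The plan is to derive Theorem~\ref{thm:integration0} as a specialization of Theorem~\ref{thm:integration}, using Lemma~\ref{lem:suppl} to rephrase the defining ODE for $\tau=\tau(t)$ as a ratio of two independent Papperitz solutions. First I would check the consistency of the choice of exponents: with $\mu_i = (1-\alpha_i)/2 - \kappa_i$ and $\mu_i' = (1+\alpha_i)/2 - \kappa_i$, Fuchs's relation reads
\begin{displaymath}
\sum_{i=1}^{3}(\mu_i+\mu_i') = \sum_{i=1}^{3}\bigl(1 - 2\kappa_i\bigr) = 3 - 2(\kappa_1+\kappa_2+\kappa_3) = 1,
\end{displaymath}
since $\kappa_1+\kappa_2+\kappa_3=1$ by hypothesis. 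A direct computation also shows $(1-\mu_i-\mu_i')/2 = \kappa_i$, so the prescribed offset vector $\kappa$ is \emph{precisely} the one to which Lemma~\ref{lem:suppl} applies with $\bar n=1/2$.

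Next I would apply Theorem~\ref{thm:integration} with $\bar n=1/2$, the stated $\mu_i,\mu_i'$, and the stated $\kappa$. The theorem provides $\tau=\tau(t)$ satisfying ${\rm d}\tau/{\rm d}t = K^{-2}(t)f^{-1/\bar n}(t) = K^{-2}(t)/f^2(t)$ for any nonzero PE solution $f$, and defines $x_i$ as the logarithmic derivative $c^{-1}\bar n^{-1}({\rm d}/{\rm d}\tau)\log f_i$ with $f_i=\Delta_i f$. By Lemma~\ref{lem:suppl} this equation for $\tau$ is equivalent to $\tau = f^{(1)}/f^{(2)}$ for any independent pair $f^{(1)},f^{(2)}$ of PE solutions (with $f$ identified up to scalar with $f^{(2)}$); and with $\bar n=1/2$ the logarithmic derivative $c^{-1}\bar n^{-1}({\rm d}/{\rm d}\tau)\log f_i$ becomes the claimed $(2/c)({\rm d}/{\rm d}\tau)\log f_i$, with $f_i=\Delta_if^{(2)}$ and the very $\Delta_i$ of Theorem~\ref{thm:integration0}. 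Thus the parametrization $(t\mapsto(\tau;x_1,x_2,x_3))$ of Theorem~\ref{thm:integration0} coincides with the one supplied by Theorem~\ref{thm:integration}.

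It remains only to identify the resulting gDH system as the DH system ${\rm DH}(\alpha_1,\alpha_2,\alpha_3\,|\,c)$. From~(\ref{eq:pair}) one has $\nu_i=-\alpha_i/2$ and $\nu_i'=\alpha_i/2$, so that $\rho^{-1}=\nu_1+\nu_2+\nu_3+\bar n=(1-\alpha_1-\alpha_2-\alpha_3)/2$, which is nonzero by the assumption that the DH system is proper. Substituting into~(\ref{eq:zetasntoab}) gives
\begin{displaymath}
a_i = \frac{-\alpha_i/2}{(1-\alpha_1-\alpha_2-\alpha_3)/2}\,c = -\,\frac{\alpha_i\,c}{1-\alpha_1-\alpha_2-\alpha_3},
\end{displaymath}
matching the DH parametrization~(\ref{eq:DH}); and the second half of Lemma~\ref{lem:suppl} already supplies $b_1=b_2=b_3=c/2$. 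Hence the gDH system produced by Theorem~\ref{thm:integration} is exactly ${\rm DH}(\alpha_1,\alpha_2,\alpha_3\,|\,c)$, and the parametric integration is complete.

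There is no genuine obstacle in this argument, since the heavy lifting is done in Theorem~\ref{thm:integration}; the only subtlety is the bookkeeping of identifying $f$ with a suitably scaled $f^{(2)}$ in the reduction $\tau = f^{(1)}/f^{(2)}$, but this is precisely the content of Lemma~\ref{lem:suppl} and involves only the elementary Wronskian identity $W(t)\propto\prod_i(t-t_i)^{1-\mu_i-\mu_i'}$ used there.
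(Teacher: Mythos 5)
Your proposal is correct and follows exactly the route the paper intends: the paper presents Theorem~\ref{thm:integration0} as the $\bar n=1/2$ specialization of Theorem~\ref{thm:integration} enhanced by Lemma~\ref{lem:suppl}, and you have simply made the bookkeeping explicit (Fuchs's relation, the identification $\kappa_i=(1-\mu_i-\mu_i')/2$, the offset exponents $\nu_i=\mp\alpha_i/2$ from~(\ref{eq:pair}), and the parameter match via~(\ref{eq:zetasntoab})). All of your computations check out against the paper's formulas.
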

\begin{remark*}
The factors $\Delta_i$ and the functions~$f_i$ are defined even if one of
$t_1,t_2,t_3$ equals~$\infty$.  As in Theorem~\ref{thm:integration}, if
$(t_1,t_2,t_3)=(0,1,\infty)$ and
$\mu_1+\mu_2+\mu_3\eqdef\allowbreak\rho^{-1}$, i.e.,
$\rho=2/\allowbreak(1-\nobreak\alpha_1-\nobreak\alpha_2-\nobreak\alpha_3)$,
the~$f_i$ will satisfy
$\left({f_1}^\rho,{f_2}^\rho,{f_3}^\rho\right)=\allowbreak{f_1}^\rho\times\allowbreak
\bigl(1,\,(1-\nobreak t)/t,\,-1/t\bigr)$.
\end{remark*}

This parametric integration of any proper DH system is complete, since by
examination it includes three free parameters.  There are three (rather
than four) in the choice of $f^{(1)},f^{(2)}$, and hence in
$t\mapsto(\tau;x_1,x_2,x_3)$, as multiplying $f^{(1)},f^{(2)}$ by a common
factor affects neither $\tau=\tau(t)$ nor $x=x(t)$.

The integration can be `customized' in the sense that the offset
vector~$\kappa$ can be chosen to make the exponents $(\mu_1,\nobreak
\mu_1'; \allowbreak \mu_2,\nobreak \mu_2'; \allowbreak \mu_3,\nobreak
\mu_3')$ agree with those of any of the three normal-form
P\nobreakdash-symbols: (\ref{eq:GHPsymbol}), (\ref{eq:SAHPsymbol}),
and~(\ref{eq:thirdHPsymbol}).  That is, the Papperitz equation can be
chosen to agree with any of the normal forms (\ref{eq:GHE}),
(\ref{eq:SAHE}), and~(\ref{eq:thirdHE}) of the~PE, if one also chooses
$(t_1,t_2,t_3)=(0,1,\infty)$.  By examination, the first two of the
respective offset vectors are $(1-\nobreak\alpha_1, \allowbreak
1-\nobreak\alpha_2, \allowbreak \alpha_1+\nobreak\alpha_2)/2$
and~$(0,0,1)$.  But when the solutions $f^{(1)},f^{(2)}$ of the normal-form
PE are expressed in~terms of the Gauss hypergeometric function~${}_2F_1$,
the same formulas for $\tau=\tau(t)$ and $x=x(t)$ in~terms of~${}_2F_1(t)$
will always result.  Details are left to the reader.

Like the parametric integration in~\S\,\ref{subsec:fromPEtogDH} of a proper
gDH system, this integration of a proper DH system constructs the general
solution but is not adapted to constructing nongeneric, special solutions:
coincident ones, in the sense of Definition~\ref{def:singular}.  The ray
solutions (\ref{eq:raysoln}) and~(\ref{eq:newscale}), each proportional to
$(\tau-\tau_*)^{-1}$ and having coincident components, have been mentioned.
However, since any proper DH system is a proper gDH system with
$\nu_i=-\alpha_i/2$, $\nu_i'=\alpha_i/2$, and~$\bar n=1/2$, a
specialization of Theorem~\ref{thm:gDHbasecoin} on the proper ${\rm
  gDH}\leftrightarrow{\rm gSE}$ correspondence will hold.  In the DH case
the gSE specializes to what will be called the~SE\null.

\begin{definition}
The (triangular) Schwarzian equation, called the SE and denoted by ${\rm
  S}_{t_1,t_2,t_3}(\alpha_1,\alpha_2,\alpha_3)$, is the following nonlinear
third-order equation for $t=t(\tau)$:
\begin{equation}
\{\tau,t\} \defeq -\,\frac{\dddot t}{\dot t^3}+\frac32\left(\frac{\ddot t}{\dot t^2}\right)^2\!
=
\frac12\sum_{i=1}^3\left[\frac{1-\alpha_i^2}{(t-t_i)^2}+\frac{(1-\alpha_i^2)-(1-\alpha_j^2)-(1-\alpha_k^2)}{(t-t_j)(t-t_k)}\right],
\label{eq:SE}
\end{equation}
the subscripts $j,k$ in the summand being the elements of $\{1,2,3\}$ other
than~$i$.  Here $t_1,t_2,t_3\in\mathbb{P}^1$ are distinct singular values,
$(\alpha_1,\alpha_2,\alpha_3)\in\mathbb{C}^3$ is a vector of angular
parameters, and the left side $\{\tau,t\}=-\{t,\tau\}/\dot t^2$ is the
standard Schwarzian derivative of $\tau$ with respect to~$t$.  The SE is
invariant under each negation $\alpha_i\mapsto\nobreak-\alpha_i$.
\end{definition}

Also, in the case of a proper DH (rather than merely a proper gDH) system,
the $t(\cdot)\mapsto\allowbreak x(\cdot)$ map~(\ref{eq:zerothclaim})
specializes to
\begin{equation}
\begin{split}
  x_i & =c^{-1}\,\frac{\rm d}{{\rm d}\tau}\log
  \left[
    \frac{\dot t}{(t-t_i)^{\alpha_j+\alpha_k}\,(t-t_j)^{1-\alpha_j}\,(t-t_k)^{1-\alpha_k}}
    \right]\\
     &= c^{-1}\,
  \left[ \frac{\ddot t}{\dot t} - \sum_{l=1}^3(1-\alpha_l)\,\frac{\dot
      t}{t-t_l}\right] + c^{-1}(1-\alpha_1-\alpha_2-\alpha_3)
  \,\frac{\dot t}{t-t_i}.
\end{split}
\label{eq:reduction}
\end{equation}
The following is the specialization of Theorem~\ref{thm:gDHbasecoin} to the
DH case.
\begin{theorem}
  For any proper DH system\/ ${\rm DH}(\alpha_1,\alpha_2,\alpha_3\,|\,c)$,
  at any point in the\/ $\tau$\nobreakdash-plane there is a bijection\/
  $x\leftrightarrow t$ between\/ {\rm(i)} its meromorphic local solutions
  $x=x(\tau)$ that are noncoincident and\/ {\rm(ii)} the meromorphic local
  solutions\/ $t=t(\tau)$ of an associated SE of the form\/
  {\rm(\ref{eq:SE})}.  The maps\/ $t(\cdot)\mapsto\allowbreak x(\cdot)$,
  $x(\cdot)\mapsto\allowbreak t(\cdot)$ of the bijection are given by
  Eqs.\ {\rm(\ref{eq:reduction})}, {\rm(\ref{eq:secondclaim})}.  In all of
  this, the choice of {\rm(}distinct\/{\rm)} $t_1,t_2,t_3\in\mathbb{P}^1$
  is arbitrary.  {\rm(}E.g., if\/ $(t_1,t_2,t_3)$ is\/ $(0,1,\infty)$
  then\/ $t=-(x_2-\nobreak x_3)/\allowbreak (x_1-\nobreak x_2)$.{\rm)}
\label{thm:DHbasecoin}
\end{theorem}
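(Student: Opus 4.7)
The plan is to deduce Theorem~\ref{thm:DHbasecoin} as a direct corollary of Theorem~\ref{thm:gDHbasecoin}, applied to the proper gDH system that equals the given proper DH system. Three matchings have to be verified. First, I would invoke the paragraph just before Lemma~\ref{lem:suppl}, which via the birational correspondence (\ref{eq:zetasntoab})--(\ref{eq:abtozetasn}) identifies ${\rm DH}(\alpha_1,\alpha_2,\alpha_3\,|\,c)$ with the proper gDH system whose alternative parameter vector is $(\nu_i,\nu_i';\bar n;c) = (-\alpha_i/2,\,\alpha_i/2;\,1/2;\,c)$. Theorem~\ref{thm:gDHbasecoin} then immediately supplies a bijection between noncoincident meromorphic $x=x(\tau)$ and meromorphic solutions $t=t(\tau)$ of an associated gSE, with the $x(\cdot)\mapsto t(\cdot)$ map given by~(\ref{eq:secondclaim}). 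Since (\ref{eq:secondclaim}) is parameter-free it already coincides with the formula stated in Theorem~\ref{thm:DHbasecoin}.

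Second, I would specialize the $t(\cdot)\mapsto x(\cdot)$ formula~(\ref{eq:zerothclaim}) to the DH case. A direct substitution of $\bar n=1/2$ and $\nu_l=-\alpha_l/2$ into the exponents appearing inside the logarithm in~(\ref{eq:zerothclaim}) gives $-(\nu_j+\nu_k)/\bar n = \alpha_j+\alpha_k$ and $(\nu_l+\bar n)/\bar n = 1-\alpha_l$, which transforms~(\ref{eq:zerothclaim}) into~(\ref{eq:reduction}) on the nose. This requires no more than comparing exponents.

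Third, and the only place where any real computation is needed, I would check that the gSE~(\ref{eq:gSE}) reduces to the SE~(\ref{eq:SE}) under the DH specialization. Substituting $\bar n=1/2$, $\nu_i=-\alpha_i/2$, $\nu_i'=\alpha_i/2$ one finds: the leading coefficient $(\bar n-2)=-3/2$; the coefficient $1-(\nu_i+\bar n)-(\nu_i'+\bar n) = 1-2\bar n-(\nu_i+\nu_i')$ vanishes, so the linear-in-$\ddot t/\dot t^2$ term drops; and, crucially,
\begin{equation*}
(\nu_i+\bar n)(\nu_i'+\bar n) \;=\; (\nu_i-\bar n)(\nu_i'-\bar n) \;=\; (1-\alpha_i^2)/4,
\end{equation*}
by virtue of the antisymmetry $\nu_i=-\nu_i'$. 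Combining this with the overall prefactor $1/\bar n = 2$ turns~(\ref{eq:gSE}) into $\dddot t/\dot t^3 - (3/2)(\ddot t/\dot t^2)^2 + (1/2)\sum_i[\,\cdots\,]=0$, which in view of the definition $\{\tau,t\} = -\dddot t/\dot t^3 + (3/2)(\ddot t/\dot t^2)^2$ is precisely~(\ref{eq:SE}). The main (though still elementary) technical point is the sign-agnostic identity $(\nu_i+\bar n)(\nu_i'+\bar n)=(\nu_i-\bar n)(\nu_i'-\bar n)$; its validity relies on the DH-specific antisymmetry of the exponent pair $(\nu_i,\nu_i')$, and this collapse is exactly the algebraic source of the appearance of the Schwarzian (rather than some weaker) derivative, equivalently of the extra M\"obius covariance enjoyed by DH systems but not by generic gDH systems.
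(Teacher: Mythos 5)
Your proposal is correct and follows exactly the route the paper takes: the paper presents Theorem~\ref{thm:DHbasecoin} as the specialization of Theorem~\ref{thm:gDHbasecoin} to $(\nu_i,\nu_i';\bar n)=(-\alpha_i/2,\alpha_i/2;1/2)$, with the reduction of (\ref{eq:zerothclaim}) to (\ref{eq:reduction}) and of the gSE to the SE stated (but not computed) in the surrounding text. Your explicit verifications, including the key identity $(\nu_i+\bar n)(\nu_i'+\bar n)=(\nu_i-\bar n)(\nu_i'-\bar n)=(1-\alpha_i^2)/4$ forced by $\nu_i=-\nu_i'$, are all accurate.
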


The proper $\textrm{gDH}\leftrightarrow\textrm{gSE}$ correspondence thus
specializes to a proper $\textrm{DH}\leftrightarrow\textrm{SE}$
correspondence.

The SE (\ref{eq:SE}) is familiar from conformal mapping
theory~\cite{Nehari52}.  It is satisfied by any meromorphic function
$t=t(\tau)$ that maps the interior of a hyperbolic triangle with angles
$\pi(\alpha_1,\alpha_2,\alpha_3)$ in the complex $\tau$\nobreakdash-plane,
where $\alpha_i\ge\nobreak0$ for all~$i$ with
$\alpha_1+\nobreak\alpha_2+\allowbreak\alpha_3<\nobreak1$, to a disk in the
$t$\nobreakdash-plane that is bounded by the circle through $t_1,t_2,t_3$.
It is most familiar in the case when $t_1,t_2,t_3$ are taken to be the
collinear points $0,1,\infty$, and the disk becomes the upper half
$t$\nobreakdash-plane.

When $(\alpha_1,\alpha_2,\alpha_3)=\allowbreak
(\frac1{N_1},\frac1{N_2},\frac1{N_3})$, where each~$N_i$ is a~positive
integer or~$\infty$, with $\alpha_1+\nobreak \alpha_2+\allowbreak
\alpha_3<\nobreak 1$, a standard result from conformal mapping theory
applies.  In this case the function $t$~(and hence each~$x_i$,
by~(\ref{eq:reduction})) can be extended from the hyperbolic triangle in
the $\tau$\nobreakdash-plane by the Schwarz reflection principle, i.e.\ by
reflecting the triangle and its images repeatedly through their sides,
thereby tessellating a larger domain: the interior of a disk or half-plane.
By definition, the group of reflections acting on this domain is a triangle
group $\Delta(N_1,N_2,N_3)<{\it PSL}(2,\mathbb{C})$, under which
$t$~(though not $x_1,x_2,x_3$) will be automorphic, i.e., invariant.

As will be explained 
%in~\S\,\ref{sec:modular}, 
in Part~II,
in the upper half-plane case
the logarithmic derivatives $x_1,x_2,x_3$ can be viewed as quasi-modular
forms, with an affine-linear transformation law under
$\Delta(N_1,N_2,N_3)<{\it PSL}(2,\mathbb{R})$ that resembles
Eq.~(\ref{eq:affinelinear}) below.  For several choices of $(N_1,N_2,N_3)$
for which $\Delta(N_1,N_2,N_3)$ is an arithmetic group, it will be possible
to express each~$x_i$ as an explicit $q$\nobreakdash-series.

\smallskip
The Schwarzian derivative $\{\tau,t\}$ is invariant under M\"obius
transformations of~$\tau$, which implies that the SE~(\ref{eq:SE}) is
invariant under a larger group than is the gSE~(\ref{eq:gSE}): not merely
under affine transformations of the independent variable, but under any
invertible point transformation $\tau\mapsto\tau'=(A\tau+\nobreak
B)/\allowbreak(C\tau+\nobreak D)$.  By the $t(\cdot)\mapsto\allowbreak
x(\cdot)$ map~(\ref{eq:reduction}), this $\tau\mapsto\tau'$ induces a
transformation of the solution $x=x(\tau)$ of the associated proper DH
system ${\rm DH}(\alpha_1,\alpha_2,\alpha_3\,|\,c)$.  But the
transformation can be investigated without reference to the
SE~(\ref{eq:SE}), as follows.

\begin{theorem}
  Any {\rm(}not necessarily proper\/{\rm)} Darboux--Halphen system\/ $\dot
  x=Q(x)$, denoted by\/ ${\rm DH}(a_1,a_2,a_3;c)$, has a\/
  {\rm3}\nobreakdash-parameter group of Lie point symmetries: for any\/
  $\pm\left(\begin{smallmatrix} A & B \\ C & D
  \end{smallmatrix}\right)\in{\it PSL}(2,\mathbb{C})$, the system is
  stabilized by a point transformation of\/
  $\mathbb{C}\times\mathbb{C}^3\ni (\tau;x)$ that faithfully represents\/
  $\pm\left(\begin{smallmatrix} A & B \\ C & D
  \end{smallmatrix}\right)$, namely
\begin{equation}
\label{eq:affinelinear}
\begin{split}
  \tau&\mapsto\tau'=(A\tau+B)/(C\tau+D),\\
  x_i&\mapsto x_i'=  (C\tau+D)^2 x_i + (c/2)\,C(C\tau+D), \qquad i=1,2,3.
\end{split}
\end{equation}
The corresponding infinitesimal statement is that the normalizer\/
$\mathcal{N}(v)$ of the dynamical vector field\/ $v=\partial_\tau +
\sum_{i=1}^3 Q_i(x_1,x_2,x_3)\partial_{x_i}$ on\/
$\mathbb{C}\times\mathbb{C}^3$ contains the\/
    {\rm3}\nobreakdash-dimensional Lie algebra of vector fields generated
    by\/ $v_0,v_1,v_2$, given by
\begin{displaymath}
  \partial_\tau,
  \qquad 
  \tau\partial_\tau - \sum_{i=1}^3 x_i\partial_{x_i},
  \qquad 
  (c/2)\,\tau^2\partial_\tau - \sum_{i=1}^3(c\,\tau x_i+1)\partial_{x_i}
\end{displaymath}
and having commutators
\begin{displaymath}
  [v_0,v_1]=v_0,\qquad [v_0,v_2]=c\,v_1,\qquad [v_1,v_2]=v_2,
\end{displaymath}
which for\/ $c\neq0$, e.g.\ for a \emph{proper} DH system, is isomorphic
to\/ $\mathfrak{sl}(2,\mathbb{C})$.
\label{cor:DH}
\end{theorem}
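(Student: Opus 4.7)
The plan is to establish the finite action of $PSL(2,\mathbb C)$ as a symmetry first, and then read off the infinitesimal generators and their commutators. For the finite part, the quickest route in the \emph{proper} DH case is to invoke Theorem~\ref{thm:DHbasecoin}: noncoincident solutions $x(\tau)$ correspond bijectively to solutions $t(\tau)$ of the Schwarzian equation~(\ref{eq:SE}). Since the Schwarzian derivative $\{\tau,t\}$ is invariant under Möbius reparametrizations of $\tau$, the SE is manifestly invariant under $\tau\mapsto\tau'=(A\tau+B)/(C\tau+D)$. Pulling this symmetry back through the reconstruction formula~(\ref{eq:reduction}) via the chain rule (using $d\tau/d\tau'=(C\tau+D)^{-2}$ when $AD-BC=1$) shows that $\ddot t/\dot t$ picks up the additive term $2C(C\tau+D)$ while every $\dot t/(t-t_l)$ scales by $(C\tau+D)^2$; collecting these corrections in~(\ref{eq:reduction}) reproduces exactly~(\ref{eq:affinelinear}).

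To cover the \emph{improper} case and coincident solutions, I would avoid the SE entirely and do a direct substitution into the DH system. Here the key preparatory step is to rewrite the $b_1=b_2=b_3=c/2$ specialization of~(\ref{eq:gDH}) using the identity $x_i(x_{i+1}+x_{i+2})-x_{i+1}x_{i+2}=x_i^2+(x_i-x_{i+1})(x_{i+2}-x_i)$, obtaining
\begin{equation*}
\dot x_i \;=\; (c/2)\,x_i^2 \,+\, (c/2-a_i)\,(x_i-x_{i+1})(x_{i+2}-x_i),
\end{equation*}
which holds without any properness restriction. Two structural observations then make the verification nearly mechanical: the additive translation in~(\ref{eq:affinelinear}) is independent of $i$, so $x_i'-x_j'=(C\tau+D)^2(x_i-x_j)$ and the cross product $(x_i'-x_{i+1}')(x_{i+2}'-x_i')$ is homogeneous of weight $4$ in $(C\tau+D)$; and $d/d\tau'=(C\tau+D)^2\,d/d\tau$. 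After expansion, $dx_i'/d\tau'$ and $Q_i(x')$ match power-of-$(C\tau+D)$ by power-of-$(C\tau+D)$, with the $x_i^2$-piece producing precisely the cross terms needed to cancel the terms coming from differentiating the additive piece. This direct computation also makes transparent that the whole verification is local and purely algebraic, so no continuity or density argument is needed to extend across parameter subvarieties.

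For the infinitesimal statement, I would realize $v_0, v_1, v_2$ as tangent vectors at the identity of the three standard one-parameter subgroups of $PSL(2,\mathbb C)$ — the translation $(1,\epsilon,0,1)$, the dilation $(e^{\epsilon/2},0,0,e^{-\epsilon/2})$, and the parabolic at infinity $(1,0,-\epsilon,1)$ — and differentiate~(\ref{eq:affinelinear}) at $\epsilon=0$. This yields the claimed coordinate expressions for $v_0, v_1, v_2$ on $\mathbb C\times\mathbb C^3$ directly. The three commutation relations $[v_0,v_1]=v_0$, $[v_0,v_2]=c\,v_1$, and $[v_1,v_2]=v_2$ are then verified by a brief coordinate computation, and for $c\neq 0$ a rescaling (essentially $v_2\mapsto v_2/c$) brings these relations into standard $\mathfrak{sl}(2,\mathbb C)$ form. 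The main obstacle throughout is bookkeeping: getting the normalization of the additive $(C\tau+D)$-term correct and making sure powers of $(C\tau+D)$ balance at each stage of the finite verification. The SE-based derivation of the first part, when available, is morally the clearest: the extra Lie symmetry of DH beyond the affine covariance of the general gDH system is a direct inheritance of the Möbius invariance of the Schwarzian derivative.
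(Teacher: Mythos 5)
Your overall strategy --- trace the M\"obius symmetry back to the SE (equivalently, to $\tau=f^{(1)}/f^{(2)}$) in the proper case, fall back on direct substitution into the rewritten system $\dot x_i=(c/2)x_i^2+(c/2-a_i)(x_i-x_{i+1})(x_{i+2}-x_i)$ to cover improper systems and coincident solutions, and then differentiate the finite action to obtain the generators --- is essentially the paper's proof, which likewise rests on verification by direct computation together with the observation that $\pm\left(\begin{smallmatrix}A&B\\C&D\end{smallmatrix}\right)$ acts on the pair $(f^{(1)},f^{(2)})$ with $\tau=f^{(1)}/f^{(2)}$.

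There is, however, a concrete point at which your claimed verifications would fail: the finite law~(\ref{eq:affinelinear}) as printed and the stated generator $v_2$ carry \emph{reciprocal} normalizations of the additive term, so you cannot simultaneously ``verify~(\ref{eq:affinelinear}) power by power'' and ``obtain $v_2$ by differentiating~(\ref{eq:affinelinear}).'' Carrying your chain-rule computation through~(\ref{eq:reduction}) gives $x_i\mapsto (C\tau+D)^2x_i+(2/c)\,C(C\tau+D)$, because the additive contribution is $c^{-1}\cdot 2C(C\tau+D)$ coming from $\ddot t/\dot t$; it does not reproduce the printed $(c/2)\,C(C\tau+D)$ unless $c=\pm2$. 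Your direct substitution shows the same thing: with $w=C\tau+D$ and $x_i'=w^2x_i+\kappa Cw$ one gets $dx_i'/d\tau'=w^4\dot x_i+2Cw^3x_i+\kappa C^2w^2$ versus $Q_i(x')=w^4\dot x_i+c\kappa\,Cw^3x_i+(c/2)\kappa^2C^2w^2$, and the $w^3$- and $w^2$-terms balance only for $\kappa=2/c$. Finally, differentiating the printed law along the parabolic subgroup yields $-(c\tau x_i+c^2/4)\partial_{x_i}$, not the stated $-(c\tau x_i+1)\partial_{x_i}$. Done honestly, your own method therefore \emph{corrects} the finite law to have additive term $(2/c)\,C(C\tau+D)$, which is the version consistent with the stated $v_0,v_1,v_2$ and their commutators (one checks $[v_2,v]=-c\tau\,v$ directly from Euler's relation $\sum_jx_j\partial_{x_j}Q_i=2Q_i$ together with $\sum_j\partial_{x_j}Q_i=cx_i$). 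Since you explicitly flag ``getting the normalization of the additive $(C\tau+D)$-term correct'' as the main obstacle and then assert that everything matches, this is a genuine gap in the writeup rather than mere bookkeeping. (A minor slip in the same vein: you wrote $d\tau/d\tau'=(C\tau+D)^{-2}$; that is $d\tau'/d\tau$.)
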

\begin{proof}
  Invariance of any DH system, i.e., any gDH system~(\ref{eq:gDH})
  satisfying $b_1=b_2=b_3=c/2$, under the stated action
  $(x,\tau)\mapsto(x',\tau')$ of any $\pm\left(\begin{smallmatrix} A & B
    \\ C & D
  \end{smallmatrix}\right)\in{\it PSL}(2,\mathbb{C})$, can be verified
  by direct computation.  A closely related fact should be mentioned: any
  parametric solution $(\tau,x)=(\tau(t),x(t))$ of ${\rm
    DH}(\alpha_1,\alpha_2,\alpha_3\,|\,c)$, constructed by the integration
  scheme of Theorem~\ref{thm:integration0}, is mapped to another such
  solution, as one sees by considering the effects of
  \begin{displaymath}
    \left(
    \begin{array}{c}
      f^{(1)} \\ f^{(2)}
    \end{array}
    \right)
    \mapsto 
    \pm
    \left(
    \begin{array}{cc}
      A & B \\ C & D
    \end{array}
    \right)
    \left(
    \begin{array}{c}
      f^{(1)} \\ f^{(2)}
    \end{array}
    \right),
  \end{displaymath}
    taking into account that $\tau(t)=f^{(1)}(t)/f^{(2)}(t)$.  

    Each of the vector fields $v_0,v_1,v_2$ is an infinitesimal symmetry on
    $\mathbb{C}\times\mathbb{C}^3\ni (\tau;x)$, because each generates a
    1\nobreakdash-parameter subgroup of ${\it PSL}(2,\mathbb{C})$.
    Alternatively, that $v_i\in\mathcal{N}(v)$ for each~$i$ can be verified
    directly: the Lie bracket (i.e., commutator) of each~$v_i$ with~$v$
    turns~out to be a scalar multiple of~$v$.
\end{proof}

Any gDH system is invariant under $v_0$ and~$v_i$, the first two of the
three infinitesimal Lie point transformations above, because it is
stabilized by translation of~$\tau$, and by $\tau\mapsto\tau/\lambda$ when
accompanied by $x\mapsto\lambda x$.  That~is, it is covariant under affine
transformations.  The point of the theorem is that in the DH case, the
system is also covariant under \emph{projective} transformations of~$\tau$
(infinitesimal or otherwise).

What is remarkable is that although a non\nobreakdash-DH gDH system may have only a
2\nobreakdash-parameter group of Lie point symmetries, if proper it will
nonetheless be integrable by the Papperitz-based integration scheme
of~\S\,\ref{subsec:fromPEtogDH}.

For any proper DH system, the existence of a Lie symmetry algebra
isomorphic to $\mathfrak{sl}(2,\mathbb{C})$ is reminiscent of the Chazy-III
equation.  The 3\nobreakdash-dimensionality of the Chazy-III symmetry
algebra facilitates integration, though $\mathfrak{sl}(2,\mathbb{C})$ is
not solvable and the usual Lie integration technique cannot be used.  (See
\cite{Clarkson96}, and also~\cite{Ibragimov94}.)  The integration of ${\rm
  DH}(\alpha_1,\alpha_2,\alpha_3\,|\,c)$ extends that of Chazy-III, as will
be seen
%in~\S\,\ref{sec:Chazy}.
in Part~II\null.

\subsection{Alternative DH systems}

It is useful to compare the standardized proper DH system~(\ref{eq:DH}),
which appeared for the first time in~\cite{Maier15}, to other proper DH
systems in the literature.  In 1881, Halphen~\cite{Halphen1881c} used
${}_2F_1$ (or equivalently, theta constants) to integrate a
3\nobreakdash-dimensional HQDS that is identical to~(\ref{eq:DH}), up~to a
trivial reparametrization.  Halphen's system was a generalization of a
system introduced in 1878, in Darboux's analysis of triply orthogonal
surfaces.  (Darboux's system had
$\alpha_1=\alpha_2=\allowbreak\alpha_3=0$.)  Halphen's system has been the
basis of several recent papers, e.g.,~\cite{Harnad2000}.

An alternative form of Halphen's system has appeared in other recent papers
(e.g.,~\cite{Ablowitz2003,Ohyama96,Valls2006b}).  This form is
\begin{gather}
\label{eq:altDH}
\left\{
\begin{aligned}
  \dot y_1 = y_1^2 + (y_1-y_2)(y_3-y_1) - T^2,\\
  \dot y_2 = y_2^2 + (y_2-y_3)(y_1-y_2) - T^2,\\
  \dot y_3 = y_3^2 + (y_3-y_1)(y_2-y_3) - T^2,\\
\end{aligned}
\right.
\\
T^2 \defeq  \sum_{(i,j,k)}
\alpha_i^2(y_i-y_j)(y_k-y_i), \nonumber
\end{gather}
the summation being over cyclic permutations of~$(1,2,3)$.  The
system~(\ref{eq:altDH}) arises naturally in the symmetry reduction of the
self-dual Yang--Mills equations~\cite{Ablowitz2003}, and differs
from~(\ref{eq:DH}) except in the Darboux case
$\alpha_1=\alpha_2=\alpha_3=0$.  But the two systems are equivalent under
${\it GL}(3,\mathbb{C})$.  The following is an exercise in linear algebra.

\begin{proposition}
  For any\/ $(k_1,k_2,k_3)\in{\mathbb{C}}^3$ with\/ $k_1+k_2+k_3\neq1$,
  define\/ $y=(y_1,y_2,y_3)$ in terms of\/ $x=(x_1,x_2,x_3)$, and vice
  versa, by
  \begin{displaymath}
    \left\{
    \begin{aligned}
      y_1 &= (1-k_2-k_3) x_1 + k_2 x_2 + k_3 x_3,\\
      y_2 &= k_1 x_1 + (1-k_3-k_1) x_2 + k_3 x_3,\\
      y_3 &= k_1 x_1 + k_2 x_2 + (1-k_1-k_2) x_3
    \end{aligned}
    \right.
  \end{displaymath}
  and
  \begin{displaymath}
    \left\{
    \begin{aligned}
      x_1 &= (1-k_1-k_2-k_3)^{-1} \left[+(1-k_1) y_1 - k_2 y_2 - k_3 y_3\right],\\
      x_2 &= (1-k_1-k_2-k_3)^{-1} \left[-k_1 y_1 + (1-k_2) y_2 - k_3 y_3\right],\\
      x_3 &= (1-k_1-k_2-k_3)^{-1} \left[-k_1 y_1 - k_2  y_2 + (1-k_3) y_3\right].\\
    \end{aligned}
    \right.
  \end{displaymath}
  Then\/ $x=x(\tau)$ will satisfy a proper Darboux--Halphen system\/ ${\rm
    DH}(\alpha_1,\alpha_2,\alpha_3)$ in the form\/ {\rm(\ref{eq:DH})}, with
  the normalization\/ $c=2$, if and only if\/ $y=y(\tau)$ satisfies
  \begin{displaymath}
    \left\{
    \begin{aligned}
      \dot y_1 = y_1^2 + (y_1-y_2)(y_3-y_1) - T^2,\\
      \dot y_2 = y_2^2 + (y_2-y_3)(y_1-y_2) - T^2,\\
      \dot y_3 = y_3^2 + (y_3-y_1)(y_2-y_3) - T^2,\\
    \end{aligned}
    \right.
    \end{displaymath}
  where, with\/ $\rho\defeq 2/(1-\alpha_1-\alpha_2-\alpha_3)$ as usual,
    \begin{displaymath}
    T^2 \defeq  \sum_{(i,j,k)} 
    \frac{-k_i(k_i+\rho\alpha_i)}{(1-k_1-k_2-k_3)^2}(y_i-y_j)(y_k-y_i).
  \end{displaymath}
\label{prop:nontrad}
\end{proposition}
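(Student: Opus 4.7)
The proposition reduces to linear algebra, simplified dramatically by the observation that the change of variables acts very cleanly on differences. In compact matrix form the forward map reads $y = (K I + \mathbf{1} k^\top) x$, where $K \defeq 1 - k_1 - k_2 - k_3$, $\mathbf{1} = (1,1,1)^\top$, and $k = (k_1,k_2,k_3)^\top$; it is invertible iff $K \neq 0$, and direct multiplication of matrices confirms the stated inverse formulas. Introducing the linear functional $s \defeq k_1 x_1 + k_2 x_2 + k_3 x_3$, the forward map collapses to the one-line formula $y_i = K x_i + s$, whence the two key identities
\begin{equation*}
  y_i - y_j = K(x_i - x_j), \qquad (y_i - y_j)(y_k - y_i) = K^2 Q_i(x),
\end{equation*}
writing $Q_i(x) \defeq (x_i - x_j)(x_k - x_i)$. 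Substituting the second identity into the proposition's $T^2$ eliminates the $K^{-2}$ factor and gives the clean form $T^2 = -\sum_{(i,j,k)} k_i(k_i + \rho\alpha_i)\, Q_i(x)$, which is now a polynomial in $x$ rather than $y$.

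The dynamical equivalence then follows by direct computation. On one side, $\dot y_i = K \dot x_i + \sum_l k_l \dot x_l$; substituting the DH equations $\dot x_l = x_l^2 + (1+\rho\alpha_l) Q_l(x)$ (i.e.\ (\ref{eq:DH}) with $c=2$) makes $\dot y_i$ a polynomial in $x_1, x_2, x_3$. On the other side, the altDH-like right-hand side expands as $y_i^2 + (y_i-y_j)(y_k-y_i) - T^2 = (K x_i + s)^2 + K^2 Q_i(x) - T^2$, also polynomial in $x$ once the rewriting of $T^2$ above is invoked. Equating the two reduces the claim to a polynomial identity in $x_1, x_2, x_3$ depending on the parameters $k_1,k_2,k_3,\alpha_1,\alpha_2,\alpha_3$.

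The main obstacle is the polynomial bookkeeping: one must check not only that the claimed $T^2$ emerges, but that it emerges \emph{uniformly} in $i$, since a single $T^2$ must work for all three equations. The $i$-dependent contributions from $(Kx_i + s)^2$ and from the coefficient $(1+\rho\alpha_i)$ in $\dot x_i$ must conspire, via the symmetric sums $\sum_l k_l x_l^2$ and $\sum_l k_l(1+\rho\alpha_l) Q_l(x)$, to leave a single common $T^2$ matching the proposition's formula. This verification is tedious but mechanical, and the use of a computer algebra system is advisable, in line with the recommendations made elsewhere in the paper. The reverse direction of the biconditional is automatic, given the invertibility of the linear coordinate change.
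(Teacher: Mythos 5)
Your structural reduction is correct and is genuinely the efficient way to organize the computation: with $K\defeq 1-k_1-k_2-k_3$ and $s\defeq k_1x_1+k_2x_2+k_3x_3$ the forward map is indeed $y_i=Kx_i+s$, so differences scale by $K$, the $K^{-2}$ in $T^2$ cancels, and everything reduces to a polynomial identity in $x$. (The paper itself offers no proof, only the remark that the statement is ``an exercise in linear algebra,'' so your setup is already more than it provides.) The gap is precisely the step you defer to a computer algebra system. The $i$\nobreakdash-dependent contributions do \emph{not} conspire to leave a single common $T^2$ for general $(k_1,k_2,k_3)$. Writing $T^2_{(i)}\defeq y_i^2+(y_i-y_j)(y_k-y_i)-\dot y_i$ for the quantity that the $i$\nobreakdash-th equation forces $T^2$ to equal, your own identities give, after the cancellation of the terms $\sum_l k_l\rho\alpha_l Q_l(x)$,
\begin{displaymath}
T^2_{(1)}-T^2_{(2)} \;=\; K\,(x_1-x_2)\,\bigl[(2k_1+\rho\alpha_1)(x_1-x_3)+(2k_2+\rho\alpha_2)(x_2-x_3)\bigr],
\end{displaymath}
and cyclically for the other pairs. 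Since $K\neq0$, these differences vanish identically in $x$ only when $2k_l+\rho\alpha_l=0$ for every $l$, i.e.\ only for the single choice $(k_1,k_2,k_3)=-(\rho/2)(\alpha_1,\alpha_2,\alpha_3)$ that the paper invokes immediately after the proposition to recover~(\ref{eq:altDH}).

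A concrete failure: take $\alpha_1=\alpha_2=\alpha_3=0$ (so $\rho=2$ and (\ref{eq:DH}) with $c=2$ is the Darboux system), $k=(\tfrac12,0,0)$, and the initial point $x=(1,0,0)$. Then $\dot x=(0,0,0)$, hence $\dot y_1=0$; but $y=(1,\tfrac12,\tfrac12)$ gives $T^2=\tfrac14$ and $y_1^2+(y_1-y_2)(y_3-y_1)-T^2=1-\tfrac14-\tfrac14=\tfrac12\neq0$. So the polynomial identity you reduce the claim to is false in the stated generality, and a proof that ends by asserting the CAS check will succeed cannot stand: the check does not succeed. The proposition needs either the hypothesis $k_i=-(\rho/2)\alpha_i$ (equivalently $\alpha_i=-k_i/(1-k_1-k_2-k_3)$), under which your framework does complete the argument and the remaining verification $T^2_{(i)}=T^2$ is short, or an $i$\nobreakdash-dependent replacement $T^2_{(i)}$ for the single $T^2$. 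Had you carried out even one numerical instance of the ``tedious but mechanical'' step, the constraint would have surfaced; as written, the proposal proves a false statement.
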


The alternative DH system~(\ref{eq:altDH}) comes from this lemma by letting
$(k_1,k_2,k_3)=-(\rho/2)(\alpha_1,\alpha_2,\alpha_3)$.  

Our choice of~(\ref{eq:DH}) as the standardized proper DH system is to an
extent arbitrary.  In principle one could choose the
system~(\ref{eq:altDH}), or any system produced by
Prop.~\ref{prop:nontrad}.  (The corresponding non-associative
algebras~$\mathfrak{A}$ are of~course isomorphic.)  But (\ref{eq:DH})~has
several advantages.  It is uncomplicated and easy to write; it extends to
the gDH system~(\ref{eq:gDH}), which is itself a slight extension of a
well-known system of Bureau~\cite{Bureau72,Bureau87}; it is naturally
associated to the (nice) GHE normal form of the PE, rather than to any
other form; and moreover it appears in the theory of ODE's satisfied by
modular forms
%(see~\S\,\ref{sec:modular} and~\cite{Maier15}).
(see~\cite{Maier15} and Part~II)\null.

\section{Rational Morphisms of gDH Systems}
\label{sec:transformations}

\subsection{Goals and context}

The chief result obtained in this section is Theorem~\ref{thm:gDHsystems},
accompanied by Tables \ref{tab:sigmas} and~\ref{tab:restrictions}, and
Figure~\ref{fig:only}.  There is a rich collection of nonlinear but
\emph{rational} solution-preserving maps between gDH systems of the
form~(\ref{eq:gDH}), generally with different parameters.  The maps follow
from the PE\nobreakdash-based gDH integration scheme
of~\S\,\ref{subsec:fromPEtogDH}.  They come from liftings of PE's to other
PE's, or equivalently, from liftings of generalized Schwarzian equations
(gSE's) to other gSE's.

Each such rational map, from a gDH system
$\bigl({\mathbb{C}}^{3}\!,\ \dot{\tilde x}=\tilde Q(\tilde x)\bigr)$ to
another gDH system $\bigl({\mathbb{C}}^{3}\!,\ \dot x= Q( x)\bigr)$, is a
rational function $x=\Phi(\tilde x)$.  As such, it has a singular locus
in~$\mathbb{C}^3\ni\tilde x$, on which at~least one of $x_1,x_2,x_3$
diverges.  The map~$\Phi$ takes each local solution $\tilde x = \tilde
x(\tau)$ of $\dot {\tilde x}=\tilde Q(\tilde x)$ (beginning off the
singular locus, and not crossing~it) to a solution $x=x(\tau)$ of $\dot
x=Q(x)$.

The simplest example is the quadratic transformation $x=\Phi(\tilde x)$
given by
\begin{subequations}
\label{eq:quadraticlift}
\begin{equation}
\label{eq:quadraticliftmap}
x_1=\frac12(\tilde x_1 + \tilde x_3),
\qquad
 x_2 = \tilde x_2,
\qquad
x_3 = \frac{\tilde x_2(\tilde x_1+\tilde x_3) - 2\,\tilde x_1\tilde x_3}{2\,\tilde x_2 - (\tilde x_1+\tilde x_3)}, 
\end{equation}
the singular locus of which is the plane $2\tilde x_2-(\tilde x_1+\tilde
x_3)=0$.  This map~$\Phi$ is solution-preserving when the parameter vector
of $\dot{\tilde x}=\tilde Q(\tilde x)$, namely $(\tilde a_1,\nobreak \tilde
a_2,\nobreak \tilde a_3;\allowbreak \tilde b_1,\nobreak \tilde b_2,\nobreak
\tilde b_3;\allowbreak \tilde c)$, satisfies $\tilde a_3=\tilde a_1$,
$\tilde b_3=\tilde b_1$, and the parameter vector of $\dot x=Q(x)$, say
$(a_1,\nobreak a_2,\nobreak a_3;\allowbreak b_1,\nobreak b_2,\nobreak
b_3;\allowbreak c)$ with $c=\tilde c$, is computed from~it by
\begin{equation}
  \begin{alignedat}{5}
        \qquad  a_1 &= 2\,\tilde a_1, & \qquad & a_2&&=\tilde a_2, & \qquad & a_3&&= 2\,\tilde a_1 + \tilde a_2 - \tilde c,\\
     b_1 &= \tilde c - \tilde b_2, & \qquad & b_2&&=\tilde b_2, & \qquad & b_3&&=2\,\tilde b_1  + \tilde b_2- \tilde c.
  \end{alignedat}
\end{equation}
\end{subequations}
One can view (\ref{eq:quadraticliftmap}) as a degree\nobreakdash-2 map from
the projective plane~$\mathbb{P}^2_{\tilde x}$, on which $\tilde x_1,\tilde
x_2,\tilde x_3$ are homogeneous coordinates, to the projective
plane~$\mathbb{P}^2_{x}$, i.e., as
\begin{sizemultline}{\small}
{}\Phi\colon[\tilde x_1: \tilde x_2: \tilde x_3]\mapsto [x_1:x_2:x_3]\defeq{}\\
\bigl[(\tilde x_1+\tilde x_3)(\tilde x_1+\tilde x_3 - 2\,\tilde x_2) :
  2\,\tilde x_2(\tilde x_1+\tilde x_3-2\,\tilde x_2) : -2\bigl(\tilde
  x_2(\tilde x_1+\tilde x_3)-2\,\tilde x_1\tilde x_3\bigr)\bigr].\nonumber
\end{sizemultline}
A notable feature of~(\ref{eq:quadraticliftmap}) is that if no two of
$\tilde x_1,\tilde x_2,\tilde x_3$ coincide, then the quadruple $\tilde
x_1,\nobreak\tilde x_2;\allowbreak\tilde x_3,\nobreak x_3$ will be
equianharmonic: it will have cross-ratio~$2$.

It is difficult to construct solution-preserving maps between HQDS's,
though they can be treated abstractly: R\"ohrl~\cite[\S\,5]{Rohrl77}
defines a category of which the objects are HQDS's and the morphisms are
solution-preserving maps that are germs of analytic functions.  By
definition, any solution-preserving~$\Phi$ from
$\bigl({\mathbb{C}}^{3}\!,\ \dot{\tilde x}=\tilde Q(\tilde x)\bigr)$ to
$\bigl({\mathbb{C}}^{3}\!,\ \dot x= Q( x)\bigr)$ satisfies $Q=\Phi_*\tilde
Q$, i.e., the `determining equations'
\begin{equation}
\label{eq:determining}
   Q(\Phi(\tilde x)) = \left[D\Phi(\tilde x)\right]\tilde Q(\tilde x),
\end{equation}
a~system of coupled nonlinear PDE's for which there exists no~solution
algorithm.  

For $\tilde Q,Q$ of gDH type with suitably constrained parameter values, we
shall construct rational solutions~$\Phi$ of~(\ref{eq:determining}), each
of which can be viewed as a $\mathbb{P}^2_{\tilde x}\to\mathbb{P}^2_{x}$
map, by exploiting the Papperitz-based parametric integration of gDH
systems, as developed in~\S\,\ref{subsec:fromPEtogDH}.  Each~$\Phi$ will
come from a \emph{hypergeometric transformation}: a~lifting of a Papperitz
equation (PE) on $\mathbb{P}^1_t$ to a PE on a covering
$\mathbb{P}^1_{\tilde t}$, along a rational covering map $\tilde t\mapsto
t$.  (See Tables \ref{tab:coveringmapsclassical}
and~\ref{tab:coveringmapsnonclassical}.)  Such transformations, viewed as
taking ${}_2F_1$'s to ${}_2F_1$'s with changed parameters, originated with
Gauss and Kummer.  The quadratic, cubic, quartic, and sextic ones are
classical: they were worked~out in detail by Goursat~\cite{Goursat1881}.
(Post-19th century expositions are few; Ref.~\cite{Poole36} is worth
consulting.)  A~final collection of ${}_2F_1$~transformations, nonclassical
in having no~free parameter, was discovered only
recently~\cite{Vidunas2005,Vidunas2009}.

The existence of solution-preserving maps between certain gDH systems,
including DH systems, explains the DH representations of Chazy-III
solutions recently found by Chakravarty and
Ablowitz~\cite{Chakravarty2010}.  Any Chazy-III solution $u=u(\tau)$ can be
viewed as the first component of a solution $x=x(\tau)$ of the system ${\rm
  DH}(0,\frac13,\frac12)$, and by lifting this system to a system
$\dot{\tilde x}=\tilde Q(\tilde x)$, one can generate additional
DH~representations.  For instance, the lifting~(\ref{eq:quadraticlift})
yields $u=({\tilde x}_1+\nobreak{\tilde x}_3)/2$, where $\dot{\tilde
  x}=\tilde Q(\tilde x)$ is the system ${\rm DH}(0,\frac23,0)$.  This is
one of their representations.  
%In~\S\,\ref{sec:Chazy}, 
In Part~II, besides extending their representations to Chazy\nobreakdash-I,
II, VII, XI, and~XII, we shall show that the recently discovered
nonclassical transformations of~${}_2F_1$ yield novel and beautiful
DH~representations of the solutions of the $N=7,8,9$ cases of Chazy-XII.

In principle, a 3\nobreakdash-dimensional HQDS $\dot x=Q(x)$ such as a gDH
system may have a solution-preserving \emph{self-map}: a map that
transforms among solutions.  A~family of self-maps will be generated by any
polynomial vector field $q\colon\mathbb{C}^3\to\mathbb{C}^3$
in~$\mathcal{C}(Q)$, the centralizer of~$Q$; i.e., by any vector field
$\sum_{i=1}^3 q_i(x)\partial_{x_i}$ in the Lie symmetry algebra of the gDH
system.  It is known that if the non-associative algebra~$\mathfrak{A}$
coming from~$Q$ is unital, with a multiplicative identity, then $q$~will be
either linear in~$x$, in~which case it generates automorphisms
of~$\mathfrak{A}$, or quadratic; and the possible quadratic~$q$ are
bijective with the elements of a certain Jordan sub-algebra
of~$\mathfrak{A}$.  (See \cite{Koecher77}
and~\cite[Chaps.~9,10]{Walcher91}.)  This abstract result is made relevant
by being combined with the following: the algebra~$\mathfrak{A}$ associated
to any DH system is unital, and in~fact (see Ohyama~\cite{Ohyama2001}), any
3\nobreakdash-dimensional non-associative algebra over~$\mathbb{C}$ that is
unital is generically isomorphic to one coming from a DH system.  It may
therefore be possible to classify all solution-preserving self-maps of DH
systems.\footnote{The Lie symmetries of non\nobreakdash-DH HQDS's may
  include polynomial vector fields of degree~$>2$.  For Lotka--Volterra
  HQDS's, several examples occur in the tables of Ref.~\cite{Almeida95}.}
But here, we focus largely on the task of finding solution-preserving
maps~$\Phi$ between \emph{non-isomorphic} gDH systems, which maps are
required to be rational.  (In~\S\,\ref{subsec:equivs}, we do find an
order\nobreakdash-48 Coxeter group of isomorphisms between distinct gDH
systems.)

Among our rational $\Phi$, the `classical' ones are essentially the maps of
Harnad and McKay~\cite{Harnad2000}, to whom we owe much.  But the context
is different.  Their maps~$\Phi$ were between HQDS's satisfied by
3\nobreakdash-vectors of quasi-modular forms.  The HQDS's were specific DH
systems, with no~free parameters.  Our transformations apply more broadly:
the HQDS's mapped between are gDH systems with free parameters.  We
nonetheless share with Harnad--McKay a focus on the automorphism group of
the `upper,' or lifted algebra~$\tilde{\mathfrak{A}}$.  If the associated
ring of polynomial invariants in~$\tilde x_1,\tilde x_2,\tilde x_3$ has a
finite set of generators $\chi_1,\dots,\chi_r$, it is natural to express
$x=\Phi(\tilde x)$ in~terms of them.  (An~example is the quadratic
map~(\ref{eq:quadraticliftmap}): the only nontrivial automorphism of the
system $\dot{\tilde x}=\tilde Q(\tilde x)$, which satisfies $\tilde
a_3=\tilde a_1$ and $\tilde b_3=\tilde b_1$ by assumption, interchanges
$\tilde x_1,\tilde x_3$; and $\tilde x_2$, $\tilde x_1+\nobreak\tilde x_3$,
$\tilde x_1\tilde x_3$ can be chosen as the generators.)  In~general
$(\tilde x_1,\tilde x_2,\tilde x_3)\mapsto(\chi_1,\dots,\chi_r)$ will be
solution-preserving from the HQDS $\bigl({\mathbb{C}}^{3}\!,\ \dot{\tilde
  x}=\tilde Q(\tilde x)\bigr)$ to a dynamical system on~$\mathbb{C}^r$.
(See~\cite{Kinyon97}.)  The striking thing is that if one chooses
$x_1,x_2,x_3$ to be appropriate rational functions of the generators, this
system will become $\left({\mathbb{C}}^{3}\!,\ \dot{x}=Q(x)\right)$, i.e.,
another HQDS\null.

Interestingly, the quadratic map~(\ref{eq:quadraticliftmap}), denoted
by~$\bf2$ below, and also the cyclic cubic map~${\bf3}_{\bf c}$, are
solution-preserving even when applied to many HQDS's that are not of the
gDH form, and are not linearly equivalent to any gDH system.  (See Theorems
\ref{thm:full1} and~\ref{thm:full2}.)

\subsection{Liftings of PE's}
\label{subsec:liftings}

By employing the calculus of Riemann P\nobreakdash-symbols, one can derive
and explain many identities satisfied by the Gauss function~${}_2F_1$.
Each such identity (a~${}_2F_1$ transformation) comes from lifting a Gauss
hypergeometric equation (GHE) to another GHE, or more generally a PE to
another~PE, along a rational covering $R\colon \mathbb{P}^1_{\tilde
  t}\to\mathbb{P}^1_t$.  Usually $R$~has an additional interpretation, from
the theory of conformal mapping: it maps between Schwarzian triangle
functions~\cite{Nehari52}.

The quadratic transformations of~${}_2F_1$, which are especially well
known~\cite{Andrews99}, arise as follows.  Consider a PE~(\ref{eq:PE})
on~$\mathbb{P}^1_t$, for simplicity with singular points $t_1,t_2,t_3$
equal to $0,1,\infty$.  Consider the degree\nobreakdash-2 map
\begin{equation}
\label{eq:canon2}
t=R(\tilde t)=4\tilde t/(1+\tilde t)^2,
\end{equation}
which takes $\tilde t=0,\infty$ to~$t=0$; and also $\tilde t=1$ to $t=1$
and $\tilde t=-1$ to $t=\infty$, both with multiplicity~2.  In terms of the
characteristic exponents $(\mu_1,\nobreak \mu_1'; \allowbreak
\mu_2,\nobreak \mu_2'; \allowbreak \mu_3,\nobreak \mu_3')$ and their
differences $\alpha\defeq \allowbreak \mu_1'-\nobreak\mu_1$, $\beta\defeq
\allowbreak \mu_2'-\nobreak \mu_2$, and $\gamma\defeq \allowbreak
\mu_3'-\nobreak \mu_3$, one can write
\begin{sizeequation}{\small}
  \left\{
  \begin{array}{ccc}
    0 & 1 & \infty \\
    \hline
    \mu_1 & \mu_2 & \mu_3 \\
    \mu_1+\alpha & \mu_2+\beta & \mu_3+\gamma
  \end{array}
  \right\}(t)
  =
  \left\{
  \begin{array}{cccc}
    0 & 1 & -1 & \infty \\
    \hline
    \mu_1 & 2\mu_2 & 2\mu_3  & \mu_1\\
    \mu_1+\alpha & 2\mu_2+2\beta & 2\mu_3+2\gamma & \mu_1+\alpha
  \end{array}
  \right\}(\tilde t).
\end{sizeequation}
Specializing to the case $\mu_3=0$, $\gamma=\frac12$ yields
\begin{sizeequation}{\small}
  \left\{
  \begin{array}{ccc}
    0 & 1 & \infty \\
    \hline
    \mu_1 & \mu_2 & 0 \\
    \mu_1+\alpha & \mu_2+\beta & \frac12
  \end{array}
  \right\}(t)
  =
  \left\{
  \begin{array}{ccc}
    0 & 1 & \infty \\
    \hline
    \mu_1 & 2\mu_2 & \mu_1\\
    \mu_1+\alpha & 2\mu_2+2\beta & \mu_1+\alpha
  \end{array}
  \right\}(\tilde t),
\label{eq:Psymbollifting}
\end{sizeequation}
because any lifted singular point with exponents~$0,1$ (e.g., $\tilde t=-1$
here) is no~singular point at~all, but rather an ordinary point.

Equation~(\ref{eq:Psymbollifting}) describes the lifting of the PE
on~$\mathbb{P}^1_t$ to one on~$\mathbb{P}^1_{\tilde t}$, under the change
of variables $t=R(\tilde t)$.  The singular points $\tilde t_1,\tilde
t_2,\tilde t_3$ on~$\mathbb{P}^1_{\tilde t}$ are equal to $0,1,\infty$,
like those on~$\mathbb{P}^1_{t}$.  The lifting acts on exponent differences
by $(\alpha,\beta,\frac12)\leftarrow\allowbreak(\alpha,2\beta,\alpha)$,
which is a parametric restriction.  In~solution terms, the meaning
of~(\ref{eq:Psymbollifting}) is that if $f(t)$~is a local solution of the
original~PE, $\tilde f(\tilde t)\defeq f(R(\tilde t))$ will be one of
the lifted~PE\null.  By setting $\mu_1=0$ and comparing
(\ref{eq:Psymbollifting}) with the GHE
P\nobreakdash-symbol~(\ref{eq:GHPsymbol}) and the
definition~(\ref{eq:abc}) of the ${}_2F_1$ parameters $(a,b;c)$, one
readily derives
\begin{equation}
\label{eq:quadexample}
(1-t)^{-\mu_2} \, {}_2F_1(\mu_2,\mu_2+\tfrac12;\,1-\alpha;\,t) =
(1-\tilde t)^{-2\mu_2} \, {}_2F_1(2\mu_2,2\mu_2+\alpha;\,1-\alpha;\,\tilde t).
\end{equation}
Here the left and right prefactors serve to shift one of the exponents at
each of $t=1$ and $\tilde t=1$ to zero, in agreement
with~(\ref{eq:GHPsymbol}).

Equation~(\ref{eq:quadexample}) is one of many quadratic transformations
of~${}_2F_1$, each with two free parameters.  The others can be generated
by permuting the singular points $\tilde t=0,1,\infty$ and/or
$t=0,1,\infty$, i.e., by pre- and post-composing the covering map~$R$ with
M\"obius transformations that permute them.  Each modified~$R$ will yield a
PE lifting, but to yield a transformation of~${}_2F_1$ as~well, $R(0)=0$
must be satisfied.  This is because ${}_2F_1$ is only defined near the
origin, as the sum of the series~(\ref{eq:hypseries}).

The features of the PE lifting induced by the canonical
degree\nobreakdash-2 map~$R$ of~(\ref{eq:canon2}), which will be denoted
by~${\bf 2}$, are indicated in its \emph{branching schema}, which is
$1+\nobreak1=2={\it2}$.  The three `slots,' separated by equals signs,
indicate the multiplicities with which points on~$\mathbb{P}^1_{\tilde t}$
are mapped to $t=0,1,\infty$ respectively; so a value greater than~$1$
indicates a critical point of the map.  The italicization of the
second~`$2$' means that in the PE lifting, the corresponding critical point
($\tilde t=-1$) is kept from being a singular point by restricting the
exponents at the critical value `beneath'~it ($t=1$) to be $0,\frac12$.  In
the schema of any map used for lifting a PE to a PE, there will be exactly
three non-italicized multiplicities, pertaining to $\tilde t=0,1,\infty$
(in some order).

Another covering yielding a PE lifting is the degree\nobreakdash-4 map
\begin{equation}
  t=R(\tilde t) = \frac{16\,\tilde t(1-\tilde t)}{(1+4\tilde t-4\tilde t^2)^2}
  = 1 - \frac{(1-2\tilde t)^4}{(1+4\tilde t-4\tilde t^2)^2},
\end{equation}
which will be denoted by ${\bf4}_{\bf bq}$.  The associated branching
schema is $1+1+2={\it4}={\it2(2)}$, meaning
$1+\nobreak1+\nobreak2={\it4}={\it 2}+\nobreak{\it 2}$, since (for
instance) this~$R$ takes $\tilde t=0,1,\infty$ to $t=0$, with respective
multiplicities $1,1,2$; and $\tilde t=\frac12$ to $t=1$ with
multiplicity~$4$.  By examination, the associated lifting acts as
$(\alpha,\frac14,\frac12)\leftarrow(\alpha,\alpha,2\alpha)$ on the vector
of constrained exponent differences.  The resulting quartic ${}_2F_1$
transformation is left as an exercise to the reader.  The `{\bf bq}' stands
for \emph{biquadratic}, as this quartic covering turns~out to factor into
two quadratic ones.  This will be indicated by ${\bf 4}_{\bf bq}\sim {\bf
  2}\circ{\bf 2}$.  The symbol~$\sim$ is used because this is not a
composition of the above map~${\bf 2}$ with itself, a permutation of
singular points being involved.  Specifically,
\begin{equation}
\frac{16\,\tilde t(1-\tilde t)}{(1+4\tilde t-4\tilde t^2)^2} =
\frac{4\,\tilde t}{(1+\tilde t)^2} \circ \frac{\tilde t}{\tilde t-1} \circ \frac{4\,\tilde
  t}{(1+\tilde t)^2}\circ \frac{\tilde t}{\tilde t-1}.
\end{equation}
In symbolic terms, ${\bf 4}_{\bf
  bq}={\bf2}\circ\sigma_{(23)}\circ{\bf2}\circ\sigma_{(23)}$, in which each
M\"obius transformation $\tilde t\mapsto \sigma_{(23)}(\tilde
t)\defeq\tilde t/(\tilde t-1)$ interchanges the points~$1,\infty$.

The quadratic, cubic, quartic and sextic PE liftings (and the resulting
${}_2F_1$ transformations, each with at~least one free parameter), were
worked~out in 1881 by Goursat~\cite{Goursat1881}, and the ones yielding
transformations with no~free parameter much more
recently~\cite{Vidunas2005,Vidunas2009}.  In Tables
\ref{tab:coveringmapsclassical} and~\ref{tab:coveringmapsnonclassical} a
canonical covering map for each type of ${}_2F_1$ transformation is given,
chosen in an \emph{ad~hoc} way (there is no~generally accepted convention
for choosing a representative unique up~to isomorphism).  Each map~$\tilde
t\mapsto t=R(\tilde t)$ is written as $-{{P}}_1(\tilde
t)/{{P}}_3(\tilde t)$, a~quotient of polynomials, so that
\begin{equation}
  t=-{{P}}_1/{{P}}_3, \qquad (t-1)/t=-{{P}}_2/{{P}}_1, \qquad 1/(1-t)=-{{P}}_3/{{P}}_2,
\end{equation}
where ${{P}}_1+{{P}}_2+{{P}}_3=0$.  According to our convention each
canonical~$R$ must map $\tilde t=0,\infty$ to $t=0$; also $R^{-1}(\infty)$
must contain at~least one ordinary point, so the third ($t=\infty$) slot in
the branching schema must include at~least one italicized integer~$>1$.
(All italicized integers in any single slot must be equal.)  These
requirements are admittedly arbitrary, but loosening them will merely
permute the points $\tilde t=0,1,\infty$ and/or $t=0,1,\infty$, and modify
${{P}}_1,{{P}}_2,{{P}}_3$ correspondingly.

\afterpage{\clearpage\rm
\begin{landscape}
\ 
\vfill
\begin{table}[h]
  \caption{Standardized covering maps $t=-{{P}}_1(\tilde
    t)/{{P}}_3(\tilde t)$ appearing in the classical and semiclassical
    transformations of~${}_2F_1$.  
    Each triple
    ${{P}}_1,{{P}}_2,{{P}}_3$ satisfies ${{P}}_1 + {{P}}_2 +
    {{P}}_3 = 0$.
    In the ${\bf 4}_{\bf bq},{\bf 12}_{\bf
      bq}$ maps, $r$~signifies $\tilde t-1/2$, and in the ${\bf 3}_{\bf
      c},{\bf 6}_{\bf c},{\bf 12}_{\bf c}$ maps, $s,\bar s$~signify $\tilde
    t+\nobreak\omega$, $\tilde t+\nobreak\bar\omega$, for $\omega$ a
    primitive cube root of unity.}
  \begin{center}
    \begin{tabular}{|l||l|l|l|}
      \hline
       \hfil covering map & \hfil branching schema &
      $(\alpha_1,\alpha_2,\alpha_3)\leftarrow(\tilde\alpha_1,\tilde\alpha_2,\tilde\alpha_3)$
      & \hfil $\bigl[{{P}}_1, {{P}}_2, {{P}}_3\bigr](\tilde   t)$ \\
      \hline
      \hline
      ${\bf2}$ & $1+1=2={\it 2}$ & $(\alpha,\beta,\frac12)\leftarrow(\alpha,2\beta,\alpha)$ 
      & $\bigl[-4{\tilde t},\, -(1-{\tilde t})^2, (1+{\tilde t})^2\bigr]$ \\
      \hline
      ${\bf3}$ & $1+2={\it 3}=1+{\it 2}$ & $(\alpha,\frac13,\frac12)\leftarrow(\alpha,\frac12,2\alpha)$
      &$\bigl[ -27{\tilde t},\, -(1-4{\tilde t})^3,\, (1-{\tilde t})(1+8{\tilde
           t})^2 \bigr]$ \\
      \hline
      ${\bf4}$ & $1+3=1+{\it 3}={\it (2)2}$ & $(\alpha,\frac13,\frac12)\leftarrow(\alpha,\frac13,3\alpha)$
      & $\bigl[-64{\tilde t},\, -(1-t)(1-9t)^3,$ \\
      & & & \hfill $(1+18t-27t^2)^2\bigr]$ \\
      \hline
      ${\bf6}\sim{\bf3}\circ{\bf2}$ & $1+1+4={\it(2) 3}={\it (3)2}$ & $(\alpha,\frac13,\frac12)\leftarrow(\alpha,\alpha,4\alpha)$
      & $\bigl[-108{\tilde t}(1-\tilde t),\, -(1-16{\tilde t}+16{\tilde t}^2)^3,$ \\
      & & & \hfill $(1-2{\tilde t})^2(1+32{\tilde t}-32{\tilde t}^2)^2\bigr]$ \\
      \hline
      ${\bf6}_{\bf c}\sim{\bf3}\circ{\bf2}$ & $2+2+2={\it(2) 3}={\it (3)2}$ & $(\alpha,\frac13,\frac12)\leftarrow(2\alpha,2\alpha,2\alpha)$
      & $\bigl[27{\tilde t}^2(1-{\tilde t})^2,\, -4(1-{\tilde t}+{\tilde t}^2)^3, $ \\
      $\quad{}\sim{\bf2}\circ{\bf3_{\bf c}}$ & & & \hfill $(1+{\tilde t})^2(2-{\tilde t})^2(1-2{\tilde t})^2\bigr]$ \\
      & & & $\quad{}=\bigl[-(s^3-\bar s^3)^2,\, -4s^3\bar s^3,\, (s^3+\bar s^3)^2\bigr]$ \\
      \hline
      \hline
      ${\bf 3_{\bf c}}$ & $1+1+1={\it 3}={\it 3}$ & $(\alpha,\frac13,\frac13)\leftarrow(\alpha,\alpha,\alpha)$
      & $\bigl[ 3(\omega-\bar\omega)\tilde t (1-\tilde t),\, -(\tilde
        t+\bar\omega)^3,\, (\tilde t+\omega)^3  \bigr]$  \\
      & & & $\quad{}=\bigl[\bar s^3 - s^3,\, -\bar s^3,\, s^3\bigr]$ \\
\hline
      ${\bf 4_{\bf bq}}\sim{\bf 2\circ2}$ & $1+1+2={\it 4}={\it (2)2}$ & $(\alpha,\frac14,\frac12)\leftarrow(\alpha,\alpha,2\alpha)$
      & $\bigl[-16 \tilde t(1-\tilde t),\, -(1-2\tilde t)^4,$ \\
      & & & \hfill $(1+4\tilde t-4\tilde t^2)^2\bigr]$\\
      & & & $\quad{}=\bigl[ -4(1-4r^2),\,-16r^4,\,4(1-2r^2)^2  \bigr]$ \\
      \hline
      \hline
      ${\bf 12}_{\bf bq}\sim{\bf6}\circ{\bf2}$
      & $1+1+2+{\it 8}$ & $(\frac18,\frac13,\frac12)\leftarrow(\frac18,\frac18,\frac14)$
      & $\bigl[27r^8(1-4r^2),\, -4(1-4r^2+r^4)^3,$ \\
      $\quad{}\sim{\bf3}\circ{\bf2}\circ{\bf2}$ & $\quad\qquad{}={\it(4) 3}={\it (6)2}$ & & \hfill $(1-2r^2)^2(2-8r^2-r^4)^2\bigr]$ \\
      $\quad{}\sim{\bf3}\circ{\bf4}_{\bf bq}$ & & & \\
      \hline
      ${\bf 12}_{\bf c}\sim{\bf4}\circ{\bf3_{\bf c}}$ & $1+1+1+{\it 9}$ & $(\frac19,\frac13,\frac12)\leftarrow(\frac19,\frac19,\frac19)$
      & $\bigl[64 s^9 (\bar s^3 - s^3),\, -\bar s^3(9\bar s^3 - 8 s^3)^3, $ \\
      & $\quad\qquad{}={\it(4) 3}={\it (6)2}$ & & \hfill $(27\bar s^6 -
        36 s^3 \bar s^3 + 8 s^6)^2\bigr]$ \\
      \hline
    \end{tabular}
  \end{center}
\label{tab:coveringmapsclassical}
\end{table}
\vfill
\ 
\end{landscape}

\begin{landscape}
\ 
\vfill
\begin{table}[h]
  \caption{Standardized covering maps $t=-{{P}}_1(\tilde
    t)/{{P}}_3(\tilde t)$ appearing in the nonclassical transformations
    of~${}_2F_1$ that are defined over~$\mathbb{Q}$.  Each triple
    ${{P}}_1,{{P}}_2,{{P}}_3$ satisfies ${{P}}_1 + {{P}}_2 +
    {{P}}_3 = 0$.}
  \begin{center}
    \begin{tabular}{|l||l|l|}
      \hline
      covering map & branching schema, & $\bigl[{{P}}_1,{{P}}_2,{{P}}_3\bigr](\tilde t)$\\
      & $(\alpha_1,\alpha_2,\alpha_3)\leftarrow(\tilde\alpha_1,\tilde\alpha_2,\tilde\alpha_3)$ & \\
      \hline
      \hline
      ${\bf10}$ & $1+2+{\it 7}$ & $\bigl[{\tilde t}(32-81{\tilde t})^7$,   \\
      & $\quad\qquad{}= 1+{\it3(3)} = {\it5(2)}$ & $-4(1-{\tilde t})(256+17280{\tilde t}+5832{\tilde t}^2-6561{\tilde t}^3)^3$, \\
      & $(\frac17,\frac13,\frac12)\leftarrow(\frac17,\frac13,\frac27)$ & $(8192 -1271808{\tilde t} - 6127488{\tilde t}^2 + 7453296{\tilde t}^3 - 1948617{\tilde t}^4 + 1062882{\tilde t}^5)^2\bigr]$\\
      \hline
      ${\bf24_{\bf c}\sim{\bf8}\circ{\bf 3}_{\bf c}}$ & $1+1+1+{\it3(7)}$ 
      & $\bigl[-1728{\tilde t}(1-{\tilde t})(1+5{\tilde t}-8{\tilde t}^2+{\tilde t}^3)^7$,  \\
      & $\quad\qquad{}={\it8(3)}={\it12(2)}$  &
      $-(1-{\tilde t}+{\tilde t}^2)^3 (1-235{\tilde t} + 1430{\tilde t}^2 - 1695{\tilde t}^3 + 270{\tilde t}^4 + 229{\tilde t}^5 + {\tilde t}^6)^3$, \\
      & $(\frac17,\frac13,\frac12)\leftarrow(\frac17,\frac17,\frac17)$ 
      & $ (1+510{\tilde t}-14631{\tilde t}^2 + 80090{\tilde t}^3 - 218058{\tilde t}^4 + 316290{\tilde t}^5 - 253239{\tilde t}^6 $\\
      & & \hfill $ {}+131562{\tilde t}^7 -70998{\tilde t}^8 + 37950{\tilde t}^9 - 8955{\tilde t}^{10} - 522{\tilde t}^{11} + {\tilde t}^{12} )^2\bigr]$ \\
      \hline
    \end{tabular}
  \end{center}
\label{tab:coveringmapsnonclassical}
\end{table}
\vfill
\ 
\end{landscape}
}  % end of afterpage

The tabulated maps split neatly into four classes.  The ones in the first
and second, denoted by ${\bf2},{\bf3},{\bf4},{\bf6},{\bf6}_{\bf c}$,
resp.\ ${\bf3}_{\bf c},{\bf4}_{\bf bq}$, were known to Goursat.  The ones
in the first can lift PE's with exponent differences
$(\alpha_1,\alpha_2,\alpha_3)$ equal to $(0,\frac13,\frac12)$, so they
arise naturally in the context of modular forms.
%(See~\S\,\ref{sec:modular}; also~\cite{Maier15}.)  
(See Part~II; also~\cite{Maier15}.)  
The maps ${\bf4},{\bf4}_{\bf bq}$ are distinct, as are ${\bf3},{\bf3}_{\bf
  c}$ and ${\bf6},{\bf6}_{\bf c}$.  The cubic map~${\bf3}_{\bf c}$ is
defined not over~$\mathbb{Q}$ but over~$\mathbb{Q}(\omega)$, where
$\omega$~is a primitive cube root of unity.  The subscript in~${\bf 3}_{\bf
  c}$ stands for `cyclic,' and is used because up to composition with
M\"obius transformations, ${\bf3}_{\bf c}$~is equivalent to the cyclic
covering $\tilde t\mapsto \tilde t^3$.  The map~${\bf 6}_{\bf c}$ is not
cyclic but merits the~${\bf c}$, because unlike the other sextic map~${\bf
  6}$, it has ${\bf3}_{\bf c}$ as a right factor.

The degree\nobreakdash-12 maps in the third class, denoted by ${\bf12}_{\bf
  bq}$ and~${\bf12}_{\bf c}$ in Table~\ref{tab:coveringmapsclassical},
could be called `semiclassical': they are compositions of maps that appear
in Goursat's transformations of~${}_2F_1$, and in consequence appear in
certain degree\nobreakdash-12 ${}_2F_1$ transformations (which turn~out to
have no~free parameter).  But the transformations seem not to have appeared
in the literature before~\cite{Vidunas2005,Vidunas2009}.  The subscripts on
${\bf12}_{\bf bq}$ and~${\bf12}_{\bf c}$ indicate the respective presence
of ${\bf4}_{\bf bq}$ and~${\bf3}_{\bf c}$ as right factors.  It may be that
${\bf12}_{\bf bq}$ and~${\bf12}_{\bf c}$ were missed because the
transformations of~${}_2F_1$ based on ${\bf4}_{\bf bq}$ and~${\bf3}_{\bf
  c}$ (unlike, say, those based on ${\bf2}$ and~${\bf3}$) are rather
obscure, though they can be found in~\cite{Goursat1881}.

The maps in the fourth class, denoted by ${\bf10}$ and~${\bf 24}_{\bf c}$
in Table~\ref{tab:coveringmapsnonclassical}, appear in the remaining
${}_2F_1$ transformations with no~free parameter; or more accurately, in
the only two that are defined over~$\mathbb{Q}$.  There are five additional
ones, the maps in which could be denoted by
${\bf6}',\nobreak{\bf8},\allowbreak{\bf9},
\allowbreak{\bf10'},\nobreak{\bf18}$, after their
degrees~\cite{Vidunas2005,Vidunas2009}; but being rather exotic, they will
not be considered here in any detail.  (The map~$\bf8$ is defined
over~$\mathbb{Q}(\omega)$, but the others are defined over such algebraic
number fields as $\mathbb{Q}({\mathrm{i}})$.)  The subscript in~${\bf
  24}_{\bf c}$ serves to indicate the presence of a ${\bf 3}_{\bf c}$ right
factor, and the consequent cyclic symmetry.  Of the four `${\bf c}$'~maps,
${\bf24}_{\bf c}$ like~${\bf6}_{\bf c}$ is defined over~$\mathbb{Q}$,
though ${\bf12}_{\bf c}$ like~${\bf3}_c$ is only defined
over~$\mathbb{Q}(\omega)$.

In a certain sense, the most symmetrical of the eleven canonical maps in
Tables \ref{tab:coveringmapsclassical}
and~\ref{tab:coveringmapsnonclassical} are ${\bf2}$, ${\bf3}_{\bf c}$,
and~${\bf6}_{\bf c}$.  This follows by considering $\mathbb{C}(\tilde
t),\mathbb{C}(t)$, the fields of rational functions on
$\mathbb{P}^1_{\tilde t},\mathbb{P}^1_{t}$.  For each of ${\bf2}$,
${\bf3}_{\bf c}$, and~${\bf6}_{\bf c}$, the field extension
$\mathbb{C}(\tilde t)/\mathbb{C}(t)$ is Galois, the Galois group
$G=G(\mathbb{C}(\tilde t)/\mathbb{C}(t))$ (i.e., the group of automorphisms
of~$\mathbb{C}(\tilde t)$ that fix~$\mathbb{C}(t)$) being isomorphic
respectively to the cyclic groups $\mathfrak{Z}_2$, $\mathfrak{Z}_3$, and
the symmetric group $\mathfrak{S}_3$ on three letters.  The last of these
comprises all M\"obius transformations of~$\mathbb{P}^1_{\tilde t}$ that
permute $\tilde t=0,1,\infty$, and the first two are the cyclic subgroups
$\tilde t\mapsto\allowbreak\{\tilde t,1/\tilde t\}$ and $\tilde
t\mapsto\allowbreak\{\tilde t,\allowbreak 1/(1-\nobreak\tilde
t),\allowbreak(\tilde t-\nobreak 1)/\tilde t\}$.  

The field extensions $\mathbb{C}(\tilde t)/\mathbb{C}(t)$ coming from most
of the other eight maps in Tables \ref{tab:coveringmapsclassical}
and~\ref{tab:coveringmapsnonclassical} are non-Galois: for each, some
element of~$\mathbb{C}(\tilde t)$ not in~$\mathbb{C}(t)$ is fixed by all
automorphisms of~$\mathbb{C}(\tilde t)$ that fix~$\mathbb{C}(t)$.  It can
be shown (cf.~\cite{Harnad2000}) that the extensions coming from the maps
${\bf3},{\bf4},{\bf6}$, though non-Galois, have the symbolic
representations $\mathfrak{S}_3/\mathfrak{Z}_2$,
$\mathfrak{A}_4/\mathfrak{Z}_3$, $\mathfrak{S}_4/\mathfrak{Z}_4$.  Here,
representing $\mathbb{C}(\tilde t)/\mathbb{C}(t)$ by $G_1/G_2$ indicates
that $\mathbb{C}(\tilde t),\mathbb{C}(t)$ have a common
extension~$\mathbb{C}(\tilde{\tilde t})$, with $\mathbb{C}(\tilde{\tilde
  t})/\mathbb{C}(t)$ and $\mathbb{C}(\tilde{\tilde t})/\mathbb{C}(\tilde
t)$ having respective Galois groups $G_1,G_2$.  But in each non-Galois
case, with $G_2$ not normal in~$G_1$, there is no naturally associated
group of M\"obius transformations of~$\mathbb{P}^1_{\tilde t}$.

\smallskip
It must be mentioned that each map $\tilde t=R(t)$ of Table
\ref{tab:coveringmapsclassical} or~\ref{tab:coveringmapsnonclassical} that
is defined over~$\mathbb{Q}$, except for~$\bf10$, has a conformal mapping
interpretation along the lines indicated in Ref.~\cite{Nehari52}.  Let
$\tau=\tau(t)$ be a (multivalued) Schwarzian triangle function, each branch
of which takes the upper-half $t$\nobreakdash-plane to a hyperbolic
triangle in the $\tau$\nobreakdash-plane with angles
$\pi(\alpha_1,\alpha_2,\alpha_3)$, the vertices being the images of
$(0,1,\infty)$.  Then if $\deg R=d$, the multivalued function $\tilde
\tau=\tilde \tau(\tilde t)\defeq \tau(R(\tilde t))$ will act similarly on
the upper-half $\tilde t$\nobreakdash-plane, each image triangle in the
$\tilde \tau$\nobreakdash-plane having angles
$\pi(\tilde\alpha_1,\tilde\alpha_2,\tilde\alpha_3)$ and being naturally
partitioned into $d$~sub-triangles with angles
$\pi(\alpha_1,\alpha_2,\alpha_3)$.

The map~$\bf2$ is an example.  It yields a partition of a hyperbolic
triangle with angles $\pi(\alpha,2\beta,\alpha)$ into two triangles with
angles $\pi(\alpha,\beta,\frac12)$; cf.~\cite[Fig.~43]{Nehari52}.  Some
discussion of the partitions produced by the remaining maps can be found in
Refs.~\cite{Hodgkinson18,Hodgkinson20}.  For instance, the map
${\bf24}_{\bf c}$ yields an elegant dissection of an equilateral hyperbolic
triangle with angles $\pi(\frac17,\frac17,\frac17)$ into 24~triangles with
angles $\pi(\frac17,\frac13,\frac12)$.

\subsection{Maps between gDH systems}
\label{subsec:mapsbetween}

With the aid of Theorem~\ref{thm:integration}, a rich collection of
solution-preserving maps between gDH systems can now be derived.
\begin{theorem}
  To each classical PE\nobreakdash-lifting covering map\/ $t=R(\tilde t)$
  {\rm(}of degree\/~$d$\,{\rm)} listed in
  Table\/~{\rm\ref{tab:coveringmapsclassical}}, i.e., to each of\/
  ${\bf2},{\bf3},{\bf4},{\bf6},{\bf6}_{\bf c}$ and\/ ${\bf3}_{\bf
    c},{\bf4_{\bf bq}}$, there is associated a rational map\/
  $x=\Phi(\tilde x)$ from a parametrized gDH system\/ $\dot{\tilde
    x}=\tilde Q(\tilde x)$ to a parametrized gDH system\/ $\dot x=Q(x)$.
  It can be viewed as a map\/ $\Phi\colon\mathbb{P}^2_{\tilde
    x}\to\mathbb{P}^2_{x}$, and is given by
  \begin{gather*}
    x_1 = \Sigma_1/d, \\
    \begin{aligned}
      x_1-x_2 &= -\Sigma_3/d\,\Sigma_2, \\
      x_2-x_3 &= -\Sigma_6/d\,\Sigma_2\Sigma_3, \\
      x_3-x_1 &= -\Sigma_2^2/d\,\Sigma_3,
    \end{aligned}
  \end{gather*}
  where\/ $\Sigma_k$, $k=1,2,3,6$, are certain homogeneous polynomials in\/
  $\tilde x_1,\tilde x_2,\tilde x_3$ of degree\/~$k$, which are listed in
  Table\/~{\rm\ref{tab:sigmas}} and satisfy\/ $\Sigma_2^3 + \Sigma_3^2 +
  \Sigma_6=0$.  A sufficient condition for\/ $\Phi$ to be
  solution-preserving from\/ $\dot{\tilde x}=\tilde Q(\tilde x)$, i.e.,
  ${\rm gDH}(\tilde a_1, \nobreak\tilde a_2, \nobreak\tilde a_3;
  \allowbreak\tilde b_1,\nobreak\tilde b_2,\nobreak\tilde
  b_3;\nobreak\tilde c),$ to\/ $\dot x=Q(x)$, i.e., to\/ ${\rm
    gDH}(a_1,\nobreak a_2,\nobreak a_3;\allowbreak b_1,\nobreak
  b_2,\nobreak b_3;\nobreak c)$, is that the respective parameter vectors
  be restricted and related as specified in
  Table\/~{\rm\ref{tab:restrictions}}.
\label{thm:gDHsystems}
\end{theorem}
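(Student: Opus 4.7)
The plan is to exploit the parametric integration of Theorem~\ref{thm:integration} in a two\nobreakdash-level picture. Fix a Papperitz equation (PE) on $\mathbb{P}^1_t$ with $(t_1,t_2,t_3)=(0,1,\infty)$ and exponents corresponding, via~(\ref{eq:abtozetasn}), to the target gDH system $\dot x=Q(x)$. Lift this PE along the covering $t=R(\tilde t)$ of Table~\ref{tab:coveringmapsclassical} to a PE on $\mathbb{P}^1_{\tilde t}$. The P\nobreakdash-symbol calculus, illustrated for the quadratic case in~(\ref{eq:Psymbollifting}), determines the exponents of the lifted PE from the branching schema: exponent differences at points lifted with multiplicity~$m$ scale by~$m$, while critical points of~$R$ whose assigned exponent differences are $0,1/m$ become ordinary and are suppressed. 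Re\nobreakdash-applying (\ref{eq:abtozetasn}) to the lifted exponents—with $\bar n$ and~$c$ common to both levels and $\tilde\kappa$ chosen compatibly with~$\kappa$—yields an upper gDH system $\dot{\tilde x}=\tilde Q(\tilde x)$; the resulting constraints on $(\tilde a_1,\dots,\tilde c)$ and their relations to $(a_1,\dots,c)$ are exactly those tabulated in Table~\ref{tab:restrictions}.

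Once the prefactor absorbed into $\tilde f(\tilde t)=(\cdot)\cdot f(R(\tilde t))$ at the suppressed critical points is chosen so that ${\rm d}\tau/{\rm d}\tilde t=\tilde K^{-2}\tilde f^{-1/\bar n}$ agrees with ${\rm d}\tau/{\rm d}t=K^{-2}f^{-1/\bar n}$ under ${\rm d}t=R'(\tilde t)\,{\rm d}\tilde t$, the two systems share a common independent variable~$\tau$, and the assignment $\tilde x(\tau)\mapsto x(\tau)$ is forced. To make this rational in~$\tilde x$, I~would apply (\ref{eq:fourthclaim}) together with~(\ref{eq:assist}) at both levels to obtain
\begin{displaymath}
[x_2-x_3:x_3-x_1:x_1-x_2]=[-{{P}}_1(\tilde t):-{{P}}_2(\tilde t):-{{P}}_3(\tilde t)],
\end{displaymath}
\begin{displaymath}
[\tilde x_2-\tilde x_3:\tilde x_3-\tilde x_1:\tilde x_1-\tilde x_2]=[\tilde t:\tilde t-1:-1],
\end{displaymath}
and then solve the second identity for $\tilde t$ as a rational function of~$\tilde x$ via~(\ref{eq:secondclaim}). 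Substituting into the ${{P}}_k(\tilde t)$ of Table~\ref{tab:coveringmapsclassical} and clearing denominators produces homogeneous polynomials in $\tilde x_1,\tilde x_2,\tilde x_3$; the $\Sigma_k$ of Table~\ref{tab:sigmas} are precisely the resulting factors, with the identification $[{{P}}_1:{{P}}_2:{{P}}_3]\propto[\Sigma_6:\Sigma_2^3:\Sigma_3^2]$ inheriting the cube/square structure already visible in the ${{P}}_2,{{P}}_3$ entries of Table~\ref{tab:coveringmapsclassical}. The syzygy $\Sigma_2^3+\Sigma_3^2+\Sigma_6=0$ is the lift of ${{P}}_1+{{P}}_2+{{P}}_3=0$.

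To fix the overall scale $x_1=\Sigma_1/d$ (and the $1/d$ prefactor in the differences), I~would apply the $t(\cdot)\mapsto x(\cdot)$ map~(\ref{eq:zerothclaim}) with $t_1=0$ and chase $\dot t,\ddot t/\dot t$ through $\dot t=R'(\tilde t)\dot{\tilde t}$: the quantity $\rho^{-1}=\mu_1+\mu_2+\mu_3$ for the lower PE equals $d$~times its analog for the upper PE, because exponent differences at the three noncritical covered singular points scale with their branching multiplicities while $\bar n$ is preserved, and this single fact accounts for the $1/d$ uniformly across all seven maps. The main obstacle is not conceptual but computational: verifying that the explicit $\Sigma_1,\Sigma_2,\Sigma_3,\Sigma_6$ of Table~\ref{tab:sigmas} have the claimed polynomial form for each of ${\bf 2},{\bf 3},{\bf 4},{\bf 6},{\bf 6}_{\bf c},{\bf 3}_{\bf c},{\bf 4}_{\bf bq}$ is a case\nobreakdash-by\nobreakdash-case elimination exercise best handled by a computer algebra system. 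Once the $\Sigma_k$'s are identified, the determining equation $Q=\Phi_*\tilde Q$ of~(\ref{eq:determining}) is automatic, since by construction $x$ and $\tilde x$ solve the respective gDH systems against the same~$\tau$.
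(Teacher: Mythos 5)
Your proposal reproduces the architecture of the paper's own proof. The first two stages --- lifting the PE along $R$ with a prefactor $\tilde f(\tilde t)=\bigl[K^2/\tilde K^2R'\bigr]^{\bar n}f(R(\tilde t))$ so that the two systems share~$\tau$, reading off the parameter constraints of Table~\ref{tab:restrictions} from the branching schema, and identifying $[x_2-x_3:x_3-x_1:x_1-x_2]$ with $[P_1:P_2:P_3]$ evaluated at $\tilde t=-\tilde y_1/\tilde y_3$, whence $[\Sigma_6:\Sigma_2^3:\Sigma_3^2]$ and the syzygy --- coincide with Parts I and~II of the paper's argument. The one genuine divergence is how $x_1=\Sigma_1/d$ is pinned down: the paper proves the identity $f_i^{\rho}(R(\tilde t))/\tilde f_3^{\rho}(\tilde t)=C\,P_i(\tilde t)$ of Eq.~(\ref{eq:simplerfund}) by an order-of-vanishing argument and then takes a logarithmic derivative, whereas you propose to push $\dot t=R'(\tilde t)\,\dot{\tilde t}$ and $\ddot t/\dot t$ through Eq.~(\ref{eq:zerothclaim}) using Lemma~\ref{lem:usedlater2} at the upper level. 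Both routes work; yours is more computational, while the paper's makes it transparent why $\Sigma_1$ is linear with coefficients equal to branching multiplicities. Three bookkeeping points need correcting before execution: (i)~what scales by the multiplicity~$m$ is the \emph{offset} exponent pair $(\nu,\nu')=(\mu,\mu')+(2\kappa-1)\bar n\,(1,1)$, not the raw exponents --- your prefactor remark implies this, but ``exponent differences scale by~$m$'' is not by itself enough to feed into~(\ref{eq:zetasntoab}); (ii)~the normalization relation is $\rho/\tilde\rho=d$, i.e.\ $\tilde\mu_1+\tilde\mu_2+\tilde\mu_3=d\,(\mu_1+\mu_2+\mu_3)$, the reverse of what you wrote, and as stated your version would turn the $1/d$ prefactor into a~$d$; (iii)~the upper projective tuple is $[\tilde t:1-\tilde t:-1]$, not $[\tilde t:\tilde t-1:-1]$. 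Finally, your closing claim that $Q=\Phi_*\tilde Q$ is ``automatic'' holds verbatim only for proper systems and noncoincident solutions; the paper removes these hypotheses by a continuity/density argument, since both sides of~(\ref{eq:determining}) are rational in $\tilde x$ and polynomial in the parameters.
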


\afterpage{\clearpage\rm
\begin{landscape}
\ 
\vfill
\begin{table}[h]
  \caption{Polynomials $\Sigma_k$ in the gDH transformations $x=\Phi(\tilde
    x)$ derived from classical ${}_2F_1$ transformations.  
    Each triple $\Sigma_2,\Sigma_3,\Sigma_6$
    satisfies $\Sigma_2^3 + \Sigma_3^2 + \Sigma_6=0$.
    In ${\bf 3}_{\bf
      c}$ and~${\bf 6}_{\bf c}$, $z,\bar z$~signify $\tilde x_1+\omega
    \tilde x_2+\bar\omega\tilde x_3$, $\tilde x_1+\bar\omega \tilde
    x_2+\omega\tilde x_3$.}
  \begin{center}
    \begin{tabular}{|l||l|l|}
      \hline
      {\rm covering map} & $\Sigma_1$ & $\Sigma_2,\Sigma_3,\Sigma_6$ \\
      \hline
      \hline
      ${\bf2}$ & ${\tilde x}_1+{\tilde x}_3$ 
      & $-({\tilde x}_3-{\tilde x}_1)^2$, \\
      & & $({\tilde x}_3-{\tilde x}_1)^2 ({\tilde x}_1-2{\tilde x}_2+{\tilde x}_3)$, \\
      & & $4({\tilde x}_1-{\tilde x}_2)({\tilde x}_2-{\tilde x}_3)({\tilde x}_3-{\tilde x}_1)^4$ \\
      \hline
      ${\bf3}$ & ${\tilde x}_1+2{\tilde x}_3$ 
      & $-({\tilde x}_3-{\tilde x}_1)(-{\tilde x}_1-3{\tilde x}_2+4{\tilde x}_3)$, \\
      & & $({\tilde x}_3-{\tilde x}_1)^2 ({\tilde x}_1-9{\tilde x}_2+8{\tilde x}_3)$, \\
      & & $-27({\tilde x}_1-{\tilde x}_2)^2({\tilde x}_2-{\tilde x}_3)({\tilde x}_3-{\tilde x}_1)^3$ \\
      \hline
      ${\bf4}$ & ${\tilde x}_1+3{\tilde x}_3$ 
      & $-({\tilde x}_3-{\tilde x}_1)(-{\tilde x}_1-8{\tilde x}_2+9{\tilde x}_3)$, \\
      & & $({\tilde x}_3-{\tilde x}_1)(-{\tilde x}_1^2 + 8{\tilde x}_2^2 + 27{\tilde x}_3^2 + 20{\tilde x}_1{\tilde x}_2 -36{\tilde x}_2{\tilde x}_3 - 18{\tilde x}_3{\tilde x}_1)$, \\
      & & $64({\tilde x}_1-{\tilde x}_2)^3({\tilde x}_2-{\tilde x}_3)({\tilde x}_3-{\tilde x}_1)^2$ \\
      \hline
      ${\bf6}\sim{\bf3}\circ{\bf2}$ & ${\tilde x}_1+{\tilde x}_2+4{\tilde x}_3$ 
      & $-({\tilde x}_1^2+{\tilde x}_2^2+16{\tilde x}_3^2 +14{\tilde x}_1{\tilde x}_2 - 16{\tilde x}_2{\tilde x}_3 - 16{\tilde x}_3{\tilde x}_1)$,  \\
      & & $({\tilde x}_1+{\tilde x}_2-2{\tilde x}_3)({\tilde x}_1^2 + {\tilde x}_2^2 -32{\tilde x}_3^2 -34{\tilde x}_1{\tilde x}_2 + 32{\tilde x}_2{\tilde x}_3 + 32{\tilde x}_3{\tilde x}_1)$, \\
      & & $-108({\tilde x}_1-{\tilde x}_2)^4({\tilde x}_2-{\tilde x}_3)({\tilde x}_3-{\tilde x}_1)$ \\
      \hline
      ${\bf6}_{\bf c}\sim{\bf3}\circ{\bf2}$ & $2{\tilde x}_1+2{\tilde x}_2+2{\tilde x}_3$
      & $-4({\tilde x}_1^2+{\tilde x}_2^2+{\tilde x}_3^2 -{\tilde x}_1{\tilde x}_2 - {\tilde x}_2{\tilde x}_3 - {\tilde x}_3{\tilde x}_1) = -4z\bar z$,  \\
      $\quad{}\sim{\bf2}\circ{\bf3_{\bf c}}$ & & $4(2{\tilde x}_1-{\tilde x}_2-{\tilde x}_3)(2{\tilde x}_2-{\tilde x}_3-{\tilde x}_1)(2{\tilde x}_3-{\tilde x}_1-{\tilde x}_2) = 4(z^3+\bar z^3)$, \\
      & & $432({\tilde x}_1-{\tilde x}_2)^2({\tilde x}_2-{\tilde
        x}_3)^2({\tilde x}_3-{\tilde x}_1)^2 = -16(\bar z^3 - z^3)^2 $ \\
      \hline
      \hline
      ${\bf3}_{\bf c}$ & ${\tilde x}_1+{\tilde x}_2+{\tilde x}_3$ 
      & $-({\tilde x}_1^2+{\tilde x}_2^2+{\tilde x}_3^2 -{\tilde x}_1{\tilde x}_2 - {\tilde x}_2{\tilde x}_3 - {\tilde x}_3{\tilde x}_1) = -({\tilde x}_1+\omega {\tilde x}_2
      + \bar\omega {\tilde x}_3)({\tilde x}_1+\bar\omega {\tilde x}_2 + \omega {\tilde x}_3) \eqdef -z\bar z$, \\
      & & $({\tilde x}_1+\omega {\tilde x}_2 + \bar\omega {\tilde x}_3)^3 \eqdef z^3$, \\
      & & $3(\omega-\bar\omega)({\tilde x}_1-{\tilde x}_2)({\tilde x}_2-{\tilde x}_3)({\tilde x}_3-{\tilde x}_1)\cdot({\tilde x}_1+\omega
      {\tilde x}_2 + \bar\omega {\tilde x}_3)^3  = (\bar z^3 -  z^3) z^3  $ \\
      \hline
      ${\bf 4_{\bf bq}}\sim{\bf 2\circ2}$ & ${\tilde x}_1+{\tilde x}_2+2{\tilde x}_3$ 
      & $-({\tilde x}_1+{\tilde x}_2 - 2{\tilde x}_3)^2$, \\
      & & $({\tilde x}_1+{\tilde x}_2-2{\tilde x}_3)({\tilde x}_1^2 + {\tilde x}_2^2 -4{\tilde x}_3^2 - 6{\tilde x}_1{\tilde x}_2 + 4{\tilde x}_2{\tilde x}_3 + 4{\tilde x}_3{\tilde x}_1)$, \\
      & & $-16({\tilde x}_1-{\tilde x}_2)^2({\tilde x}_2-{\tilde x}_3)({\tilde x}_3-{\tilde x}_1)\cdot({\tilde x}_1+{\tilde x}_2-2{\tilde x}_3)^2$ \\
      \hline
    \end{tabular}
  \end{center}
  \label{tab:sigmas}
\end{table}
\vfill
\ 
\end{landscape}

\begin{landscape}
\ 
\vfill
\begin{table}[h]
  \caption{Constraints on the parameters of the systems ${\rm gDH}(\tilde
    a_1,\tilde a_2,\tilde a_3;\tilde b_1,\tilde b_2,\tilde b_3;\tilde c)$
    and ${\rm gDH}(a_1,\nobreak a_2,\nobreak a_3;\allowbreak b_1,\nobreak
    b_2,\nobreak b_3;\nobreak c)$, under which $x=\Phi(\tilde x)$ is a
    solution-preserving map.  (In all cases, $c=\tilde c$.)}
  \begin{center}
    \begin{tabular}{|l||l|l||l|l|}
      \hline 
      {\rm covering map}$\vphantom{\tilde{\tilde A}}$ 
      & $(\tilde a_1,\tilde a_2,\tilde a_3)$
      & $(\tilde b_1,\tilde b_2,\tilde b_3)$ 
      & $(a_1,a_2,a_3)$ 
      & $(b_1,b_2,b_3)$ 
      \\
      \hline
      \hline
      ${\bf2}\vphantom{\tilde{\tilde A}}$
      & $\bigl(\tilde a_1,\tilde a_2, \tilde a_1\bigr)$
      & $\bigl(\tilde b_1,\tilde b_2, \tilde b_1\bigr)$
      & $\bigl(2\tilde a_1, \tilde a_2, 2\tilde a_1+\tilde a_2-\tilde c\bigr)$
      & $\bigl(\tilde c-\tilde b_2, \tilde b_2, 2\tilde b_1+\tilde b_2-\tilde c\bigr)$
      \\
      \hline
      ${\bf3}\vphantom{\tilde{\tilde A}}$
      & $\bigl(\tilde a,3\tilde a-\tilde c, 2\tilde a\bigr)$
      & $\bigl(\tilde b,3\tilde b-\tilde c, \tilde c-\tilde b\bigr)$
      & $\bigl(3\tilde a, 2(3\tilde a-\tilde c), 3(3\tilde a-\tilde c)\bigr)$
      & $\bigl(2\tilde c-3\tilde b, 3\tilde b-\tilde c, 3\tilde b-\tilde c\bigr)$
      \\
      \hline
      ${\bf4}\vphantom{\tilde{\tilde A}}$
      & $\bigl(\tilde a,\frac12(4\tilde a-\tilde c), 3\tilde a\bigr)$
      & $\bigl(\tilde b,\frac12(4\tilde b-\tilde c), \tilde c-\tilde b\bigr)$
      & $\bigl(4\tilde a, 2(4\tilde a-\tilde c), 3(4\tilde a-\tilde c)\bigr)$
      & $\bigl(\frac12(3\tilde c-4\tilde b), \frac12(4\tilde b-\tilde c), \frac12(4\tilde b-\tilde c)\bigr)$
      \\
      \hline
      ${\bf6}\sim{\bf3}\circ{\bf 2}\vphantom{\tilde{\tilde A}}$
      & $\bigl(\tilde a,\tilde a, 4\tilde a\bigr)$
      & $\bigl(\tilde b,\tilde b, \frac14(3\tilde c-2\tilde b)\bigr)$
      & $\bigl(6\tilde a, 2(6\tilde a-\tilde c), 3(6\tilde a-\tilde c)\bigr)$
      & $\bigl(\frac14(5\tilde c-6\tilde b), \frac14(6\tilde b-\tilde c), \frac14(6\tilde b-\tilde c)\bigr)$
      \\
      \hline
      ${\bf6}_{\bf c}\sim{\bf3}\circ{\bf 2}\sim{\bf2}\circ{\bf 3}_{\bf c}\vphantom{\tilde{\tilde A}}$
      & $\bigl(\tilde a,\tilde a, \tilde a\bigr)$
      & $\bigl(\tilde b,\tilde b, \tilde b\bigr)$
      & $\bigl(3\tilde a, 2(3\tilde a-\tilde c), 3(3\tilde a-\tilde c)\bigr)$
      & $\bigl(2\tilde c-3\tilde b, 3\tilde b-\tilde c, 3\tilde b-\tilde c\bigr)$
      \\
      \hline
      \hline
      ${\bf3}_{\bf c}\vphantom{\tilde{\tilde A}}$
      & $\bigl(\tilde a,\tilde a, \tilde a\bigr)$
      & $\bigl(\tilde b,\tilde b, \tilde b\bigr)$
      & $\bigl(3\tilde a, 3\tilde a-\tilde c, 3\tilde a-\tilde c\bigr)$
      & $\bigl(2\tilde c-3\tilde b, 3\tilde b-\tilde c, 3\tilde b-\tilde c\bigr)$
      \\
      \hline
      ${\bf4}_{\bf bq}\sim{\bf2}\circ{\bf 2}\vphantom{\tilde{\tilde A}}$
      & $\bigl(\tilde a,\tilde a, 2\tilde a\bigr)$
      & $\bigl(\tilde b,\tilde b, \frac12\tilde c\bigr)$
      & $\bigl(4\tilde a, 4\tilde a-\tilde c, 2(4\tilde a-\tilde c)\bigr)$
      & $\bigl(\frac12(3\tilde c-4\tilde b), \frac12(4\tilde b-\tilde c), \frac12(4\tilde b-\tilde c)\bigr)$
      \\
      \hline
      \hline
      ${\bf12}_{\bf bq}\sim{\bf6}\circ{\bf2}\vphantom{\tilde{\tilde A}}$
      & $\bigl(-\frac14,-\frac14,-\frac12\bigr)\tilde c$
      & $\bigl(\frac12,\frac12,\frac12\bigr)\tilde c$
      & $(-3,-8,-12)\tilde c$ 
      & $\bigl(\frac12,\frac12,\frac12\bigr)\tilde c$
      \\
      $\quad{}\sim{\bf3}\circ{\bf2}\circ{\bf2}\sim{\bf3}\circ{\bf4}_{\bf bq}$
      &
      &
      &
      &
      \\
      \hline
      ${\bf12}_{\bf c}\sim{\bf4}\circ{\bf 3_{\bf c}}\vphantom{\tilde{\tilde A}}$
      & $\bigl(-\frac16,-\frac16,-\frac16\bigr)\tilde c$
      & \quad\texttt{"}
      & $(-2,-6,-9)\tilde c$ 
      & \quad\texttt{"}
      \\
      \hline
      \hline
      ${\bf10}\vphantom{\tilde{\tilde A}}$
      & $\bigl(-\frac38,-\frac78,-\frac38\bigr)\tilde c$
      & \quad\texttt{"}
      & $(-6,-14,-21)\tilde c$ 
      & \quad\texttt{"}
      \\
      \hline
      ${\bf24}_{\bf c}\sim{\bf8}\circ{\bf 3_{\bf c}}\vphantom{\tilde{\tilde A}}$
      & $\bigl(-\frac14,-\frac14,-\frac14\bigr)\tilde c$
      & \quad\texttt{"}
      & $(-6,-14,-21)\tilde c$ 
      & \quad\texttt{"}
      \\
      \hline
    \end{tabular}
  \end{center}
\label{tab:restrictions}
\end{table}
\vfill
\ 
\end{landscape}
}                               %end of afterpage

\begin{remark*}
  The seemingly \emph{ad hoc} convention for standardizing the maps~$R$ of
  Tables \ref{tab:coveringmapsclassical}
  and~\ref{tab:coveringmapsnonclassical} was chosen to regularize the
  resulting formulas for the rational functions $x_i$ of $\tilde x_1,\tilde
  x_2,\tilde x_3$.  Choosing another would permute $\tilde x_1,\tilde
  x_2,\tilde x_3$ and/or $x_1,x_2,x_3$.
\end{remark*}

\begin{proof}
  Consider (as it turns out, without loss of generality) the case when each
  gDH system is proper in the sense of Definition~\ref{def:proper}, and the
  solutions being mapped between, $x=x(\tau)$, $\tilde x=\tilde
  x(\tilde\tau)$ with $\tau=\tilde\tau$, are noncoincident in the sense of
  Definition~\ref{def:singular}.  Then the systems and their solutions will
  come via Theorem~\ref{thm:integration} from a pair of PE's with solutions
  $f(t),\tilde f(\tilde t)$.  Their singular points $(t_1,t_2,t_3)$,
  $(\tilde t_1,\tilde t_2,\tilde t_3)$ will both be taken to be
  $(0,1,\infty)$, and the exponent offset vectors $\kappa,\tilde \kappa$ to
  be $(0,0,1)$.

  The map $x=\Phi(\tilde x)$, and the restrictions on parameters that must
  be imposed for it to be solution-preserving, will be derived from the
  requirement that $t,\tilde t$ be related as in \S\,\ref{subsec:liftings}
  by the covering $t=R(\tilde t)=-{{P}}_1(\tilde t)/{{P}}_3(\tilde t)$.
  For the duration, let $i,j,k$ be any cyclic permutation of~$1,2,3$, with
  $y_k\defeq \allowbreak x_i-\nobreak x_j$, $\tilde
  y_k\defeq \allowbreak \tilde x_i-\nobreak \tilde x_j$.

  For each map~$R$ in Table~\ref{tab:coveringmapsclassical}, the
  corresponding polynomials $\Sigma_1,\Sigma_2,\Sigma_3,\Sigma_6$ in
  Table~\ref{tab:sigmas} will come from a formula for the
  solution-preserving map $x=\Phi(\tilde x)$,
  \begin{equation}
    \label{eq:fund}
    x_i = \tilde x_3 + d^{-1}\,\tilde y_3\left[\tilde t(1-\tilde
      t)({{P}}'_i/{{P}}_i)(\tilde t)\right]\big|_{\tilde t=-\tilde
      y_1/\tilde y_3},
  \end{equation}
  where $P_1+P_2+P_3=0$ as usual.  In fact, $\Sigma_2,\Sigma_3,\Sigma_6$
  will come from
  \begin{equation}
    \label{eq:fundcor}
    y_k = d^{-1}\,\tilde y_3\Bigl\{\tilde t(1-\tilde
    t)\left[({{P}}'_i/{{P}}_i)-({{P}}'_j/{{P}}_j)\right](\tilde
    t)\Bigr\}\big|_{\tilde t=-\tilde y_1/\tilde y_3} ,
  \end{equation}
  a corollary of~(\ref{eq:fund}) that will be derived first.  The proof is
  long and is therefore divided into three parts: (I)~the derivation of the
  parameter relationships of Table~\ref{tab:restrictions}, (II)~the
  derivation of~(\ref{eq:fundcor}), and (III)~the derivation
  of~(\ref{eq:fund}).  Of~the three parts the first is the longest.  Only
  in~(III) will the facts $\kappa,\tilde \kappa=(0,0,1)$ be used.

  \emph{Part I of Proof}.  What must first be elucidated are the relation
  between~$f,\tilde f$, and that between the exponents of the PE's that
  $f,\tilde f$ satisfy; and hence, the relation between the parameters of
  the two induced gDH systems.  It is not the case that $f(t)=\tilde
  f(\tilde t)$.  (The reader should glance at the
  identity~(\ref{eq:Psymbollifting}), which shows the P\nobreakdash-symbols
  of two PE's related by the quadratic transformation~${\bf 2}$, the
  solutions $f,\tilde f$ of which \emph{do} satisfy $f(t)=\tilde f(\tilde
  t)$.)  That $f(t),\tilde f(\tilde t)$ must differ is evident from the
  parametrizations $\tau=\tau(t)$, $\tilde\tau=\tilde\tau(\tilde t)$
  produced by Theorem~\ref{thm:integration}, which satisfy
  \begin{equation}
    \frac{{\rm d}\tau}{{\rm d}t} =  K^{-2}(t) f^{-1/\bar n}(t), \qquad
    \frac{{\rm d}\tilde\tau}{{\rm d}\tilde t} =  \tilde K^{-2}(\tilde t)
    \tilde f^{-1/\bar n}(t),
  \end{equation}
  for some chosen $\bar n\in\mathbb{P}^1\setminus\{0,\infty\}$ and
  functions $K,\tilde K$ determined by the chosen offset
  vectors~$\kappa,\tilde\kappa$.  If $t=R(\tilde t)$ and $\tau=\tilde \tau$
  then
  \begin{equation}
    \label{eq:ready}
    \tilde f(\tilde t)/f(t) = 
    \left[ K^2(t)/\tilde K^2(\tilde t) R'(\tilde t)\right]^{\bar n},
  \end{equation}
  the prime indicating differentiation with respect to~$\tilde t$.  It
  follows readily from~(\ref{eq:ready}) that if a point
  $t^*\in\mathbb{P}^1_t$ is mapped by~$R$ with multiplicity~$m$ from a
  point $\tilde t^*\in\mathbb{P}^1_{\tilde t}$, so that the local behavior
  of~$R$ is $(t-t^*)\sim\textrm{const}\times\allowbreak(\tilde t -\nobreak
  \tilde t^*)^m$, then the respective exponents $(\mu,\mu')$ and~$(\tilde
  \mu,\tilde \mu')$ are related not by $(\tilde \mu,\tilde
  \mu')=m(\mu,\mu')$ (as~is the case, say, in~(\ref{eq:Psymbollifting})),
  but rather by
  \begin{equation}
    \label{eq:multadd}
    \bigl[(\tilde\mu,\tilde\mu') + (2\tilde \kappa-1){\bar{n}}\,(1,1)\bigr] = m
     \bigl[(\mu,\mu') + (2\kappa-1){\bar{n}}\,(1,1)\bigr].
  \end{equation}
  Here, $\kappa,\tilde \kappa$ are specific to the points~$t^*,\tilde t^*$
  in the sense that $\kappa$~is $\kappa_i$ if $t^*=t_i$, and
  $\tilde\kappa$~is $\tilde\kappa_i$ if $\tilde t^*=\tilde t_i$; and
  otherwise they are zero.

  Equation~(\ref{eq:multadd}) takes on a simple form when written in~terms
  of \emph{offset} exponents
  $\nu\defeq \allowbreak\mu+\allowbreak(2
  \kappa-\nobreak1)\bar n$, etc., which were already introduced
  in~(\ref{eq:zetas}), in the statement of Theorem~\ref{thm:integration}.
  It says that
  \begin{equation}
    \label{eq:usemults}
    (\tilde\nu,\tilde\nu')=m(\nu,\nu')
  \end{equation}
  relates the (offset) exponents at any point $\tilde t=\tilde t^*$ to
  those at its image $t=t^*$.  Informally, when lifting a PE
  on~$\mathbb{P}_t^1$ that induces a gDH system (with independent
  variable~$\tau$) to a PE on~$\mathbb{P}_{\tilde t}^1$ that induces
  another gDH system (with the \emph{same} independent variable, i.e., with
  $\tilde\tau=\tau$), one should work in~terms of~$\nu$'s (offset
  exponents) rather than~$\mu$'s (unoffset ones).

  An example is the map~${\bf 2}$, i.e., $t=R(\tilde t)=4\tilde
  t/(1+\nobreak\tilde t)^2$.  Equation~(\ref{eq:usemults}) holding, the
  P\nobreakdash-symbol identity (\ref{eq:Psymbollifting}) must be replaced
  by the pair of P\nobreakdash-symbols
\begin{subequations}
\label{eq:Psymab}
\begin{sizealign}{\small}
\label{eq:Psyma}
  f(t) & = 
  \left\{
  \begin{array}{ccc}
    0 & 1 & \infty \\
    \hline
    \nu_1-(2\kappa_1-1){\bar{n}} & \nu_2-(2\kappa_2-1){\bar{n}} & 0-\left(2\kappa_3-\tfrac12\right){\bar{n}} \\
    \nu'_1-(2\kappa_1-1){\bar{n}} & \nu'_2-(2\kappa_2-1){\bar{n}} & \tfrac12-\left(2\kappa_3-\tfrac12\right){\bar{n}}
  \end{array}
  \right\}(t),\\
  \tilde f(\tilde t) &=
  \left\{
  \begin{array}{ccc}
    0  & 1 & \infty \\
    \hline
    \nu_1-(2\tilde\kappa_1-1){\bar{n}} & 2\nu_2-(2\tilde\kappa_2-1){\bar{n}} & \nu_1-(2\tilde\kappa_3-1){\bar{n}}\\
    \nu'_1-(2\tilde\kappa_1-1){\bar{n}} & 2\nu'_2-(2\tilde\kappa_2-1){\bar{n}} & \nu'_1-(2\tilde\kappa_3-1){\bar{n}}
  \end{array}
  \right\}(\tilde t),
\label{eq:Psymb}
\end{sizealign}
\end{subequations}
which are parametrized by the offset exponents $(\nu_1,\nobreak \nu_1';
\allowbreak \nu_2,\nobreak \nu_2'; \allowbreak \nu_3,\nobreak \nu_3')$,
together with $\kappa,\tilde \kappa$ and~$\bar n$.  Most of the
dependencies that one sees between exponents in
(\ref{eq:Psyma}),\allowbreak(\ref{eq:Psymb}) are attributable to the
unitalicized multiplicities in the branching schema $1+\nobreak1=2={\it2}$
of the map~$\bf2$, which imply
\begin{equation}
\label{eq:zetaprimeconstraints}
  (\tilde\nu_1,\tilde\nu_1')=(\nu_1,\nu'_1),\qquad
  (\tilde\nu_2,\tilde\nu_2')=2(\nu_2,\nu'_2),\qquad
  (\tilde\nu_3,\tilde\nu_3')=(\nu_1,\nu'_1).\qquad
\end{equation}
In~(\ref{eq:Psyma}), the exponents $(\mu_3,\mu_3')$ at~$t=t_3=\infty$,
which is doubly mapped from $\tilde t=-1$, come instead from the fact that
at any point $t^*\in\mathbb{P}^1_t$ mapped with multiplicity~$m$ from an
\emph{ordinary} point $\tilde t^*\in\mathbb{P}^1_{\tilde t}$ (with
exponents $(\tilde\mu,\tilde\mu')$ equal to $(0,1)$ by definition), the
offset exponents $(\nu,\nu')$ and exponents $(\mu,\mu')$ are given by
\begin{subequations}
\begin{align}
(\nu,\nu')&=(0,1/m)-(1/m){\bar{n}}\,(1,1),\\ 
(\mu,\mu')&=(0,1/m)-(2\kappa-1+1/m){\bar{n}}\,(1,1),
\label{eq:muunderordinary}
\end{align}
\end{subequations}
due to~(\ref{eq:usemults}).  It should be noted that
$\nu_1,\nobreak\nu_2,\nobreak\nu_3;
\allowbreak\nu_1',\nobreak\nu_2',\nobreak\nu_3'$ are always
constrained by the Fuchsian condition
\begin{equation}
\label{eq:nonfree}
  \sum_{i=1}^3 (\nu_i+\nu'_i)=1-2{\bar{n}},
\end{equation}
which follows from Fuchs's relation $\sum_{i=1}^3(\mu_i+\nobreak\mu_i')=1$.
In~(\ref{eq:Psymab}), each of $\nu_1,\nobreak\nu_1';
\allowbreak\nu_2,\nobreak\nu_2'$ can be viewed as a free parameter, $\bar
n$~being determined by~(\ref{eq:nonfree}).

It can now be seen how the gDH parameter vectors $( a_1, \nobreak a_2,
\nobreak a_3;\allowbreak b_1, \nobreak b_2, \nobreak b_3; c)$ and $(\tilde
a_1, \nobreak \tilde a_2, \nobreak \tilde a_3;\allowbreak \tilde b_1,
\nobreak \tilde b_2, \nobreak\tilde b_3;\tilde c)$ are related, subject to
constraints.  To begin with, the `upper' offset exponents $(\tilde\nu_1,
\nobreak \tilde\nu_2, \nobreak \tilde\nu_3;\allowbreak \tilde\nu_1',
\nobreak \tilde\nu_2', \nobreak \tilde\nu_3')$ will satisfy one or more
constraints.  (For instance, for the map~$\bf2$ these are
$\tilde\nu_3=\tilde\nu_1$ and $\tilde\nu_3'=\tilde\nu_1'$,
by~(\ref{eq:zetaprimeconstraints}).)  Let $(\tilde a_1, \nobreak \tilde
a_2, \nobreak \tilde a_3;\allowbreak \tilde b_1, \nobreak \tilde b_2,
\nobreak\tilde b_3;\tilde c)$ be computed from $(\tilde\nu_1, \nobreak
\tilde\nu_2, \nobreak \tilde\nu_3;\allowbreak \tilde\nu_1', \nobreak
\tilde\nu_2', \nobreak \tilde\nu_3')$, by the formulas
(\ref{eq:abtozetasn}) of Theorem~\ref{thm:integration}.  Then $(\tilde a_1,
\nobreak \tilde a_2, \nobreak \tilde a_3;\allowbreak \tilde b_1, \nobreak
\tilde b_2, \nobreak\tilde b_3;\tilde c)$ will also be constrained;
for~$\bf2$ the constraints turn~out to be that $\tilde a_3=\tilde a_1$,
$\tilde b_3=\tilde b_1$.  Let the `lower' offset exponents $(\nu_1,\nobreak
\nu_1'; \allowbreak \nu_2,\nobreak \nu_2'; \allowbreak \nu_3,\nobreak
\nu_3')$ be computed from $(\tilde\nu_1,\nobreak \tilde\nu_1'; \allowbreak
\tilde\nu_2,\nobreak \tilde\nu_2'; \allowbreak \tilde\nu_3,\nobreak
\tilde\nu_3')$ by~(\ref{eq:usemults}), using the unitalicized
multiplicities in the branching schema of~$R$, and let $( a_1, \nobreak
a_2, \nobreak a_3;\allowbreak b_1, \nobreak b_2, \nobreak b_3; c)$ be
computed from $(\nu_1,\nobreak \nu_1'; \allowbreak \nu_2,\nobreak \nu_2';
\allowbreak \nu_3,\nobreak \nu_3')$ by the formulas (\ref{eq:zetasntoab})
of Theorem~\ref{thm:integration}.  What results is an expression for the
gDH parameter vector $( a_1, \nobreak a_2, \nobreak a_3;\allowbreak b_1,
\nobreak b_2, \nobreak b_3; c)$, as a function of the (constrained) gDH
parameter vector $(\tilde a_1, \nobreak \tilde a_2, \nobreak \tilde
a_3;\allowbreak \tilde b_1, \nobreak \tilde b_2, \nobreak\tilde b_3;\tilde
c)$.

The parametric constraints and relations in Table~\ref{tab:restrictions}
were computed by this technique.  (Each line of the table was normalized by
taking $c=\tilde c$.)  In most cases the constraints on $(\tilde a_1,
\nobreak \tilde a_2, \nobreak \tilde a_3;\allowbreak \tilde b_1, \nobreak
\tilde b_2, \nobreak\tilde b_3;\tilde c)$ have an interpretation in~terms
of symmetry.  For instance, for each `${\bf c}$'~map the constraints are
that $\tilde a_1=\tilde a_2=\tilde a_3$ and $\tilde b_1=\tilde b_2=\tilde
b_3$, because the singular points $\tilde t=0,1,\infty$ are mapped by~$R$
to the same point ($t=0$), with equal multiplicities.

As was noted in~\S\,\ref{subsec:fromPEtogDH}, it is implicit in
Theorem~\ref{thm:integration} that the gDH system that it produces is
proper.  Here, this translates to a hidden assumption that the two gDH
systems being mapped between do \emph{not} satisfy (i) $\tilde c=0$
or~$c=0$, (ii) $2\tilde c-\nobreak\tilde b_1-\nobreak\tilde
b_2-\nobreak\tilde b_3=0$ or $2 c-\nobreak b_1-\nobreak b_2-\nobreak
b_3=0$, or (iii) $\tilde c-\nobreak\tilde a_1-\nobreak\tilde
a_2-\nobreak\tilde a_3=0$ or $c-\nobreak a_1-\nobreak a_2-\nobreak a_3=0$.
But by continuity of each gDH system in its parameters, this assumption can
safely be dropped.

\smallskip
\emph{Part II of Proof}.  Now that the relation between the two PE's, the
relation between their solutions $f(t),\tilde f(\tilde t)$, and that
between the two induced gDH systems $\dot x=Q(x)$, $\dot{\tilde x}=\tilde
Q(\tilde x)$ are all understood, the solution-preserving map $x=\Phi(\tilde
x)$ can be computed from the covering $t=R(\tilde t)$.  First, from
$f,\tilde f$, construct as functions of~$t,\tilde t$ the dependent
variables $x,\tilde x$ and independent variables $\tau,\tilde\tau$
(satisfying $\tau=\tilde\tau$), as in Theorem~\ref{thm:integration}.
Define $\rho,\tilde \rho$ by
\begin{alignat}{2}
\label{eq:rhodef}
  \rho^{-1}&=\mu_1+\mu_2+\mu_3 &\qquad\tilde\rho^{-1}&=\tilde\mu_1+\tilde\mu_2+\tilde\mu_3\\
&=\frac{2\,c-b_1-b_2-b_3}{c-a_1-a_2-a_3},&\qquad&=\frac{2\,\tilde c-\tilde b_1-\tilde b_2-\tilde b_3}{\tilde c-\tilde a_1-\tilde a_2-\tilde a_3},\nonumber
\end{alignat}
the lower expressions following from the formulas in that theorem; and note
that $\rho/\tilde\rho=d$, as $R$~is a degree\nobreakdash-$d$ covering.
(The case when $\rho$ or~$\tilde\rho$ is undefined can be handled by
continuity, as above.)  As $(t_1,t_2,t_3)$ and $(\tilde t_1,\tilde
t_2,\tilde t_3)$ equal $(0,1,\infty)$, the formula (\ref{eq:secondclaim})
of Lemma~\ref{lem:usedlater} becomes
\begin{equation}
\label{eq:320}
  t=-\left(\frac{x_2-x_3}{x_1-x_2}\right) = -\,\frac{y_1}{y_3}, \qquad
  \tilde t=-\left(\frac{\tilde x_2-\tilde x_3}{\tilde x_1-\tilde
    x_2}\right) = -\,\frac{\tilde y_1}{\tilde y_3},
\end{equation}
it being assumed that in the gDH solutions $x=x(\tau)$ and $\tilde x=\tilde
x(\tilde\tau)$ are noncoincident.  (The case of coinciding components can
be handled by another continuity argument.)  Also,
Eq.~(\ref{eq:fifthclaim}) of the lemma yields
\begin{equation}
\label{eq:321}
  \dot{\tilde t} = c\,{\bar{n}}\,\tilde\rho
  \left[\frac{(\tilde x_2-\tilde x_3)(\tilde x_3-\tilde x_1)}{\tilde
      x_1-\tilde x_2}\right]
  = c\,{\bar{n}}\,\tilde\rho
  \left(\frac{\tilde y_1\tilde y_2}{\tilde y_3}\right).
\end{equation}
As $\dot t=\dot{\tilde t}R'(\tilde t)$, it then follows from
Eq.~(\ref{eq:firstclaim}) of the lemma that
\begin{align}
  y_k=x_i-x_j &= c^{-1}\bar n^{-1}\rho^{-1}\left[\frac{\dot t}{t-t_i}-\frac{\dot t}{t-t_j}\right]\nonumber\\
  &=c^{-1}\bar n^{-1}\rho^{-1}\left[\frac1{t-t_i}-\frac1{t-t_j}\right]\dot{\tilde t}\,R'(\tilde  t)\\
  &=d^{-1}\left[\frac1{R(\tilde
      t)-t_i}-\frac1{R(\tilde t)-t_j}\right]\left(\frac{\tilde
    y_1\tilde y_2}{\tilde y_3}\right)R'(\tilde t).\nonumber
\end{align}
But $R(\tilde t)=-{{P}}_1(\tilde t)/{{P}}_3(\tilde t)$ with
${{P}}_1+{{P}}_2+{{P}}_3=0$, and $(t_1,t_2,t_3)=(0,1,\infty)$.  Using
(\ref{eq:321}) and~(\ref{eq:320}) to rewrite $\dot{\tilde t}$ in~terms of
$\tilde y_3$ and~$\tilde t$, one obtains rational expressions for
$y_1,y_2,y_3$ in~terms of $\tilde y_1,\tilde y_2,\tilde y_3$, which by
examination are
\begin{equation}
\label{eq:prefundcor}
  (y_1,y_2,y_3)/\tilde y_3 = d^{-1}\Bigl[\tilde
  t(1-\tilde t)R'(\tilde t)\cdot\bigl({{P}}_2^{-1}{{P}}_3,
  {{P}}_1^{-1}{{P}}_3, {{P}}_1^{-1}{{P}}_2^{-1}{{P}}_3^2\bigr)\Bigr]\big|_{\tilde t=-\tilde y_1/\tilde y_3}.
\end{equation}
Further rearrangement using $R' =
({{P}}_3'{{P}}_1-{{P}}_1'{{P}}_3)/{{P}}_3^2$ and
${{P}}_1'+{{P}}_2'+{{P}}_3'=0$ yields
  \begin{equation}
    \label{eq:fundcor2}
    y_k = d^{-1}\,\tilde y_3\Bigl\{\tilde t(1-\tilde t)\left[({{P}}'_i/{{P}}_i)-({{P}}'_j/{{P}}_j)\right](\tilde t)\Bigr\}\big|_{\tilde t=-\tilde y_1/\tilde y_3}
  \end{equation}
for $k=1,2,3$, which was claimed as Eq.~(\ref{eq:fundcor}) above.

The expressions for $\Sigma_2,\Sigma_3,\Sigma_6$ in Table~\ref{tab:sigmas}
can be computed from Eq.~(\ref{eq:fundcor2}), which gives each
$y_k=x_i-\nobreak x_j$ as a rational function of $\tilde y_1,\tilde
y_2,\tilde y_3$, and hence of $\tilde x_1,\tilde x_2,\tilde x_3$.
Actually, it is more illuminating to use~(\ref{eq:prefundcor}).  Writing
$y_1,y_2,y_3$ in~terms of $\Sigma_2,\Sigma_3,\Sigma_6$, and using
(\ref{eq:prefundcor}) to solve for the latter, yields
\begin{subequations}
\label{eq:Sigma236}
\begin{align}
  \Sigma_2 / \tilde y_3^2 &=
  \left\{\left[\tilde t(\tilde t-1)R'\right]^2{{P}}_1^{-2}{{P}}_2^{-1}{{P}}_3^3\right\}|_{\tilde t=-\tilde
    y_1/\tilde y_3}, \label{eq:Sigma2}\\
  \Sigma_3 / \tilde y_3^3 &=
  \left\{\left[\tilde t(\tilde t-1)R'\right]^3{{P}}_1^{-3}{{P}}_2^{-2}{{P}}_3^5\right\}|_{\tilde t=-\tilde
    y_1/\tilde y_3}, \label{eq:Sigma3}\\
  \Sigma_6 / \tilde y_3^6 &=
  \left\{\left[\tilde t(\tilde t-1)R'\right]^6{{P}}_1^{-5}{{P}}_2^{-4}{{P}}_3^9\right\}|_{\tilde t=-\tilde y_1/\tilde y_3}. \label{eq:Sigma6}
\end{align}
\end{subequations}
By considering the order of vanishing of each right side
in~(\ref{eq:Sigma236}) at each zero of ${{P}}_1$, ${{P}}_2$, or~${{P}}_3$,
it is not difficult to see that irrespective of the choice of classical
PE\nobreakdash-lifting map $t=R(\tilde t)$, each of
$\Sigma_2,\Sigma_3,\Sigma_6$ must be \emph{polynomial} in $\tilde
x_1,\tilde x_2,\tilde x_3$, and not merely rational.  (This is essentially
because in each case, $R^{-1}\{0\}\subset\{0,1,\infty\}$.)  Polynomiality
also follows by direct computation.

\smallskip
\emph{Part III of Proof}.  What remains to be derived is the strengthening
(\ref{eq:fund}) of~(\ref{eq:fundcor2}), i.e., the useful formula
  \begin{equation}
    \label{eq:fund2}
    x_i = \tilde x_3 + d^{-1}\,\tilde y_3\left[\tilde t(1-\tilde t)({{P}}'_i/{{P}}_i)(\tilde t)\right]\big|_{\tilde t=-\tilde
      y_1/\tilde y_3},
  \end{equation}
from which the remaining polynomials~$\Sigma_1$ of Table~\ref{tab:sigmas}
can be computed.  Equation~(\ref{eq:fund2}) is a corollary of an even
simpler formula,
\begin{equation}
  \label{eq:simplerfund}
  f_i^{\rho}(t=R(\tilde t))/\tilde f_3^\rho(\tilde t) = C\times P_i(\tilde t)|_{\tilde t=-\tilde y_1/\tilde
    y_3},
 \end{equation}
where as in Theorem~\ref{thm:integration}, $f_i=\Delta_if$ and $\tilde
f_i=\tilde \Delta_i\tilde f$, and where the prefactor $C\neq0$ is $\tilde
t$-independent.  To see that (\ref{eq:fund2}) follows
from~(\ref{eq:simplerfund}), recall (see Theorem~\ref{thm:integration})
that $x_i,\tilde x_i$ are defined as logarithmic derivatives with respect
to~$\tau$, i.e.,
\begin{equation}
   x_i=c^{-1}{\bar{n}^{-1}}\,\dot{f}_i/{f}_i,\qquad
  \tilde x_i=c^{-1}{\bar{n}^{-1}}\,\dot{\tilde f}_i/{\tilde f}_i.
\end{equation}
By taking the logarithmic derivative of both sides
of~(\ref{eq:simplerfund}), and using (\ref{eq:321}) and~(\ref{eq:320}) to
rewrite $\dot{\tilde t}$ in~terms of $\tilde y_3$ and~$\tilde t$, one
obtains~(\ref{eq:fund2}).

To prove that Eq.~(\ref{eq:simplerfund}) holds for each classical
PE\nobreakdash-lifting map, reason as follows.  It suffices to prove the
$i=3$ case, i.e., that
\begin{equation}
\label{eq:consistent1}
\left[f_3(t)/\tilde f_3(\tilde t)\right]^\rho = C\times P_3(\tilde t).
\end{equation}
But the relation between $f,\tilde f$ is known; see Eq.~(\ref{eq:ready})
above.  The case when $\kappa,\tilde\kappa=(0,0,1)$ is especially simple,
as then $K^2,\tilde K^2=1$ (see the second remark after
Theorem~\ref{thm:integration}).  The relation becomes $\tilde
f/f=\left[R'(\tilde t)\right]^{\bar n}$, so that
\begin{align}
\label{eq:consistent2}
\left[f_3(t)/\tilde f_3(\tilde t)\right]^\rho &= \left[R'(\tilde
  t)\right]^{-{\bar{n}}\rho} \left[ \frac{\Delta_3(t)}{\tilde
    \Delta_3(\tilde t)}\right]^\rho\\ &=\left[R'(\tilde
  t)\right]^{-{\bar{n}}\rho} \left[
  \frac{(-t)^{-\mu_1}(t-1)^{-\mu_2}}{(-\tilde t)^{-\tilde\mu_1}(\tilde
    t-1)^{-\tilde\mu_2}}\right]^\rho \nonumber\\ &=\left[R'(\tilde
  t)\right]^{-{\bar{n}}\rho} \left[
  \frac{(P_1/P_3)^{-\mu_1}(P_2/P_3)^{-\mu_2}}{(-\tilde
    t)^{-\tilde\mu_1}(\tilde t-1)^{-\tilde\mu_2}}\right]^\rho. \nonumber
\end{align}
The expressions for $\Delta_3,\tilde\Delta_3$ are taken
from~(\ref{eq:Deltas}), and use has been made of the fact that $t=R(\tilde
t)=-P_1(\tilde t)/P_3(\tilde t)$ with $P_1+P_2+P_3=0$.  After a bit more
manipulation, making use of
$\rho^{-1}=\allowbreak\mu_1+\nobreak\mu_2+\nobreak\mu_3$
(see~(\ref{eq:rhodef}) above), one finds that (\ref{eq:consistent1}) will
follow from~(\ref{eq:consistent2}) if
\begin{equation}
\label{eq:lambdadef}
  \Lambda(\tilde t)\defeq (-\tilde t)^{\tilde\mu_1}(\tilde t-1)^{\tilde\mu_2}\,
P_1^{-\mu_1}P_2^{-\mu_2}P_3^{-\mu_3}\left[R'=(-P_1/P_3)'=(P_2/P_3)'\right]^{\bar n}
\end{equation}
is a nonzero constant function of~$\tilde t$.

To prove this last, it suffices to show that at all~$\tilde t$ (or~merely,
all finite~$\tilde t$), $\Lambda$~has zero order of vanishing.  But, the
only points on~$\mathbb{P}^1_{\tilde t}$ at which it can have nonzero order
of vanishing are those in $R^{-1}(\{0,1,\infty\})$, i.e., (i)~the singular
points $\tilde t=0,1,\infty$, and (ii)~ordinary (nonsingular) points mapped
by~$R$ to $t=0,1,\infty$.

(i)~As an example, consider $\tilde t=0$, which by convention is mapped
by~$R$ to $t=0$.  Suppose this is with multiplicity~$m$, i.e., $t=R(\tilde
t)\sim\textrm{const}\times\nobreak\tilde t^m$, so that $P_1(\tilde
t)\sim\textrm{const}\times\nobreak\tilde t^m$ and $P_2(0),P_3(0)\neq0$.
By~(\ref{eq:lambdadef}), the order of vanishing of~$\Lambda$ at $\tilde
t=0$ will be $\tilde\mu_1-m\mu_1+\allowbreak(m-\nobreak1)\bar n$.  But by
the remarks on lifting of exponents in Part~I of this proof, the lifted
exponent~$\tilde\mu_1$ at~$\tilde t=0$ will equal
$m\mu_1-\allowbreak(m-\nobreak1)\bar n$.  (See~(\ref{eq:multadd});
$\kappa_1=0$ and $\tilde\kappa_1=0$ are used here.)  Hence the order of
vanishing at $\tilde t=0$ is zero.  The singular point $\tilde t=1$ is
handled identically.

(ii)~As an example, consider $\tilde t=\tilde t^*$, some ordinary point
mapped by~$R$ to $t=\infty$.  Being an ordinary (nonsingular) point, it has
exponents $(\tilde\mu,\tilde\mu')=(0,1)$.  Suppose the mapping is with
multiplicity~$m$, so that $P_3(\tilde
t)\sim\textrm{const}\times\nobreak(\tilde t-\tilde t^*)^m$ and $P_2(\tilde
t^*),P_3(\tilde t^*)\neq0$.  Then by the above remarks on lifting of
exponents, the first exponent~$\mu_3$ at the mapped point $t=\infty$ must
be $0-\allowbreak(2\cdot1-\nobreak1+\nobreak1/m)\bar n$.
(See~(\ref{eq:muunderordinary}); $\kappa_3=1$ is used here.)  It follows
immediately from~(\ref{eq:lambdadef}) that $\Lambda$~has zero order of
vanishing at~$\tilde t=\tilde t^*$.  Ordinary points
on~$\mathbb{P}^1_{\tilde t}$ that are mapped by~$R$ to the other two
singular points, i.e., to $t=1,\infty$, are handled similarly.

\smallskip
Now that Eq.~(\ref{eq:simplerfund}) and its corollary~(\ref{eq:fund2}) have
been derived, one can readily compute $x_1\eqdef\Sigma_1/d$ as a function
of $\tilde x_1,\tilde x_2,\tilde x_3$ for each classical
PE\nobreakdash-lifting map $t=R(\tilde t)=-(P_1/P_3)(\tilde t)$.  In each
case $\Sigma_1$ turns~out to be linear in $\tilde x_1,\tilde x_2,\tilde
x_3$.  The resulting linear polynomials are listed in
Table~\ref{tab:sigmas}.
\end{proof}

\begin{remark*}
  There is an alternative proof of Theorem~\ref{thm:gDHsystems}, which
  would go as follows.  (I)~Each covering $t=R(\tilde t)$ induces a lifting
  of a gSE of the form~(\ref{eq:gSE}) satisfied by $t=t(\tau)$ to one for
  $\tilde t=\tilde t(\tau)$, and an accompanying lifting of (suitably
  constrained) parameter vectors, $(\nu_1,\nobreak \nu_1'; \allowbreak
  \nu_2,\nobreak \nu_2'; \allowbreak \nu_3,\nobreak \nu_3';\bar
  n)\leftarrow\allowbreak (\tilde\nu_1,\nobreak \tilde\nu_1'; \allowbreak
  \tilde\nu_2,\nobreak \tilde\nu_2'; \allowbreak \tilde\nu_3,\nobreak
  \tilde\nu_3';\bar n)$.  (II)~By Theorem~\ref{thm:gDHbasecoin}, each such
  yields a rational solution-preserving map of (suitably constrained)
  proper gDH systems.  Details are left to the reader.
\end{remark*}

One should note that for each classical PE\nobreakdash-lifting map~$R$, the
coefficients of the polynomial~$\Sigma_1$ in Table~\ref{tab:sigmas}, which
is linear in $\tilde x_1,\tilde x_2,\tilde x_3$, sum to the degree~$d$ of
the map.  In fact the coefficients are the multiplicities with which
$\tilde t=0,1,\infty$ are taken to~$t=0$ by the map (cf.~the branching
schemata of Table~\ref{tab:coveringmapsclassical}).

In the cases ${\bf2},{\bf3}_{\bf c},{\bf6}_{\bf c}$, in which the field
extension $\mathbb{C}(\tilde t)/\mathbb{C}(t)$ is Galois, there is a simple
interpretation of the polynomials $\Sigma_1,\Sigma_2,\Sigma_3$.  On the
level of the rational solution-preserving map $(\tilde x_1,\tilde
x_2,\tilde x_3)\mapsto\allowbreak(x_1,x_2,x_3)$, the respective Galois
groups $\mathfrak{Z}_2,\mathfrak{Z}_3,\mathfrak{S}_2$ comprise interchanges
$\tilde x_1\leftrightarrow \tilde x_3$, cyclic permutations $\tilde x_1\to
\tilde x_2\to \tilde x_3\to \tilde x_1$, and arbitrary permutations of
$\tilde x_1,\tilde x_2,\tilde x_3$.  In each case
$\Sigma_1,\Sigma_2,\Sigma_3$ form an algebraic basis for an associated ring
of polynomial invariants.  For instance, in the case~${\bf 6}_{\bf c}$ they
generate all symmetric polynomials in $\tilde x_1,\tilde x_2,\tilde x_3$.
This interpretation of $\Sigma_1,\Sigma_2,\Sigma_3$ has an extension to the
non-Galois cases (see~\cite{Harnad2000}).

The solution-preserving maps between gDH systems induced by the
semiclassical and nonclassical PE\nobreakdash-lifting maps ${\bf12}_{\bf
  bq},{\bf12}_{\bf c}$ and ${\bf10},{\bf24}_{\bf c}$ of Tables
\ref{tab:coveringmapsclassical} and~\ref{tab:coveringmapsnonclassical} are
rather complicated and will not be given in their entirety.  The following
will suffice.

\begin{theorem}
  To each of\/ ${\bf12}_{\bf bq},{\bf12}_{\bf c}$ and\/
  ${\bf10},{\bf24}_{\bf c}$ {\rm(}of degree\/~$d$\,{\rm)} there is
  associated a rational map\/ $x=\Phi(\tilde x)$ between gDH systems\/
  $\dot{\tilde x}=\tilde Q(\tilde x)$ and\/ $\dot x=Q(x)$ of the form\/
  {\rm(\ref{eq:gDH})}.  It can be viewed as a map\/
  $\Phi\colon\mathbb{P}^2_{\tilde x}\to\mathbb{P}^2_{x}$, and is expressed
  as in Theorem\/~{\rm\ref{thm:gDHsystems}} but with\/
  $\Sigma_1,\Sigma_2,\Sigma_3,\Sigma_6$ rational rather than polynomial,
  and with
  \begin{displaymath}
    x_1 = \Sigma_1/d = \frac1d\left[\hat\Sigma_1 + \frac{d-d_0}{\deg\Upsilon}\,
\frac{\sum_{i=1}^3 \tilde
        x_j\tilde x_k\, \partial\Upsilon/\partial\tilde x_i}{\Upsilon}\right],
  \end{displaymath}
  in which\/ $j,k$ are the elements of\/ $1,2,3$ other than\/~$i$.  Here\/
  $\hat\Sigma_1,\Upsilon$ are certain homogeneous polynomials in\/ $\tilde
  x_1,\tilde x_2,\tilde x_3$ that are listed in
  Table\/~{\rm\ref{tab:upsilon}}, the first being linear, and\/ $d_0<d$ is
  the sum of the coefficients of\/~$\hat\Sigma_1$.  A~sufficient condition
  for\/ $\Phi$ to be solution-preserving from the system\/ $\dot{\tilde
    x}=\tilde Q(\tilde x)$, i.e., ${\rm gDH}(\tilde a_1, \nobreak\tilde
  a_2, \nobreak\tilde a_3; \allowbreak\tilde b_1,\nobreak\tilde
  b_2,\nobreak\tilde b_3;\nobreak\tilde c)$, to the system\/ $\dot x=Q(x)$,
  i.e., to\/ ${\rm gDH}(a_1,\nobreak a_2,\nobreak a_3;\allowbreak
  b_1,\nobreak b_2,\nobreak b_3;\nobreak c)$, is that the respective
  parameter vectors be restricted and related as specified in
  Table\/~{\rm\ref{tab:restrictions}}.
  \label{thm:gDHsystems2}
\end{theorem}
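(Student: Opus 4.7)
The plan is to follow the architecture of the proof of Theorem~\ref{thm:gDHsystems}, adapted to accommodate the distinguishing feature of the four PE-lifting maps ${\bf12}_{\bf bq}, {\bf12}_{\bf c}, {\bf10}, {\bf24}_{\bf c}$: unlike in the classical cases treated there, the preimage $R^{-1}(\{0\})$ on~$\mathbb{P}^1_{\tilde t}$ is \emph{not} contained in $\{0,1,\infty\}$. Correspondingly, the polynomial $P_1$ in $R(\tilde t)=-P_1(\tilde t)/P_3(\tilde t)$ factors as $P_1 = \tilde t^{m_0}(1-\tilde t)^{m_1}\cdot P_1^{\rm ord}(\tilde t)$, the roots of $P_1^{\rm ord}$ being the extra, ordinary (italicized) preimages of $t=0$ visible in each branching schema of Table~\ref{tab:coveringmapsnonclassical}.

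First I would derive the parameter restrictions of Table~\ref{tab:restrictions} by the method of Part~I of the earlier proof. The transformation law~(\ref{eq:usemults}) for offset exponents $(\nu_i,\nu_i')$ at each preimage, together with the requirement that each italicized preimage be an ordinary point of the lifted PE (so that its exponents are forced to $(0,1/m)$ up to the standard offset, cf.~(\ref{eq:muunderordinary})), imposes enough constraints in these nonclassical cases that the lower offset exponent vector, and hence the gDH parameter vector $(a_1,a_2,a_3;b_1,b_2,b_3;c)$ computed by~(\ref{eq:zetasntoab}), is fully pinned down. This explains the concrete numerical entries (rather than free parameters) in the relevant rows of Table~\ref{tab:restrictions}, and similarly forces $\tilde a_1=\tilde a_2=\tilde a_3$ and $\tilde b_1=\tilde b_2=\tilde b_3$ for ${\bf12}_{\bf c}$ and ${\bf24}_{\bf c}$, whose schemata have equal multiplicities above $t=0$.

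Next, for the map $\Phi$ itself, I would generalize the identity~(\ref{eq:fund}). Splitting $P_1'/P_1 = m_0/\tilde t - m_1/(1-\tilde t) + (P_1^{\rm ord})'/P_1^{\rm ord}$, the contribution of the first two summands to $d\,x_1 - d\,\tilde x_3 = \tilde y_3\,\tilde t(1-\tilde t)(P_1'/P_1)\big|_{\tilde t=-\tilde y_1/\tilde y_3}$ assembles, after use of $\tilde y_1+\tilde y_2+\tilde y_3=0$, into a $\mathbb{C}$-linear combination of $\tilde x_1,\tilde x_2,\tilde x_3$ whose coefficients sum to $d_0<d$; this is the polynomial $\hat\Sigma_1$ of Table~\ref{tab:upsilon}. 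The remaining contribution is
\begin{displaymath}
\tilde y_3\cdot\tilde t(1-\tilde t)\cdot(P_1^{\rm ord})'/P_1^{\rm ord}\big|_{\tilde t=-\tilde y_1/\tilde y_3}
= \frac{\tilde y_1\tilde y_2}{\tilde y_3}\cdot\frac{(P_1^{\rm ord})'(-\tilde y_1/\tilde y_3)}{P_1^{\rm ord}(-\tilde y_1/\tilde y_3)},
\end{displaymath}
using $\tilde t(1-\tilde t) = \tilde y_1\tilde y_2/\tilde y_3^{2}$. Defining $\Upsilon(\tilde x_1,\tilde x_2,\tilde x_3) \defeq \tilde y_3^{\deg P_1^{\rm ord}}\cdot P_1^{\rm ord}(-\tilde y_1/\tilde y_3)$, a homogeneous polynomial of degree $\deg\Upsilon=\deg P_1^{\rm ord}$, the chain rule together with Euler's identity on $\Upsilon$ converts this into the claimed expression $\bigl(\sum_i \tilde x_j\tilde x_k\,\partial\Upsilon/\partial\tilde x_i\bigr)/\Upsilon$, up to the normalization factor $(d-d_0)/\deg\Upsilon$, the equality $d-d_0=\deg P_1^{\rm ord}$ following from the multiplicity bookkeeping at points in $R^{-1}(\{0\})$.

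The main obstacle will be the verification of this last normalization and, in particular, the reduction of the three partial-derivative summands to a single symmetric expression via Euler's identity; more routine but tedious will be the analogous derivations of the rational $\Sigma_2, \Sigma_3, \Sigma_6$ from the adaptation of~(\ref{eq:Sigma236}). An alternative, perhaps cleaner attack would bypass the $f_i$-based bookkeeping by invoking Theorem~\ref{thm:gDHbasecoin}: lift the gSE on $\mathbb{P}^1_t$ directly along $t=R(\tilde t)$ to a gSE on $\mathbb{P}^1_{\tilde t}$, and read off the induced rational map between proper gDH systems from the $x(\cdot)\mapsto t(\cdot)$ correspondence~(\ref{eq:secondclaim}); this has the merit that the $\kappa,\tilde\kappa$ dependence, being irrelevant to the final map, does not enter.
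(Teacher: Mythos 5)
Your overall strategy is the right one and matches the paper's (very terse) proof, which simply says to apply the formula~(\ref{eq:fund}) \emph{mutatis mutandis}; your identification of the extra ordinary points in $R^{-1}(0)$ as the source of the rational term, and your homogenization $\Upsilon=\tilde y_3^{\deg P_1^{\rm ord}}P_1^{\rm ord}(-\tilde y_1/\tilde y_3)$, are exactly what is needed. However, the decomposition as you have sketched it would not reproduce the stated formula, for a concrete reason. The contribution of the singular summands $m_0/\tilde t-m_1/(1-\tilde t)$ of $P_1'/P_1$, together with the base term $\tilde x_3$ in~(\ref{eq:fund}), assembles into $d^{-1}\bigl[m_0\tilde x_1+m_1\tilde x_2+(d-m_0-m_1)\tilde x_3\bigr]$, whose coefficients sum to~$d$, not to~$d_0$; this is \emph{not} $\hat\Sigma_1/d$, whose $\tilde x_3$-coefficient is the multiplicity $m_\infty$ at $\tilde t=\infty$. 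The missing correction $-(d-d_0)\tilde x_3/d$ is supplied by the ordinary factors themselves: for a linear factor $L(\tilde t)=a-b\tilde t$ of $P_1^{\rm ord}$ with homogenization $\ell=a\tilde y_3+b\tilde y_1$, one has the exact polynomial identity
\begin{equation*}
\tilde y_3\,\tilde t(1-\tilde t)\,\frac{L'}{L}\Big|_{\tilde t=-\tilde y_1/\tilde y_3}
=\frac{-b\,\tilde y_1\tilde y_2}{\ell}
=-\,\tilde x_3-\frac{\sum_{i=1}^3\tilde x_j\tilde x_k\,\partial\ell/\partial\tilde x_i}{\ell},
\end{equation*}
and it is the $-\tilde x_3$ on the right that, summed over ordinary preimages with multiplicity, lowers the $\tilde x_3$-coefficient from $d-m_0-m_1$ to $m_\infty$ and completes $\hat\Sigma_1$.

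Two further points. First, the identity above is \emph{not} Euler's identity: the expression $\sum_i\tilde x_j\tilde x_k\,\partial\Upsilon/\partial\tilde x_i$ involves the complementary products $\tilde x_j\tilde x_k$, not $\tilde x_i$ itself, so Euler's relation $\sum_i\tilde x_i\,\partial_i\Upsilon=(\deg\Upsilon)\Upsilon$ does not directly convert your logarithmic-derivative expression into the claimed form; you need the displayed per-factor identity (verified by expanding both sides, using $\tilde y_1+\tilde y_2+\tilde y_3=0$) and then multiplicativity of logarithmic derivatives over the factors of~$\Upsilon$. Second, when you carry this out you should track the sign of the $\Upsilon$-ratio term carefully against the formula in the theorem and the entries of Table~\ref{tab:upsilon}: a direct numerical check of~(\ref{eq:fund}) for the map~$\bf10$ at, say, $(\tilde x_1,\tilde x_2,\tilde x_3)=(2,1,0)$ shows the ordinary-point contribution enters with the sign given by the identity above, i.e., opposite to a naive reading of the displayed formula, so this is precisely the kind of bookkeeping your "main obstacle" paragraph must resolve rather than defer. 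Your alternative route via lifting the gSE along $t=R(\tilde t)$ and invoking Theorem~\ref{thm:gDHbasecoin} is legitimate and is in fact the alternative proof the paper itself sketches after Theorem~\ref{thm:gDHsystems}.
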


\begin{proof}
  \emph{Mutatis mutandis}, the same as that of
  Theorem~\ref{thm:gDHsystems}.  In each of the cases ${\bf12}_{\bf
    bq},{\bf12}_{\bf c}$ and ${\bf10},{\bf24}_{\bf c}$, the useful
  formula~(\ref{eq:fund}) can be used to compute $\Phi$ from~$R$, and in
  particular the rational function $x_1=\Sigma_1/d$ of $\tilde x_1,\tilde
  x_2,\tilde x_3$.
\end{proof}

In fact, the coefficients of each linear polynomial~$\hat\Sigma_1$ of
Table~\ref{tab:upsilon} are the multiplicities with which $\tilde
t=0,1,\infty$ are taken by the respective covering map~$R$ to $t=0$, much
as with the polynomials~$\Sigma_1$ of Table~\ref{tab:sigmas}.  But for
these semiclassical and nonclassical covering maps, a~new feature enters:
one or more ordinary points on~$\mathbb{P}^1_{\tilde t}$ are also mapped to
$t=0$.  For instance, for~${\bf10}$ one has the factor $32-81\tilde t$
in~${{P}}_1$ (see Table~\ref{tab:coveringmapsnonclassical}), which is zero
at the ordinary point $\tilde t=32/81$.  This factor is responsible for
$\Upsilon$ equaling $32\tilde x_1 + 49\tilde x_2 - 81\tilde x_3$ in
Table~\ref{tab:upsilon}.  The polynomial~$\Upsilon$ is of degree~1 for
${\bf12}_{\bf bq},{\bf12}_{\bf c}$ and ${\bf10}$, but of degree~3
for~${\bf24}_{\bf c}$, due to $R^{-1}(0)\subset\mathbb{P}^1_{\tilde t}$
comprising three ordinary points, i.e.\ the roots of $1+5\tilde t - 8\tilde
t^2 + \tilde t^3$, rather than only one.

\begin{table}[t]
  \begin{center}
    \begin{tabular}{|l||l|l|}
      \hline
      \hfil covering map & $\hat\Sigma_1$ & $\Upsilon$ \\
      \hline  
      \hline  
      ${\bf12}_{\bf bq}\sim{\bf6}\circ{\bf2}$ & $\tilde x_1+2\tilde x_2+\tilde x_3$ & $\tilde x_1-2\tilde x_2+\tilde x_3$ \\
      $\quad{}\sim{\bf3}\circ{\bf2}\circ{\bf2}$ & & \\
      $\quad{}\sim{\bf3}\circ{\bf4}_{\bf bq}$ & & \\
      \hline  
      ${\bf12}_{\bf c}\sim{\bf4}\circ{\bf3}_{\bf c}$ & $\tilde x_1+\tilde x_2+\tilde x_3$ &
      $\tilde x_1+\omega \tilde x_2+\bar\omega \tilde x_3 \eqdef z$ \\
      \hline  
      \hline
      ${\bf10}$ & $\tilde x_1 + 2\tilde x_3$  & $32\tilde x_1 + 49\tilde x_2 - 81\tilde x_3$  \\
      \hline  
      ${\bf24}_{\bf c}\sim{\bf8}\circ{\bf3}_{\bf c}$ & $\tilde x_1+\tilde x_2+\tilde x_3$ & 
      $(\tilde x_1^3 + \tilde x_2^3 + \tilde x_3^3) + 5(\tilde x_1^2\tilde x_2 + \tilde x_2^2\tilde x_3 + \tilde x_3^2\tilde x_1)$ \\
      & & $\quad{}-8(\tilde x_1\tilde x_2^2 + \tilde x_2\tilde x_3^2 + \tilde x_3\tilde x_1^2)+6\tilde x_1\tilde x_2\tilde x_3$   \\
      \hline  
    \end{tabular}
    \bigskip
  \caption{Polynomials $\hat\Sigma_1,\Upsilon$ in the gDH transformations
    $x=\Phi(\tilde x)$ derived from semiclassical and nonclassical
    ${}_2F_1$ transformations.}
  \label{tab:upsilon}  
  \end{center}
\end{table}

\begin{figure}[tb]
\centering\includegraphics[width=5in]{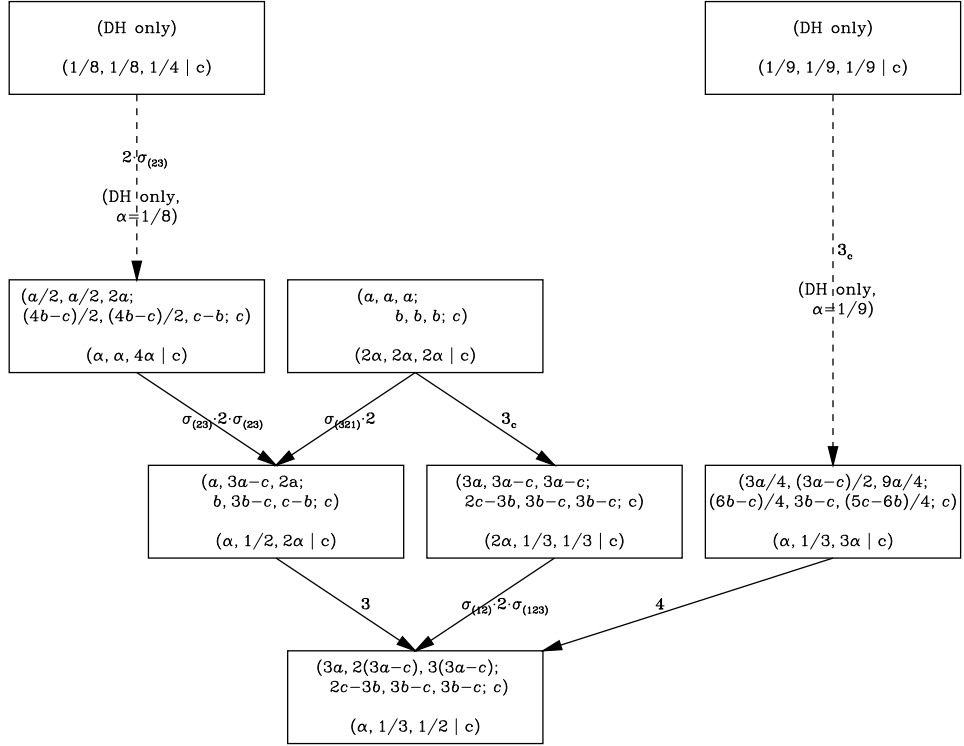}
\medskip
\caption{A directed graph of gDH systems with free parameters $(a,b,c)$,
  which are related by solution-preserving maps and specialize to proper DH
  systems with free parameters $\alpha$ and~$c$.}
\medskip
\label{fig:only}
\end{figure}

Another observation is that up~to scaling by $\tilde c=c$, in the cases
${\bf12}_{\bf bq},{\bf12}_{\bf c}$ and ${\bf10},{\bf24}_{\bf c}$ the
parameter vectors of the two gDH systems $\dot x=Q(x)$, $\dot{\tilde
  x}=\tilde Q(\tilde x)$ are completely specified.  (See
Table~\ref{tab:restrictions}.)  In each case $\tilde b_1=\tilde b_2=\tilde
b_3=\tilde c/2$ and $b_1=b_2=b_3=c/2$, so the solution-preserving maps of
Theorem~\ref{thm:gDHsystems2} are maps between DH systems, \emph{only}.
The correspondences
$(\alpha_1,\alpha_2,\alpha_3)\leftarrow(\tilde\alpha_1,\tilde\alpha_2,\tilde\alpha_3)$
between the angular parameters of the DH systems, if proper, have already
appeared in Tables \ref{tab:coveringmapsclassical}
and~\ref{tab:coveringmapsnonclassical}.  For instance, ${\bf24}_{\bf
  c}$~yields a rational solution-preserving map from ${\rm
  DH}\left(\frac17,\frac17,\frac17\right)$ to ${\rm
  DH}\left(\frac17,\frac13,\frac12\right)$.

\smallskip

The relationships among the solution-preserving maps of Theorems
\ref{thm:gDHsystems} and~\ref{thm:gDHsystems2} are displayed in
Figure~\ref{fig:only}, a composition graph.  It reveals how a single gDH
system (the `root' at the bottom, parametrized by~$a,b,c$) can give rise to
many others, by being lifted along the classical covering maps.  Each node
is labeled by the parameter vector $(\alpha_1,\alpha_2,\alpha_3\,|\,c)$ of
a proper DH system, and in all but two cases by the parameter vector
$(a_1,\nobreak a_2,\nobreak a_3;\allowbreak b_1,\nobreak b_2,\nobreak
b_3;c)$ of a gDH system that specializes to the proper DH system if the
root is constrained by $b=c/2\neq0$ and $c-\nobreak3a\neq0$, i.e., if the
root is specialized to ${\rm DH}(\alpha,\frac13,\frac12\,|\,c)$.  Each
directed edge of the graph is a solution-preserving map based on
${\bf2},{\bf3},{\bf4}$, or~${\bf3}_{\bf c}$.  The map may include M\"obius
transformations that permute singular points, and hence solution
components.

For each node in the graph, the DH parameters
$(\alpha_1,\alpha_2,\alpha_3)$ were taken from
Table~\ref{tab:coveringmapsclassical}, and the gDH parameter vector
$(a_1,\nobreak a_2,\nobreak a_3;\allowbreak b_1,\nobreak b_2,\nobreak
b_3;c)$ if~any was computed from Table~\ref{tab:restrictions}.  For
instance, the node atop the `diamond' is the ${\bf6}_{\bf c}$ node, for
which $(a_1,\nobreak a_2,\nobreak a_3;\allowbreak b_1,\nobreak b_2,\nobreak
b_3;c)$ is $(a,\nobreak a,\nobreak a;\allowbreak b,\nobreak b,\nobreak
b;c)$.  When $b=c/2\neq0$ and $c-\nobreak3a\neq0$, the gDH system with this
parameter vector specializes to ${\rm DH}(2\alpha,2\alpha,2\alpha\,|\,c)$,
where (cf.~(\ref{eq:computealphas})) the angular parameter~$\alpha$ is
defined by $2\alpha=-a/(c-\nobreak3a)$ and must satisfy
$\alpha\neq\frac16$ if the DH system is to be proper.  The left and right
sides of the diamond come from ${\bf 6}_{\bf
  c}\sim{\bf3}\circ{\bf2}\sim{\bf2}\circ{\bf3}_{\bf c}$, which with
M\"obius transformations included is
\begin{equation}
\label{eq:permutesps}
{\bf 6}_{\bf c}={\bf3}\circ\sigma_{(321)}\circ{\bf2}=\sigma_{(12)}\circ{\bf2}\circ\sigma_{(123)}\circ{\bf3}_{\bf c}.
\end{equation}
Here $\sigma_{(321)}$, $\sigma_{(12)}$, $\sigma_{(123)}$ permute the
singular points $0,1,\infty$, and hence the gDH components.

The upper left, resp.\ right nodes are the ${\bf12}_{\bf bq},{\bf12}_{\bf
  c}$ nodes, and the ${\bf12}_{\bf bq},{\bf12}_{\bf c}$ maps are functional
compositions leading down from them to the root.  Those two nodes are
`DH~only,' as indicated, and moreover are present only if $\alpha=\frac18$,
resp.\ $\alpha=\frac19$.  The decompositions ${\bf12}_{\bf
  bq}\sim{\bf3}\circ{\bf2}\circ{\bf2}$ and ${\bf12}_{\bf
  c}\sim{\bf4}\circ{\bf3}_{\bf c}$ can be seen.

Nodes $(\alpha_1,\alpha_2,\alpha_3\,|\,c)$ coming from the nonclassical
coverings ${\bf10},{\bf24}_{\bf c}$ could be added, but are omitted due to
lack of space.  The ${\bf10},{\bf24}_{\bf c}$ nodes are DH-only and require
$\alpha=\frac17$.  Their angular parameters
$(\alpha_1,\alpha_2,\alpha_3)=\allowbreak(\frac17,\frac13,\frac27)$ and
$(\frac17,\frac17,\frac17)$ were given in
Table~\ref{tab:coveringmapsnonclassical}.  From each, a directed edge
extends to the root node
$(\alpha,\frac13,\frac12\,|\,c)=\allowbreak(\frac17,\frac13,\frac12\,|\,c)$.

It was mentioned in~\S\,\ref{subsec:liftings} that there are five
nonclassical coverings that are not defined over $\mathbb{Q}$, and will not
be considered in any detail.  They can be denoted by
${\bf6}',{\bf8},\allowbreak{\bf9},\allowbreak{\bf10'},{\bf18}$, after their
degrees.  The final four of these add DH-only nodes to
Figure~\ref{fig:only}, which are present only if
$\alpha=\frac17,\frac17,\frac18,\frac17$, respectively.  The angular
parameters of these nodes are $(\frac17,\frac13,\frac13)$,
$(\frac17,\frac12,\frac17)$, $(\frac18,\frac13,\frac18)$,
$(\frac17,\frac17,\frac27)$.  From each, an edge extends to the root node
$(\alpha,\frac13,\frac12\,|\,c)$.  The ${\bf6}'$~map is anomalous: it maps
${\rm DH}(\frac14,\frac15,\frac14\,|\,c)$ to ${\rm
  DH}(\frac14,\frac15,\frac12\,|\,c)$, and does not fit into the framework
of the figure.

\subsection{Maps between non-gDH HQDS's}

The focus thus far in \S\,\ref{sec:transformations} has been on gDH
systems, and on the consequences of the hypergeometric integration scheme
implicit in Theorem~\ref{thm:integration} for the existence of rational
solution-preserving maps~$\Phi$ between them.  But the same maps~$\Phi$,
applied to certain HQDS's that are not of the gDH form, can yield images
that are at~least HQDS's.

Suppose that $x=\Phi(\tilde x)$ is solution-preserving from a gDH system
$\dot{\tilde x}=\tilde Q(\tilde x)$ to a HQDS $\dot{x}=Q(x)$.  Suppose also
that a HQDS $\dot{\tilde x}'= \tilde Q'(\tilde x')$, not necessarily a gDH
system, is linearly equivalent to $\dot{\tilde x} = \tilde Q(\tilde x)$,
i.e., that there is some $\tilde T\in{\it GL}(3,\mathbb{C})$ such that
$\tilde x'=\tilde T\tilde x$ and $\tilde x=\tilde T^{-1}\tilde x'$, by
which is meant that the quadratic vector fields $\tilde Q',\tilde Q$
satisfy $\tilde Q'=\tilde T\circ \tilde Q\circ\tilde T^{-1}$.  The
corresponding non-associative algebras
$\tilde{\mathfrak{A}}',\tilde{\mathfrak{A}}$ will then be isomorphic.
Clearly $x'=(\Phi\circ T^{-1})(\tilde x')$ will be solution-preserving from
$\dot{\tilde x}'=\tilde Q'(\tilde x')$ to $\dot{x}=Q(x)$.  Moreover if
$\Phi$~satisfies $\Phi=T\circ\Phi\circ \tilde T^{-1}$ for some $T,\tilde
T\in{\it GL}(3,\mathbb{C})$, then $\Phi$~itself will map the HQDS
$\dot{\tilde x}'=\tilde Q'(\tilde x')$ to a HQDS $\dot{x}'=Q'(x')$, the
vector field~$Q'$ being defined by $Q' = T\circ Q\circ T^{-1}$.

The following two theorems give examples of non-gDH HQDS's on which the
maps~$\Phi$ coming from the coverings $\bf2$, ${\bf3}_{\bf c}$ and~${\bf
  6}_{\bf c}$ are solution-preserving to other HQDS's.  In some cases this
is because they are linearly equivalent to the gDH systems of
Theorem~\ref{thm:gDHsystems} by $\tilde x'=\tilde T\tilde x$ for
some~$\tilde T$, and $\Phi$~satisfies $\Phi=T\circ\Phi\circ \tilde T^{-1}$
for some~$T$.  The map $x=\Phi(\tilde x)$ coming from~$\bf2$ was given
in~(\ref{eq:quadraticliftmap}), and the `cyclic' ones coming from ${\bf
  3}_{\bf c},{\bf6}_{\bf c}$ can be read~off from Table~\ref{tab:sigmas}.
Explicitly, the ${\bf 3}_{\bf c}$ map is
\begin{alignat}{3}
  \label{eq:3cmap}
  x_1 &= \frac13S_1,&\qquad x_2&=\frac13\left(S_1-\frac{z^2}{\bar z}\right), &\quad
  x_3&=\frac13\left(S_1-\frac{\bar z^2}{z}\right),
  \intertext{and the ${\bf 6}_{\bf c}$ map is}
  \label{eq:6cmap}
  x_1&=\frac13S_1, &\qquad
  x_2&= \frac13\left(S_1 - \frac{z^3+\bar z^3}{2\,z\bar z}  \right), &\quad
  x_3&= \frac13\left(S_1- \frac{2(z\bar z)^2}{z^3+\bar z^3}\right),\\
  &  &\quad
  &= \frac{9\,S_3-S_1S_2}{2(3\,S_2-S_1^2)}, &\quad
  &= \frac{9\,S_1S_3 - 6\,S_2^2 + S_1^2S_2}{27\,S_3-9\,S_1S_2+2\,S_1^3}.
  \nonumber
\end{alignat}
Here
\begin{equation}
S_1\defeq\tilde x_1+\tilde x_2+\tilde x_3,\qquad S_2\defeq\tilde x_1\tilde
x_2+\tilde x_2\tilde x_3+\tilde x_3\tilde x_1,\qquad S_2\defeq\tilde x_1\tilde
x_2\tilde x_3
\end{equation}
are the elementary symmetric polynomials in $\tilde x_1,\tilde x_2,\tilde
x_3$, and
\begin{equation}
z\defeq\tilde x_1 + \omega\,\tilde x_2 + \bar\omega\,\tilde x_3,\qquad \bar
z\defeq\tilde x_1 + \bar\omega\,\tilde x_2 + \omega\,\tilde x_3
\end{equation}
are the cyclic relative invariants, $\omega$~being a primitive cube root of
unity.  By $\mathfrak{Z}_2,\mathfrak{Z}_3,\mathfrak{S}_3<{\it
  GL}(3,\mathbb{C})$ will be meant the subgroups generated by $\tilde
x_1\leftrightarrow \tilde x_3$ and $\tilde x_1\to\tilde x_2\to \tilde
x_3\to \tilde x_1$, and the symmetric group on $\tilde x_1,\tilde
x_2,\tilde x_3$.

A general HQDS $\bigl(\mathbb{C}^3,\ \dot{\tilde x}=\tilde Q(\tilde
x)\bigr)$, with 18~parameters, can be written as
\begin{equation}
  \label{eq:HQDS}
  \left\{
  \begin{aligned}
    \dot{\tilde x}_1 &= a_{11}\tilde x_1^2 + a_{12}\tilde x_2^2 + a_{13}\tilde x_3^2  + b_{11}\tilde x_2\tilde x_3 + b_{12}\tilde x_3\tilde x_1 + b_{13}\tilde x_1\tilde x_2,\\
    \dot{\tilde x}_2 &= a_{21}\tilde x_1^2 + a_{22}\tilde x_2^2 + a_{23}\tilde x_3^2  + b_{21}\tilde x_2\tilde x_3 + b_{22}\tilde x_3\tilde x_1 + b_{23}\tilde x_1\tilde x_2,\\
    \dot{\tilde x}_3 &= a_{31}\tilde x_1^2 + a_{32}\tilde x_2^2 + a_{33}\tilde x_3^2  + b_{31}\tilde x_2\tilde x_3 + b_{32}\tilde x_3\tilde x_1 + b_{33}\tilde x_1\tilde x_2.
  \end{aligned}
  \right.
\end{equation}
The special case $a_{ij}=a_i\delta_{ij}$,
$b_{ij}=(2a_i-c)\delta_{ij}-a_i+b_j$ is the gDH case, as was mentioned in
the Introduction; cf.~(\ref{eq:gDH}).

\begin{theorem}
  Let\/ $\bigl(\mathbb{C}^3,\ \dot{\tilde x}=\tilde Q(\tilde x)\bigr)$ be a
  HQDS of the form\/ {\rm(\ref{eq:HQDS})} that is invariant under\/
  $\mathfrak{Z}_2$, resp.\ $\mathfrak{Z}_3$, resp.\ $\mathfrak{S}_3$, i.e.,
  satisfying\/ $a_{\pi(i),\pi(j)}=a_{ij}$ and\/ $b_{\pi(i),\pi(j)}=b_{ij}$
  for each supported permutation\/~$\pi$ of\/~$1,2,3$.  In the\/
  $\mathfrak{Z}_2$ case it is also required that
    \begin{sizealign}{\small}
      &\left[(a_{11} + a_{12} + a_{13}) - 2 (a_{21} + a_{22} + a_{23})
      + (a_{31} + a_{32} + a_{33})\right]\nonumber\\
      &\quad{}+\left[(b_{11} + b_{12} + b_{13}) - 2 (b_{21} + b_{22} + b_{23})
      + (b_{31} + b_{32} + b_{33})\right]=0.\nonumber
    \end{sizealign}
    Then, the\/ $\bf2$~map\/ {\rm(\ref{eq:quadraticliftmap})}, resp.\ the\/
    ${\bf3}_{\bf c}$~map\/ {\rm(\ref{eq:3cmap})}, resp.\ the\/ ${\bf6}_{\bf
      c}$~map\/ {\rm(\ref{eq:6cmap})}, will take\/
    $\bigl(\mathbb{C}^3,\ \dot{\tilde x}=\tilde Q(\tilde x)\bigr)$ to some
    HQDS\/ $\bigl(\mathbb{C}^3,\ \dot{ x}=Q( x)\bigr)$.
\label{thm:full1}
\end{theorem}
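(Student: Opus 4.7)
The plan is to exploit two structural features of each of the maps $\bf2$, ${\bf3}_{\bf c}$, ${\bf6}_{\bf c}$: the components of $\Phi$ are $G$-invariant rational functions of $\tilde x$, and the associated covering is Galois with group $G$ equal to $\mathfrak{Z}_2$, $\mathfrak{Z}_3$, $\mathfrak{S}_3$ respectively. As noted in~\S\,\ref{subsec:liftings}, $\mathbb{C}(\tilde t)/\mathbb{C}(t)$ is Galois in these three cases with Galois group $G$. Transferred to the coordinate setting, the extension $\mathbb{C}(\tilde x_1,\tilde x_2,\tilde x_3)/\mathbb{C}(x_1,x_2,x_3)$ is Galois with the same group $G$, acting on $\tilde x$ by the stated linear permutations. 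A direct check on the formulas~(\ref{eq:quadraticliftmap}), (\ref{eq:3cmap}), (\ref{eq:6cmap}) confirms that each $x_i$ is $G$-invariant; and matching degrees (transcendence degree~$3$ versus $[\mathbb{C}(\tilde x):\mathbb{C}(x)]=d=|G|$) yields $\mathbb{C}(x_1,x_2,x_3)=\mathbb{C}(\tilde x_1,\tilde x_2,\tilde x_3)^G$.

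Now I would use the hypothesis on~$\tilde Q$. The $G$-invariance of the HQDS~(\ref{eq:HQDS}) means precisely that the vector field $v=\sum_i \tilde Q_i(\tilde x)\partial_{\tilde x_i}$ commutes with the action of~$G$ on $\mathbb{C}^3\ni\tilde x$, so the derivation $f\mapsto v(f)$ preserves the invariant subfield $\mathbb{C}(\tilde x)^G=\mathbb{C}(x)$. Applied to $f=\Phi_i$, this gives $\dot x_i=v(\Phi_i)\in\mathbb{C}(x_1,x_2,x_3)$; that~is, $\dot x_i=R_i(x_1,x_2,x_3)$ for some rational~$R_i$. The scaling symmetry $\tilde x\mapsto\lambda\tilde x$ (which induces $x\mapsto\lambda x$ since $\Phi$ is homogeneous of degree~$1$ as a $\mathbb{P}^2$-map, and $\dot{\tilde x}\mapsto\lambda^2\dot{\tilde x}$) forces each $R_i$ to be homogeneous of degree~$2$ in the sense $R_i(\lambda x)=\lambda^2 R_i(x)$. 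Writing $R_i=N_i/D_i$ in lowest terms with $N_i,D_i$ homogeneous polynomials in $x_1,x_2,x_3$, we have $\deg N_i-\deg D_i=2$.

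The main obstacle is the final step: showing $D_i$ is a nonzero constant, so that $R_i$ is a homogeneous quadratic polynomial and $\dot x=Q(x)$ is a genuine HQDS. Equivalently, we must verify that the rational function $v(\Phi_i)$ of~$\tilde x$, computed directly from (\ref{eq:HQDS}) via the derivation~$v$, is everywhere regular as a function of~$x$. The potential locus of poles is the image under~$\Phi$ of the exceptional divisor of~$\Phi$: in the $\bf2$ case this is the plane $2\tilde x_2-(\tilde x_1+\tilde x_3)=0$, and in the ${\bf3}_{\bf c},{\bf6}_{\bf c}$ cases it is contained in the zero locus of $z\bar z$. For ${\bf3}_{\bf c}$ and ${\bf6}_{\bf c}$, $v$~preserves the subalgebra of polynomial invariants generated by $S_1,z\bar z,z^3+\bar z^3$ (resp.\ by $S_1,S_2,S_3$), so an explicit symbolic computation (best delegated to a computer algebra system) reveals that $v(\Phi_i)$, once re-expressed, is automatically polynomial in $x_1,x_2,x_3$ with no extra condition needed. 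For~$\bf2$, the analogous computation produces a residual numerator proportional to $2\tilde x_2-(\tilde x_1+\tilde x_3)$ times the linear combination
\[
\sum_{j=1}^3(a_{1j}-2a_{2j}+a_{3j})\tilde x_j^{\,\bullet}+\sum_{j=1}^3(b_{1j}-2b_{2j}+b_{3j})\tilde x_j^{\,\bullet},
\]
(with the obvious monomials $\tilde x_j^{\,\bullet}$), whose vanishing is precisely the stated linear condition; imposing it cancels the apparent pole and delivers $R_i\in\mathbb{C}[x_1,x_2,x_3]$. Completing the proof thus reduces to the verification of these two algebraic identities, one for each map, which is a finite bookkeeping calculation.
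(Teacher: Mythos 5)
Your proposal is correct in substance, but it takes a genuinely different route from the paper, whose proof of this theorem is a single sentence: the claims ``are readily confirmed with the aid of a computer algebra system,'' i.e.\ the full statement is verified by brute-force symbolic computation (with the explicit images recorded in Theorem~\ref{thm:full2} for the cyclic cases). What you add is a conceptual reduction: the observation that $G$-equivariance of $\tilde Q$ makes the derivation $v=\sum_i\tilde Q_i\partial_{\tilde x_i}$ preserve the invariant subfield $\mathbb{C}(\tilde x)^G=\mathbb{C}(x_1,x_2,x_3)$, combined with the homogeneity count, shows \emph{a priori} that each $\dot x_i$ is a degree-$2$-homogeneous rational function of $x$; only polynomiality (absence of poles) remains to be checked, and in the $\mathfrak{Z}_2$ case that check is exactly where the extra linear condition on the coefficients enters. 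This explains why the theorem is true and why its hypothesis takes the form it does, which the paper's proof does not. The price is that the last step is still a finite symbolic verification --- $v(\Phi_i)$ is not automatically polynomial, since $\Phi_i$ is only a rational, not polynomial, invariant --- so neither route avoids computer algebra entirely; yours isolates a smaller and better-motivated identity to check. Two points to tighten if you write this up: the identification $\mathbb{C}(x)=\mathbb{C}(\tilde x)^G$ requires the generic fiber of $\Phi$ to be a single free $G$-orbit (true, and easy to verify; e.g.\ for ${\bf2}$ the fiber over~$x$ is the unordered pair $\{\tilde x_1,\tilde x_3\}$ with prescribed sum and product), and the assertion that the residual numerator in the ${\bf2}$ case factors through the stated linear combination is itself part of the deferred computation rather than a consequence of the general framework.
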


These claims are readily confirmed with the aid of a computer algebra
system.  The image HQDS's $\dot x=Q(x)$ in the ${\bf3}_{\bf c}$
and~${\bf6}_{\bf c}$ cases are made explicit in the following; the
$\bf2$~case is left to the reader.

\begin{theorem}
  Let a cyclically symmetric\/ {\rm(}i.e.,
  $\mathfrak{Z}_3$-invariant\/{\rm)} HQDS be defined by
  \begin{equation}
    \label{eq:cyclicalqds}
  \begin{split}
    \dot {\tilde x}_i &= -\tilde a(\tilde x_i-\tilde x_j)(\tilde x_k-\tilde x_i)
    +\tilde b(\tilde x_2\tilde x_3 + \tilde x_3\tilde x_1 + \tilde x_1\tilde x_2)
    -\tilde c\,\tilde x_j\tilde x_k\\
    &\qquad{}+\tilde d(\tilde x_1 + \tilde x_2 + \tilde x_3)^2 
    +\tilde e\,\tilde x_i(\tilde    x_j-\tilde x_k) 
    +\tilde f(\tilde x_j^2-\tilde x_k^2),
  \end{split}
  \end{equation}
  for\/ $i,j,k$ equal to each cyclic permutation of\/ $1,2,3$.  The\/
  ${\bf3}_{\bf c}$~map\/ {\rm(\ref{eq:3cmap})} will take this HQDS to the
  image HQDS
  \begin{displaymath}
    \left\{
  \begin{aligned}
    \dot x_1&= -3\,\tilde a(x_1-x_2)(x_3-x_1)
    \hphantom{(-\tilde c)}\, + \mathbf{B}(x_1,x_2,x_3)
      -\tilde c\,x_2x_3  +9\tilde d\,x_1^2,\\
    \dot x_2&= -(3\tilde a-\tilde c)(x_2-x_3)(x_1-x_2) + \mathbf{B}(x_1,x_2,x_3)
      -\tilde c\,x_3x_1  +9\tilde d\,x_1^2\\
      &\qquad +(\omega-\bar\omega)(x_1-x_2)
      \bigl[\tilde e(x_2+2\,x_3)+\tilde f(9\,x_1-x_2-2\,x_3)\bigr],\\
    \dot x_3&= -(3\tilde a-\tilde c)(x_3-x_1)(x_2-x_3)  + \mathbf{B}(x_1,x_2,x_3)
      -\tilde c\,x_1x_2  +9\tilde d\,x_1^2\\
      &\qquad +(\omega-\bar\omega)(x_3-x_1)
      \bigl[\tilde e(2\,x_2+x_3)+\tilde f(9\,x_1-2\,x_2-x_3)\bigr].\\
  \end{aligned}
  \right.
  \end{displaymath}
  If\/ $\tilde e=\tilde f=0$ {\rm(}expanding the invariance group from\/
  $\mathfrak{Z}_3$ to\/ $\mathfrak{S}_3${\rm)}, the\/ ${\bf6}_{\bf
    c}$~map\/ {\rm(\ref{eq:6cmap})} can also be applied, yielding the image
  HQDS
  \begin{displaymath}
    \left\{
  \begin{alignedat}{3}
    \dot x_1&= -3\,\tilde a(x_1-x_2)(x_3-x_1)&& + \mathbf{B}(x_1,x_2,x_3)
      &&-\tilde c\,x_2x_3  +9\tilde d\,x_1^2,\\
    \dot x_2&= -2(3\tilde a-\tilde c)(x_2-x_3)(x_1-x_2)&& + \mathbf{B}(x_1,x_2,x_3)
      &&-\tilde c\,x_3x_1  +9\tilde d\,x_1^2,\\
    \dot x_3&= -3(3\tilde a-\tilde c)(x_3-x_1)(x_2-x_3)&&  + \mathbf{B}(x_1,x_2,x_3)
      &&-\tilde c\,x_1x_2  +9\tilde d\,x_1^2.
  \end{alignedat}
  \right.
  \end{displaymath}
  In both image HQDS's,
  \begin{displaymath}
    \mathbf{B}(x_1,x_2,x_3)\defeq (2\,\tilde c-3\,\tilde b)x_2x_3 + (3\,\tilde b-\tilde c)x_3x_1
        +(3\,\tilde b-\tilde c)x_1x_2.
  \end{displaymath}
\label{thm:full2}
\end{theorem}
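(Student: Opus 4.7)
The plan is to verify the claimed image HQDS's by direct chain-rule calculation, performed in a basis of $\mathbb{C}^3$ that diagonalizes the cyclic $\mathfrak{Z}_3$-action on the source, namely the linear forms $S_1\defeq\tilde x_1+\tilde x_2+\tilde x_3$, $z\defeq\tilde x_1+\omega\tilde x_2+\bar\omega\tilde x_3$, $\bar z\defeq\tilde x_1+\bar\omega\tilde x_2+\omega\tilde x_3$. In this basis the ${\bf3}_{\bf c}$ map~(\ref{eq:3cmap}) takes the compact form $x_1=S_1/3$, $x_1-x_2=z^2/(3\bar z)$, $x_1-x_3=\bar z^2/(3z)$. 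By Theorem~\ref{thm:full1} we already know the image is a HQDS; only its coefficients need be pinned down.

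First I would compute $\dot S_1,\dot z,\dot{\bar z}$ as quadratics in $S_1,z,\bar z$ by summing the three equations of~(\ref{eq:cyclicalqds}) with weights $(1,1,1)$, $(1,\omega,\bar\omega)$, and $(1,\bar\omega,\omega)$ respectively. Cyclic invariance forces $\dot S_1$ (weight~$1$) to lie in $\operatorname{span}\{S_1^2,z\bar z\}$, $\dot z$ (weight~$\omega$) in $\operatorname{span}\{S_1 z,\bar z^2\}$, and $\dot{\bar z}$ (weight~$\bar\omega$) in $\operatorname{span}\{S_1\bar z,z^2\}$. The resulting six scalar coefficients are explicit $\mathbb{C}$-linear combinations of $\tilde a,\tilde b,\tilde c,\tilde d,\tilde e,\tilde f$; the $\tilde e,\tilde f$ terms, which break $\mathfrak{S}_3\supset\mathfrak{Z}_3$, are precisely what produce the $(\omega-\bar\omega)$-factored addenda visible in the final image.

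Next, I would apply the chain rule to the map,
\begin{displaymath}
\dot x_1-\dot x_2=\frac{2z\bar z\,\dot z-z^2\,\dot{\bar z}}{3\bar z^2},\qquad
\dot x_1-\dot x_3=\frac{2z\bar z\,\dot{\bar z}-\bar z^2\,\dot z}{3z^2},
\end{displaymath}
substitute the expressions from the preceding step, and re-express everything in the target coordinates via $S_1=3x_1$, $z\bar z=9(x_1-x_2)(x_1-x_3)$, $z^2/\bar z=3(x_1-x_2)$, $\bar z^2/z=3(x_1-x_3)$. The crucial observation is that although $z,\bar z$ individually are only triple-valued as functions of the $x_i$, every combination that survives in $\dot x_i$ is built from $S_1$, $z\bar z$, $z^2/\bar z$, and $\bar z^2/z$, all of which are rational (in~fact, linear or bilinear) in $x_1,x_2,x_3$. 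Matching coefficients then gives the image HQDS claimed for~${\bf3}_{\bf c}$. The ${\bf6}_{\bf c}$ subcase ($\tilde e=\tilde f=0$) proceeds identically via~(\ref{eq:6cmap}); alternatively, one may invoke the factorization ${\bf6}_{\bf c}\sim{\bf2}\circ{\bf3}_{\bf c}$ from~\S\,\ref{subsec:liftings} and compose the ${\bf3}_{\bf c}$ result with the quadratic map~(\ref{eq:quadraticlift}), modulo a permutation of singular points to align conventions.

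The main obstacle is purely bookkeeping: pushing the cube-root-of-unity algebra through the substitutions without arithmetic error, while tracking six source parameters, produces a volume of symbolic manipulation that makes computer-algebraic verification essentially mandatory. Conceptually, however, nothing deep is at stake: the cyclic basis diagonalizes the $\mathfrak{Z}_3$-action, the image is \emph{a~priori} known to be quadratic by Theorem~\ref{thm:full1}, and the final identities are fixed by matching rational expressions term~by~term.
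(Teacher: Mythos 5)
Your proposal is correct, and it lands on what the paper actually does: the paper supplies no written derivation for Theorem~\ref{thm:full2} beyond the statement after Theorem~\ref{thm:full1} that the claims ``are readily confirmed with the aid of a computer algebra system,'' together with the remark that the terms proportional to $\tilde a,\tilde b,\tilde c$ can be cross-checked against Table~\ref{tab:restrictions} and only the $\tilde d,\tilde e,\tilde f$ terms are new. Your organization of that brute-force check is a genuine refinement: passing to the eigenbasis $S_1,z,\bar z$ of the $\mathfrak{Z}_3$-action reduces the source dynamics to six scalar coefficients (with $\dot{\bar z}$ obtained from $\dot z$ by $\omega\leftrightarrow\bar\omega$, since $\tilde a,\dots,\tilde f$ are free of~$\omega$), and the identities $x_1-x_2=z^2/3\bar z$, $x_1-x_3=\bar z^2/3z$, $z\bar z=9(x_1-x_2)(x_1-x_3)$, $S_1=3x_1$ make it transparent \emph{why} every surviving combination is quadratic in $x_1,x_2,x_3$ --- something the paper leaves implicit in Theorem~\ref{thm:full1}. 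Two small cautions: your weight bookkeeping has $z\mapsto\bar\omega z$ under $\tilde x_1\to\tilde x_2\to\tilde x_3\to\tilde x_1$ (so the spans you list are right even if the labels $\omega$ versus $\bar\omega$ are swapped); and if you take the composition route ${\bf 6}_{\bf c}=\sigma_{(12)}\circ{\bf2}\circ\sigma_{(123)}\circ{\bf3}_{\bf c}$ of~(\ref{eq:permutesps}) rather than applying~(\ref{eq:6cmap}) directly, you must verify not only the $x_2\leftrightarrow x_3$ symmetry of the intermediate ${\bf3}_{\bf c}$-image (which holds when $\tilde e=\tilde f=0$) but also the additional linear constraint on its coefficients demanded by the $\mathfrak{Z}_2$ case of Theorem~\ref{thm:full1}; that check is routine but not automatic.
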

\begin{remark*}
  Setting $\tilde d=\tilde e=\tilde f=0$ reduces the initial HQDS to a gDH
  system with $\tilde a_1=\tilde a_2=\tilde a_3=\tilde a$ and $\tilde
  b_1=\tilde b_2=\tilde b_3=\tilde b$; cf.~{(\ref{eq:gDH})}.  Hence the
  terms proportional to $\tilde a,\tilde b,\tilde c$ in each image HQDS can
  be verified by examining Table~{\ref{tab:restrictions}}.  It is only the
  terms proportional to the gDH deformation parameters $\tilde d,\tilde
  e,\tilde f$ that are new.
\end{remark*}
\begin{remark*}
  Setting $\tilde e=\tilde f=0$ removes the terms from the image
  under~${\bf3}_{\bf c}$ that break invariance under $x_2\leftrightarrow
  x_3$, thereby allowing ${\bf 3}_{\bf c}$ to be composed with a further
  solution-preserving map of quadratic type.  But for the map ${\bf 6}_{\bf
    c}$ to be obtained thus, by composing ${\bf 3}_{\bf c}$ with~$\bf2$
  (which is based on symmetry under $x_1\leftrightarrow x_3$, not
  $x_2\leftrightarrow x_3$), two permutations of components are needed;
  cf.~(\ref{eq:permutesps}).
\end{remark*}

It follows from the discussion in \S\,\ref{subsec:liftings} that in each of
the `nice' (i.e.\ Galois) cases ${\bf2},{\bf3}_{\bf c},{\bf 6}_{\bf c}$,
there is a simple interpretation of the polynomials
$\Sigma_1,\Sigma_2,\Sigma_3$ that appear, according to
Theorem~\ref{thm:gDHsystems}, in the solution-preserving map $x=\Phi(\tilde
x)$.  Associated to the respective Galois group $G=
\mathfrak{Z}_2,\mathfrak{Z}_3,\mathfrak{S}_3$ there is a ring $I(G)$ of
polynomial invariants in $\tilde x_1,\tilde x_2,\tilde x_3$; and
$\Sigma_1,\Sigma_2,\Sigma_3$ generate~$I(G)$.  (The case $G=\mathfrak{Z}_2$
is a bit special: a~fourth invariant $\Sigma_1'\defeq x_2$ must be added.)
When $G=\mathfrak{S}_3$, say, this is equivalent to saying that the ring of
symmetric polynomials in $\tilde x_1,\tilde x_2,\tilde x_3$ is generated by
the elementary symmetric polynomials $S_1,S_2,S_3$.  Any HQDS that is
invariant under~$\mathfrak{S}_3$ can be taken to a polynomial differential
system satisfied by the invariants $S_1,S_2,S_3$, via a solution-preserving
map.

The significance of Theorems \ref{thm:full1} and~\ref{thm:full2} is that
they reveal that the differential system satisfied by $x_1,x_2,x_3$ (each a
suitably chosen rational function of $S_1,S_2,S_3$, or more generally of
$\Sigma_1,\Sigma_2,\Sigma_3$) will be a HQDS, like the original system for
$\tilde x_1,\tilde x_2,\tilde x_3$.  In the area of
3\nobreakdash-dimensional differential systems, this may be a new
discovery.  In Ref.~\cite{Kinyon97}, which introduced and exploited the
ring~$I(G)$, several HQDS's were solved, in each case by mapping the HQDS
onto a differential system satisfied by a vector of cyclic invariants.  But
since the invariants were $S_1,S_2,S_3$ rather than the above
$x_1,x_2,x_3$, the system was not a HQDS\null.

Cyclically symmetric HQDS's have arisen many times in dynamical systems
theory.  If $\tilde a=\tilde d=\tilde f=0$, the HQDS~(\ref{eq:cyclicalqds})
of Theorem~\ref{thm:full2} becomes the Leonard--May model of cyclic
competition among three species~\cite{May75,Schuster79}.\footnote{In the
  model as usually defined, each $\dot {\tilde x}_i$ also includes a
  term~$r{\tilde x}_i$, $r$~being the common species growth rate.  The
  substitutions ${\tilde X}_i=e^{-rt}{\tilde x}_i$, ${\rm d}T=e^{rt}\,{\rm
    d}t$ remove these linear terms.}  If moreover ${\tilde b=\tilde c=0}$,
so that $\tilde e$~is its only nonzero parameter, the
HQDS~(\ref{eq:cyclicalqds}) becomes the periodic 3\nobreakdash-particle
Volterra model $\dot{\tilde x}_i=\tilde e\,\tilde x_i(\tilde
x_j-\nobreak\tilde x_k)$, or equivalently the 3\nobreakdash-particle KM
(Kac--van~Moerbeke) system~\cite{Fernandes97}, which is integrable by
elliptic functions~\cite[\S\,11]{Bureau92}.  Such models have been studied
exhaustively, but it may not have been noticed before that in the case of
three particles (or~species), the cyclic symmetry group~$\mathfrak{Z}_3$
can be quotiented~out in such a way as to yield another HQDS\null.

Another noteworthy special case of the HQDS~(\ref{eq:cyclicalqds}) is the
Kasner system $\dot {\tilde x}_i=\allowbreak {\tilde x}_j{\tilde
  x}_k-\nobreak {\tilde x}_i^2$, which is invariant under~$\mathfrak{S}_3$.
(See~\cite{Kasner25,Kinyon97} and~\cite[\S\,5.3]{Walcher91}.)  Of~course
the Kasner system is an (improper) gDH system, with $(a_1,\nobreak
a_2,\nobreak a_3;\allowbreak b_1,\nobreak b_2,\nobreak
b_3;c)=\allowbreak-(1,\nobreak1,\nobreak1;\allowbreak
1,\nobreak1,\nobreak1;3)$.  But the ${\bf 3}_{\bf c}$ half of
Theorem~\ref{thm:full2} gives many examples of solution-preserving maps
from HQDS's that are not gDH systems, or linearly equivalent to them.  (For
instance, the periodic 3\nobreakdash-particle Volterra model is not.)  The
${\bf 6}_{\bf c}$ half is a bit less interesting, since when $\tilde
e=\tilde f=0$, the HQDS (\ref{eq:cyclicalqds}) turns~out always to be
gDH-equivalent.  But the ${\bf 6}_{\bf c}$~half can be generalized in the
following way.
\begin{theorem}
  For each classical PE\nobreakdash-lifting covering map\/ $t=R(\tilde t)$
  {\rm(}of degree\/~$d$\,{\rm)} listed in
  Table\/~{\rm\ref{tab:coveringmapsclassical}}, i.e., for each of\/
  ${\bf2},{\bf3},{\bf4},{\bf6},{\bf6}_{\bf c}$ and\/ ${\bf3}_{\bf
    c},{\bf4_{\bf bq}}$, the associated rational map\/ $x=\Phi(\tilde x)$
  given in Theorem\/~{\rm\ref{thm:gDHsystems}} satisfies\/
  $\Phi=T_{\epsilon}\circ\Phi\circ \tilde T_{\epsilon}^{-1}$, where\/
  $T_{\epsilon},\tilde T_{\epsilon}\in{\it GL}(3,\mathbb{C})$ are defined
  by
  \begin{sizealign}{\small}
    (\tilde{x}_1',\tilde{x}_2',\tilde{x}_3')=\tilde T_{\epsilon}(\tilde x_1,\tilde
    x_2,\tilde x_3) &\defeq (1-{\epsilon})
    (\tilde x_1,\tilde x_2,\tilde x_3) + {\epsilon}\left[\Sigma_1(\tilde
      x_1,\tilde x_2,\tilde x_3)/d\right](1,1,1),\nonumber\\
    (x_1',x_2',x_3')=T_{\epsilon}(x_1,x_2,x_3) &\defeq  (1-{\epsilon})(x_1,x_2,x_3) + {\epsilon}\,x_1(1,1,1),\nonumber
  \end{sizealign}
  ${\epsilon}\neq1$ being arbitrary.  That is, if\/ $\Phi$ maps a gDH
  system\/ $\dot{\tilde x}=\tilde Q(\tilde x)$ to a gDH system\/
  $\dot{x}=Q(x)$, it also maps a one-parameter deformation\/ $\dot{\tilde
    x}'=\tilde Q'(\tilde x')$ of the former to a one-parameter
  deformation\/ $\dot x'=Q'(x')$ of the latter, the deformed\/
  {\rm(}linearly equivalent\/{\rm)} vector fields being\/ $\tilde Q'\defeq
  \tilde T_{\epsilon}\circ \tilde Q\circ\tilde T_{\epsilon}^{-1}$ and\/
  $Q'\defeq T_{\epsilon}\circ Q\circ T_{\epsilon}^{-1}$.
\label{eq:firstdeformed}
\end{theorem}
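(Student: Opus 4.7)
The plan is to verify the conjugation identity $\Phi\circ \tilde T_{\epsilon}=T_{\epsilon}\circ \Phi$ directly, by tracking how the affine map $\tilde T_{\epsilon}$ acts on two natural pieces of data carried by $\tilde x\in\mathbb{C}^3$: its ``mean'' $\Sigma_1(\tilde x)/d$ and its vector of pairwise differences $\tilde y_k\defeq \tilde x_i-\tilde x_j$. First I would note the two elementary features of $\tilde T_{\epsilon}$ that underlie everything: (i)~it scales each pairwise difference uniformly, $\tilde x_i'-\tilde x_j'=(1-\epsilon)(\tilde x_i-\tilde x_j)$; and (ii)~the linear functional $\tilde x\mapsto\Sigma_1(\tilde x)/d$ is itself \emph{fixed} by $\tilde T_{\epsilon}$, because the coefficients of $\Sigma_1$, being the branching multiplicities of $R$ over $t=0$, sum to $d$ (as noted after Theorem~\ref{thm:gDHsystems2}). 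From (ii) immediately $x_1(\tilde T_{\epsilon}\tilde x)=\Sigma_1(\tilde T_{\epsilon}\tilde x)/d=\Sigma_1(\tilde x)/d=x_1(\tilde x)$.

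For the differences $y_k=x_i-x_j$ I would invoke the structural formula~(\ref{eq:prefundcor}) from the proof of Theorem~\ref{thm:gDHsystems}, which expresses
\[
(y_1,y_2,y_3)/\tilde y_3 \;=\; d^{-1}\,G(\tilde t)\bigl|_{\tilde t=-\tilde y_1/\tilde y_3}
\]
for an explicit rational vector $G$ depending only on the covering $R$. Under~(i), the argument $\tilde t=-\tilde y_1/\tilde y_3$ is manifestly \emph{invariant} (the factors of $1-\epsilon$ cancel in numerator and denominator), so $G(\tilde t)$ is unchanged, while the prefactor $\tilde y_3$ scales by $(1-\epsilon)$. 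Hence $y_k(\tilde T_{\epsilon}\tilde x)=(1-\epsilon)\,y_k(\tilde x)$ for each $k$. Combined with the invariance of $x_1$, this yields
\[
x_i(\tilde T_{\epsilon}\tilde x) \;=\; x_1(\tilde x)+(1-\epsilon)\bigl(x_i(\tilde x)-x_1(\tilde x)\bigr) \;=\; (1-\epsilon)\,x_i(\tilde x)+\epsilon\,x_1(\tilde x),
\]
which is precisely the $i$\nobreakdash-th component of $T_{\epsilon}(\Phi(\tilde x))$. This is the intertwining $\Phi\circ\tilde T_{\epsilon}=T_{\epsilon}\circ\Phi$, equivalent to the stated identity $\Phi=T_{\epsilon}\circ\Phi\circ\tilde T_{\epsilon}^{-1}$ for $\epsilon\neq 1$ (invertibility of $\tilde T_{\epsilon}$ there is immediate, as $\det \tilde T_\epsilon=(1-\epsilon)^2$). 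Solution-transport for the deformed systems then follows formally: with $\tilde Q'\defeq\tilde T_{\epsilon}\circ\tilde Q\circ\tilde T_{\epsilon}^{-1}$ and $Q'\defeq T_{\epsilon}\circ Q\circ T_{\epsilon}^{-1}$, the chain rule applied to $Q=\Phi_*\tilde Q$ gives $Q'=\Phi_*\tilde Q'$, by the general discussion at the start of this subsection.

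The proof is rather short once the right invariants are chosen, so the ``main obstacle'' is conceptual rather than computational: recognizing $\tilde T_{\epsilon}$ as the one-parameter family of shears of $\mathbb{C}^3$ that fix the affine functional $\Sigma_1/d$ and contract the space of pairwise differences by $1-\epsilon$. With that perspective, both inputs used above---the coefficient-sum for $\Sigma_1$, and the $\tilde t$\nobreakdash-invariance of~(\ref{eq:prefundcor}) under rescaling of the differences $\tilde y_k$---are already in hand from the proof of Theorem~\ref{thm:gDHsystems}, and no separate case-by-case inspection of the polynomials $\Sigma_2,\Sigma_3,\Sigma_6$ in Table~\ref{tab:sigmas} is required to carry the argument through uniformly for all seven classical coverings.
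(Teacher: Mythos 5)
Your proposal is correct and follows essentially the same route as the paper's own proof: both arguments rest on (a)~the fact that $\Sigma_1$ is linear with coefficients summing to~$d$, so that $\Sigma_1/d$ is fixed by $\tilde T_\epsilon$ and $x_1'=x_1$, and (b)~the degree\nobreakdash-1 homogeneity of the differences $y_k$ as rational functions of $\tilde y_1,\tilde y_2,\tilde y_3$ (which you extract from~(\ref{eq:prefundcor}) via the invariance of $\tilde t=-\tilde y_1/\tilde y_3$, and the paper reads off directly from~(\ref{eq:fundcor})), followed by the standard transport of solution-preservation under linear equivalence.
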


\begin{proof}
  As in the proof of Theorem~\ref{thm:gDHsystems}, let $y_k\defeq
  \allowbreak x_i-\nobreak x_j$, etc.  What is to be shown is that if
  $\tilde x'=\tilde T\tilde x$ and $x'=T x$, then $x'=\Phi(\tilde x')$.
  But $\tilde y'_k=(1-{\epsilon})\tilde y_k$ and $y'_k=(1-{\epsilon}) y_k$.
  By the formula~(\ref{eq:fundcor}), each~$y_k$ is a homogeneous
  degree\nobreakdash-1 rational function of $\tilde y_1,\tilde y_2,\tilde
  y_3$; thus each~$y_k'$ can be represented as the \emph{same} function of
  $\tilde y_1',\tilde y_2',\tilde y_3'$.  Hence it suffices to prove, say,
  that $x_1'$~is the same function of $\tilde x_1',\tilde x_2',\tilde x_3'$
  as $x_1$~is of $\tilde x_1,\tilde x_2,\tilde x_3$; i.e.\ (see the
  statement of Theorem~\ref{thm:gDHsystems}), that given
  $x_1=\Sigma_1(\tilde x_1,\tilde x_2,\tilde x_3)$, one also has
  $x_1'=\Sigma_1(\tilde x_1',\tilde x_2',\tilde x_3')$.  But by definition,
  $x_1'=x_1$; and for each covering map, $\Sigma_1$~is linear in its
  arguments with coefficients that sum to~$d$ (since they are
  multiplicities, as was remarked after Theorem~\ref{thm:gDHsystems}).  It
  follows that $x_1'=\Sigma_1(\tilde x_1',\tilde x_2',\tilde x_3')$.
\end{proof}

Some elementary further calculations reveal that for each choice of
covering map, each of the deformed quadratic vector fields $\tilde Q',Q'$
of Theorem~\ref{eq:firstdeformed} can be written as the vector field of a
gDH system, of the familiar type shown in~(\ref{eq:gDH}), plus a
deformation field of a simple form.  This is summarized in the following.

\begin{theorem}
  If a rational map\/ $x=\Phi(\tilde x)$ associated by
  Theorem\/~{\rm\ref{thm:gDHsystems}} to one of the classical covering
  maps\/ ${\bf2},{\bf3},{\bf4},{\bf6},{\bf6}_{\bf c}$ and\/ ${\bf3}_{\bf
    c},{\bf4_{\bf bq}}$ {\rm(}of degree\/~$d$\,{\rm)} is
  solution-preserving from\/ ${\rm gDH}(\tilde a_1, \nobreak\tilde a_2,
  \nobreak\tilde a_3; \allowbreak\tilde b_1,\nobreak\tilde
  b_2,\nobreak\tilde b_3;\nobreak\tilde c)$ to\/ ${\rm gDH}(a_1,\nobreak
  a_2,\nobreak a_3;\allowbreak b_1,\nobreak b_2,\nobreak b_3;\nobreak c)$,
  then the same will be true if the two gDH systems are deformed: a term\/
  $\delta \left[\Sigma_1(\tilde x_1,\tilde x_2,\tilde x_3)/d\right]^2$
  being added to the expression for each\/ $\dot{\tilde x}_i$, and a term\/
  $\delta x_1^2$ to that for each\/~$\dot{x}_i$.  The deformation\/
  parameter $\delta\in\mathbb{C}$ is arbitrary.
\end{theorem}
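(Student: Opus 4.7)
The plan is to reduce the deformed statement to a single infinitesimal property of $\Phi$, namely
\[
\bigl[D\Phi(\tilde x)\bigr]\,(1,1,1)^{T} = (1,1,1)^{T}\qquad \text{for all }\tilde x,
\]
and then to apply the chain rule. Denote the deformed vector fields by $\tilde Q_{\delta}(\tilde x) \defeq \tilde Q(\tilde x) + \delta\bigl[\Sigma_{1}(\tilde x)/d\bigr]^{2}(1,1,1)$ and $Q_{\delta}(x) \defeq Q(x) + \delta\,x_{1}^{2}(1,1,1)$. For any local solution $\tilde x = \tilde x(\tau)$ of $\dot{\tilde x} = \tilde Q_{\delta}(\tilde x)$, setting $x = \Phi(\tilde x)$ and using $x_{1} = \Sigma_{1}/d$ gives
\[
\dot x = [D\Phi(\tilde x)]\,\tilde Q(\tilde x) + \delta\bigl[\Sigma_{1}/d\bigr]^{2}\,[D\Phi(\tilde x)](1,1,1)^{T} = Q(x) + \delta\,x_{1}^{2}\,(1,1,1)^{T} = Q_{\delta}(x),
\]
where the undeformed hypothesis $Q(\Phi) = [D\Phi]\,\tilde Q$ handles the first summand and the displayed infinitesimal identity handles the second.

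The infinitesimal identity can be read off directly from Table~\ref{tab:sigmas}. For each of the seven classical coverings tabulated there, inspection shows that $\Sigma_{2},\Sigma_{3},\Sigma_{6}$ are homogeneous polynomials in the differences $\tilde x_{i}-\tilde x_{j}$ (or, in the `${\bf c}$' cases, in the cyclic relative invariants $z,\bar z$, which also annihilate the diagonal direction), while $\Sigma_{1}$ is linear in $\tilde x_{1},\tilde x_{2},\tilde x_{3}$ with coefficients whose sum equals the degree~$d$. Consequently
\[
\Phi\bigl(\tilde x + s(1,1,1)\bigr) = \Phi(\tilde x) + s(1,1,1),
\]
and differentiating in~$s$ at $s=0$ yields the required identity.

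A coordinate-free derivation is also available: differentiating the intertwining relation $\Phi = T_{\epsilon}\circ\Phi\circ\tilde T_{\epsilon}^{-1}$ of Theorem~\ref{eq:firstdeformed} at $\epsilon=0$, and using the degree-one homogeneity of each classical $\Phi$ (so that $[D\Phi(\tilde x)]\tilde x = \Phi(\tilde x)$ by Euler's theorem), one sees the main terms cancel and the identity drops out. No step is expected to present a real obstacle; structurally, the claim expresses the fact that every rational morphism~$\Phi$ of Theorem~\ref{thm:gDHsystems} is equivariant along the common diagonal idempotent direction $e_{0} = e_{1}+e_{2}+e_{3}$ of the gDH algebras, and the allowed deformation is precisely a scalar deformation in that very direction.
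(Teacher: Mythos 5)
Your proof is correct, but it takes a more direct route than the paper. The paper obtains this theorem as a consequence of Theorem~\ref{eq:firstdeformed}: one conjugates the two gDH vector fields by the finite linear maps $\tilde T_{\epsilon},T_{\epsilon}$ and then checks by ``elementary further calculations'' that each conjugated field is again a gDH field plus a deformation term of the stated form, with $\delta$ a function of $\epsilon$ and the original parameters; implicitly one must also verify that varying $\epsilon$ and the gDH parameters realizes every admissible pair (gDH system, $\delta$). You instead isolate the single infinitesimal identity $[D\Phi(\tilde x)](1,1,1)^{T}=(1,1,1)^{T}$ and verify the deformed determining equations~(\ref{eq:determining}) directly by the chain rule, using only $x_1=\Sigma_1/d$; this proves the statement for arbitrary $\delta$ at once and avoids computing any conjugations. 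Your justification of the identity is sound: $\Sigma_1$ is linear with coefficient sum~$d$ (noted in the paper right after Theorem~\ref{thm:gDHsystems}), and the translation-invariance of the differences $x_i-x_j$ along the diagonal can be read off from Table~\ref{tab:sigmas} as you do --- though for the maps ${\bf4}$ and ${\bf6}$ the quadratic factors in $\Sigma_3$, resp.\ $\Sigma_2,\Sigma_3$, are not literally products of differences, so ``inspection'' there means checking that the diagonal lies in the radical of each quadratic form; alternatively one can cite Eq.~(\ref{eq:fundcor}), which exhibits each $y_k=x_i-x_j$ as a function of $\tilde y_1,\tilde y_2,\tilde y_3$ alone and makes the invariance automatic. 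Your closing remark, that the identity also follows by differentiating $\Phi=T_{\epsilon}\circ\Phi\circ\tilde T_{\epsilon}^{-1}$ at $\epsilon=0$ together with Euler's relation for the degree-one homogeneous map~$\Phi$, is exactly the bridge between your argument and the paper's; what your version buys is a self-contained verification that does not depend on identifying the conjugated vector fields.
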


It should be stressed that the deformed systems of this theorem are
linearly equivalent to gDH ones.  The ${\bf 6}_{\bf c}$ case, in which
$d=6$ and $\Sigma_1=2\tilde x_1 + 2\tilde x_2 + 2\tilde x_3$ by
Table~\ref{tab:sigmas}, subsumes the ${\bf 6}_{\bf c}$~half of
Theorem~\ref{thm:full2}.  (In the statement of that theorem, the
deformation parameter~$\delta$ appeared as~$9\tilde d$.)

\subsection{Equivalences among gDH systems}
\label{subsec:equivs}

Distinct gDH systems $\dot x=Q(x)$ and $\dot {\bar x}=\bar Q(\bar x)$ of
the form~(\ref{eq:gDH}) may be linearly equivalent, i.e., may be related by
$\bar x=Tx$ for some $T\in{\it GL}(3,\mathbb{C})$.  That is, the vector
fields $Q,\bar Q$ may satisfy $\bar Q=T\circ\nobreak Q\circ\nobreak
T^{-1}$.  One case is when $T$~permutes the components $(x_1,x_2,x_3)$ into
$(\bar x_1,\bar x_2,\bar x_3)$, perhaps with scaling; but there are also
nontrivial equivalences, summarized in Theorem~\ref{thm:48} below.  The
existence of nontrivial equivalences has implications for the (as~yet
incomplete) classification up~to isomorphism of 3\nobreakdash-dimensional
commutative, non-associative algebras.  Even if one restricts oneself to
algebras~$\mathfrak{A}$ of gDH type, one must quotient~out a group action
on the space of gDH parameters $(a_1,\nobreak a_2,\nobreak a_3;\allowbreak
b_1,\nobreak b_2,\nobreak b_3;c)$.  A fundamental domain for this action
will need to be computed, as in the classification of
2\nobreakdash-dimensional non-associative algebras~\cite{Markus60}.

Nontrivial linear equivalences between gDH systems are an algebraic matter,
but interestingly, explicit formulas for such equivalences follow directly
from the PE-based integration scheme of~\S\,\ref{subsec:fromPEtogDH}.  The
scheme employed offset exponents $(\nu_1,\nobreak \nu_1'; \allowbreak
\nu_2,\nobreak \nu_2'; \allowbreak \nu_3,\nobreak \nu_3')$, rationally
related to $(a_1,\nobreak a_2,\nobreak a_3;\allowbreak b_1,\nobreak
b_2,\nobreak b_3)$ and~$c$ by formulas given in
Theorem~\ref{thm:integration}; and another parameter,~$\bar n$.  In~fact
the 7\nobreakdash-dimensional linear space of gDH systems can alternatively
be parametrized by $(\nu_1,\nobreak \nu_1'; \allowbreak \nu_2,\nobreak
\nu_2'; \allowbreak \nu_3,\nobreak \nu_3';\allowbreak \bar n; c )$.  By
Fuchs's relation, the six offset exponents are not independent;
see~(\ref{eq:nonfree}).

There is a natural group~$\mathfrak{G}$ of gDH equivalences, which is
generated by (1)~permutations of the 3\nobreakdash-set
$\left\{(\nu_1,\nobreak\nu_1'), \allowbreak(\nu_2,\nobreak\nu_2'),
\allowbreak(\nu_3,\nobreak\nu_3')\right\}$, and (2)~the tranpositions
$\nu_i\leftrightarrow\nu_i'$, $i=1,2,3$.  The group~$\mathfrak{G}$ is
clearly isomorphic to the group of \emph{signed} permutations of a
3\nobreakdash-set, which is the order\nobreakdash-48 Coxeter
group,~$\mathcal{B}_3$.  Any element of~$\mathcal{B}_3$ can be written in a
sign-annotated version of the disjoint cycle representation used for
elements of~$\mathfrak{S}_3$.  For instance, $[1_+2_-][3_-]$ signifies
$1\mapsto\nobreak2$, $2\mapsto\nobreak-1$, $3\mapsto\nobreak-3$.
(Positively signed 1\nobreakdash-cycles are typically omitted.)  The group
$\mathcal{B}_3$~is isomorphic to the wreath product $\mathcal{Z}_2
\wr\nobreak \mathfrak{S}_3$, i.e.\ to a semidirect product
$(\mathcal{Z}_2)^3\!\rtimes\nobreak \mathfrak{S}_3$, where the normal
subgroup~$(\mathcal{Z}_2)^3$ is generated by the involutions
$\nu_i\leftrightarrow\nu_i'$, $i=1,2,3$.  In~fact $\mathcal{B}_3$ turns~out
to be isomorphic to $\mathcal{Z}_2\times\nobreak \mathfrak{S}_4$, where the
$\mathcal{Z}_2$ factor is generated by the involution $[1_-][2_-][3_-]$,
i.e., by $\nu_i\leftrightarrow\nu_i'\ (\forall i)$; though this will not be
needed.

\begin{theorem}
  There is an order\/{\rm-48} gDH-stabilizing group\/
  $\mathfrak{G}\cong\allowbreak{\mathcal{B}}_3\cong\allowbreak
  \mathcal{Z}_2 \wr\nobreak \mathfrak{S}_3$, which acts on the gDH
  parameter space as follows.  Positive-signed elements
  of\/~$\mathfrak{G}$, i.e., pure permutations, act by permuting the
  pairs\/ $(a_1,b_1)$, $(a_2,b_2)$, $(a_3,b_3)$; and the associated linear
  equivalence\/ $\bar x=Tx$ permutes the components\/ $x_1,x_2,x_3$ into\/
  $\bar x_1,\bar x_2,\bar x_3$.  Also, for\/ $i=1,2,3$, the involution\/
  $[i_-]\in\mathfrak{G}$ performs the map\/ $(a_1,\nobreak a_2,\nobreak
  a_3;\allowbreak b_1,\nobreak b_2,\nobreak b_3;c)\mapsto (\bar
  a_1,\nobreak \bar a_2,\nobreak \bar a_3;\allowbreak \bar b_1,\nobreak
  \bar b_2,\nobreak \bar b_3;\bar c)$ given by
  \begin{equation}
    \label{eq:parammap}
    \left\{
    \begin{aligned}
      \bar a_i &= c-(c-a_i)\left( \frac{2\,c-b_1-b_2-b_3}{c-a_i-b_i}  \right),\\
      \bar a_j &= a_j \left( \frac{2\,c-b_1-b_2-b_3}{c-a_i-b_i}  \right);\\[\jot]
      \bar b_i &= -(c+a_i - b_1-b_2-b_3)\left( \frac{2\,c-b_1-b_2-b_3}{c-a_i-b_i}  \right),\\
      \bar b_j &= -(c-b_1-b_2-b_3)+(c-b_i-b_k)\left( \frac{2\,c-b_1-b_2-b_3}{c-a_i-b_i}  \right)
    \end{aligned}
    \right.
  \end{equation}
  with\/ $\bar c=c$, the associated linear equivalence\/ $\bar
  x=T_{[i_-]}x$ being
  \begin{equation}
    \label{eq:xmap}
    \left\{
    \begin{aligned}
      \bar x_i &= x_i,\\
      \bar x_j &= x_i + \left( \frac{c-a_i-b_i}{2\,c-b_1-b_2-b_3}  \right)(x_j-x_i).
    \end{aligned}
    \right.
  \end{equation}
  In the preceding, $j$~stands for each element of\/ $\{1,2,3\}$ other
  than\/~$i$, and\/ $k$ for the third element.
\label{thm:48}
\end{theorem}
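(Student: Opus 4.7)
The plan is to verify the two families of generators of $\mathfrak{G}$ separately, and then identify the abstract group. For the positive-signed subgroup $\mathfrak{S}_3<\mathfrak{G}$ the verification is immediate: the gDH system~(\ref{eq:gDH}) is manifestly equivariant under simultaneous relabeling, so any $\pi\in\mathfrak{S}_3$ acts by $\bar x_i=x_{\pi(i)}$ with $(\bar a_i,\bar b_i)=(a_{\pi(i)},b_{\pi(i)})$ and $\bar c=c$. The substance of the theorem lies in the three sign involutions $[i_-]$, which correspond under the birational correspondence (\ref{eq:zetasntoab})/(\ref{eq:abtozetasn}) to the exponent transpositions $\nu_i\leftrightarrow\nu_i'$ (with $\bar n,c$ and the other two pairs fixed).

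For $[i_-]$ I would exploit the $\mathrm{gDH}\leftrightarrow\mathrm{gSE}$ bijection of Theorem~\ref{thm:gDHbasecoin}. The gSE~(\ref{eq:gSE}) is explicitly invariant under each transposition $\nu_i\leftrightarrow\nu_i'$; hence if $x=x(\tau)$ is a noncoincident solution of $\mathrm{gDH}(a_1,a_2,a_3;b_1,b_2,b_3;c)$ and $t=t(\tau)$ is its associated gSE solution, the \emph{same} $t(\tau)$ is associated to the $(\bar a;\bar b;\bar c)$-system obtained by transposition. Let $\bar x=\bar x(\tau)$ be the noncoincident gDH solution recovered from this shared $t(\tau)$ via the reconstruction formula~(\ref{eq:zerothclaim}) with the transposed parameters. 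Direct subtraction of the two instances of (\ref{eq:zerothclaim})—noting that only the summand $(\nu_i+\bar n)\dot t/(t-t_i)$ in the $\sum_m$-sum and the single prefactor $(\nu_1+\nu_2+\nu_3+\bar n)$ in the last term depend on $\nu_i$—yields, after a short cancellation,
\begin{equation*}
\bar x_l - x_l = c^{-1}\bar n^{-1}\alpha_i\!\left[\frac{\dot t}{t-t_l}-\frac{\dot t}{t-t_i}\right],\qquad \alpha_i\defeq\nu_i'-\nu_i.
\end{equation*}
Substituting~(\ref{eq:firstclaim}) to rewrite the bracket as $c\bar n\rho(x_l-x_i)$ gives $\bar x_l = x_l + \rho\alpha_i(x_l-x_i)$. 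For $l=i$ this reproduces $\bar x_i=x_i$, and for $l\in\{j,k\}$ the identity $1+\rho\alpha_i=(c-a_i-b_i)/(2c-b_1-b_2-b_3)$, obtained from~(\ref{eq:abtozetasn}) and~(\ref{eq:rhodef0}), recovers~(\ref{eq:xmap}) exactly.

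The parameter formulas~(\ref{eq:parammap}) are then a routine consequence: one applies~(\ref{eq:zetasntoab}) to the transposed exponents $(\bar\nu_i,\bar\nu_i')=(\nu_i',\nu_i)$ and substitutes~(\ref{eq:abtozetasn}) to re-express the output in $(a;b;c)$-variables, the common denominator $\bar\rho^{-1}=\rho^{-1}+\alpha_i$ becoming $(c-a_i-b_i)/(c-a_1-a_2-a_3)$ after clearing. To finish, the three involutions $[i_-]$ pairwise commute (they act on disjoint exponent pairs) and are conjugated by the $\mathfrak{S}_3$ subgroup according to their indices; the group they together generate is the signed permutation group of $\{(\nu_1,\nu_1'),(\nu_2,\nu_2'),(\nu_3,\nu_3')\}$, i.e.\ the hyperoctahedral Coxeter group $\mathcal{B}_3\cong (\mathcal{Z}_2)^3\rtimes\mathfrak{S}_3\cong\mathcal{Z}_2\wr\mathfrak{S}_3$, of order $2^3\cdot 3!=48$. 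Faithfulness on a generic gDH parameter orbit is clear from the explicit formulas.

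The main obstacle will be the displayed cancellation yielding the clean expression for $\bar x_l - x_l$: both the $\sum_m$-sum and the $\rho^{-1}$-coefficient in~(\ref{eq:zerothclaim}) depend on $\nu_i$, and one must check that their changes conspire (with opposite signs) to eliminate every $l$-dependence except through the single rational combination $\dot t/(t-t_l)-\dot t/(t-t_i)$. This single linearity statement—that the difference $\bar x - x$ lies in the $2$-plane spanned by $(1,1,1)$ and $(x_1,x_2,x_3)$—is what forces $T_{[i_-]}$ to be linear rather than merely rational in $x$. Once it is established, everything else is mechanical algebra using the birational correspondence (\ref{eq:zetasntoab})/(\ref{eq:abtozetasn}), and the algebraic identities so obtained extend by continuity to non-proper parameter values.
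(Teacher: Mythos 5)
Your argument is correct, and it reaches both displayed formulas by a route that is parallel to, but not identical with, the paper's.  The paper derives (\ref{eq:parammap}) exactly as you do --- conjugating the transposition $\nu_i\leftrightarrow\nu_i'$ by the birational maps (\ref{eq:abtozetasn}), (\ref{eq:zetasntoab}) --- but for (\ref{eq:xmap}) it works one level lower: it starts from the representation $x_l\propto({\rm d}/{\rm d}\tau)\log(\Delta_l f)$ of Theorem~\ref{thm:integration} and uses the fact that the Papperitz equation (\ref{eq:PE}) depends on its exponents only through $\mu_i+\mu_i'$ and $\mu_i\mu_i'$, so the transposition fixes $f$ and $\tau$ and alters only $\Delta_j,\Delta_k$; the affine map then drops out as the ratio $(\mu_i'+\mu_j+\mu_k)/(\mu_i+\mu_j+\mu_k)$.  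You instead pivot on the invariance of the gSE (\ref{eq:gSE}) under $\nu_i\leftrightarrow\nu_i'$ and difference two instances of the reconstruction formula (\ref{eq:zerothclaim}); the cancellation you flag as the main obstacle does go through exactly as you compute, since only the single summand $(\nu_i+\bar n)\,\dot t/(t-t_i)$ and the prefactor $\rho^{-1}=\nu_1+\nu_2+\nu_3+\bar n$ carry any $\nu_i$-dependence, and (\ref{eq:firstclaim}) converts the residue into $\rho\alpha_i(x_l-x_i)$, whence $1+\rho\alpha_i=(c-a_i-b_i)/(2c-b_1-b_2-b_3)$ as required.  The two derivations are two faces of the same PE-based scheme: yours leans on the full bijection of Theorem~\ref{thm:gDHbasecoin} (to know that (\ref{eq:zerothclaim}) with transposed parameters genuinely produces a solution of the transposed system), while the paper's is slightly more self-contained and makes $\bar x_i=x_i$ transparent, since $\Delta_i$ does not involve $\mu_i$ at all.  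Both arguments are a priori restricted to proper systems and noncoincident solutions; the paper disposes of the general case by noting that (\ref{eq:xmap}) can be verified by direct substitution into (\ref{eq:gDH}), which is the cleanest way to cash out your closing appeal to continuity.
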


\begin{proof}
  That the substitution $\bar x=T_{[i_-]}x$ of~(\ref{eq:xmap}) maps ${\rm
    gDH}(a_1,\nobreak a_2,\nobreak a_3;\allowbreak b_1,\nobreak
  b_2,\nobreak b_3;c)$ to ${\rm gDH}(\bar a_1,\nobreak \bar a_2,\nobreak
  \bar a_3;\allowbreak \bar b_1,\nobreak \bar b_2,\nobreak \bar b_3;c)$ can
  be checked by hand.  For a more comprehensible derivation of
  (\ref{eq:parammap}) and~(\ref{eq:xmap}) based on
  Theorem~\ref{thm:integration}, reason as follows.

  From $(a_1,\nobreak a_2,\nobreak a_3;\allowbreak b_1,\nobreak
  b_2,\nobreak b_3)$ and~$c$, compute $(\nu_1,\nobreak \nu_1'; \allowbreak
  \nu_2,\nobreak \nu_2'; \allowbreak \nu_3,\nobreak \nu_3';\bar n)$ by
  Eqs.\ (\ref{eq:abtozetasn}) of that theorem. Perform the transposition
  $\nu_i\leftrightarrow\nu_i'$, and by applying
  Eqs.\ (\ref{eq:zetasntoab}), reverse the direction of computation to
  obtain $(\bar a_1,\nobreak \bar a_2,\nobreak \bar a_3;\allowbreak \bar
  b_1,\nobreak \bar b_2,\nobreak \bar b_3)$.  After algebraic
  simplification of rational functions, one finds~(\ref{eq:parammap}).

  To derive (\ref{eq:xmap}), use the expressions given in
  Theorem~\ref{thm:integration} for $x_1,x_2,x_3$ as logarithmic
  derivatives of PE solutions.  With a common constant of proportionality,
  these are (see~(\ref{eq:2ndremark}))
  \begin{equation}
    \label{eq:easyconseq}
    x_i\propto ({\rm d}/{\rm d}\tau)\log(\Delta_if)
    =({\rm d}/{\rm d}\tau)\log\left[
(t-t_i)^{\mu_j+\mu_k}(t-t_j)^{-\mu_j}(t-t_k)^{-\mu_k}f
      \right],
  \end{equation}
  for $i,j,k$ any cyclic permutation of $1,2,3$.  Now, let~$i$
  once again mean the index at which the interchange $\nu_i\leftrightarrow\nu_i'$
  of offset exponents (i.e., the interchange $\mu_i\leftrightarrow\mu_i'$
  of exponents) takes place, producing $\bar x$ from~$x$.  It then follows
  from~(\ref{eq:easyconseq}) that
  \begin{equation}
    \label{eq:xmap2}
    \left\{
    \begin{aligned}
      \bar x_i &= x_i,\\ \bar x_j &= x_i + \left(
      \frac{\mu_i'+\mu_j+\mu_k}{\mu_i+\mu_j+\mu_k} \right)(x_j-x_i),
    \end{aligned}
    \right.
  \end{equation}
  where $j$~now stands for each element of $\{1,2,3\}$ other than~$i$, and
  $k$ for the third element.  Using $\mu_i'=\mu_i+\alpha_i$ and
  $\mu_l=\nu_l-(2\kappa_l-1){\bar{n}}$, $l=1,2,3$, one has
  \begin{equation}
    \frac{\mu_i'+\mu_j+\mu_k}{\mu_i+\mu_j+\mu_k} = 
    \frac{\alpha_i + \nu_1+\nu_2+\nu_3+\bar n}
         {\hphantom{\alpha_i +{} }\nu_1+\nu_2+\nu_3+\bar n},
  \end{equation}
  as $\kappa_1+\kappa_2+\kappa_3=1$.  Expressing $\bar n$ and
  $\nu_1,\nu_2,\nu_3$ and~$\alpha_i$ in~terms of $(a_1,\nobreak
  a_2,\nobreak a_3;\allowbreak b_1,\nobreak b_2,\nobreak b_3;c)$, by
  Eqs.\ (\ref{eq:abtozetasn}), yields (\ref{eq:xmap})
  from~(\ref{eq:xmap2}).
\end{proof}

In its action on the gDH parameter space, $\mathfrak{G}$~is an order-48
\emph{algebraic group}: a collection of rational maps that act on points
$(a_1,\nobreak a_2,\nobreak a_3;\allowbreak b_1,\nobreak b_2,\nobreak
b_3;c)$.  Hence any gDH system lies on an orbit of up~to 48~gDH systems.
Some orbits are of size strictly less than~48.  For instance, one may have
for some $i\neq\nobreak j$ that $a_i=a_j$ and $b_i=b_j$, so that
$(\nu_i,\nu_i')=\allowbreak (\nu_j,\nu_j')$, in which case
$[i_+j_+]\in\mathfrak{G}$, which performs the interchange
$(\nu_i,\nu_i')\leftrightarrow\allowbreak (\nu_j,\nu_j')$, will leave the
gDH system invariant.  Also, for some $g\in\mathfrak{G}$ the map
$(a_1,\nobreak a_2,\nobreak a_3;\allowbreak b_1,\nobreak b_2,\nobreak
b_3;c)\to\allowbreak (\bar a_1,\nobreak \bar a_2,\nobreak \bar
a_3;\allowbreak \bar b_1,\nobreak \bar b_2,\nobreak \bar b_3;\bar c)$ may
involve a division by zero.  That is, the transformed gDH system may be
undefined.

One should also note that in exceptional cases, applying the
element~$[i_-]\in\mathfrak{G}$, i.e., the transformation
$\nu_i\leftrightarrow\nu_i'$ or equivalently the negation
$\alpha_i\mapsto\allowbreak -\alpha_i$, may convert a proper gDH system to
an improper one, or vice versa.  (Recall Definition~\ref{def:proper}.)

To understand $\mathfrak{G}$ better, it is useful to consider its action on
DH systems, for which $(a_1,\nobreak a_2,\nobreak a_3;\allowbreak
b_1,\nobreak b_2,\nobreak b_3;c)$ equals $(a_1,\nobreak a_2,\nobreak
a_3;\allowbreak b,\nobreak b,\nobreak b;c)$ with $b=c/2$.  (Proper) DH
systems of the form~(\ref{eq:DH}) are parametrized by
$(\alpha_1,\alpha_2,\alpha_3\,|\,c)$, where the angular parameters satisfy
the condition that $\rho^{-1}\defeq \allowbreak (1-\nobreak
\alpha_1-\nobreak \alpha_2-\nobreak \alpha_3)/2$ is nonzero (and also,
$c\neq0$).  The group~$\mathfrak{G}$ acts on the DH parameter space as a
group of signed permutations: the orbit of
$(\alpha_1,\alpha_2,\alpha_3\,|\,c)$ consists of the 48~points
$(\pm\alpha_{1'},\pm\alpha_{2'},\pm\alpha_{3'}\,|\,c)$, where
$\alpha_{1'},\alpha_{2'},\alpha_{3'}$ is a permutation of
$\alpha_1,\alpha_2,\alpha_3$.  This is because the transposition
$\nu_i\leftrightarrow \nu_i'$ specializes to the negation
$\alpha_i\mapsto\nobreak -\alpha_i$.  Owing to (unsigned) angles appearing
multiple times, a $\mathfrak{G}$\nobreakdash-orbit may contain fewer than
48~distinct DH systems.  In exceptional cases a proper DH system can be
transformed to an improper one.

The following explicit formulas dealing with the involution
$[1_-][2_-][3_-]\in\mathfrak{G}$, which performs
$\nu_i\leftrightarrow\nu_i'\ (\forall i)$, will be used in the construction
of gDH representations of the solutions of Chazy equations.

\begin{proposition}
  The linear equivalence\/ $\bar x=T_{[1_-][2_-][3_-]}x$, acting on any
  solution\/ $x=x(\tau)$ of\/ ${\rm gDH}(a,\nobreak a,\nobreak a;\allowbreak
  b,\nobreak b,\nobreak b;c)$, is the circulant map
  \begin{displaymath}
    \bar x_i = x_i -\frac{c+a-2b}{2c-3b}\,(2\,x_i - x_j - x_k), \qquad i=1,2,3,
  \end{displaymath}
  in which\/ $j,k$ are the elements of\/ $\{1,2,3\}$ other than\/~$i$.  The
  corresponding transformation of gDH systems is\/ ${\rm gDH}(a,\nobreak
  a,\nobreak a;\allowbreak b,\nobreak b,\nobreak b;c)\mapsto\allowbreak{\rm
    gDH}(\bar a,\nobreak \bar a,\nobreak \bar a;\allowbreak b,\nobreak
  b,\nobreak b; c)$, where
  \begin{displaymath}
    \bar a = a+\frac{(c-3a)(c+a-2b)}{c+3a-3b}.
  \end{displaymath}
  It is assumed that the transformed gDH system is not undefined, i.e.,
  that there is no division by zero.
\label{prop:123}
\end{proposition}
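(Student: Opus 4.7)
The plan is to work with the offset-exponent parametrization of Theorem~\ref{thm:integration}, exploiting the fact that the three involutions $[1_-]$, $[2_-]$, $[3_-]$ transpose disjoint pairs $(\nu_i,\nu_i')$ and therefore commute.  Their product can accordingly be analyzed in one step, rather than by three successive applications of formulas (\ref{eq:parammap}) and~(\ref{eq:xmap}), in which the parameters would change at each stage.

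First I would translate the symmetric input parameters into offset exponents via~(\ref{eq:abtozetasn}):  since $a_i=a$ and $b_i=b$ for all~$i$, one finds $\bar n = (2c-3b)/c$, $\nu_i\equiv\nu = \bar n\,a/(c-3a)$, and $\alpha_i\equiv\alpha = (-c-a+2b)/(c-3a)$, so that $\rho^{-1}\defeq \nu_1+\nu_2+\nu_3+\bar n = 3\nu+\bar n$.  After applying $[1_-][2_-][3_-]$, the offset exponents become $\bar\nu_i=\nu+\alpha$ and $\bar\nu_i'=\nu$ for every~$i$; in particular $\bar\rho^{-1}=\rho^{-1}+3\alpha$.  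Substituting back into~(\ref{eq:zetasntoab}) gives $\bar a_i = c(\nu+\alpha)\bar\rho$ and a symmetric expression for~$\bar b_i$.  Algebraic simplification, using $\alpha(c-3a) = -c-a+2b$ and $\bar n\,c = 2c-3b$, then confirms that $\bar b_i = b$ for all~$i$ (so the image is again of the fully symmetric form ${\rm gDH}(\bar a,\bar a,\bar a;b,b,b;c)$) and produces the advertised closed-form expression for~$\bar a$.

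To handle the action on~$x$, note that the underlying Papperitz equation depends only on the unordered pairs $\{\mu_i,\mu_i'\}$, so its solution~$f$ is unchanged by $[1_-][2_-][3_-]$.  In the logarithmic-derivative representation~(\ref{eq:easyconseq}), all three exponents $\mu_l$ get shifted by the same common value~$\alpha$, so
\begin{displaymath}
c\bar n\,(\bar x_i - x_i) = \alpha\,\frac{{\rm d}}{{\rm d}\tau}\log\left[(t-t_i)^2(t-t_j)^{-1}(t-t_k)^{-1}\right] = \alpha\left[\frac{2\dot t}{t-t_i} - \frac{\dot t}{t-t_j} - \frac{\dot t}{t-t_k}\right].
\end{displaymath}
Applying formula~(\ref{eq:firstclaim}) of Lemma~\ref{lem:usedlater}, which rewrites each difference $\dot t/(t-t_l)-\dot t/(t-t_m)$ as $c\bar n\rho\,(x_l-x_m)$, converts this into $\bar x_i - x_i = \alpha\rho\,(2x_i-x_j-x_k)$.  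The identity $\alpha\rho = -(c+a-2b)/(2c-3b)$ then yields the claimed circulant map.

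The main technical obstacle is the rational-function algebra needed to verify that $\bar b_i = b$ for each~$i$ under the swap (an identity encoding the preservation of the DH-like condition $b_1=b_2=b_3$ in the symmetric case), and in parallel to bring $\bar a = c(\nu+\alpha)\bar\rho$ into the quoted form $a + (c-3a)(c+a-2b)/(c+3a-3b)$.  Both are routine but intricate, and a computer algebra system is recommended for confirmation.  Throughout, the derivation tacitly assumes that $\bar\rho^{-1} = (c+3a-3b)/c$ does not vanish and that $c-3a\neq0$, consistent with the proposition's hypothesis that the transformed system is not undefined.
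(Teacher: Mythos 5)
Your proof is correct, but it takes a genuinely different route from the paper's. The paper's own proof of Proposition~\ref{prop:123} is a one-line appeal to ``a lengthy direct computation,'' namely three successive applications of the formulas (\ref{eq:parammap}) and~(\ref{eq:xmap}) of Theorem~\ref{thm:48}, with the gDH parameters changing at each of the three stages. You instead pass once to the offset-exponent coordinates, where $[1_-][2_-][3_-]$ is a single simultaneous swap $\nu_i\leftrightarrow\nu_i'$, push that swap back through~(\ref{eq:zetasntoab}), and read off the action on~$x$ directly from the logarithmic-derivative representation~(\ref{eq:easyconseq}) combined with Eq.~(\ref{eq:firstclaim}) of Lemma~\ref{lem:usedlater}. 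This is closer in spirit to how the paper proves Theorem~\ref{thm:48} itself (cf.\ the passage from~(\ref{eq:xmap2}) to~(\ref{eq:xmap})), and it has the advantage of explaining structurally why the result is a circulant map: all three angular parameters are equal, so the exponent shift $\mu_l\mapsto\mu_l+\alpha$ is uniform in~$l$. I checked the two identities you flag as the technical core: $\bar b=b$ does follow, using precisely the Fuchsian condition $6\nu+3\alpha=1-2\bar n$, and $c(\nu+\alpha)\bar\rho$ does simplify to the stated expression for~$\bar a$. Two small points. First, your parenthetical formula $\bar\rho^{-1}=(c+3a-3b)/c$ is off; the correct value is $\bar\rho^{-1}=-(c+3a-3b)/(c-3a)$, though the relevant nonvanishing condition $c+3a-3b\neq0$ (the denominator in the formula for~$\bar a$) is unchanged. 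Second, your derivation of the $x$-map is carried out on noncoincident solutions produced by PE-based integration; since the map you obtain is linear and the solution-preserving property $\bar Q=T\circ Q\circ T^{-1}$ is a polynomial identity, it extends to all solutions, but a sentence to that effect would close the argument cleanly.
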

\begin{proof}
  By a lengthy direct computation, i.e.\ by successively applying the
  formulas for $[i_-]$, $i=1,2,3$, given in Theorem~\ref{thm:48}.
\end{proof}

The group $\mathfrak{G}$ can be used to create new solution-preserving maps
between non-isomorphic gDH systems.  If $g_1,g_2\in\mathfrak{G}$ and
$x=\Phi(\tilde x)$ is one of the rational solution-preserving maps of
Theorem~\ref{thm:gDHsystems}, from some gDH system $\dot{\tilde
  x}=\tilde{Q}(\tilde x)$ to another gDH system $\dot x=Q(x)$, then
$\bar\Phi\defeq g_2\circ\Phi \circ g_1^{-1}$ will be solution-preserving
from $\dot{\bar{\tilde{x}}}=\bar{\tilde{Q}}(\bar{\tilde x})$ to
$\dot{\bar{{x}}}={\bar{Q}}(\bar{x})$, where $\bar{\tilde x},\bar x$ come
from $\tilde x,x$ by $\bar{\tilde x}=T_{g_1}\tilde x$ and $\bar x=T_{g_2}
x$.  In this way, up~to $48^2=2304$ rational maps~$\bar\Phi$ can be
produced.  Of~course maps obtained by mere permutation of components
(of~$\bar{\tilde x}$ and/or~$\bar x$) are of relatively little interest.
However, each map~$\Phi$ of Theorem~\ref{thm:gDHsystems} lies on an orbit
of up~to $8^2=64$ significantly different maps~$\bar\Phi$, obtained by
acting on~$\tilde x$ and/or~$x$ by elements of~$\mathfrak{G}$ that are
products of zero or more negatively signed 1\nobreakdash-cycles, such as
the symmetric involution $[1_-][2_-][3_-]$ (the product of all three).

If one focuses on the hypergeometric integration scheme
of~\S\,\ref{subsec:liftings}, one can interpret
$\mathfrak{G}\cong\mathcal{B}_3$ as the automorphism group of the Papperitz
equation~(\ref{eq:PE}), or of its P\nobreakdash-symbol~(\ref{eq:PPsymbol}).
(That group is generated by permutations of the singular points
$t_1,t_2,t_3$ and interchanges $\mu_1\leftrightarrow\mu_1'$,
$\mu_2\leftrightarrow\mu_2'$, $\mu_3\leftrightarrow\mu_3'$ of exponents.)
The group~$\mathfrak{G}$ is therefore closely related to the automorphism
group of the Gauss hypergeometric equation~(\ref{eq:GHE}), or of its
P\nobreakdash-symbol~(\ref{eq:GHPsymbol}).  As usually defined, the latter
group is the order\nobreakdash-24 Coxeter group of \emph{even-signed}
permutations of a 3\nobreakdash-set, which is isomorphic
to~$\mathfrak{S}_4$.  It can be viewed as the origin of Kummer's well-known
set of 24~local solutions of the GHE, each of which is expressed in~terms
of~${}_2F_1$, and four of which are in~fact identical to
${}_2F_1$~\cite{Maier10}.

The many alternative rational solution-preserving maps $\bar\Phi\defeq
g_2\circ\Phi \circ g_1^{-1}$ that can be produced from each~$\Phi$,
i.e.\ from each of the PE\nobreakdash-lifting maps of
Table~\ref{tab:coveringmapsclassical}, have a hypergeometric
interpretation.  Like~$\Phi$, each is associated to a classical ${}_2F_1$
transformation that is based on a covering $\bar R\colon
\mathbb{P}^1_{\tilde t}\to \mathbb{P}^1_{t}$, but $\bar R$~may differ from
the covering~$R$ of the table because of pre- and/or post-composition with
M\"obius transformations that permute the singular points $\tilde
t=0,1,\infty$ and/or $t=0,1,\infty$.  There may also be interchanges of
exponents.  For example, there are quadratic solution-preserving maps that
are associated to the classical quadratic transformations of~${}_2F_1$
other than the canonical one,~(\ref{eq:quadexample}).  But, each of the
other quadratic transformations can be obtained from~(\ref{eq:quadexample})
by P\nobreakdash-symbol manipulations~\cite{Andrews99}.

\section{gDH Systems of Painlev{\'e} Type}
\label{sec:painleve}

A gDH system~(\ref{eq:gDH}) may have the Painlev\'e property (PP),
according to which no local solution has a branch point; at~least, not one
that is movable, with a location depending on the choice of initial
condition $x(\tau_0)=x^0$.  

The (proper) DH case is fully understood.  A proper DH system ${\rm
  DH}(\alpha_1,\alpha_2,\alpha_3)$ has the~PP if and only if
$(\alpha_1,\nobreak\alpha_2,\nobreak\alpha_3)=\allowbreak(\frac1{N_1},\nobreak\frac1{N_2},\nobreak\frac1{N_3})$,
where each~$N_i$ is a nonzero integer or~$\infty$.  The most familiar
subcase is when each~$N_i$ is a \emph{positive} integer or~$\infty$.  If
$\alpha_1+\nobreak\alpha_2+\allowbreak\alpha_3<\nobreak 1$, any
noncoincident solution $x=x(\tau)$ of~(\ref{eq:gDH}) is confined by a
natural barrier (a~`wall of poles') to a half-plane or disk, as mentioned
in the Introduction.  This maximal domain of definition is
solution-dependent, i.e., depends on the initial condition.  If the domain
is ${\rm Im}\,\tau>0$, then $x=x(\tau)$ will be a vector of quasi-modular
forms for a triangle group $\Delta(N_1,N_2,N_3) <{\it PSL}(2,\mathbb{R})$.

Non-DH gDH systems have no modular interpretation: they are
affine-covariant but not projective-covariant.  But one can prove the
following sophisticated classification theorem
(Theorem~\ref{thm:PPclassification}, including
Table~\ref{tab:PPclassification}) on proper non\nobreakdash-DH gDH systems
that have the~PP\null.  Interestingly, many such systems are related by
rational solution-preserving maps $x=\Phi(\tilde x)$ of the types
constructed in~\S\,\ref{sec:transformations}.

The classification is made possible by the proper ${\rm
  gDH}\leftrightarrow{\rm gSE}$ (generalized Schwarzian equation)
correspondence of Theorem~\ref{thm:gDHbasecoin}.  Recall that this
correspondence is based on the maps $t(\cdot)\mapsto x(\cdot)$,
$x(\cdot)\mapsto t(\cdot)$ of
Eqs.\ (\ref{eq:zerothclaim}),(\ref{eq:secondclaim}), the latter reducing to
$t=-(x_2\nobreak -x_3)/\allowbreak(x_1\nobreak -x_2)$ if
$(t_1,t_2,t_3)=\allowbreak(0,1,\infty)$.
Theorem~\ref{thm:PPclassification} is accordingly a corollary of the
classification of the proper gSE's of the form~(\ref{eq:gSE}) that have
the~PP but are not~SE's, which was begun by Garnier~\cite{Garnier12} and
completed by Carton-LeBrun~\cite{CartonLeBrun69b}, using a rigorous version
of Painlev\'e's $\alpha$\nobreakdash-method.  Garnier and Carton-LeBrun
chose $(0,1,\infty)$ as the gSE singular points, with no loss of
generality.

Theorem~\ref{thm:PPclassification} is followed, for purposes of
illustration, by the explicit integration of several proper
non\nobreakdash-DH gDH systems with the~PP\null.  In fact, each of the
systems in Table~\ref{tab:PPclassification} is integrable; this follows
from the integration of all non-SE gSE's with the~PP, also performed by
Garnier and Carton--LeBrun.  (See Theorem~\ref{thm:PPintegration},
including Table~\ref{tab:PPintegration}, and the detailed Examples
\ref{ex:1}--\ref{ex:4}.)  In most cases, the noncoincident gDH solutions
$x=x(\tau)$ are doubly or at~least simply periodic in~$\tau$.  In
Example~\ref{ex:2}, an application to the integration of
Chazy\nobreakdash-XI is indicated.

\afterpage{\clearpage\rm
\begin{landscape}
{
%\small
\begin{longtable}{|l||l|l|l|l|l|}
  \caption{All proper non-DH gDH systems with the Painlev{\'e} property,
    up~to isomorphism.  ($r\in\mathbb{Z}\setminus\{0\}$ is arbitrary.)
    Several components~$x_i$ satisfying Chazy equations are indicated in
    brackets.}  \\
  \hline
  system &
  $(a_1,a_2,a_3;b_1,b_2,b_3;c)$ &
  $(\nu_1,\nu_1';\nu_2,\nu_2';\nu_3,\nu_3')$ &
  $(\alpha_1,\alpha_2,\alpha_3)$ & $(r_1,r_1';r_2,r_2';r_3,r_3')$ &
  preimage(s), [notes]  \\ \hline\hline
  \endfirsthead
  \hline
  system &
  $(a_1,a_2,a_3;b_1,b_2,b_3;c)$ &
  $(\nu_1,\nu_1';\nu_2,\nu_2';\nu_3,\nu_3')$ &
  $(\alpha_1,\alpha_2,\alpha_3)$ & $(r_1,r_1';r_2,r_2';r_3,r_3')$ &
  preimage(s), [notes]  \\ \hline\hline
  \endhead
  \hline \multicolumn{6}{r}{\emph{Continued on next page}}
  \endfoot
  \hline 
  \endlastfoot 
  \hline \multicolumn{6}{|l|}{$n$ an
    arbitrary integer ($\neq0,-1,-2$), with $\bar n=(n+1)/n$}\\* \hline
  e.I.1($n$) & $(-2,-3,-1;$ &
  $(-\frac1{3n},-\frac{n+1}{3n};-\frac1{2n},-\frac{n+1}{2n};$ &
  $(-\frac13,-\frac12,-\frac16)$ & $(3n+3,3;2n+2,2;$ & \\* &
  $2n-2,3n,n-4;6n)$ & $-\frac1{6n},-\frac{n+1}{6n})$ & & $6n+6,6)$ & \\*
  e.I.2($n$) & $(-1,-2,-1;$ &
  $(-\frac1{4n},-\frac{n+1}{4n};-\frac1{2n},-\frac{n+1}{2n};$ &
  $(-\frac14,-\frac12,-\frac14)$ & $(4n+4,4;2n+2,2;$ & \\* &
  $n-2,2n,n-2;4n)$ & $-\frac1{4n},-\frac{n+1}{4n})$ & & $4n+4,4)$ & \\*
  e.II($n$) & $(-1,-1,-1;$ &
  $(-\frac1{3n},-\frac{n+1}{3n};-\frac1{3n},-\frac{n+1}{3n};$ &
  $(-\frac13,-\frac13,-\frac13)$ & $(3n+3,3;3n+3,3;$ & e.II($n$) via
  ${\bf3}_{\bf c}$ \\* & $n-1,n-1,n-1;3n)$ & $-\frac1{3n},-\frac{n+1}{3n})$
  & & $3n+3,3)$ & \\* e.IV.4($n$) & $(-1,0,-1;$ &
  $(-\frac1{2n},-\frac{n+1}{2n};0,0;$ & $(-\frac12,0,-\frac12)$ &
  $(2n+2,2;\infty,\infty;$ & e.IV($n$,1,$\infty$) via $\bf2$ \\* &
  $n,-2,n;2n)$ & $-\frac1{2n},-\frac{n+1}{2n})$ & & $2n+2,2)$ & \\*
  e.IV($n$,1,$\infty$) & $(0,-1,0;$ & $(0,0;-\frac1n,-\frac{n+1}{n};$ &
  $(0,-1,0)$ & $(\infty,\infty;n+1,1;$ & \\ & $-1,n+1,-1;n)$ & $0,0)$ & &
  $\infty,\infty)$ & \\ \hline e.IV($n$,1,$r$) & $(n+1,r(n+1),-1;$ & $(
  -\frac{n+1}{nr}, -\frac{1}{nr}; -\frac{n+1}{n},-\frac{1}{n};$ &
  $(\frac1r,1,\frac1r)$ & $(r,r(n+1);1,n+1;$ & \\ & $-1,-1,n+1;n)$ &
  $\frac{1}{nr},\frac{n+1}{nr})$ & & $-r(n+1),-r)$ & \\ \hline \hline
  \multicolumn{6}{|l|}{$n=1$ (i.e., $\bar n=2$)}\\* \hline I &
  $(2,2,1;-1,-1,2;3)$ & $(-2,0;-2,0;-1,2)$ & $(2,2,3)$ &
  $(1,\infty;1,\infty;2,-1)$ & \\ I.1 & $(2,4,1;-3,1,2;3)$ &
  $(-1,-\frac12;-2,0;-\frac12,1)$ & $(\frac12,2,\frac32)$ &
  $(2,4;1,\infty;4,-2)$ & I via ${\bf2}$ \\ II & $(1,1,0;0,0,0;1)$ &
  $(-2,0;-2,0;0,1)$ & $(2,2,1)$ & $(1,\infty;1,\infty;\infty,-2)$ & \\*
  II.1 & $(1,2,0;-1,1,0;1)$ & $(-1,-\frac12;-2,0;0,\frac12)$ &
  $(\frac12,2,\frac12)$ & $(2,4;1,\infty;\infty,-4)$ & II via ${\bf2}$
  \\ III & $(1,1,1;0,-1,1;2)$ & $(-2,1;-2,0;-2,2)$ & $(3,2,4) $ &
  $(1,-2;1,\infty;1,-1)$ & \\ IV & $(2,2,1;1,-1,0;3)$ & $(-2,1;-2,0;-1,1)$
  & $(3,2,2) $ & $(1,-2;1,\infty;2,-2)$ & \\ V & $(1,1,1;0,0,0;2)$ &
  $(-2,1;-2,1;-2,1)$ & $(3,3,3) $ & $(1,-2;1,-2;1,-2)$ & \\* V.1 &
  $(1,2,1;-2,2,0;2)$ & $(-1,-\frac12;-2,1;-1,\frac12)$ &
  $(\frac12,3,\frac32) $ & $(2,4;1,-2;2,-4)$ & V via ${\bf2}$ \\ V.2 &
  $(3,2,3;-2,-2,4;2)$ & $(-1,-\frac12;-\frac23,-\frac13;-1,\frac12)$ &
  $(\frac12,\frac13,\frac32) $ & $(2,4;3,6;2,-4)$ & V.1,3;V via
  ${\bf3},{\bf2};{\bf6}_{\bf c}$ \\* & & & & & [Ch-XI($4$) for~$x_3$] \\*
  V.3 & $(1,3,1;-2,4,-2;2)$ & $(-\frac23,-\frac13;-2,1;-\frac23,-\frac13)$
  & $(\frac13,3,\frac13) $ & $(3,6;1,-2;3,6)$ & V via ${\bf3}_{\bf c}$
  \\ VI & $(1,2,2;-2,1,1;3)$ & $(-1,0;-2,1;-2,1)$ & $(1,3,3) $ &
  $(2,\infty;1,-2;1,-2)$ & \\* VI.1 & $(2,4,1;-3,5,-2;3)$ &
  $(-1,-\frac12;-2,1;-\frac12,0)$ & $(\frac12,3,\frac12) $ &
  $(2,4;1,-2;4,\infty)$ & VI via ${\bf2}$ \\ VII.1 & $(1,2,1;-1,0,1;2)$ &
  $(-1,0;-2,0;-1,1)$ & $(1,2,2) $ & $(2,\infty;1,\infty;2,-2)$ & \\* VII.2
  & $(1,3,1;-2,1,1;2)$ & $(-\frac23,-\frac13;-2,0;-\frac23,\frac23)$ &
  $(\frac13,2,\frac43) $ & $(3,6;1,\infty;3,-3)$ & \\ VIII.1 &
  $(2,6,1;-3,3,0;3)$ & $(-\frac23,-\frac13;-2,0;-\frac13,\frac13)$ &
  $(\frac13,2,\frac23) $ & $(3,6;1,\infty;6,-6)$ & \\* X.1 &
  $(1,4,2;-2,1,1;3)$ & $(-\frac12,0;-2,0;-1,\frac12)$ &
  $(\frac12,2,\frac32) $ & $(4,\infty;1,\infty;2,-4)$ & \\ XI.1 &
  $(2,6,1;-1,3,-2;3)$ & $(-\frac23,0;-2,0;-\frac13,0)$ &
  $(\frac23,2,\frac13) $ & $(3,\infty;1,\infty;6,\infty)$ & \\ XII.1 &
  $(1,4,1;-2,2,0;2)$ & $(-\frac12,-\frac14;-2,0;-\frac12,\frac14)$ &
  $(\frac14,2,\frac34) $ & $(4,8;1,\infty;4,-8)$ & \\ XIII.1 &
  $(1,4,1;-1,2,-1;2)$ & $(-\frac12,0;-2,0;-\frac12,0)$ &
  $(\frac12,2,\frac12) $ & $(4,\infty;1,\infty;4,\infty)$ & \\* XIII.2 &
  $(6,4,3;-1,-1,2;1)$ & $(-1,-\frac12;-\frac23,-\frac13;-\frac12,0)$ &
  $(\frac12,\frac13,\frac12) $ & $(2,4;3,6;4,\infty)$ & XIII.1,3,4,5 \\* &
  & & & & \quad via ${\bf6},{\bf3},{\bf6}_{\bf c},{\bf2}$ \\* XIII.3 &
  $(2,2,1;-1,1,0;1)$ & $(-1,-\frac12;-1,0;-\frac12,0)$ &
  $(\frac12,1,\frac12) $ & $(2,4;2,\infty;4,\infty)$ & XIII.1,4 via ${\bf
    2}$ \\* XIII.4 & $(1,1,1;0,0,0;1)$ & $(-1,0;-1,0;-1,0)$ & $(1,1,1) $ &
  $(2,\infty;2,\infty;2,\infty)$ & \\* XIII.5 & $(2,3,2;-1,2,-1;1)$ &
  $(-\frac23,-\frac13;-1,0;-\frac23,-\frac13)$ & $(\frac13,1,\frac13) $ &
  $(3,6;2,\infty;3,6)$ & XIII.4 via ${\bf 3}_{\bf c}$ \\ \hline
  e.IV(1,2,$r$) & $(2,2r,0;0,-2,2;2)$ & $(-\frac2r,0;-2,-1;0,\frac2r)$ &
  $(\frac2r,1,\frac2r)$ & $(r,\infty;1,2;\infty,-r)$ & \\ e.IV(1,3,$r$) &
  $(2,2r,1;1,-3,2;3)$ & $(-\frac2r,\frac1r;-2,-1;-\frac1r,\frac2r)$ &
  $(\frac3r,1,\frac3r)$ & $(r,-2r;1,2;2r,-r)$ & \\ e.IV(1,4,$r$) &
  $(2,2r,2;2,-4,2;4)$ & $(-\frac2r,\frac2r;-2,-1;-\frac2r,\frac2r)$ &
  $(\frac4r,1,\frac4r)$ & $(r,-r;1,2;r,-r)$ & \\ e.IV.1($r$) &
  $(r,2,r;-2,4,-2;2)$ & $(-1,-\frac12;-\frac2r,\frac2r;-1,-\frac12)$ &
  $(\frac12,\frac4r,\frac12) $ & $(2,4;r,-r;2,4)$ & e.IV(1,4,$r$) via
  $\bf2$\\\newpage% \hline \hline
  \multicolumn{6}{|l|}{$n=2$ (i.e., $\bar
    n=3/2$)}\\* \hline I & $(1,1,1;0,0,1;2)$ &
  $(-\frac32,\frac12;-\frac32,\frac12;-\frac32,\frac32)$ & $(2,2,3) $ &
  $(1,-3;1,-3;1,-1)$ & \\* I.1 & $(1,2,1;-1,1,1;2)$ &
  $(-\frac34,-\frac14;-\frac32,\frac12;-\frac34,\frac34)$ &
  $(\frac12,2,\frac32) $ & $(2,6;1,-3;2,-2)$ & I via ${\bf2}$ \\ II &
  $(3,3,1;1,1,0;4)$ &
  $(-\frac32,\frac12;-\frac32,\frac12;-\frac12,\frac12)$ & $(2,2,1) $ &
  $(1,-3;1,-3;3,-3)$ & \\* II.1 & $(3,6,1;-2,4,0;4)$ &
  $(-\frac34,-\frac14;-\frac32,\frac12;-\frac14,\frac14)$ &
  $(\frac12,2,\frac12)$ & $(2,6;1,-3;6,-6)$ & II via ${\bf2}$\\ III.1 &
  $(4, 6, 4; 3, -1, -1; 2)$ &
  $(-\frac12,\frac16;-\frac34,-\frac14;-\frac12,-\frac16)$ &
  $(\frac23,\frac12,\frac13)$ & $(3,-9;2,6;3,9)$ & [Ch-XI(9) for~$x_1$] \\*
  III.2 & $(1, 3, 1; -1, 2, 0; 2)$ &
  $(-\frac12,-\frac16;-\frac32,\frac12;-\frac12,\frac16)$ &
  $(\frac13,2,\frac23)$ & $(3,9;1,-3;3,-9)$ & \\ \hline e.IV(2,2,$r$) &
  $(3,3r,1;1,-2,3;4)$ &
  $(-\frac3{2r},\frac1{2r};-\frac32,-\frac12;-\frac1{2r},\frac3{2r})$ &
  $(\frac2r,1,\frac2r)$ & $(r,-3r;1,3;3r,-r)$ & \\ e.IV(2,3,$r$) &
  $(3,3r,3;3,-3,3;6)$ &
  $(-\frac3{2r},\frac3{2r};-\frac32,-\frac12;-\frac3{2r},\frac3{2r})$ &
  $(\frac3r,1,\frac3r)$ & $(r,-r;1,3;r,-r)$ & \\ e.IV.2($r$) & $(r, 2, r;
  -1, 3, -1; 2)$ &
  $(-\frac34,-\frac14;-\frac3{2r},\frac3{2r};-\frac34,-\frac14)$ &
  $(\frac12,\frac3r,\frac12) $ & $(2,6;r,-r;2,6)$ & e.IV(2,3,$r$) via
  $\bf2$\\ \hline \hline \multicolumn{6}{|l|}{$n=3$ (i.e., $\bar
    n=4/3$)}\\* \hline I & $(2, 2, 1; 1, 1, 0; 3)$ &
  $(-\frac43,\frac23;-\frac43,\frac23;-\frac23,\frac13)$ & $(2,2,1)$ &
  $(1,-2;1,-2;2,-4)$ & \\* I.1 & $(2, 4, 1; -1, 3, 0; 3)$ &
  $(-\frac23,-\frac16;-\frac43,\frac23;-\frac13,\frac16)$ &
  $(\frac12,2,\frac12) $ & $(2,8;1,-2;4,-8)$ & I via ${\bf2}$ \\ \hline
  e.IV(3,2,$r$) & $(4,4r,2;2,-2,4;6)$ &
  $(-\frac4{3r},\frac2{3r};-\frac43,-\frac13;-\frac2{3r},\frac4{3r})$ &
  $(\frac2r,1,\frac2r)$ & $(r,-2r;1,4;2r,-r)$ & \\ \hline \hline
  \multicolumn{6}{|l|}{$n=5$ (i.e., $\bar n=6/5$)}\\* \hline e.IV(5,2,$r$)
  & $(6,6r,4;4,-2,6;10)$ &
  $(-\frac6{5r},\frac4{5r};-\frac65,-\frac15;-\frac4{5r},\frac6{5r})$ &
  $(\frac2r,1,\frac2r)$ & $(r,-\frac{3}{2}r;1,6;\frac{3}{2}r,-r)$ & [N.B.: even $r$ only]\\ \hline \hline
  \multicolumn{6}{|l|}{$n=\infty$ (i.e., $\bar n=1$)}\\* \hline
  e.I.1($\infty$) & $(0,0,0;2,3,1;6)$ &
  $(0,-\frac13;0,-\frac12;0,-\frac16)$ & $(-\frac13,-\frac12,-\frac16)$ &
  $(\infty,3;\infty,2;\infty,6)$ & \\* e.I.2($\infty$) & $(0,0,0;1,2,1;4)$
  & $(0,-\frac14;0,-\frac12;0,-\frac14)$ & $(-\frac14,-\frac12,-\frac14)$ &
  $(\infty,4;\infty,2;\infty,4)$ & \\* e.II($\infty$) & $(0,0,0;1,1,1;3)$ &
  $(0,-\frac13;0,-\frac13;0,-\frac13)$ & $(-\frac13,-\frac13,-\frac13)$ &
  $(\infty,3;\infty,3;\infty,3)$ & e.II($\infty$) via ${\bf3}_{\bf c}$\\*
  e.IV.4($\infty$) & $(0,0,0;1,0,1;2)$ & $(0,-\frac12;0,0;0,-\frac12)$ &
  $(-\frac12,0,-\frac12)$ & $(\infty,2;\infty,\infty;\infty,2)$ &
  e.IV($\infty$,1,$\infty$) via $\bf2$ \\* e.IV($\infty$,1,$\infty$) &
  $(0,0,0;0,1,0;1)$ & $(0,0;0,-1;0,0)$ & $(0,-1,0)$ &
  $(\infty,\infty;\infty,1;\infty,\infty)$ & \\* \hline
  e.IV($\infty$,1,$r$) & $(1,r,0;0,0,1;1)$ & $(-\frac1r,0;-1,0;0,\frac1r)$
  & $(\frac1r,1,\frac1r)$ & $(r,\infty;1,\infty;\infty,-r)$ & \\*
  e.IV($\infty$,2,$r$) & $(1,r,1;1,0,1;2)$ &
  $(-\frac1r,\frac1r;-1,0;-\frac1r,\frac1r)$ & $(\frac2r,1,\frac2r)$ &
  $(r,-r;1,\infty;r,-r)$ & \\* e.IV.3($r$) & $(r, 2, r; 0, 2, 0; 2)$ &
  $(-\frac12,0;-\frac1r,\frac1r;-\frac12,0)$ & $(\frac12,\frac2r,\frac12)$
  & $(2,\infty;r,-r;2,\infty)$ & e.IV($\infty$,2,$r$) via $\bf2$
\label{tab:PPclassification}
\end{longtable}
}
\end{landscape}
}  % end of afterpage

\begin{theorem}
  The non\nobreakdash-DH gDH systems with the Painlev\'e property, which
  are proper in the sense of Definition\/ {\rm\ref{def:proper}}, i.e.,
  satisfy\/ {\rm(i)} $c\neq0$, {\rm(ii)} $2c-\nobreak b_1-\nobreak
  b_2-\nobreak b_3\neq0$, and\/ {\rm(iii)} $c-\nobreak a_1-\nobreak
  a_2-\nobreak a_3\neq0$, are listed up to linear equivalence in
  Table\/~{\rm\ref{tab:PPclassification}}.

  On each line a vector\/ $(a_1,\nobreak a_2,\nobreak a_3;\allowbreak
  b_1,\nobreak b_2,\nobreak b_3;c)$ is given, as are the elements of the
  alternative\/ {\rm(}birationally equivalent\/{\rm)} vector\/
  $(\nu_1,\nobreak \nu_1'; \allowbreak \nu_2,\nobreak \nu_2'; \allowbreak
  \nu_3,\nobreak \nu_3';\allowbreak \bar n; c )$, and the angular
  parameters\/ $\alpha_i\defeq\allowbreak\nu_i'-\nobreak\nu_i$.  {\rm(}The
  freedom in the choice of\/~$c$ is exploited to make each of\/
  $a_1,\nobreak a_2,\nobreak a_3; \allowbreak b_1,\nobreak b_2,\nobreak
  b_3$ an integer.{\rm)} In each non\nobreakdash-DH\/ {\rm(}$\bar
  n\neq1/2${\rm)} case with the PP, $\bar n$~equals $(n+1)/n$, with\/
  $n$~an integer\/ {\rm(}$n\neq0,-1,-2${\rm)} or\/~$\infty$.  The table is
  partitioned according to\/~$n$.
  
  The list is complete, up to the multiplication of\/ $(a_1,\nobreak
  a_2,\nobreak a_3;\allowbreak b_1,\nobreak b_2,\nobreak b_3;c)$ by a
  nonzero constant, and up to the action on the gDH parameter space of the
  order\/{\rm-48} isomorphism group\/~$\mathfrak{G}$ of
  Theorem\/~{\rm\ref{thm:48}}, which is generated by permutations of the
  components\/ $i=1,2,3$, and the three interchanges\/
  $\nu_i\leftrightarrow\nu_i'$, $i=1,2,3$.  Thus each line may stand for up
  to\/ {\rm48} distinct\/ $(\nu_1,\nobreak \nu_1'; \allowbreak
  \nu_2,\nobreak \nu_2'; \allowbreak \nu_3,\nobreak \nu_3';\bar n)$; and up
  to\/ {\rm48} distinct\/ $(a_1,\nobreak a_2,\nobreak a_3;\allowbreak
  b_1,\nobreak b_2,\nobreak b_3;c)$, each of which can be multiplied by a
  nonzero constant.
\label{thm:PPclassification}
\end{theorem}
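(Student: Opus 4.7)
The strategy is to reduce the classification of proper non\nobreakdash-DH gDH systems with the PP to the classification of non\nobreakdash-Schwarzian gSE's with the PP, which was established by Garnier~\cite{Garnier12} and Carton-LeBrun~\cite{CartonLeBrun69b}. The pivot is Theorem~\ref{thm:gDHbasecoin}, which gives at every point of the $\tau$\nobreakdash-plane a bijection between noncoincident meromorphic local solutions $x=x(\tau)$ of any proper gDH system and meromorphic local solutions $t=t(\tau)$ of the associated gSE, via the rational map (\ref{eq:secondclaim}) and its inverse (\ref{eq:zerothclaim}). Because the maps of that bijection are rational in one direction and rational in $t$ and its derivatives in the other, the presence or absence of movable branch points is preserved by the bijection on the noncoincident stratum.

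First I would show that a proper gDH system has the PP if and only if (i) the associated gSE has the PP and (ii) every \emph{coincident} local solution has no movable branch point either. For (ii), the coincident solutions are all explicit: by Proposition~\ref{prop:defineps} and Eqs.\ (\ref{eq:raysoln}),(\ref{eq:newscale}), they are rays of the form $-(\tau-\tau_*)^{-1}p$ or $Kn$ along finitely many idempotents~$p$ or nilpotents~$n$, possibly embedded in $2$\nobreakdash-parameter families like~(\ref{eq:raysoln2}). Each is meromorphic in~$\tau$, so (ii)~is automatic and the PP question reduces entirely to the associated gSE. Conversely, if the gSE has the PP, then (\ref{eq:zerothclaim}) produces noncoincident gDH solutions $x=x(\tau)$ that are meromorphic (the formula is a rational function of $t,\dot t,\ddot t$ divided by $\dot t$), so the gDH system has the~PP.

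Having reduced to gSE's, I invoke the Garnier--Carton-LeBrun list of all gSE's of the form~(\ref{eq:gSE}) that have the PP but are not SE's. As the remark after Theorem~\ref{thm:gDHbasecoin} indicates, the necessary Fuchs resonance conditions force $\bar n=(n+1)/n$ with $n\in\mathbb{Z}\setminus\{0,-1,-2\}$ or $n=\infty$, and the exponents $(r_i,r_i')=-\bar n(1/\nu_i,1/\nu_i')$ at each finite singular branch must also be integers (or $\infty$). After the parametric restrictions are imposed, the explicit Carton-LeBrun list reads, for each admissible~$n$, the entries tabulated under that~$n$ in Table~\ref{tab:PPclassification} (with $t_1,t_2,t_3=0,1,\infty$, as Garnier and Carton-LeBrun chose). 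For each such $(\nu_1,\nu_1';\nu_2,\nu_2';\nu_3,\nu_3';\bar n)$ I use the birational formulas~(\ref{eq:zetasntoab}) to compute a representative gDH parameter vector $(a_1,a_2,a_3;b_1,b_2,b_3;c)$, after clearing denominators so as to render all entries integer.

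Finally I account for the non-uniqueness of the representative. The classification is only up to linear equivalence, and Theorem~\ref{thm:48} exhibits the order\nobreakdash-$48$ subgroup $\mathfrak{G}\cong\mathcal{B}_3$ of linear equivalences acting on the gDH parameter space via permutations of the pairs $(\nu_i,\nu_i')$ and the three involutions $\nu_i\leftrightarrow\nu_i'$. I would quotient the list by this action to obtain one representative per orbit; each line of Table~\ref{tab:PPclassification} is such a representative. The DH cases (corresponding to $\bar n=1/2$ and $(\nu_i,\nu_i')=(-\alpha_i/2,\alpha_i/2)$) must be excluded to match the hypothesis ``non\nobreakdash-DH,'' and the properness conditions (i)--(iii) must be verified line by line. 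The main obstacle will be the last step: combinatorial bookkeeping to ensure that the orbit representatives are distinct and that none of the Garnier--Carton-LeBrun gSE's has been inadvertently omitted or duplicated. This is compounded by the need to check, for each~$n$, which lifts to a \emph{proper} gDH system (so that the reverse map (\ref{eq:zerothclaim}) is available) and which degenerates—degenerations such as $c=0$, $\rho^{-1}=0$, or $\bar n=0$ must be handled by continuity from nearby proper systems, as in~\S\,\ref{subsec:fromPEtogDH}.
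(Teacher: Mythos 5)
Your overall strategy is the paper's: reduce to the associated gSE via the noncoincident bijection of Theorem~\ref{thm:gDHbasecoin}, import the Garnier--Carton-LeBrun classification of non-SE gSE's with the~PP, translate back through the birational formulas~(\ref{eq:zetasntoab}), and quotient by the order-48 group~$\mathfrak{G}$. That part is sound, and your remarks about the resonance conditions on $n$ and on $(r_i,r_i')$ match the paper's formal analysis.

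There is, however, a genuine gap in your step~(ii), the treatment of coincident solutions. You assert that the coincident solutions ``are all explicit'' --- rays along the finitely many idempotents or nilpotents of Proposition~\ref{prop:defineps}, possibly embedded in families like~(\ref{eq:raysoln2}) --- and conclude that meromorphy is automatic. This is false. A coincident solution is \emph{any} solution lying in one of the invariant planes $x_j-x_k=0$; substituting $x_j\equiv x_k$ into~(\ref{eq:gDH}) yields a genuine two-dimensional HQDS for $(x_i,x_j)$, whose general solution is a two-parameter family that is not a ray and is not given in closed form by anything established earlier in the paper. Planar HQDS's can perfectly well have movable branch points, so the PP on the coincidence planes is a nontrivial additional condition that your argument never verifies. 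The paper closes this exact lacuna differently: it observes that the Kovalevskaya exponents of the restricted planar system are a subset of those of the full gDH system (namely the pairs $\{-1,n+1\}$, $\{-1,q_i\}$, $\{-1,q_i'\}$ whose eigendirections lie in the plane $x_j-x_k=0$), that these are integers for every system in Table~\ref{tab:PPclassification}, and that for \emph{two-dimensional} HQDS's --- unlike higher-dimensional ones --- integer Kovalevskaya exponents are known to be \emph{sufficient} for the~PP, by the classical classification of planar quadratic systems. Without some such auxiliary argument, your proof establishes only that the tabulated systems have no movable branch points among their noncoincident solutions, which is weaker than the theorem as stated.

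A smaller caution: your appeal to ``continuity from nearby proper systems'' to handle degenerations such as $c=0$ or $\rho^{-1}=0$ is not needed here and does not really make sense in this context --- the theorem restricts attention to proper systems by hypothesis, so improper degenerations are simply excluded from the classification rather than recovered by a limit.
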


The classification data in Table~\ref{tab:PPclassification} were extracted
from Tables I--\nobreak{VII} of~\cite{CartonLeBrun69b}, with some
labor. The following brief explanation of the results
of~\cite{CartonLeBrun69b}, formal rather than rigorous, may be useful.
Consider a gSE of the form~(\ref{eq:gSE}) with parameters $(\nu_1,\nobreak
\nu_1'; \allowbreak \nu_2,\nobreak \nu_2'; \allowbreak \nu_3,\nobreak
\nu_3';\bar n)$, the parameters associated to $t=t_i$ being $\nu_i,\nu_i'$,
and the Fuchsian condition being $\sum_{i=1}^3(\nu_i+\nobreak\nu'_i)=
\allowbreak 1-\nobreak2\bar n$.  Of~interest is the behavior of a solution
$t=t(\tau)$ of the gSE near any of its singular points, say $\tau=\tau_*$.
For the PP to obtain, the solution must not be branched.

As was mentioned at the end of~\S\,\ref{subsec:fromPEtogDH}, by
substituting into the gSE the formal statement $t=\allowbreak
t(\tau)\sim\allowbreak t_* +\nobreak C(\tau-\nobreak\tau_*)^p$ where
$t_*\neq t_1,t_2,t_3$, it is easy to deduce that $p\in\{\pm1, \pm(n+1) \}$,
with the statement holding as~$\tau\to\tau_*$ if $\mathop{\text{Re}}p>0$
and as~$\tau\to\infty$ if $\mathop{\text{Re}}p<0$.  Here $n\defeq 1/(\bar
n-1)$ and $\bar n=(n+1)/n$.  Formally, this is the source of the
restriction that $n$ be an integer or~$\infty$.

If instead $t_*=t_i$ for one of $i=1,2,3$, one still finds that a leading
exponent~$p$ governing $\tau\to\infty$ behavior (where
$\mathop{\text{Re}}p<0$) must satisfy $p\in\{-1,-n-\nobreak 1\}$; but now,
an exponent~$p$ governing $\tau\to\tau_*$ behavior (where
$\mathop{\text{Re}}p>0$) must satisfy $p\in\{r_i,r_i'\}$, where
$(r_i,r_i')\defeq -\bar n(1/\nu_i, 1/\nu_i')$.  The appearance of new
exponents is a reflection of the following.  In Papperitz-based integration
(with the local function $\tau=\tau(t)$ and its inverse $t=t(\tau)$ defined
as in Theorem~\ref{thm:integration}), the two Frobenius solutions of the~PE
at $t=t_i$, say $f^{[i]},f^{[i']}$, turn~out to yield gSE solutions
$t=t^{[i]}(\tau)$, $t^{[i']}(\tau)$ with formal asymptotic behavior
\begin{subequations}
\label{eq:rsolns}
\begin{align}
  t^{[i]}(\tau) &\sim t_i + \textrm{const}\times (\tau-\tau_*)^{r_i},\label{eq:rsolnsa}\\
  t^{[i']}(\tau) &\sim t_i + \textrm{const}\times (\tau-\tau_*)^{r_i'},\label{eq:rsolnsb}
\end{align}
\end{subequations}
where $(r_i,r_i')\defeq-\bar n(1/\nu_i,1/\nu'_i)$, and lower-order terms
with exponents differing by integers from $r_i$, resp.~$r'_i$, are omitted.
In either approach, one expects that for the gSE to have the~PP, $r_i,r_i'$
must be integers (at~least, if they are finite, with positive real parts).
On each line of Table~\ref{tab:PPclassification}, $(r_1,\nobreak
r_1';\allowbreak r_2,\nobreak r_2';\allowbreak r_3,\nobreak r_3')$ is
given.  The identity
\begin{equation}
\label{eq:rcondition}
-(n+1)^{-1}+\sum_{i=1}^3 (r_i^{-1} + r_i'^{-1}) = 1
\end{equation}
is a restatement of the Fuchsian condition in~terms of these exponents.

The PE having a 2\nobreakdash-dimensional space of solutions, each of its
basis functions $f^{[i]},f^{[i']}$ can be continuously deformed; and one
finds that from any nontrivial mixture comes a gSE solution $t=t(\tau)$
with one of two formal behaviors:
\begin{subequations}
\label{eq:qsolns}
\begin{align}
\bar t^{[i]}(\tau) & \sim t_i + \textrm{const}\times
(\tau-\tau_*)^{r_i}\times \bigl[1+\textrm{const}\times
  (\tau-\tau_*)^{q_i}\bigr],
\label{eq:qsolnsa}
\\ \bar t^{[i']}(\tau) & \sim t_i +
\textrm{const}\times (\tau-\tau_*)^{r_i'}\times
\bigl[1+\textrm{const}\times (\tau-\tau_*)^{q_i'}\bigr],
\label{eq:qsolnsb}
\end{align}
\end{subequations}
where lower-order terms are omitted, and the definition of the auxiliary
exponents is $(q_i,q_i')\defeq \bar
n\left((\nu_i-\nu_i')/\nu_i,(\nu'_i-\nu_i)/\nu'_i\right)$, so that if
defined they satisfy
\begin{equation}
\label{eq:qcondition}
(n+1)^{-1} + q_i^{-1}  + q_i'^{-1} = 1, \qquad i=1,2,3.
\end{equation}
The `mixed' solution (\ref{eq:qsolnsa}), resp.\ (\ref{eq:qsolnsb}), occurs
if $\nu_i<\nu_i'$, resp.\ $\nu_i'<\nobreak \nu_i$.  The conditions that for
$i=1,2,3$, each of $r_i,\nobreak r_i';\allowbreak q_i,\nobreak q_i'$ be an
integer (or~$\infty$ or undefined [i.e.~`$0/0$'], either indicating that
there is no singularity at~all), imposed along with the condition that
$n\defeq1/(\bar n -1)$ be an integer or~$\infty$, and the Fuchsian
condition (\ref{eq:rcondition}), greatly restrict the possible
$(\nu_1,\nobreak \nu_1'; \allowbreak \nu_2,\nobreak \nu_2'; \allowbreak
\nu_3,\nobreak \nu_3';\bar n)$.

Formally, this is the origin of the list of parameter vectors in
Table~\ref{tab:PPclassification}.  The tables of~\cite{CartonLeBrun69b}
give $r_i$ and~$q_i$ (or~rather the equivalent quantity $p_i\defeq q_i-1$)
for $i=1,2,3$, which permits the vectors to be reconstructed.  The labeling
scheme used in Table~\ref{tab:PPclassification} is that
of~\cite{CartonLeBrun69b}, though explicit parametrizations (by $(n)$,
$(r)$ or~$(n,q,r)$, as appropriate) have been added.  For consistency the
family e.III($n$,$r$) of~\cite{CartonLeBrun69b} has been relabeled
e.IV($n$,1,$r$).  In parametrized families the parameter~$q$ is a positive
integer, $r$~is a nonzero integer, and as~stated $n\defeq\allowbreak
1/(\bar n-\nobreak1)$, if finite, can take any integral value (except for
$n=0,-1$, which would imply $\bar n=0,\infty$, which are improper values;
and for $n=-2$, which would yield the DH [and~SE] value $\bar n=1/2$).

Of special note are the five parametrized families e.I.1($n$), e.I.2($n$),
e.II($n$), e.IV.4($n$), and e.IV($n$,1,$\infty$).  Each has been defined so
that it is `pseudo-Euclidean,' in~that the angular parameters
$\alpha_1,\alpha_2,\alpha_3$ satisfy $\alpha_1+\alpha_2+\alpha_3=-1$.
Because of this convention, the last of these, the family
e.IV($n$,1,$\infty$), is not the formal $r\to\infty$ limit of the
separately listed family e.IV($n$,1,$r$).  Its parameters $\nu_2,\nu_2'$
have been transposed, so that $(\alpha_1,\alpha_2,\alpha_3)$ equals
$(0,-1,0)$ rather than $(0,1,0)$.  That is, the element $[2_-]\in\mathfrak
G$ has been applied.  Without this transposition the alternative parameter
vector $(\nu_1,\nobreak \nu_1'; \allowbreak \nu_2,\nobreak \nu_2';
\allowbreak \nu_3,\nobreak \nu_3';\bar n)$ would be improper in the sense
of Definition~\ref{def:proper}, because $\rho^{-1}\defeq
(1-\nobreak\alpha_1-\nobreak\alpha_2-\nobreak\alpha_3)/2$, where
$\alpha_i\defeq \nu_i'-\nobreak\nu_i$, would equal zero; and according to
Eq.~(\ref{eq:zetasntoab}), $(a_1,\nobreak a_2,\nobreak a_3;\allowbreak
b_1,\nobreak b_2,\nobreak b_3; c)$ would not be defined.  (The
gSE~(\ref{eq:gSE}) is unaffected by the transposition, but without~it
Theorem~\ref{thm:gDHbasecoin} would not apply: there would be no ${\rm
  gDH}\leftrightarrow{\rm gSE}$ correspondence.)

The reader may wonder why the HQDS $\dot x_i=x_jx_k$ (known as the
3\nobreakdash-dimensional Nahm system, a variant of the Euler--Poinsot
top), which is ${\rm gDH}(0,\nobreak 0,\nobreak 0;\allowbreak 0,\nobreak
0,\nobreak 0;1)$ and is known to be completely integrable~\cite{Fairlie87}
and to have the Painlev\'e property, is not listed in
Table~\ref{tab:PPclassification}.  In fact it is present in disguise.  It
is one of the systems on the $\mathfrak{G}$\nobreakdash-orbit of the $n=1$
system XIII.4, which is ${\rm gDH}(1,\nobreak 1,\nobreak 1;\allowbreak
0,\nobreak 0,\nobreak 0;1)$.  The two gDH systems are linearly equivalent,
being related by the involution $[1_-][2_-][3_-]\in\mathfrak{G}$, i.e.\ by
$\nu_i\leftrightarrow\nu_i'\ (\forall i)$, as one sees from the formulas of
Proposition~\ref{prop:123}.

\smallskip
For any HQDS, Painlev\'e and integrability properties can be explored with
the aid of Kovalevskaya exponents~\cite{Goriely2001,Yoshida87a}, which
provide information on the linearization of the dynamics specified by $\dot
x=Q(x)$ around each ray solution.  If the non-associative
algebra~$\mathfrak{A}$ for a $d$\nobreakdash-dimensional HQDS has an
idempotent~$p$ with accompanying ray solution
$x(\tau)=\allowbreak-(\tau-\nobreak\tau_*)^{-1}p$, the Kovalevskaya
exponents associated to~$p$ are defined as the eigenvalues of the $d\times
d$ matrix $I-\nobreak[\partial Q_i/\partial x_j]_{ij}(x=p)$.  A~necessary
condition for an HQDS to be algebraically integrable is that each
Kovalevskaya exponent be rational~\cite{Yoshida87a}; and by definition,
each must be an integer if the HQDS is to have the~PP\null.  An easy
calculation applied to the gDH system~(\ref{eq:gDH}) yields the following.

\afterpage{\clearpage\rm
\begin{landscape}
%\ 
%\vfill
{
\small
\begin{table}[ht]
  \caption{ODEs (i.e., algebraic curves in $t,\dot t$) determining the
    solutions of many gDH systems with the Painlev\'e property.
    (For e.IV($n$,$q$,$r$) systems, $t$~is represented as~$u^r$.)}
  \begin{center}
    \begin{tabular}{|l||l|l|}
    \hline
    system & ODE & type of curve \\*
    \hline
    \hline
    \multicolumn{3}{|l|}{$n$ an arbitrary integer($\neq0,-1,-2$), with $\bar n=(n+1)/n$}\\*
    \hline
    e.IV($n$,1,$r$) & $\dot u^{n+1} = (K_1u - K_2)^n$ & rational\\
    \hline
    \hline
    \multicolumn{3}{|l|}{$n=1$ (i.e., $\bar n=2$)}\\*
    \hline
    I & $\dot t^2 = K_1t^2(2t-3) + K_2$  & elliptic; rational if $K_1K_2(K_1-K_2)=0$ \\
    II & $\dot t^2 = K_1t^2 - K_2(2t-1)$ & rational \\
    III & $\dot t^2 = K_1t^3(3t-4) + K_2$ & elliptic; rational if $K_1K_2(K_1-K_2)=0$ \\
    IV & $\dot t^2 = K_1t^3 - K_2(3t-2)$ & elliptic; rational if $K_1K_2(K_1-K_2)=0$ \\
    V & $\dot t^2 = K_1t^3(t-2) + K_2(2t-1)$ & elliptic ($j=0$); rational if $K_1K_2(K_1-K_2)=0$ \\
    VI & $\dot t^2 = K_1t^2(t^2-3t+3) - K_2t$ & elliptic; rational if $K_1K_2(K_1-K_2)=0$ \\
    \hline
    e.IV(1,2,$r$) & $\dot u^2 = K_1u^2 - K_2$ & rational \\
    e.IV(1,3,$r$) & $\dot u^2 = K_1u^3 - K_2$ & elliptic ($j=0$); rational if $K_1K_2=0$\\
    e.IV(1,4,$r$) & $\dot u^2 = K_1u^4 - K_2$ & elliptic ($j=12^3$); rational if $K_1K_2=0$\\
    \hline
    \hline
    \multicolumn{3}{|l|}{$n=2$ (i.e., $\bar n=3/2$)}\\*
    \hline
    I & $\dot t^3 = \left[K_1t^2(2t-3) + K_2\right]^2$ & elliptic ($j=0$); rational if $K_1K_2(K_1-K_2)=0$ \\
    II & $\dot t^3 = \left[K_1t^2 - K_2(2t-1)\right]^2$ & elliptic ($j=0$); rational if $K_1K_2(K_1-K_2)=0$ \\
    \hline
    e.IV(2,2,$r$) & $\dot u^3 = (K_1u^2 - K_2)^2$ & elliptic ($j=0$); rational if $K_1K_2=0$\\
    e.IV(2,3,$r$) & $\dot u^3 = (K_1u^3 - K_2)^2$ & elliptic ($j=0$); rational if $K_1K_2=0$\\
    \hline
    \hline
    \multicolumn{3}{|l|}{$n=3$ (i.e., $\bar n=4/3$)}\\*
    \hline
    I & $\dot t^4 = \left[K_1t^2 - K_2(2t-1)\right]^3$ & elliptic ($j=12^3$); rational if $K_1K_2(K_1-K_2)=0$\\
    \hline
    e.IV(3,2,$r$) & $\dot u^4 = (K_1u^2 - K_2)^3$ & elliptic ($j=12^3$); rational if $K_1K_2=0$\\
    \hline
    \hline
    \multicolumn{3}{|l|}{$n=5$ (i.e., $\bar n=6/5$)}\\*
    \hline
    e.IV(5,2,$r$) & $\dot u^6 = (K_1u^2 - K_2)^5$ & hyperelliptic, $\textrm{genus}=2$; rational if $K_1K_2=0$\\
    \hline
    \hline
    \multicolumn{3}{|l|}{$n=\infty$ (i.e., $\bar n=1$)}\\*
    \hline
    e.IV($\infty$,1,$r$) & $\dot u = K_1u - K_2$ & rational\\
    e.IV($\infty$,2,$r$) & $\dot u = K_1u^2 - K_2$ & rational\\
    \hline
    \end{tabular}
  \end{center}
\label{tab:PPintegration}
\end{table}
}
%\vfill
%\ 
\end{landscape}
}  % end of afterpage

\begin{lemma}
  The Kovalevskaya exponents associated to the seven canonical
  idempotents\/ $p=p_0,p_1,p_2,p_3,p_1',p_2',p_3'\in\mathfrak{A}$ of any\/
  {\rm(}generic\/{\rm)} gDH system that are defined in
  Proposition\/~{\rm\ref{prop:defineps}}, i.e., associated to the seven ray
  solutions along these idempotents, are respectively
  \begin{alignat*}{3}
  \mathcal{R}_0&=&&\{ -1,\bar n/(\bar n-1),\bar n/(\bar n-1) \} \eqdef \{ -1,n+1,n+1 \}  ,&&    \\
  \mathcal{R}_i&=&&\{ -1,r_i,q_i \}  ,&\qquad& i=1,2,3,   \\
  \mathcal{R}_i'&=&&\{ -1,r_i',q_i' \}  ,&\qquad& i=1,2,3,
  \end{alignat*}
  where $(r_i,r_i')=-\bar n(1/\nu_i,1/\nu'_i)$ and $(q_i,q_i')= \bar
  n\left((\nu_i-\nu_i')/\nu_i,(\nu'_i-\nu_i)/\nu'_i\right)$, as above; so
  that if defined, they satisfy
  Eqs.\ {\rm(\ref{eq:rcondition}),\allowbreak(\ref{eq:qcondition})}.  In
  each of\/ $\mathcal{R}_i$ and\/~$\mathcal{R}_i'$, the eigendirection
  corresponding to the first of the three exponents\/ {\rm(}i.e.,
  $-1${\rm)} lies along\/~$p$, and that of the third\/ {\rm(}i.e., $q_i$,
  resp.~$q'_i${\rm)} lies in the\/ $x_j-\nobreak x_k=0$ subspace.

  If for any of the seven, any Kovalevskaya exponent is formally infinite
  or undefined {\rm(}i.e.\ {\rm`$0/0$'}{\rm)}, it indicates nongenericity:
  the idempotent is absent, and the ray solution degenerates to a ray of
  constant solutions\/ {\rm(}i.e., nilpotents\/{\rm)}.
\end{lemma}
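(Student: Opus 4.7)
The plan is to exploit the identity $J(x)\defeq[\partial Q_i/\partial x_j]_{ij}(x)=2L_x$ that holds for every HQDS, where $L_x$ denotes left multiplication by~$x$ in the algebra~$\mathfrak{A}$. Applied at any idempotent~$p$, this gives $J(p)=2L_p$, so that $p$~itself is automatically a $2$\nobreakdash-eigenvector (since $L_pp=p*p=p$); the corresponding Kovalevskaya exponent, i.e.\ the eigenvalue of $I-J(p)$ along~$p$, is therefore $1-2=-1$, accounting for the first entry of every triple and the accompanying eigendirection claim. What remains in each case is to identify the two nontrivial Kovalevskaya exponents, i.e.\ the two eigenvalues of $J(p)$ on a $2$\nobreakdash-plane complementary to~$p$.

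For $p_0=(b_1+b_2+b_3-c)^{-1}(1,1,1)$, normalized so that $p_0*p_0=p_0$, I would compute $J((1,1,1))$ entry by entry from~(\ref{eq:gDH}) and observe that it has the rank-one-perturbation form $cI+\mathbf{1}\,\ell^{\mathrm{T}}$, where $\mathbf{1}=(1,1,1)^{\mathrm{T}}$ and $\ell_j=(b_1+b_2+b_3-c)-b_j$. Its spectrum is then $\{c,\,c,\,c+\ell^{\mathrm{T}}\mathbf{1}\}=\{c,\,c,\,2(b_1+b_2+b_3-c)\}$, the $c$\nobreakdash-eigenspace being the $2$\nobreakdash-plane $\ker\ell^{\mathrm{T}}$. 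Rescaling by $(b_1+b_2+b_3-c)^{-1}$ to pass from $J((1,1,1))$ to $J(p_0)$ and subtracting from~$1$ yields Kovalevskaya exponents $\{-1,\,(b_1+b_2+b_3-2c)/(b_1+b_2+b_3-c)\text{ twice}\}$; the identity $\bar n c=2c-b_1-b_2-b_3$ from~(\ref{eq:abtozetasn}), together with $n=1/(\bar n-1)$, simplifies the double exponent to $\bar n/(\bar n-1)=n+1$.

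For $p_i=e_i/a_i$, using $e_i*e_i=a_ie_i$ read off from~(\ref{eq:gDH}), I would compute $J(e_i)$ and note that it is block upper-triangular with respect to the splitting $\mathbb{C}^3=\mathrm{span}(e_i)\oplus\mathrm{span}(e_j,e_k)$: the $1\times 1$ block equals $2a_i$ (with eigenvector~$e_i$), and the complementary $2\times 2$ block has trace $(b_j+b_k)-(a_j+a_k)$ and determinant $-(c-a_j-a_k)(c-b_j-b_k)$, hence characteristic polynomial factoring as $(\lambda-(c-a_j-a_k))(\lambda+(c-b_j-b_k))$. Rescaling by $1/a_i$ and applying $1-(\cdot)$ converts the two nontrivial eigenvalues into $(a_1+a_2+a_3-c)/a_i$ and $(a_i+c-b_j-b_k)/a_i$, which by the formulas~(\ref{eq:abtozetasn}) for $\nu_i$ and $\alpha_i=\nu_i'-\nu_i$ coincide with $r_i=-\bar n/\nu_i$ and $q_i=\bar n(\nu_i-\nu_i')/\nu_i$. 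Solving the $2\times 2$ eigenvector equation for the eigenvalue $b_j+b_k-c$ gives $v_j=v_k$; lifting this solution to a $3$\nobreakdash-vector (by choosing $v_i$ to satisfy the first row of $J(e_i)$) preserves $v_j=v_k$, confirming that the $q_i$\nobreakdash-eigendirection lies in the plane $x_j-x_k=0$.

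The $p_i'$~case I would handle by the parallel direct calculation, starting from the explicit formula for $e_i'$ in Proposition~\ref{prop:defineps}, computing the normalizing scalar, then evaluating~$J(e_i')$ and its eigenvalues on a $2$\nobreakdash-plane complementary to~$e_i'$; the arithmetic is lengthier but structurally identical, and yields $r_i'=-\bar n/\nu_i'$ and $q_i'=\bar n(\nu_i'-\nu_i)/\nu_i'$. A~quicker route is the involution $[i_-]\in\mathfrak{G}$ of Theorem~\ref{thm:48}, which implements $\nu_i\leftrightarrow\nu_i'$ as a linear equivalence under which the set of seven canonical idempotents is permuted with $p_i\leftrightarrow p_i'$; since Kovalevskaya exponents are similarity invariants of~$J(p)$, the triple $\{-1,r_i',q_i'\}$ and the corresponding eigendirection statement follow. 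The main obstacle I anticipate is not conceptual but combinatorial: keeping signs, scalings, and index conventions straight while translating between the two parametrizations $(a_1,a_2,a_3;b_1,b_2,b_3;c)$ and $(\nu_1,\nu_1';\nu_2,\nu_2';\nu_3,\nu_3';\bar n;c)$, and verifying that the degenerate cases claimed in the lemma's final sentence—where some $r_i$, $q_i$, etc.\ is formally infinite or~$0/0$—correspond precisely to the vanishing of the scalar needed to normalize the corresponding~$e$ to an idempotent, in which case $e$~is nilpotent and its ray of ray solutions degenerates to a ray of constant ones.
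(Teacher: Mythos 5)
Your proposal is correct and matches the paper's approach: the paper offers no written proof beyond ``an easy calculation applied to the gDH system,'' i.e.\ direct computation of the Jacobian $[\partial Q_i/\partial x_j]$ at each idempotent, which is exactly what you carry out (and your intermediate quantities --- the rank-one structure of $J(p_0)$, the block-triangular form of $J(e_i)$ with the stated trace and determinant, and the translation to $r_i,q_i$ via~(\ref{eq:abtozetasn}) --- all check out). Your use of $J(p)=2L_p$ for the $-1$ exponent and of the involution $[i_-]\in\mathfrak{G}$ to transfer the $p_i$ computation to~$p_i'$ are efficient organizational shortcuts rather than a different method.
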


\begin{example*}
  Any proper DH system ${\rm DH}(\alpha_1,\alpha_2,\alpha_3\,|\,c)$, which
  is a (proper) gDH system with $(\nu_i,\nu_i')=\allowbreak
  (-\alpha_i/2,\alpha_i/2)$ and $\bar n=\nobreak 1/2$, has Kovalevskaya
  exponents $\mathcal{R}_0 =\allowbreak \{-1,-1,-1\}$ and $\mathcal{R}_i
  =\allowbreak \{-1,1/\alpha_i,1\}$, $\mathcal{R}_i' =\allowbreak
  \{-1,-1/\alpha_i,1\}$, for $i=1,2,3$.  If $\alpha_i=0$ for any~$i$ then
  $\mathcal{R}_i,\mathcal{R}_i'$ will formally include exponents that are
  infinite or undefined, which simply indicates that
  $\mathcal{R}_i,\mathcal{R}_i'$ should be omitted: though each element
  $e=e_i,e_i'$ satisfies $e*e\propto e$, it is a nilpotent ($e*e=0$), and
  hence there is no idempotent $p\propto e$, and no ray solution
  $x(\tau)=-(\tau-\tau_*)^{-1}p$.

  Thus the original Darboux system ${\rm DH}(0,0,0\,|\,c)$ has as its only
  ray solution $x(\tau)=-(2/c)(\tau-\tau_*)^{-1}(1,1,1)$, i.e.\ the ray
  solution~(\ref{eq:raysoln}) with all components coincident, which lies
  along the multiplicative identity element $p_0=\allowbreak
  (2/c)e_0=\allowbreak(2/c)(e_1+e_2+e_3)$.  Its set of Kovalevskaya
  exponents~$\mathcal{R}_0$ is~$\{-1,-1,-1\}$.
\end{example*}

It follows from the lemma that conditions for the PP to obtain are that
$n\defeq1/(\bar n-1)$ be a nonzero integer or~$\infty$, and that each of
$r_i,\nobreak r_i';\allowbreak q_i,\nobreak q_i'$, $i=1,2,3$, be an integer
or~$\infty$, if defined.  Thus, similar necessary conditions emerge from a
Kovalevskaya--Painlev\'e analysis as from the proper ${\rm
  gDH}\leftrightarrow{\rm gSE}$ correspondence.

\smallskip
The reader may have noticed a small lacuna in the proof of
Theorem~\ref{thm:PPclassification}.  As stated, the theorem is not an
immediate corollary of Carton-LeBrun's classification of non\nobreakdash-SE
gSE's that have the~PP\null.  This is because it does not restrict the
\emph{type} of solution $x=x(\tau)$ that cannot have a (movable) branch
point.  The ${\rm gDH}\leftrightarrow{\rm gSE}$ correspondence of
Theorem~\ref{thm:gDHbasecoin} applies only to \emph{noncoincident}
solutions, with no pair of components coinciding (necessarily, at
all~$\tau$).  To rule~out branch points in any \emph{coincident} solution
of each of the gDH's listed in Table~\ref{tab:PPclassification}, an
auxiliary argument is needed.

Suppose that two components coincide; say, $x_j\equiv x_k$.  By
substituting this into the gDH system~(\ref{eq:gDH}) one obtains a
2\nobreakdash-dimensional HQDS for $(x_i,x_j)$.  Such systems have long
been classified~\cite{Markus60}, and their integrability and Painlev\'e
properties are known.  It is known in particular that for a
2\nobreakdash-dimensional HQDS, integer Kovalevskaya exponents are
sufficient for the~PP\null.  But, the Kovalevskaya exponents of this
reduced 2\nobreakdash-dimensional HQDS must be integers, since they are a
subset of the exponents of the original gDH system: they come from
$p=p_0,p_i,p_i'$, each of which lies in the $x_j-\nobreak x_k=0$ subspace,
and in particular they are the two eigenvalues in each of
$\mathcal{R}_0,\mathcal{R}_i,\mathcal{R}_i'$ with eigendirections in that
subspace.  That is, they are $\{ -1,n+1\}$, $\{ -1,q_i \}$, $\{ -1,q_i'
\}$.  (If~any of these three pairs contains a formally infinite or
undefined exponent, indicating nilpotence rather than idempotence, it must
be omitted.)  This concludes the proof of
Theorem~\ref{thm:PPclassification}.

\smallskip
There is much to say about the non-gDH systems with the PP in
Table~\ref{tab:PPclassification}.  Strikingly, many are related by rational
morphisms, i.e.\ by rational solution-preserving maps $x=\Phi(\tilde x)$.
The maps, which include ${\bf2},{\bf3},{\bf3}_{\bf c},{\bf6},{\bf6}_{\bf
  c}$, are shown in the final column.  (Permutations of components $x_i$
and~$\tilde x_i$, if a part of the morphism, are not shown.)  The labeling
scheme indicates whether any gDH system in the table is obtained in this
way from a `base' system.  For instance, the $n=1$ (i.e., $\bar n=2$)
systems V.1, V.2, V.3 are images of the system~V by ${\bf2},{\bf6}_{\bf
  c},{\bf3}_{\bf c}$.  Moreover V.2~is the image of~V.1 by~$\bf3$, and
of~V.3 by~$\bf2$.  Thus, this quadruple of gDH systems with the~PP
illustrates the ${\bf6}_{\bf c}\sim{\bf3}\circ{\bf2} \sim
{\bf2}\circ{\bf3}_{\bf c}$ `diamond' in Figure~\ref{fig:only}!

For some `image' gDH systems in the table, no base system is given.  (For
instance, the ${n=2}$ systems include III.1 and~III.2, but no~III\null.)
In each such case, the gSE associated to the system is the image under a
rational map $t=R(\tilde t)$ of a nonlinear third-order ODE that is not of
the `triangular' gSE form~(\ref{eq:gSE}), having more than three singular
values on~$\mathbb{P}^1_{\tilde t}$.  Equivalently, the associated PE
on~$\mathbb{P}^1_t$ can be lifted along $t=R(\tilde t)$ to a linear
Fuchsian ODE on~$\mathbb{P}^1_{\tilde t}$ that has more than three singular
points, and is therefore not a~PE\null.  Generalized Schwarzian equations
with more than three singular points can be found in the tables
of~\cite{CartonLeBrun69b}, but are a bit beyond the scope of the present
paper.

Each of the base systems in Table~\ref{tab:PPclassification}, or more
precisely the corresponding gSE's, can be
integrated~\cite{CartonLeBrun69b,Garnier12}.  PE\nobreakdash-based
integration suffices.  In the notation of Theorem~\ref{thm:integration},
let $\dot t=K^2(t)f^{1/\bar n}(t)$, where $\bar n=(n+1)/n$ and $f$~is a
nonzero solution of the associated PE~(\ref{eq:PE}), with exponents
$(\mu_1,\nobreak \mu_1'; \allowbreak \mu_2,\nobreak \mu_2'; \allowbreak
\mu_3,\nobreak \mu_3')$ computed from $(\nu_1,\nobreak \nu_1'; \allowbreak
\nu_2,\nobreak \nu_2'; \allowbreak \nu_3,\nobreak \nu_3')$ and the offset
vector~$\kappa$ by~(\ref{eq:zetas}).  But (see the Remark after that
theorem), $K\equiv\textrm{const}$ when $(t_1,t_2,t_3)=(0,1,\infty)$ and
$\kappa=\allowbreak(0,0,1)$.  In this case one can write $\dot
t^{n+1}\propto f^n(t)$, where $f=\allowbreak K_1f^{(1)}+\nobreak
K_2f^{(2)}$ is the general solution of the~PE\null.  This shows that the
space of possible $t=t(\tau)$, which are solutions of the
gSE~(\ref{eq:gSE}), is closed under affine transformations
$\tau\mapsto\allowbreak A\tau+\nobreak B$, as one expects.  For nearly all
base systems in the table the PE turns~out to have trivial monodromy, which
implies that the PE solutions $f^{(1)}(t),f^{(2)}(t)$ are polynomials.  If
this is the case then $t,\dot t$~are algebraically related: they are
functions on an algebraic curve.

\begin{theorem}
  Of the proper gDH systems with the PP listed in
  Table\/~{\rm\ref{tab:PPclassification}}, the base systems are integrated
  as follows.  For each, a first-order ODE for the gSE solution\/
  $t=t(\tau)$ {\rm(}i.e., an algebraic curve\/ in~$t,\dot t$\,{\rm)},
  where\/ $t=-(x_2\nobreak -x_3)/\allowbreak(x_1\nobreak -x_2)$, is given
  in Table\/~{\rm\ref{tab:PPintegration}}.  The parameters\/
  $K_1,K_2\in\mathbb{C}$ are free, and after\/ $t=t(\tau)$ is obtained, the
  corresponding noncoincident gDH solution\/ $x=x(\tau)$ is computed from
  the\/ $t(\cdot)\mapsto x(\cdot)$ map\/~{\rm(\ref{eq:zerothclaim})}.

  The five pseudo-Euclidean families at the head of
  Table\/~{\rm\ref{tab:PPclassification}}, namely\/ {\rm e.I.1(}$n${\rm)},
  {\rm e.I.2(}$n${\rm)}, {\rm e.II(}$n${\rm)}, {\rm e.IV.4(}$n${\rm)} and
  {\rm e.IV(}$n$,$1$,$\infty${\rm)}, omitted from
  Table\/~{\rm\ref{tab:PPintegration}} because the associated PE's have
  nonpolynomial solutions, are integrated thus.  Let
  \begin{displaymath}
    \sigma_C(\tau)\defeq
    \left\{
    \begin{alignedat}{2}
      &C+\tau^{n+1}, &\qquad& n\neq\infty,\\
      &C+e^\tau, &\qquad& n = \infty,
    \end{alignedat}
    \right.
  \end{displaymath}
  where\/ $C\in\mathbb{C}$ is a free parameter.  Then up to an affine
  transformation\/ $\tau\mapsto\allowbreak A\tau+\nobreak B$, the
  \emph{general} solution\/ $t=t(\tau)$ of the gSE associated to a gDH
  system in any of them is\/ $t=\psi(\sigma_C(\tau))$, where\/
  $\psi=\psi(\sigma)$ is any nonconstant function satisfying
    \begin{displaymath}
      \left\{
      \begin{alignedat}{2}
        & ({\rm d}\psi/{\rm d}\sigma)^6=\psi^4(\psi-1)^3,
        &\qquad&\text{for\/ {\rm e.I.1(}$n${\rm);}}\\ & ({\rm d}\psi/{\rm
          d}\sigma)^4=\psi^3(\psi-1)^2, &\qquad&\text{for\/ {\rm
            e.I.2(}$n${\rm);}}\\ & ({\rm d}\psi/{\rm
          d}\sigma)^3=\psi^2(\psi-1)^2, &\qquad&\text{for\/ {\rm
            e.II(}$n${\rm);}}\\ & ({\rm d}\psi/{\rm
          d}\sigma)^2=\psi^1(\psi-1)^2,&\qquad&\text{for\/ {\rm
            e.IV.4(}$n${\rm)}};\\ & ({\rm d}\psi/{\rm
          d}\sigma)^1=\psi^1(\psi-1)^0,&\qquad&\text{for\/ {\rm
            e.IV(}$n$,$1$,$\infty${\rm)}}.\end{alignedat}\right.\end{displaymath}There is also a \emph{special} solution that can be chosen to be\/
    $t=\psi(\tau)$.
\label{thm:PPintegration}
\end{theorem}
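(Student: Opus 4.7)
The plan is to apply the Papperitz-based integration scheme of Theorem~\ref{thm:integration} with the canonical choice of singular points $(t_1,t_2,t_3) = (0,1,\infty)$ and offset vector $\kappa = (0,0,1)$, for which $K^2 \equiv 1$. Under this normalization, the defining relation ${\rm d}\tau/{\rm d}t = f^{-1/\bar n}(t)$ inverts to $\dot t^{\bar n} = f(t)$, and raising to the $n$\nobreakdash-th power yields $\dot t^{n+1} = f^n(t)$, where $\bar n = (n+1)/n$. Thus, once a basis $f^{(1)},f^{(2)}$ of the 2\nobreakdash-dimensional Papperitz solution space is obtained, the general solution $f = K_1 f^{(1)} + K_2 f^{(2)}$ produces directly the first-order ODE governing $t=t(\tau)$; then $x=x(\tau)$ is reconstructed by the $t(\cdot)\mapsto x(\cdot)$ map~(\ref{eq:zerothclaim}).

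For each base system in Table~\ref{tab:PPintegration}, I would first compute the Papperitz exponents via $\mu_i = \nu_i - (2\kappa_i - 1)\bar n$, reading $(\nu_1,\nu_1';\nu_2,\nu_2';\nu_3,\nu_3')$ off Table~\ref{tab:PPclassification}. A case-by-case inspection shows that these exponents are all integers and the PE has trivial monodromy, so the solution space is spanned by two rational functions. Their degrees and zero loci are forced by the P\nobreakdash-symbol: the order of vanishing at each $t_i$ must equal one of $\mu_i,\mu_i'$, and the leading behavior at $t=\infty$ constrains the total degree. This determines $(f^{(1)},f^{(2)})$ uniquely up to normalization; substitution into $\dot t^{n+1} = f^n(t)$ yields the algebraic curve in $t,\dot t$ stated in the table. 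The classification by genus (rational, elliptic, hyperelliptic) and the degeneration conditions $K_1K_2(K_1-K_2)=0$ follow from the standard classification of affine curves $\dot t^{n+1} = P(t)$; for the e.IV$(n,q,r)$ families the substitution $t = u^r$ reveals the underlying lower-degree curve in~$u,\dot u$.

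For the pseudo-Euclidean families e.I.1$(n)$, e.I.2$(n)$, e.II$(n)$, e.IV.4$(n)$, and e.IV$(n,1,\infty)$, the PE fails to admit a rational basis because the exponent structure at one or more singular points forces repeated or integer-spaced exponents with logarithmic Frobenius expansions. Here, the substitution $t = \psi(\sigma)$ with $\sigma = \sigma_C(\tau)$ is designed so that $\sigma$ absorbs the logarithmic character of the solution. Direct computation verifies that $\sigma_C(\tau) = C + \tau^{n+1}$ (or $C + e^\tau$ for $n=\infty$) satisfies a first-order relation linking $\dot\sigma$ to $\sigma$, and the chain rule then converts the gSE to an autonomous separable first-order equation for $\psi(\sigma)$ of the form $({\rm d}\psi/{\rm d}\sigma)^{n+1} \propto \psi^a(\psi-1)^b$, where $a,b$ are read off from the Euclidean exponent triple $(\alpha_1,\alpha_2,\alpha_3)$ (satisfying $\alpha_1+\alpha_2+\alpha_3=-1$). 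The special solution $t = \psi(\tau)$ corresponds to the formal limit of the $\sigma_C$ substitution in which the purely logarithmic branch of the PE solution space is taken, rather than its mixture with the algebraic branch.

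The main obstacle is twofold: (i)~verifying case-by-case that the proposed rational basis $(f^{(1)},f^{(2)})$ genuinely satisfies the PE whose coefficient functions are computed from the exponent data, rather than merely exhibiting the correct local behavior at $t=0,1,\infty$; and (ii)~in the pseudo-Euclidean cases, justifying rigorously the $\sigma_C$ substitution by tracing through the logarithmic Frobenius expansions of $f(t)$ near the singular points and confirming that the nonrational part is precisely captured by $\sigma$. Both steps reduce to straightforward but lengthy symbolic calculations for which computer algebra is essential; fortunately, much of this verification is subsumed by Carton-LeBrun's integration of the corresponding non\nobreakdash-SE gSE's with the PP~\cite{CartonLeBrun69b}, of which Theorem~\ref{thm:PPintegration} is ultimately the gDH translation via Theorem~\ref{thm:gDHbasecoin}.
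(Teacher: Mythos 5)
Your proposal follows essentially the same route as the paper: reduce to $\dot t^{\,n+1}\propto f^n(t)$ via the normalization $(t_1,t_2,t_3)=(0,1,\infty)$, $\kappa=(0,0,1)$, identify $K_1f^{(1)}+K_2f^{(2)}$ as the general PE solution (polynomial in the tabulated cases, handled by the $\sigma_C$ substitution in the pseudo-Euclidean ones), and lean on Carton-LeBrun's integration tables for the case-by-case verification — which is exactly what the paper's sketch does, only more tersely. The only caution is that the paper claims trivial monodromy for ``nearly all'' (not all) base systems, so your blanket assertion of integer exponents and a rational basis should be checked per case rather than asserted uniformly.
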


\begin{remark*}
  For each of the five pseudo-Euclidean families, the curve in $\psi,{\rm
    d}\psi/{\rm d}\sigma$ is of the form ${\rm d}\psi/{\rm
    d}\sigma=\allowbreak \psi^{\alpha_1+1}(\psi-\nobreak 1)^{\alpha_2+1}$.
  The family {\rm e.IV.4(}$n${\rm)} is included in this theorem for
  completeness, but is not a family of base systems.  As is indicated in
  Table~\ref{tab:PPclassification}, {\rm e.IV.4(}$n${\rm)} comes from {\rm
    e.IV(}$n$,$1$,$\infty${\rm)} via~$\bf2$.  One can simply choose
  \begin{equation}
    \psi(\sigma)=\left(\frac{e^\sigma+1}{e^\sigma-1}\right)^2,
    \qquad\text{resp.}\qquad e^\sigma,
  \end{equation}
  for {\rm e.IV.4(}$n${\rm)}, resp.\ {\rm e.IV(}$n$,$1$,$\infty${\rm)}.
\end{remark*}

\begin{proof}[Sketch of Proof]
The curves $\dot t^{n+1} = (K_1f^{(1)} + K_2f^{(2)})^n$ in
Table~\ref{tab:PPintegration} are adapted
from~\cite[Tab.~VIII]{CartonLeBrun69b}, with modifications to ensure
consistency.  For each system in Table~\ref{tab:PPintegration},
$K_1f^{(1)}+\nobreak K_2f^{(2)}$ is a solution of the associated PE\null.
When $[K_1:\nobreak K_2]=\allowbreak [1:\nobreak 0],\allowbreak[1:\nobreak
  1],\allowbreak[0:\nobreak 1]$, it is a Frobenius solution corresponding
respectively to $t=t_1,t_2,t_3$, i.e., to $t=0,1,\infty$, in the following
way: it belongs to the characteristic exponent~$\mu'_i$ at the singular
point $t=t_i$, and to the exponents $\mu_j,\mu_k$ at the other two singular
points.  Of the five pseudo-Euclidean families, e.II($n$) and
e.IV($n$,1,$\infty$) are integrated in~\cite[Tab.~VIII]{CartonLeBrun69b},
and the others are integrated similarly.
\end{proof}

\smallskip
For several illustrative base gDH systems selected from
Table~\ref{tab:PPclassification}, explicit integrations are supplied below.
(See Examples \ref{ex:1}--\ref{ex:4}.)  Most come from the ODE's given in
Table~\ref{tab:PPintegration}, via the $t(\cdot)\mapsto x(\cdot)$ map.

The simplest cases in Table~\ref{tab:PPintegration} are the ones in which
the curve in~$t,\dot t$ is rational, i.e.\ of genus~0, so that the
solutions $t=t(\tau)$ can be expressed in~terms of elementary functions.
In the cases flagged as elliptic, the curve is of genus~1 and $t=t(\tau)$
can be expressed in~terms of Weierstrass $\wp$\nobreakdash-functions
$\wp(g_2,g_3;\cdot)$.  In most though not all of these cases the
Klein--Weber invariant $j=g_2^3/(g_2^3-27g_3^2)$ of the curve is
independent of~$K_1,K_2$, and equals either~0 (indicating an equianharmonic
curve, and a triangular period lattice for $t(\cdot)$ and~$x(\cdot)$);
or~$12^3$ (indicating a~lemniscatic curve, and a square period lattice).
Incidentally, the first three curves in $\psi,{\rm d}\psi/{\rm d}\sigma$
appearing in Theorem~\ref{thm:PPintegration} are elliptic, with respective
$j$\nobreakdash-values $0,12^3,0$; the final two are rational.

If an algebraic curve in $t,\dot t$ is elliptic with $j=0$ then $t(\cdot)$
and~$x(\cdot)$ can optionally be expressed in~terms of Dixon's elliptic
functions~$\sm,\cm$, which satisfy $\sm^3+\nobreak\cm^3=1$ and
$(\sm)^{\textbf{.}}=\allowbreak \cm^2$, $(\cm)^{\textbf{.}}=\allowbreak
-\sm^2$.  (See~\cite{Dixon1890}; $\sm,\cm$ respectively equal
$6\wp/(1-3\dot\wp)$ and $(3\dot\wp+1)/(3\dot\wp-1)$, with
$(g_2,g_3)=\allowbreak (0,1/27)$ so that $\ddot\wp=6\wp^2$.)  If $j=12^3$,
they can be expressed in~terms of the lemniscatic sine/cosine functions
$\slemn,\clemn$, which satisfy $\slemn^2+\nobreak\clemn^2=1$ and
$[(\slemn)^{\textbf{.}}]^2 =\allowbreak 1-\nobreak\slemn^4$.  These are
identical to the Jacobi functions $\sn,\cn$ that have modular parameter
$m=\allowbreak k^2=-1$.  Jacobi functions with $m\allowbreak =k^2=1/2$ or
$m\allowbreak =k^2=2$ could also be used.

It is noteworthy that besides expressing $t=t(\tau)$ and $x=x(\tau)$ in
closed form, for any gDH system in Table~\ref{tab:PPintegration} it is
straightforward to construct first integrals that are rational
in~$x_1,x_2,x_3$.  It follows from $\dot t^{\bar n} =\allowbreak K_1f^{(1)}
+\nobreak K_2f^{(2)}$ that
\begin{equation}
  I\defeq \left.\frac{\dot f^{(1)}(t)-\bar n\,(\ddot t/\dot t)f^{(1)}(t)}{\dot f^{(2)}(t)-\bar n\,(\ddot t/\dot t)f^{(2)}(t)}\right|_{t=-(x_2-x_3)/(x_1-x_2)}
\end{equation}
is a constant of the motion ($\dot I=0$), i.e., is a first integral.  (A
useful expression for~$\ddot t/\dot t$ in~terms of $x_1,x_2,x_3$ is
provided by Eq.~(\ref{eq:sixthclaim}) of Lemma~\ref{lem:usedlater2}.)  The
same is true if $f^{(1)},f^{(2)}$ are replaced by any two nonzero solutions
of the~PE, i.e., by any two nonzero combinations of~$f^{(1)},f^{(2)}$.

First integrals can also be constructed as products of powers of Darboux
polynomials (sometimes called `second integrals'), if the space of such
polynomials is sufficiently rich~\cite{Goriely2001,Moulin99}.  By
definition, a Darboux polynomial (DP) of a HQDS is a homogeneous polynomial
$p\in\mathbb{C}[x_1,\dots,x_d]$ satisfying $\dot
p\defeq\allowbreak\sum_{i=1}^d Q_i\partial_ip =\allowbreak
\lambda\cdot\nobreak p$ for some `eigenvalue'
$\lambda\in\mathbb{C}[x_1,\dots,x_d]$, which if nonzero is necessarily a
homogeneous polynomial of degree~1.  For any DP~$p$, the HQDS flow
stabilizes the surface $p(x_1,\dots,x_d)=0$.  The DP's of any gDH
system~(\ref{eq:gDH}), with $d=3$, include $x_2-\nobreak x_3$,
$x_3-\nobreak x_1$, $x_1-\nobreak x_2$, so the gDH flow stabilizes their
zerosets, which are planes.  (Note that by the $x(\cdot)\mapsto t(\cdot)$
map~(\ref{eq:secondclaim}), these `planes of coincidence' correspond to
$t\equiv t_1,t_2,t_3$.)  Also, if $c-\nobreak a_i-\nobreak b_i=0$ for
any~$i$ then $x_i$~is a~DP; i.e.\ the gDH flow also stabilizes the
coordinate plane $x_i=0$.  If $c-\nobreak a_1-\nobreak a_2-\nobreak a_3=0$,
so that the system is improper, then any linear combination of
$x_2-\nobreak x_3$, $x_3-\nobreak x_1$, $x_1-\nobreak x_2$ is a~DP\null.
That is, the flow stabilizes any plane containing the ray
$(x_1,x_2,x_3)\propto\allowbreak(1,1,1)$; hence every solution $x=x(\tau)$
lies in an invariant plane.  All these statements about~DP's can be
rephrased in~terms of proper subalgebras of the non-associative
algebra~$\mathfrak{A}$.

\begin{example} ${\rm
    gDH}(1,1,0;0,0,0;1)$, i.e., the $n=1$ system~II, which has alternative
  parameter vector $(\nu_1,\nobreak \nu_1'; \allowbreak \nu_2,\nobreak
  \nu_2'; \allowbreak \nu_3,\nobreak \nu_3';\bar n) = \allowbreak
  (-2,\nobreak 0;\allowbreak -2,\nobreak0;\allowbreak 0,\nobreak 1;2)$ and
  has $(r_1,\nobreak r_1';\allowbreak r_2,\nobreak r_2';\allowbreak
  r_3,\nobreak r_3')=(1,\nobreak \infty;\allowbreak 1,\nobreak
  \infty;\allowbreak \infty,-2)$.  Explicitly, this system is
  \begin{equation}
    \label{eq:ex1gDH}
    \left\{
  \begin{aligned}
    \dot x_1 &= x_1(x_1-x_2-x_3)    ,\\
    \dot x_2 &= x_2(x_2-x_1-x_3)    ,\\
    \dot x_3 &= -x_1x_2.    
  \end{aligned}
  \right.
  \end{equation}
  This proper gDH system with the PP, invariant under $x_1\leftrightarrow
  x_2$, is listed in Table~\ref{tab:PPintegration}.  The algebraic curve
  in~$t,\dot t$ listed there,
  \begin{equation}
    \label{eq:tcurveII}
    \dot t^2 = K_1\,t^2 - K_2\, (2\,t-1),
  \end{equation}
  is rational.  Up~to the action of an affine transformation that replaces
  $\tau$ by $A\tau+\nobreak B$, solving this ODE for $t=t(\tau)$ yields the
  following general solution of the gSE: 
  \begin{equation}
    \label{eq:newguy}
    t(\tau)= \frac {c_1+\nobreak c_3+\nobreak c_2\sin\tau}{2c_3},
  \end{equation}
  where $[c_1:c_2:c_3]$ is any point on a genus\nobreakdash-0
  parametrization curve~$\mathfrak{C}$, given in projective form (i.e.,
  $\mathfrak{C}\subset\mathbb{P}^2$) as
  \begin{equation}
    \mathfrak{C}\colon\quad
    c_1^2 - c_2^2 - c_3^2 = 0.
  \end{equation}
  Substituting~(\ref{eq:newguy}) into the $t(\cdot)\mapsto x(\cdot)$
  map~(\ref{eq:zerothclaim}) yields
  \begin{equation}
    \label{eq:ex1gensoln}
    x(\tau) = \left(
    \frac{-c_2 -(c_1+c_3)\sin\tau}{ (c_1 +c_3+ c_2\sin\tau)\cos\tau},\,
    \frac{-c_2 -(c_1-c_3)\sin\tau}{(c_1 - c_3 + c_2\sin\tau)\cos\tau},\,
    -\tan\tau
    \right)
  \end{equation}
  as the general solution of the gDH system, up to an affine transformation
  that replaces $\tau$ by $A\tau+\nobreak B$ and scales $x$ by~$A$.

  The points $[c_1:c_2:c_3] = [-1:0:1],[1:0:1],[\pm1:1:0]$ on the
  curve~$\mathfrak{C}$ are special: for each, the
  solution~(\ref{eq:ex1gensoln}) is coincident, because respectively
  $t\equiv0,1,\infty$, i.e., $t\equiv t_1,t_2,t_3$, which correspond to the
  planes $x_2-\nobreak x_3=0$, $x_3-\nobreak x_1=0$, $x_1-\nobreak x_2=0$.
  But these points on~$\mathfrak{C}$ can be shown by a limiting procedure
  to yield respective special solutions $t=t^{[i']}(\tau)$, $i=1,2,3$,
  i.e.,
  \begin{equation}
    \label{eq:specsolnsII}
    t^{[1']}(\tau)=0+e^\tau, \qquad     t^{[2']}(\tau)=1+e^\tau, \qquad 
    t^{[3']}(\tau)=1/2 - \tau^2,
  \end{equation}
  in each of which $\tau\mapsto A\tau+B$ can be taken.  (As an alternative
  to the limiting procedure, in the ODE~(\ref{eq:tcurveII}) for $t=t(\tau)$
  simply set $[K_1:K_2]$ equal to $[1:\nobreak 0],\allowbreak [1:\nobreak
    1],\allowbreak [0,\nobreak 1]$, corresponding to $t=0,1,\infty$.)
  In~fact these special solutions of the gSE come from Frobenius solutions
  $f^{[1']},\allowbreak f^{[2']},\allowbreak f^{[3']}$ of the PE at
  $t=0,1,\infty$.  They are of the type shown in~(\ref{eq:rsolnsb}),
  associated respectively to the exponents $(r_1',r_2',r_3')=\allowbreak
  (\infty,\infty,-2)$; and the first two have no singularities because that
  is what $r_i'=\infty$ implies.  The general solution $t=t(\tau)$ given
  in~(\ref{eq:newguy}) comes from deformations of $f^{[1']},\allowbreak
  f^{[2']},\allowbreak f^{[3']}$.  The (noncoincident) special gDH
  solutions $x=x^{[i']}(\tau)$, $i=1,2,3$, obtained from the gSE
  solutions~(\ref{eq:specsolnsII}) by the $t(\cdot)\mapsto x(\cdot)$
  map~(\ref{eq:zerothclaim}), are
  \begin{subequations}
  \label{eq:ex1specsolns}
  \begin{align}
    x = x^{[1']}(\tau) &= (0,(1-e^\tau)^{-1},1),  \label{eq:ex1specsoln1}\\
    x = x^{[2']}(\tau) &= ((1+e^\tau)^{-1},0,1),  \label{eq:ex1specsoln2}\\
    x = x^{[3']}(\tau) &= \left(\frac{1+2\tau^2}{\tau(1-2\tau^2)},\,
    \frac{1-2\tau^2}{\tau(1+2\tau^2)}, \, \frac1\tau \right).\label{eq:ex1specsoln3}
  \end{align}
  \end{subequations}Again, an affine transformation can be applied.  
  This completes the integration of the gDH system~(\ref{eq:ex1gDH}),
  insofar as noncoincident solutions are concerned.  It should be noted
  that though the general solution~(\ref{eq:ex1gensoln}) and the special
  solutions (\ref{eq:ex1specsoln1}),(\ref{eq:ex1specsoln2}) are simply
  periodic on the $\tau$\nobreakdash-plane, the curious 5\nobreakdash-pole
  special solution~(\ref{eq:ex1specsoln3}) is not.

  Rational first integrals of this gDH system include
  \begin{equation}
    I_1 = x_3^2 - x_1x_2,\qquad
    I_2 = \frac{x_1(x_2-x_3)^2}{x_1-x_2},\qquad
    I_3 = \frac{x_2(x_3-x_1)^2}{x_1-x_2},
  \end{equation}
  which satisfy $I_1 = I_2-I_3$.  In each, each factor in the numerator and
  denominator is a Darboux polynomial.

  As is indicated in Table~\ref{tab:PPclassification}, the system ${\rm
    gDH}(1,2,0;-1,1,0;1)$, i.e., the $n=1$ system~II.1, is the image of
  this gDH system (the `base') under
  $\sigma_{(12)}\circ{\bf2}\circ\sigma_{(23)}$.  This is essentially the
  quadratic map given in~(\ref{eq:quadraticliftmap}), with interchanges of
  components.  As a solution-preserving map, it takes the above general and
  special solutions of the base system to those of the image, which like
  the base has the~PP\null.
\label{ex:1}
\end{example}

\begin{example} ${\rm
    gDH}(1,1,1;0,0,0;2)$, i.e., the $n=1$ system~V, which has alternative
  parameter vector $(\nu_1,\nobreak \nu_1'; \allowbreak \nu_2,\nobreak
  \nu_2'; \allowbreak \nu_3,\nobreak \nu_3';\bar n) = \allowbreak
  (-2,\nobreak 1;\allowbreak -2,\nobreak 1;\allowbreak -2,\nobreak1;2)$ and
  has $(r_1,\nobreak r_1';\allowbreak r_2,\nobreak r_2';\allowbreak
  r_3,\nobreak r_3')=(1,\nobreak -2;\allowbreak 1,\nobreak -2;\allowbreak
  1,-2)$.  Explicitly, this system is
  \begin{equation}
    \label{eq:ex2gDH}
    \left\{
  \begin{aligned}
    \dot x_1 &= x_1^2 - (x_1x_2 + x_2x_3 + x_3x_1),\\
    \dot x_2 &= x_2^2 - (x_1x_2 + x_2x_3 + x_3x_1),\\
    \dot x_3 &= x_3^2 - (x_1x_2 + x_2x_3 + x_3x_1).
  \end{aligned}
  \right.
  \end{equation}
  This proper gDH system with the PP, invariant under arbitrary
  permutations of $x_1,x_2,x_3$ (an~$\mathfrak{S}_3$ symmetry,
  cf.\ Theorem~\ref{thm:full2}), is listed in
  Table~\ref{tab:PPintegration}.  The algebraic curve in~$t,\dot t$ listed
  there, which is taken from~\cite{CartonLeBrun69b}, is generically
  elliptic and equianharmonic ($j=0$).  But it assumes that $t_1,t_2,t_3$
  are $0,1,\infty$.  Because of the $\mathfrak{S}_3$ symmetry and
  in~particular the~$\mathfrak{Z}_3$ (i.e., $x_1\to x_2\to x_3\to x_1$)
  symmetry, it is more convenient to choose $t_1,t_2,t_3$ to be
  $1,\omega,\omega^2$, where $\omega$~is a primitive cube root of unity.
  By examination, the curve in~$t,\dot t$ then becomes
  \begin{equation}
    \label{eq:PEforV}
    \dot t^2 = K_1\,(t-1)^3(t+1) + K_2\,(t-\omega)^3(t+\omega) + K_3\,(t-\omega^2)^3(t+\omega^2),
  \end{equation}
  in which for each~$i$, $K_i$~multiplies a Frobenius solution of the~PE
  at~$t=t_i$; so that one of $K_1,K_2,K_3$ is redundant, the PE solution
  space being 2\nobreakdash-dimensional.
  
  Up~to the action of an affine transformation that replaces $\tau$ by
  $A\tau+\nobreak B$, integrating this ODE yields the following general
  solution $t=t(\tau)$ of the gSE:
  \begin{equation}
    \label{eq:tforII}
    t(\tau)=
    \frac{\pm\sqrt{c_2c_3^3}\,\dot\wp(\tau) - 2\,c_1c_3\,\wp(\tau) + c_2^2}{4\,c_3^2\,\wp^2(\tau) + c_1c_2},
  \end{equation}
  where $[c_1:c_2:c_3]$ is any point on a parametrization
  curve~$\mathfrak{C}$, given in projective form (i.e.,
  $\mathfrak{C}\subset\mathbb{P}^2$) as
  \begin{equation}
    \mathfrak{C}\colon\quad
    c_1^3 - c_2^3 - (4/27)\,c_3^3 = 0,
  \end{equation}
  and the Weierstrass function~$\wp$ is equianharmonic, having parameters
  $(g_2,g_3)=\allowbreak (0,1/27)$.  (The curve~$\mathfrak{C}$ is elliptic,
  i.e.\ is of genus~1, and has $j=0$, i.e., is itself equianharmonic.)  The
  $t(\cdot)\mapsto x(\cdot)$ map~(\ref{eq:zerothclaim}) reduces for this
  gDH system to
  \begin{equation}
    \label{eq:ttoxII}
    x_i(\tau) =
    \frac12 \frac{{\rm d}}{{\rm d}\tau}\log
    \left[
      \frac{\dot t}{(t-\omega^{i-1})^2}
      \right]
    = \frac{\ddot t}{2\,\dot t} - \frac{\dot
      t}{t-\omega^{i-1}},\qquad i=1,2,3,
  \end{equation}
  and substituting (\ref{eq:tforII}) into~(\ref{eq:ttoxII}) yields the
  general gDH solution $x=x(\tau)$, up to an affine transformation that
  replaces $\tau$ by $A\tau+\nobreak B$ and scales $x$ by~$A$.

  The points $[c_1:c_2:c_3] = [1:1:0],[1:\omega:0],[1:\omega^2:0]$ on the
  curve~$\mathfrak{C}$ are special: for each, the resulting solution
  $x=x(\tau)$ is coincident, because respectively $t\equiv
  1,\omega,\omega^2$, i.e., $t\equiv t_1,t_2,t_3$, which correspond to the
  planes $x_2-\nobreak x_3=0$, $x_3-\nobreak x_1=0$, $x_1-\nobreak x_2=0$.
  But these points on~$\mathfrak{C}$ can be shown by a limiting procedure
  to yield respective special solutions $t=t^{[i']}(\tau)$, $i=1,2,3$,
  i.e.,
  \begin{equation}
    t^{[i']}(\tau)=\omega^{i-1}\, \frac{\tau^2+1}{\tau^2-1},\qquad i=1,2,3,
  \end{equation}
  in each of which $\tau\mapsto A\tau+B$ can be taken.  (As an alternative
  to the limiting procedure, in the ODE~(\ref{eq:PEforV}) for $t=t(\tau)$
  simply set $[K_1:K_2:K_3]$ equal to $[1:\nobreak 0:\nobreak
    0],\allowbreak [0:\nobreak 1:\nobreak 0],\allowbreak [0:\nobreak
    0:\nobreak 1]$, corresponding to $t=1,\omega,\omega^2$.)  In fact these
  special solutions of the gSE come from Frobenius solutions
  $f^{[1']},\allowbreak f^{[2']},\allowbreak f^{[3']}$ of the PE at
  $t=1,\omega,\omega^2$.  They are of the type shown in~(\ref{eq:rsolnsb}),
  associated respectively to the exponents $(r_1',r_2',r_3')=\allowbreak
  (-2,-2,-2)$; and the exponents determine their behavior
  as~$\tau\to\infty$.  The (noncoincident) special gDH solutions
  $x=x^{[i']}(\tau)$ obtained from these gSE solutions by the
  $t(\cdot)\mapsto x(\cdot)$ map~(\ref{eq:ttoxII}) include
  \begin{equation}
    \label{eq:ex2specsolns}
    x=x^{[1']}(\tau)=
    \left(\frac1{2\,\tau},\, \frac{1-3\,\tau^2}{2\,\tau(1+\tau^2)},\,
    \frac{1+3\,\tau^2}{2\,\tau(1-\tau^2)}
    \right),
  \end{equation}
  with $x^{[2']},x^{[3']}$ obtained from $x^{[1']}$ by cyclically permuting
  components.  Again, an affine transformation can be applied.  This
  completes the integration of the gDH system~(\ref{eq:ex2gDH}), insofar as
  noncoincident solutions are concerned.  It should be noted that though
  the general solution is doubly periodic in~$\tau$, the three special
  solutions, such as~(\ref{eq:ex2specsolns}), are rational (with 5~poles)
  and are not periodic.

  Rational first integrals of this gDH system include $I_1,I_2,I_3$,
  defined as
  \begin{equation}
    I_i = \frac{(x_i-x_j)^3\,P_k(x_1,x_2,x_3)}{(x_j-x_k)^3\,P_i(x_1,x_2,x_3)},
  \end{equation}
  where $i,j,k$ are a cyclic permutation of $1,2,3$.  In this, for
  $i=1,2,3$, the polynomial $P_i(x_1,x_2,x_3)$ is defined to be $3x_i^2
  -\nobreak x_1x_2 -\nobreak x_2x_3 -\nobreak x_3x_1$.  Each of
  $P_1,P_2,P_3$ is a Darboux polynomial, like $x_1-\nobreak x_2$,
  $x_2-\nobreak x_3$, $x_3-\nobreak x_1$, and the degree\nobreakdash-6
  polynomial $P_1P_2P_3$ is not merely a Darboux polynomial: it is a first
  integral.

  \smallskip
  As mentioned, and as is indicated in Table~\ref{tab:PPclassification},
  the $n=1$ systems V.1,\,V.2,\,V.3 are the images of this gDH system~(V)
  under the rational morphisms ${\bf 2},{\bf6}_{\bf c},{\bf 3}_{\bf c}$.
  (Some interchanges of components are required.)  The most interesting of
  these maps is the $\text{V}\mapsto\text{V.2}$ one, performed by
  $\sigma_{(13)}\circ{\bf6}_{\bf c}$, because it will be shown in Part~II
  that the third component of the $n=1$ system~V.2, i.e., of ${\rm
    gDH}(3,2,3;-2,-2,4;2)$, satisfies the $N=4$ case of the
  Chazy\nobreakdash-XI equation.  The Chazy\nobreakdash-XI equation is a
  third-order scalar ODE with the~PP, which is of the Chazy-class
  form~(\ref{eq:chazyclass}) with $[A:\nobreak B:\allowbreak C:\nobreak
    D]\in\mathbb{P}^1(1,1,2,3)$ parametrized by a positive integer~$N$ and
  equal to
  \begin{equation}
    [N^2-1 : N^2-13 : 12(N^2-1) : -3(N^2-1)^2].
  \end{equation}
  (See~\cite{Cosgrove2000,Sasano2010}; for the PP one needs $N\neq1$ and
  $6\notdivides N$.)  Explicitly, it is
  \begin{sizeequation}{\small}
    \label{eq:chazyxi}
    \dddot u =  \lambda(N^2-1)\,u\ddot u + \lambda(N^2-13)\,\dot u^2 + 12\,\lambda^2(N^2-1)\,u^2\dot u -3\,\lambda^3(N^2-1)^2\,u^4,
  \end{sizeequation}
  where the value of $\lambda$ is a matter of convention; it simply
  scales~$u$.  For the $N=4$ case arising from this gDH system, it happens
  that $\lambda=2/5$ is appropriate.

  According to the formula $x=\Phi(\tilde x)$ for the ${\bf 6}_{\bf c}$~map
  given in~(\ref{eq:6cmap}), the third component of the image system under
  $\sigma_{(13)}\circ{\bf6}_{\bf c}$ is simply $(\tilde x_1+\nobreak \tilde
  x_2+\nobreak \tilde x_3)/3$.  Hence if $x=x(\tau)$ is any of the above
  solutions of the $n=1$ system~V, general or special, then the average of
  components $u\defeq\allowbreak (x_1+\nobreak x_2+\nobreak x_3)/3$ will be
  a solution of Chazy\nobreak-XI(4), i.e., of Eq.~(\ref{eq:chazyxi}) with
  $N=4$ and~$\lambda=2/5$.

  Thus the general solution $x=x(\tau)$ of the gDH
  system~(\ref{eq:ex2gDH}), computed from (\ref{eq:tforII})
  by~(\ref{eq:ttoxII}), yields the general solution $u=u(\tau)$ of
  Chazy\nobreak-XI(4).  Explicitly,
  \begin{equation}
    \label{eq:genchazy4}
    u(\tau)=
    \frac12 \frac{{\rm d}}{{\rm d}\tau}\log
    \left[
      \frac{\dot t}{(t^3-1)^{2/3}}
      \right]
    =\frac{\ddot t}{2\,\dot t} - \frac{t^2\,\dot t}{t^3-1},
  \end{equation}
  where $t=t(\tau)$ is the general gSE solution~(\ref{eq:tforII}).  This
  general solution is rational in the equianharmonic Weierstrass
  functions~$\wp,\dot\wp$, and is therefore doubly periodic with a
  triangular period lattice; and it is algebraically parametrized by the
  point $[c_1:\nobreak c_2:\nobreak c_3]\in\allowbreak
  \mathfrak{C}\subset\nobreak\mathbb{P}^2$.  Similarly, each of the special
  gDH solutions $x=x^{[i']}(\tau)$, such as $x^{[1']}(\tau)$
  of~(\ref{eq:ex2specsolns}), yields the special (i.e.\ rational)
  Chazy\nobreakdash-XI(4) solution
  \begin{equation}
    \label{eq:chazysoln1}
    u = u^{[1']}(\tau) = u^{[2']}(\tau) = u^{[3']}(\tau) = \frac{1-5\,\tau^4}{2\,\tau(1+3\,\tau^4)},
  \end{equation}
  up to an affine transformation.  This too solves (\ref{eq:chazyxi}) with
  $N=4$ and~$\lambda=2/5$.

  Any Chazy solution $u=u(\tau)$ is transformed to another Chazy solution
  by any affine transformation that replaces $\tau$ by $A\tau+\nobreak B$
  and scales $u$ by~$A$.  It should be mentioned that there are additional
  special Chazy-XI(4) solutions,
  \begin{equation}
    \label{eq:chazysoln2}
    u(\tau) = \frac{1}{2\,\tau},\quad -\,\frac{5}{6\,\tau},\quad -\,\frac{1}{3\,\tau},
  \end{equation}
  which fit less well into the framework of PE-based integration.  These
  come via the averaging formula $u=(x_1+x_2+x_3)/3$ from ray solutions
  $x=x(\tau)$ of the gDH system~(\ref{eq:ex2gDH}), each of which has
  coincident components.  As is easily verified, the respective ray
  solutions are the ones along the directions $e_0$, $e_i'$, and~$e_i$.
  (See Proposition~\ref{prop:defineps}; by the $\mathfrak{S}_3$ symmetry,
  $i$~is arbitrary.)

  It is not difficult to generalize heuristically the special Chazy-XI(4)
  solutions (\ref{eq:chazysoln1}),(\ref{eq:chazysoln2}) to the case of
  arbitrary~$N$ (and trivially, to arbitrary~$\lambda$).  The
  generalizations, which solve Eq.~(\ref{eq:chazyxi}) and subsume
  (\ref{eq:chazysoln1}),\allowbreak(\ref{eq:chazysoln2}), are
  \begin{equation}
    \lambda\, u(\tau) =
    \frac{c_+\,\tau^{N/2}+c_-\,\tau^{-N/2}}{\tau\left[(1-N)c_+\,\tau^{N/2}+(1+N)c_-\,\tau^{-N/2}\right]},\quad
    \quad \frac{2}{(1-N^2)\tau},
  \end{equation}
  in the first of which $c_+,c_-\in\mathbb{C}$ are parameters satisfying
  $c_++\nobreak c_-=\nobreak 1$; equivalently, $[c_+:c_-]\in\mathbb{P}^1$.
  To both, a translation $\tau\mapsto\allowbreak \tau+B$ can be applied.
  The above special solutions (with $N=4$) are recovered by setting
  $[c_+:c_-]=\allowbreak [-5:1]$, $[0:1]$, $[1:0]$.

  The \emph{general} solution~(\ref{eq:genchazy4}) of Chazy-XI(4), rational
  in the equianharmonic Weierstrass functions~$\wp,\dot\wp$, cannot readily
  be generalized from $N=4$ to arbitrary~$N$; not even to the integral
  values of~$N$ for which Chazy\nobreakdash-XI has the Painlev\'e property.
  But it will be shown in Part~II that the individual components
  $x_i=x_i(\tau)$, $i=1,2,3$, of the gDH system~(\ref{eq:ex2gDH}) satisfy
  Chazy\nobreakdash-XI(2), with $\lambda=2/3$.  Also, it will be shown that
  the first component of the system ${\rm gDH}(4,6,4;\allowbreak
  3,-1,-1;2)$, i.e., of the $n=2$ system~III.1 listed in
  Table~\ref{tab:PPclassification}, satisfies Chazy\nobreakdash-XI(9), with
  $\lambda=1/10$.  So the general solution of Chazy\nobreakdash-XI($N$)
  with $N=2,4,9$, at~least, can be constructed by PE\nobreakdash-based
  integration, applied to a proper gDH system with the~PP\null.
  Chazy\nobreak-XI($N$) is integrable in a Liouvillian sense for
  any~$N$~\cite{Cosgrove2000}, but constructing algebraically parametrized
  closed-form solutions may be especially easy for certain integer values
  of~$N$.
\label{ex:2}
\end{example}

\begin{example}
The parametrized gDH family {\rm{e.IV(}}$n$,$q$,$r${\rm)}, defined by either of
\begin{subequations}
\begin{sizealign}{\small}
(a_1, a_2, a_3;\, b_1, b_2, b_3;\,c)&\propto \bigl(n+1, r(n+1), qn-n-1;\, qn-n-1,
  -q, n+1 ;\, qn\bigr),\label{eq:origparvec}\\
\bigl(\nu_1, \nu'_1;\, \nu_2, \nu'_2;\, \nu_3, \nu'_3\bigr)&= \bigl(-\tfrac{\bar n}r,
  \tfrac{q-\bar n}r;\, -\bar n, -\bar n+1;\, -\tfrac{q-\bar n}r, \tfrac{\bar
    n}r\bigr),\label{eq:newparvec}
\end{sizealign}
\end{subequations}
where as usual $\bar n\defeq(n+1)/n$, with $n$ restricted by
$n\in\mathbb{Z}\setminus\{0,-1,-2\}$; and where $q$~is a positive integer,
and $r$~a nonzero one.  (By assumption $n\neq\infty$ here, so that $\bar
n\neq1$; the family {\rm{e.IV(}}$\infty$,$q$,$r${\rm)} is best treated
separately.)

One notes that the second angular parameter, $\alpha_2\defeq
\nu_2'-\nobreak\nu_2$, always equals~1.  Also, from the formula
$(r_i,r_i')=-\bar n(1/\nu_i,1/\nu'_i)$, it follows that the vector of
leading exponents of the Frobenius-derived gSE solutions shown
in~(\ref{eq:rsolns}) is
\begin{equation}
(r_1,r_1';\, r_2, r_2';\, r_3, r_3')=
\bigl(r,\tfrac{-(n+ 1)r}{qn-  n-  1};\, 1, n+  1;\, \tfrac{(n+  1)r}{qn-  n- 1},-r\bigr),
\end{equation}
in which the pair $(r_2,r_2')=(1,n+1)$ is what one would expect of an
\emph{ordinary}, nonsingular point.  In fact, in the gSE associated to any
gDH system in the family {\rm{e.IV(}}$n$,$q$,$r${\rm)}, the second
`singular point' $t=t_2$ is not a singular point at~all: it is ordinary.
This is because, according to~(\ref{eq:newparvec}),
\begin{equation}
(\nu_1, \nu'_1;\, \nu_2, \nu'_2;\, \nu_3, \nu'_3)= (\nu,\nu';\, -\bar n,-\bar
  n+1;\, -\nu',-\nu)  
\end{equation}
for certain $\nu,\nu'$.  If the vector of offset exponents is of this
special form, then no poles will appear at~$t=t_2$ in the
gSE~(\ref{eq:gSE}), as one can easily check.

A~related degeneracy is visible in the gDH parameter
vector~(\ref{eq:origparvec}).  It satisfies $c-\nobreak a_1-\nobreak b_1=0$
and $c-\nobreak a_3-\nobreak b_3=0$.  From this, it follows that the gDH
flow stabilizes the coordinate planes $x_1=0$ and $x_3=0$.  Moreover, any
system {\rm{e.IV(}}$n$,$q$,$r${\rm)} is `triangular': $x_1,x_3$ evolve
independently of~$x_2$.  Although the opposite is not true, the
non-associative algebra~$\mathfrak{A}$ associated to the system does have
as a proper subalgebra the span of~$e_2$.

The systems in this family that have the PP are listed in both Tables
\ref{tab:PPclassification} and~\ref{tab:PPintegration}.  For
e.IV($n$,$q$,$r$) and its associated gSE to have the~PP, $(n,q)$ must be
tightly restricted, because of the following considerations.  The auxiliary
exponents $q_i,q_i'$ (see~(\ref{eq:qsolns})) are defined by
$(q_i,q_i')=\allowbreak \bar
n\left((\nu_i-\nobreak\nu_i')/\nu_i,\allowbreak
(\nu'_i-\nobreak\nu_i)/\nu'_i\right)$, so that
\begin{equation}
\label{eq:certaincondition}
(n+1)^{-1} + q_i^{-1}  + q_i'^{-1} = 1, \qquad i=1,2,3,
\end{equation}
as was previously noted.  Moreover, one calculates
from~(\ref{eq:newparvec}) that
\begin{equation}
(q_1,q_1';\,q_2,q_2';\, q_3,q_3')= (q,q';\,1,-(n+1);\,q',q)
\end{equation}
for a certain~$q'$ (namely, $q'=\frac{(n+1)q}{qn-n-1}$.)
Equation~(\ref{eq:certaincondition}) thus requires that $n+\nobreak1$ (an
integer other than $1,0,-1$), $q$~(an integer), and $q'$ (an integer
or~$\infty$) be related by the sum of their reciprocals equaling~1.  Hence,
$\{n+\nobreak1,\nobreak q,\nobreak q'\}$ must be one of
$\{n+\nobreak1,\nobreak 1,\nobreak -(n+\nobreak1)\}$, $\{2,2,\infty\}$,
$\{2,3,6\}$, $\{2,4,4\}$, or $\{3,3,3\}$; so that
\begin{equation}
  \label{eq:manycases}
  (n,q) = (n,1),\, (1,2),\,  (1,3),\,  (1,4),\,  (2,2),\,  (2,3),\, 
 (3,2),\,  (5,2),
\end{equation}
up~to the interchange $q\leftrightarrow q'$, or equivalently up~to
$x_1\leftrightarrow x_3$.  These choices yield the systems
e.IV($n$,$q$,$r$) with the PP that appear in Tables
\ref{tab:PPclassification} and~\ref{tab:PPintegration}.  It should be noted
that the final choice $(n,q)=\allowbreak (5,2)$ is anomalous.  As is
indicated in Table~\ref{tab:PPclassification}, the system
{\rm{e.IV(}}$5$,$2$,$r${\rm)} will have the PP only if $r$~is even.

For any system e.IV($n$,$q$,$r$), having the PP or not, it follows readily
from PE\nobreakdash-based integration that the algebraic curve in~$t,\dot
t$ that determines the gSE and gDH solutions (i.e., the ODE satisfied by
$t=t(\tau)$) is
\begin{equation}
\label{eq:newcurve}
\dot u^{n+1} =\allowbreak (K_1u^q -\nobreak K_2)^n, \qquad t\defeq u^r,
\end{equation}
as is indicated in Table~\ref{tab:PPintegration}.  This is a statement that
the curve in~$t,\dot t$ is the image under a cyclic covering $t=R(u)\defeq
u^r$ of a curve in~$u,\dot u$; and associated to this covering there must
necessarily be a rational morphism (i.e.\ solution-preserving map)
$x=\Phi(\tilde x)$, from {\rm{e.IV(}}$n$,$q$,$1${\rm)} to
{\rm{e.IV(}}$n$,$q$,$r${\rm)}.  The latter system is therefore an image
system, and its integration is facilitated by the existence of the map.  It
follows from the useful formula~(\ref{eq:fund}) that the map is
  \begin{equation}
    \label{eq:rmorphism}
    x_1=\tilde x_1,\qquad
    x_2= \frac{\tilde x_1(\tilde x_3-\tilde x_2)^r - \tilde x_3(\tilde
      x_1-\tilde x_2)^r}{(\tilde x_3-\tilde x_2)^r - (\tilde x_1-\tilde
      x_2)^r},\qquad
    x_3=\tilde x_3,
  \end{equation}
which is of a new type.  It did not arise in~\S\,\ref{sec:transformations}
because the PE associated to any {\rm{e.IV(}}$n$,$q$,$r${\rm)}, having only
two singular points on~$\mathbb{P}^1_t$ as explained above (i.e., $t_1,t_3$
but not~$t_2$), is of a degenerate type that was not considered there.

Written explicitly, even the base system e.IV($n$,$q$,1) is disconcertingly
complicated:
\begin{equation}
  \label{eq:ex3gDH}
  \left\{
  \begin{aligned}
    \dot {\tilde x}_1 &= {\tilde x}_1\bigl[(n+1){\tilde x}_1 - (n+q+1){\tilde x}_3 \bigr]    ,\\
    \dot {\tilde x}_2 &= (n+1){\tilde x}_2^2 + {\tilde x}_3 \bigl[(qn-2n-2){\tilde x}_2 - (q-1)(n+1){\tilde x}_1 \bigr]    ,\\
    \dot {\tilde x}_3 &= {\tilde x}_3\bigl[(qn-n-1){\tilde x}_3 -
      (q-1)(n+1){\tilde x}_1       \bigr]  .
  \end{aligned}
  \right.
\end{equation}
But the algebraic curve in~$u,\dot u$ (i.e.,~$\tilde t,\dot{\tilde t}$)
used in its integration will simply be the curve~(\ref{eq:newcurve}), which
is easy to integrate.  Once the gSE associated to the base system
e.IV($n$,$q$,1) has been integrated, i.e., its general and special
solutions $t=t(\tau)$ have been found, the gDH solutions $\tilde x=\tilde
x(\tau)$ are obtained by applying the $t(\cdot)\mapsto x(\cdot)$ map.  From
these, explicit solutions of the image system e.IV($n$,$q$,$r$), for $r$
any nonzero integer, can be obtained by applying the solution-preserving
map~(\ref{eq:rmorphism}).

As Table~\ref{tab:PPintegration} indicates, the curve~(\ref{eq:newcurve})
in~$u,\dot u$ is rational for $(n,q)$ equal to $(n,1)$ or $(1,2)$;
generically elliptic (with $j=0$) for $(n,q)$ equal to $(1,3)$, $(2,2)$,
or~$(2,3)$; and generically elliptic (with $j=12^3$) for $(n,q)$ equal to
$(1,4)$ or~$(3,2)$.  In the rational cases the construction of explicit gSE
solutions $t=t(\tau)$ proceeds similarly to Example~\ref{ex:1}; and in the
elliptic cases, to Example~\ref{ex:2}.  (Details are left to the reader.)
Only the case $(n,q)=(5,2)$ requires special comment.  As is indicated in
Table~\ref{tab:PPintegration}, for this choice the
curve~(\ref{eq:newcurve}) is generally hyperelliptic.  Its general integral
$u=u(\tau)$ is not single-valued on the complex $\tau$\nobreakdash-plane;
but if $r$~is even then $t\defeq u^r$ will be single-valued.  In~fact, it
will be an equianharmonic elliptic function of~$\tau$.  Another way of
saying this is that if $v\defeq u^2$ then the resulting curve in~$v,\dot
v$, which is
\begin{equation}
\dot v^6 = 64\, v^3(K_1\,v - K_2)^5,
\end{equation}
is elliptic (with $j=0$), provided that $K_1K_2\neq0$.  This completes our
summary of the integration of the cases~(\ref{eq:manycases}) of the gDH
system~(\ref{eq:ex3gDH}) that have the~PP, insofar as noncoincident
solutions are concerned.

Rational first integrals of the base gDH system e.IV($n$,$q$,1) include
\begin{equation}
  I_1 = {\tilde x}_1^{(q-1)n-1}\, {\tilde x_3}^{n+1} (\tilde x_3-\tilde x_1)^q, \qquad
  I_2 = \frac{{\tilde{x}}_1({\tilde{x}}_3-{\tilde{x}}_2)^q}{{\tilde{x}}_3({\tilde{x}}_1-{\tilde{x}}_2)^q}.
\label{eq:r1pair}
\end{equation}
In each, each factor is a Darboux polynomial.  For any gDH system
e.IV($n$,$q$,$r$) with $r$ a nonzero integer other than~$\pm1$, the
corresponding first integrals are algebraic rather than rational.  They can
be derived from~(\ref{eq:r1pair}) by applying a irrational but algebraic
morphism that is the inverse of the morphism~(\ref{eq:rmorphism}).

It must be stressed that $I_1,I_2$ of~(\ref{eq:r1pair}) are first integrals
of the system~(\ref{eq:ex3gDH}), i.e., of
\begin{displaymath}
\text{e.IV}(n,q,1) = {\rm gDH}(n+1, n+1, qn-n-1;\, qn-n-1, -q, n+1 ;\, qn),
\end{displaymath}
whether or not it appears in Table~\ref{tab:PPintegration}, i.e., whether
or not the pair $(n,q)$ takes on one of the relatively few values that
cause it to have the Painlev\'e property.  The parameters $n,q$ do not even
need to take~on integral values.
\label{ex:3}
\end{example}

\begin{example}
    ${\rm gDH}(0,0,0;0,1,0;1)$, i.e., the $n=\infty$ system {\rm
    e.IV(}$\infty$,$1$,$\infty${\rm)}, which has alternative parameter
  vector $(\nu_1,\nobreak \nu_1'; \allowbreak \nu_2,\nobreak \nu_2'; \allowbreak \nu_3,\nobreak \nu_3';\bar n) = \allowbreak (0,\nobreak 0;\allowbreak
  0,\nobreak -1;\allowbreak 0,\nobreak0;1)$ and has $(r_1,\nobreak
  r_1';\allowbreak r_2,\nobreak r_2';\allowbreak r_3,\nobreak
  r_3')=(\infty,\nobreak \infty;\allowbreak \infty,\nobreak 1;\allowbreak
  \infty,\infty)$.  Explicitly, this system is
  \begin{equation}
    \label{eq:ex4gDH}
    \left\{
  \begin{aligned}
    \dot x_1 &= x_3(x_1-x_2),\\
    \dot x_2 &= 0,\\
    \dot x_3 &= x_1(x_3-x_2).    
  \end{aligned}
  \right.
  \end{equation}
  This proper but rather degenerate gDH system with the PP, invariant under
  $x_1\leftrightarrow x_3$, does not appear in
  Table~\ref{tab:PPintegration}: it is the $n=\infty$ member of {\rm
    e.IV(}$\infty$,$1$,$n${\rm)}, the fifth pseudo-Euclidean family treated
  in Theorem~\ref{thm:PPclassification}.  The curve in~$t,\dot t$ is not
  algebraic but transcendental.  It was not given in the theorem but is
  \begin{equation}
    \label{eq:finalode}
    \dot t = K_1(t\log t) + K_2\,t,
  \end{equation}
  the right side being the solution $K_1f^{(1)}(t)+K_2f^{(2)}(t)$ of
  the~PE\null.  As in Example~\ref{ex:3} above, the second `singular point'
  $t=t_2$ of the~PE (i.e., $t=1$) is ordinary rather than singular: the
  only singular points are $t=t_1,t_3$, i.e., $t=0,\infty$.

  Up~to an affine transformation that replaces $\tau$ by $A\tau+\nobreak
  B$, the general solution of~(\ref{eq:finalode}) is 
  \begin{equation}
    \label{eq:finalgsesoln}
    t(\tau)=\allowbreak \exp(C+\nobreak e^\tau)\eqdef \bar C\exp(e^\tau),
  \end{equation}
  as stated in the theorem.  Substituting the gSE
  solution~(\ref{eq:finalgsesoln}) into the $t(\cdot)\mapsto x(\cdot)$
  map~(\ref{eq:zerothclaim}) yields
  \begin{equation}
    \label{eq:genfinal}
    x=x(\tau) = \left(
    \frac{1+e^\tau - \bar C\exp(e^\tau)}{1-\bar C\exp(e^\tau)}
    ,\,1,\,
    \frac{1-\bar C(1-e^\tau)\exp(e^\tau)}{1-\bar C\exp(e^\tau)}
%    \frac{\exp(e^\tau+C)-e^\tau-1}{\exp(e^\tau+C)-1},\, 1,\, \frac{-(e^\tau-1)\exp(e^\tau+C)-1}{\exp(e^\tau+C)-1}
    \right)
  \end{equation}
  as the general solution of the gDH system, up to an affine transformation
  that replaces $\tau$ by $A\tau+\nobreak B$ and scales $x$ by~$A$.  Here,
  the free parameter $\bar C\in\mathbb{C}$ can be viewed as a point on a
  genus\nobreakdash-0 parametrization curve
  $\mathfrak{C}=\mathbb{P}^1=\mathbb{C}\cup\{\infty\}$.

  The points $\bar C=0,\infty$ on~$\mathfrak{C}$ are special: for each, the
  solution~(\ref{eq:genfinal}) is coincident, because respectively $t\equiv
  0,\infty$, i.e., $t\equiv t_1,t_3$, which correspond to the planes
  $x_2-\nobreak x_3=0$, $x_1-\nobreak x_2=0$.  But these points
  on~$\mathfrak{C}$ can be shown by a limiting argument to yield special
  gSE solutions
  \begin{equation}
    t^{[1']}(\tau) = \exp(e^\tau),\qquad     t^{[3']}(\tau) = e^{\tau},
  \end{equation}
  in each of which $\tau\mapsto A\tau+B$ can be taken.  (Alternatively,
  simply set $[K_1:\nobreak K_2] =\allowbreak [0:\nobreak 1]$,
  $[1:\nobreak0]$ in~(\ref{eq:finalode}).)  The first of these is merely an
  instance of the general solution.  But the solution $t=t^{[3']}(\tau) =
  e^{\tau}$, which was mentioned in Theorem~\ref{thm:PPintegration}, comes
  from a Frobenius solution~$f^{[3']}$ of the~PE at $t=t_3=\infty$; and it
  is of the type shown in~(\ref{eq:rsolnsb}), though it has no singularity
  because $r_3'=\infty$.  It is deformations of~$f^{[3']}$ that yield the
  above general solution, $t(\tau)=\allowbreak\bar C \exp(\nobreak
  e^\tau)$.  The special gDH solution obtained from $t=t^{[3']}(\tau)$ by
  the $t(\cdot)\mapsto x(\cdot)$ map is
  \begin{equation}
    x = x^{[3']}(\tau) = \left(\frac{1}{1-e^\tau},\,0,\,\frac{e^\tau}{1-e^\tau}
    \right).
  \end{equation}
  Again, an affine transformation can be applied (e.g., the one based on a
  replacement of $\tau$ by~$-\tau$ will interchange $x_1,x_3$).  This
  completes the integration of the gDH system~(\ref{eq:ex4gDH}), insofar as
  noncoincident solutions are concerned.

  Because of the double exponentials in~(\ref{eq:genfinal}), the poles of
  the general solution $x=x(\tau)$ in the complex plane form a lattice that
  is not regular, being exponentially stretched.  Such stretching,
  polynomial for $n$~finite and exponential for $n=\infty$, is
  characteristic of gDH systems in the pseudo-Euclidean families
  e.I.1($n$), e.I.2($n$), e.II($n$), e.IV.4($n$), and
  e.IV($n$,$1$,$\infty$), as Theorem~\ref{thm:PPintegration} makes clear.

  First integrals of this gDH system include $I_1,I_2$, defined as
  \begin{equation}
    I_1 = x_2\,\log\left( \frac{x_3-x_2}{x_1-x_2}\right) + (x_3 -
    x_1),\qquad
    I_2 = x_2.
  \end{equation}
  The transcendentality is apparent.
\label{ex:4}
\end{example}

In each of the preceding examples of explicit (Papperitz-based) integration
of non-DH gDH systems with the~PP, there was a \emph{general} noncoincident
solution $x=x(\tau)$ with three free parameters, and one or more
\emph{special} noncoincident solutions having only two.  (For both types of
solution, one must include in the count the parameters
$(A,B)\in(\mathbb{C}\setminus\{0\})\times \mathbb{C}$ of the affine
transformation $\tau\mapsto\allowbreak A\tau+\nobreak B$, which can always
be applied.)  In the integration of differential equations and systems,
solutions without a full complement of free parameters are traditionally
called `singular' solutions~\cite{Davis62}.  This is what the above special
solutions are.

Generically, special gDH and gSE solutions originate in the following way.
Suppose that $\nu_i<\nu_i'$ for any of $i=1,2,3$.  There will be a gDH
solution $x=x^{[i']}(\tau)$, derived from a gSE solution $t=t^{[i']}(\tau)$
of the type shown in~(\ref{eq:rsolnsb}), with leading exponent~$r_i'$; the
gSE solution comes from a Frobenius solution of the~PE at the singular
point $t=t_i$, with leading exponent~$\mu_i'$.  There will also be a gDH
solution $x=x^{[i]}(\tau)$, derived from a gSE solution $t=t^{[i]}(\tau)$
of the type shown in~(\ref{eq:rsolnsa}), with leading exponent~$r_i$; the
gSE solution comes from a Frobenius solution of the~PE at $t=t_i$, with
exponent~$\mu_i$.  But $t=t^{[i']}(\tau)$ will be \emph{special}, with only
two free parameters, and $t=t^{[i]}(\tau)$ will lie on the
three-dimensional general-solution manifold.  This is because the dominance
assumption $\nu_i<\nu_i'$ ensures that each instance of the general
solution will have leading exponent $r_i$, not~$r_i'$.  By definition, each
such instance is based on a nontrivial mixture of the two Frobenius
solutions at~$t=t_i$; and it will be of the type shown
in~(\ref{eq:qsolnsa}), with a leading exponent ($r_i$) different from that
of the special solution~($r_i'$), which is `recessive.'

However, special solutions can be obtained from the general solution by a
limiting procedure, as has been mentioned.  It must also be mentioned that
solutions with coincident components of any gDH system are `singular' in
the traditional sense; but they do not arise from PE\nobreakdash-based
integration of the sort considered here.

\smallskip
We close by briefly discussing possible extensions of the preceding
classification of proper gDH systems with the~PP\null.  Any solution
$x=x(\tau)$ of such a system is meromorphic on its maximal domain of
definition in~$\mathbb{C}$ (which in the ${\rm
  DH}(\frac1{N_1},\frac1{N_2},\frac1{N_3})$ case is generically a
subdomain: a~disk or half-plane).  An extension would be a classification
of (proper) gDH systems having the \emph{weak} Painlev\'e property
(wPP)\null.  Solutions of such systems are allowed to be finitely branched,
i.e., to be meromorphic on a finite cover of a domain in~$\mathbb{C}$.  The
classification of proper gDH systems with the wPP will presumably require a
classification of gSE's with the wPP, going beyond the work of Garnier and
Carton-LeBrun.  Such gSE's may be numerous.

Another extension would be a fuller study of the integrability properties
of gDH systems.  Under what circumstances are there two (algebraically
independent) first integrals, say rational or algebraic in~$x_1,x_2,x_3$?
The integrability properties of HQDS's of Lotka--Volterra type have been
exhaustively investigated, as was mentioned in the Introduction.  But the
class of gDH systems, which is nearly disjoint, remains to be treated.  It
is clear that on account of Papperitz-based integration, proper gDH systems
may be not~only Liouvillian-integrable, but also integrable in the
preceding sense; even if they lack the~PP\null.  (See, e.g., the remarks at
the end of Example~\ref{ex:3} above.)

As a further example, let the parameters $(a_1,a_2,a_3;\allowbreak
b_1,b_2,b_3;c)$ equal $(a,a,a;\allowbreak b,b,b;c)$.  The resulting
$\mathfrak{S}_3$\nobreakdash-invariant gDH system can be denoted by ${\rm
  gDH}(a;b;c)$.  If $c-\nobreak a-\nobreak b=0$, the gDH flow stabilizes
each coordinate plane $x_i=0$, and the first integrals include
\begin{equation}
I_i = \frac{x_i(x_j-x_k)}{x_j(x_k-x_i)},\qquad i=1,2,3,
\end{equation}
where $i,j,k$ is a cyclic permutation of $1,2,3$.  (These satisfy
$I_1I_2I_3=1$.)  In each of these, each factor is a Darboux polynomial.
The integrability of ${\rm gDH}(a;b;c)$ when $c-\nobreak a-\nobreak b=0$
has been noticed elsewhere~\cite[Ex.~2.5]{Kinyon97}.  The special case
${\rm gDH}(0;1;1)$ is a Lotka--Volterra model that has a
bi\nobreakdash-Hamiltonian structure, and its integration can be
carried~out in a manner that respects the
structure~\cite{Grammaticos89,Gumral93}.

This paper has said little about \emph{improper} gDH systems, because they
are not amenable to Papperitz-based integration; and because if the
parameter vector $(a_1,a_2,a_3;\allowbreak b_1,b_2,b_3;c)$ is not proper,
the ${\rm gDH}\leftrightarrow{\rm gSE}$ correspondence of
Theorem~\ref{thm:gDHbasecoin} breaks down.  However, many improper gDH
systems are rationally integrable.  If $c-\nobreak a_1-\nobreak
a_2-\nobreak a_3=0$ then the system ${\rm gDH}(a_1,a_2,a_3;\allowbreak
b_1,b_2,b_3;c)$, improper by definition, stabilizes any plane containing
the ray $(x_1,x_2,x_3)\propto\allowbreak(1,1,1)$, and has first integrals
\begin{equation}
I_i = \frac{x_i-x_j}{x_k-x_i},\qquad i=1,2,3.
\end{equation}
This makes possible the integration of any ${\rm DH}(a_1,a_2,a_3;c)$ system
that is improper on account of $c-\nobreak a_1-\nobreak a_2-\nobreak a_3$
equaling zero~\cite[\S\,4.3]{Maciejewski98}, even though no \emph{proper}
${\rm DH}(a_1,a_2,a_3;c)$, i.e.\ no~system ${\rm
  DH}(\alpha_1,\alpha_2,\alpha_3\,|\,c)$, is even algebraically
integrable~\cite{Maciejewski95,Valls2006b}.  This fact also lies behind
several integrations of improper non\nobreakdash-DH gDH systems that have
appeared in the literature.  One is the Kasner system ${\rm
  gDH}(1,1,1;\allowbreak 1,1,1;3)$, which is of historic interest.
(See~\cite{Kasner25,Kinyon97} and~\cite[\S\,5.3]{Walcher91}.)

Any complete Painlev\'e analysis of improper gDH systems must examine all
systems that are classified as improper because $c=0$ or $2c-b_1-\nobreak
b_2-\nobreak b_3=0$, as~well as those with $c-a_1-\nobreak a_2-\nobreak
a_3=0$.  Owing to the absence of the ${\rm gDH}\leftrightarrow{\rm gSE}$
correspondence, a classification of the improper gDH systems that have
the~PP will not be attempted here.

%\section{Nonlinear Scalar ODE's from (g)DH Systems}

%\section{Chazy Solutions and Representations}
%\label{sec:Chazy}

%\section{Symmetries and B\"acklund Transformations of Chazy Equations}

%\section{DH Systems and Modular Forms}
%\label{sec:modular}

%\section{Higher-Rank HQDS's and Modular Forms}

%\section{Confluent Limits of DH Systems}
%\label{sec:confluence}

%\bibliographystyle{abbrv}
%\bibliography{general}

\end{document}